\newtheorem{thm}{Theorem}[section]
\newtheorem{lem}[thm]{Lemma}
\newtheorem{prop}[thm]{Proposition}
\newtheorem{cor}[thm]{Corollary}
\theoremstyle{definition}
\newtheorem{defn}{Definition}[section]
\theoremstyle{remark}
\newtheorem{rem}{Remark}[section]
\makeatletter \@addtoreset{equation}{section}
\newcommand{\lemref}[1]{Lemma~\ref{#1}}
\def\a{\alpha}
\def\b{\beta}
\def\l{\lambda}
\def\p{\partial}
\def\vphi{\varphi}
\def\L{\Lambda}
\def\tr{\rm tr}
\def\Int{{\rm Int}\,}
\def\and{\quad{\rm and}\quad}
\def\eps{\epsilon}
\let\lra=\longrightarrow
\def\mapright{\xrightarrow}
\def\mapright\#1{\,\smash{\mathop{\lra}\limits^{\#1}}\,}
\def\om{\omega}
\def\Om{\Omega}
\def\tri{\triangle}
\numberwithin{equation}{section}
\DeclareMathOperator{\Vol}{Vol}
\DeclareMathOperator{\Ric}{Ric}
\DeclareMathOperator{\osc}{Osc}
\newcommand{\Rmnum}[1]{\expandafter\@slowromancap\romannumeral #1@}
\title[Geodesics in the space of K\"ahler cone metrics]{Geodesics in the space of K\"ahler cone metrics}
\author [Simone Calamai]{Simone Calamai}
  \address{Scuola Normale Superiore di Pisa - Piazza dei Cavalieri, 7 - 56126 Pisa, Italy }
  \email{simocala@gmail.com}
\author [Kai Zheng]{Kai Zheng}
  \address{Institut Fourier, Universit\'{e} Joseph Fourier (Grenoble I), UMR 5582 CNRS-UJF, BP 74, 38402 Saint-Martin-d'H\`{e}res, France}
  \email{zheng@math.uni-hannover.de}
\thanks{The first named author is partially supported by INdAM grants.
The second named author is partially supported by ANR project
"Flots et Op\'erateurs G\'eom\'etriques" (ANR-07-BLAN-0251-01).}
\keywords{}
\begin{document}
\maketitle
\begin{abstract}
In this paper, we study the Dirichlet problem of the geodesic equation in the space of K\"ahler cone metrics $\mathcal H_\b$; that is equivalent to a homogeneous complex Monge-Amp\`ere equation whose boundary values consist of K\"ahler metrics with cone singularities. Our approach concerns the generalization of the space defined in Donaldson \cite{MR2975584} to the case of K\"ahler manifolds with boundary; moreover we introduce a subspace $\mathcal H_C$ of $\mathcal H_\b$ which we define by prescribing appropriate geometric conditions. Our main result is the existence, uniqueness and regularity of $C^{1,1}_\b$ geodesics whose boundary values lie in $\mathcal H_C$. Moreover, we prove that such geodesic is the limit of a sequence of $C^{2,\a}_\b$ approximate geodesics under the $C^{1,1}_\b$-norm. As a geometric application, we prove the metric space structure of $\mathcal H_C$.

\end{abstract}
\tableofcontents
\section{Introduction}
We shall always denote by $X$ a smooth compact K\"ahler manifold without boundary
of complex dimension $n\geq 1$,
by $[\om_0]$ a K\"ahler class of $X$,
and by $\mathcal H$ the space of K\"ahler metrics in $[\om_0]$.
In their pioneering works, Mabuchi \cite{MR909015},
Donaldson \cite{MR1736211} and Semmes \cite{MR1165352}, independently defined the famous
Weil-Peterson type metric in $\mathcal H$ , under which
$\mathcal H$ becomes a non-positive curved infinite-dimensional symmetric space.
Semmes \cite{MR1165352} pointed out that the geodesic equation in $\mathcal H$
is a homogeneous complex Monge-Amp\`ere (HCMA) equation,
\begin{equation}\label{geo smooth}
  \left\{
   \begin{array}{ll}
{(\Om_{0}+\frac{\sqrt{-1}}{2}\p\bar\p\Psi)^{n+1}=0}&\text{ in }X\times R \; ,\\
\sum_{1\leq i,j\leq n}{(\Om_{0}+\frac{\sqrt{-1}}{2}\p\bar\p\Psi)_{i\bar j}\, dz^{i} \wedge dz^{\bar j}>0}
&\text{ in }X\times \{{z^{n+1}}\} \; ;
   \end{array}
  \right.
\end{equation}
{here $R$ is a cylinder with boundary, and}  $\Om_0$ is the pull-back metric of $\om_0$
under the natural projection. 

Geodesics are basic geometric objects in the infinity dimensional manifold $\mathcal{H}$. The intensive relation between the geodesics of $\mathcal{H}$ and the existence and the uniqueness of the cscK metrics was pointed out by Donaldson in \cite{MR1736211}.
He also conjectured {that $\mathcal{H}$ endowed with the Weil-Peterson type metric}
is geodesically convex and is a metric space. 
Chen \cite{MR1863016} established the {existence of $C^{1,1}$ geodesic {segments} (of bounded mixed derivatives)
under} smooth Dirichlet conditions and thus verified
that the space of K\"ahler metrics is a metric space. Later, Blocki \cite{BlockiZbigniew2012}
proved the $C^{1,1}$ geodesic segment has bounded Hessian when $(X\times R, \Om_0)$ has nonnegative bisectional curvature.
Phong-Sturm \cite{MR2242635}, Song-Zeltdich \cite{MR2672796}\cite{MR2394541}\cite{MR2880224}
approximated the $C^{1,1}$ geodesic
by the Bergman geodesics in finite-dimensional symmetric spaces. Later Chen and Tian in \cite{MR2434691}
improved the partial regularity of the $C^{1,1}$ geodesic,
then proved the uniqueness of the {extremal} metrics.
Donaldson \cite{MR1959581}, Darvas-Lempert \cite{Darvas:2012fk} and Lempert-Vivas \cite{Lempert:2011uq}
showed that {a} $C^{1,1}$ geodesic does not need to be smooth in general.
On the other hand, the geodesic ray induced by the test configuration is constructed in Arezzo-Tian \cite{MR2040638},
Chen-Tang \cite{MR2521647}, Phong-Sturm \cite{MR2377252}\cite{MR2661562} and Phong-Sturm \cite{MR2377252}\cite{MR2661562}.
The $C^{1,1}$ geodesic ray parallel to a given one is constructed in Chen \cite{MR2471594}
under the geometric condition {``tamed by a bounded ambient geometry''}.
We would like to remark that the existence of $C^{1,1}$ geodesic has been proved by Chen-He \cite{MR2775873}
in the space of volume {forms} on a Riemannian manifold, by P.-F. Guan-X. Zhang \cite{MR2900546} in Sasakian manifolds
and by B. Guan-Q. Li \cite{MR2673728} in Hermitian manifolds.

In this paper, our aim is to construct the natural geodesic in the moduli space of all K\"ahler metrics singular along the divisor $D$ for future study. Let us isolate now the concept, central to our aim, of K\"ahler cone metric.

\begin{defn}\label{defn: cone metrics inside introduction}
Let $X$ and  $[\om_0]$ as {at} the beginning of the paper,
and let $D=\sum^m_{i=1} {(1-\b_i)} V_i $ be a normal crossing,
effective smooth divisor of $X$ with $0 <\b_i \leq 1$ for $1\leq i\leq m$,
where $V_i \subset X$ are irreducible hypersurfaces.
Set $\b :=(\b_1,\dots,\b_m)$ and call the $\b_i$'s the \emph{cone angles}.
Given a point $p$ in $D$, label  a local chart $(U_p, z^i)$ centered at $p$
as \emph{ local cone chart} when $z^1,\dots z^k$
are the local defining functions of the hypersurfaces where $p$ locates.
A \emph{K\"ahler cone metric} $\om$ of cone angle $2\pi\b_i$ along $V_i$, for $1\leq i \leq m$,
is a closed positive $(1,1)$ current and a smooth K\"ahler metric on the regular part $M:=X\setminus D$.
{In a local cone chart $U_p$ its} K\"ahler form is quasi-isometric to the cone flat metric, which is
\begin{align}\label{flat cone}
\om_{cone}
&:=\frac{\sqrt{-1}}{2}\sum_{i=1}^k\b_i^2|z^i|^{2(\b_i-1)}dz^i\wedge
dz^{\bar i}
+\sum_{k+1\leq j\leq n}dz^j\wedge dz^{\bar j}\, .
\end{align}
\end{defn}
Let $\mathcal H_{\b}$ be the space of K\"ahler cone metrics of cone angle $2\pi\b_i$ along $V_i$ in $[\om_0]$.
It is clear that when {for all $i$ there holds $\b_i=1$, then} $\mathcal H_{\b} $ consists of all cohomologous
smooth K\"ahler metrics on a compact K\"ahler manifold.
Let {$s$} be a global meromorphic section of $[D]$.
Let $h$ be an Hermitian metric on $[D]$.
{It
is shown in Donaldson} \cite{MR2975584} that, for sufficiently small $\delta>0$,
\begin{align}\label{model cone}
\om&=\om_0+\delta {\sum_{i=1}^m \frac{\sqrt{-1}}{2}\p\bar\p|s_i|^{2\b_i}_{h_\L}}
\end{align}
is a K\"ahler cone metric.
Moreover, $\om$ is independent of the choices of $\om_0$, $h_\L$, $\delta$ up to quasi-isometry. We call it model metric in this paper.

A special K\"ahler cone metric is the K\"ahler-Einstein cone metric which is studied in many recent papers.
They have been studied in McOwen \cite{MR938672},
Troyanov \cite{MR1034288}\cite{MR1005085} for Riemannian surfaces.
The study of K\"ahler-Einstein cone metrics was initiated in Tian \cite{MR1603624}
and Tsuji \cite{MR976585} {concerning various inequalities involving the Chern numbers.}
Recently, Donaldson \cite{MR2975584} defined a new function space and
developed a program to look for the smooth K\"ahler-Einstein metric by deforming the cone angle.
Existence theorems are proved by Brendle \cite{Brendle:2011kx} for Ricci flat K\"ahler cone
metrics
, by Jeffres, Mazzeo and Rubinstein \cite{Jeffres:2011vn}
for the Fano case under the properness of the twisted K-energy
, by Campana, Guenancia and P\u{a}un \cite{Campana:2011kx} for the normal
crossing divisors
and by Berman, Boucksom, Eyssidieux, Guedj, Zeriahi \cite{Berman:2011fk} on log Fano varieties.
With the log-$\a$ invariants, Berman \cite{Berman:2010ve} solved the {existence problem for small cone angles.}
{After finishing} our paper, {more extensive developments of Donaldson's program
on the application of the K\"ahler-Einstein cone metrics to the the K\"ahler-Einstein problem have appeared};
we mention some of the most recent beautiful papers
\cite{Chen:2012mz}\cite{Chen:2012kx}\cite{Chen:2012uq}\cite{Chen:2013fk}\cite{Tian:2012fr}.

In this paper, we study the geometry of the space of K\"ahler cone metrics, in particular, the geodesic in $\mathcal H_\b$. Now we clarify the concept of geodesic in $\mathcal H_\b$.
A \emph{cone geodesic} is a curve segment $\vphi\in \mathcal H_\b$ for $0\leq t\leq 1$ which satisfies the natural generalization of the problem \eqref{geo smooth}; i.e. we are requiring that $\om_{\vphi(t)}$ is a K\"ahler cone metric for any $0\leq t\leq1$.
In this article, we find the geometric boundary conditions which assure the existence and the uniqueness of the cone geodesic.
Those lead to an appropriate {choice} of a subspace of $\mathcal{H}_{\b}$. As we will show in Section \ref{space}, the geodesic equation leads to the Dirichlet problem of the HCMA equation with the boundary potentials of cone singularities.
The Dirichlet problem of HCMA was studied intensively by many authors under various analytic boundary conditions
(see \cite{MR0445006}\cite{MR0254877}\cite{MR1935845}\cite{MR637493}\cite{MR780073}...).
In our particular environment, 
the underlying manifold is a product manifold and the curvature conditions on the background metrics play
{an important role as in the geometric-analysis problems (see the useful tricks we explain at the beginning of Section \ref{close} and Remark \ref{modifyrhs}).

The slight difference between our equation and the standard HCMA is that in our case the boundary
values allow cone singularities. So the problem is how to choose the appropriate function spaces where the solutions live in. A possible function space could be the edge space. The corresponding elliptic theory is investigated by many authors (see Mazzeo \cite{MR1133743}, Melrose \cite{MR1127161}, Schulze \cite{MR1142574} and references therein). In this edge space, Jeffres, Mazzeo and Rubinstein \cite{Jeffres:2011vn}
improved the higher regularity of the {K\"ahler-Einstein}
cone metrics. In our environment, the problem is that the edge space is defined for manifolds without boundary; which is not our case.
So, we do not use edge space in this paper.
We overcome this problem by generalizing Donaldson's space to the boundary case (see Definition \eqref{defn: 3dribdy}), that is more natural for our geometric problem. However, it would be interesting to understand whether the edge space (or with some modification) could be defined near the boundary and how to improve the regularity in such space. Finally, it is interesting to see that the cone geodesic are translated as solution of the HCMA, then the cone singularities on the boundary travel naturally to the interior of the domain.
We hope this phenomenon will be helpful to understand the solution of the complex Monge-Amp\`ere equation.

Now we specify the geometric conditions on the boundary metrics.
(The space $C_\b^{3}$ is introduced in Definition \ref{defn: 3dri}.)}
\begin{defn}\label{spaceHC}
Assume $D$ is disjoint smooth hyper surface and the cone angles $\b$
belong to the interval $(0,\frac{1}{2})$.
Then, we denote as $\mathcal H_\b^{3}$ the space of $C^{3}_\b$ $\om_0$-plurisubharmonic
potentials.
Moreover, we label as $\mathcal H_C \subset \mathcal H^{3}_\b$ one of the following spaces;
\begin{align*}
\mathfrak{I_1}&=\{\vphi\in \mathcal H^{3}_\b\text{ {such that} }  \sup Ric(\om_\vphi)
\text{ is bounded}\} ;\\
\mathfrak{I_2}&=\{\vphi\in\mathcal H^{3}_\b \text{ {such that} } \inf Ric(\om_\vphi)
\text{ is bounded}\} .
\end{align*}
\end{defn}

In general the K\"ahler cone metrics do not have bounded geometry. The Riemannian curvature of $\om$ is bounded from below when when the cone angle is less than $\frac{1}{2}$. We will compute that the Levi-Civita connection of the model cone metric defined in \eqref{model cone} under the cone coordinate (see \eqref{conetransform}) is bounded when the cone angle is less than $\frac{2}{3}$. So we need the curvature conditions of the boundary metrics to improve the regularities.
The space $\mathcal H_C$ at least contains all K\"ahler-Einstein cone metrics with the cone angle between $0$ and $\frac{1}{2}$ (see {Proposition $6.7$} in Brendle \cite{Brendle:2011kx}).
The further discussion on the properties of the subspace $\mathcal{H}_C$ will be in the forthcoming paper.
In the present work, our main aim is to prove the following result (cf. Theorem \ref{sole geo}).

\begin{thm}\label{geo existence}
Any two K\"ahler cone metrics in $\mathcal H_C$ are connected by a unique $C_\b^{1,1}$ cone geodesic. More precisely, it is the limit under the $C_\b^{1,1}$-norm by a sequence of $C^{2,\a}_{\b}$ approximate geodesics.
\end{thm}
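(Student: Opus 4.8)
The plan is to follow Chen's approximation scheme \cite{MR1863016}, but carried out entirely inside the weighted cone H\"older spaces $C^{k,\a}_\b$ adapted to the divisor lifted to $X\times R$. A cone geodesic joining $\vphi_0,\vphi_1\in\mathcal H_C$ is a path whose total potential $\Psi$ on $X\times R$ solves the degenerate HCMA \eqref{geo smooth} with $\Psi|_{t=0}=\vphi_0$ and $\Psi|_{t=1}=\vphi_1$. Since \eqref{geo smooth} is degenerate, I would first regularize it to the family of non-degenerate \emph{approximate geodesic} equations
\[
\Bl \Om_0+\tfrac{\sqrt{-1}}{2}\p\bar\p\Psi_\epsilon \Br^{n+1}=\epsilon\, f\,\Om_0^{n+1},\qquad \Psi_\epsilon|_{t=0}=\vphi_0,\quad \Psi_\epsilon|_{t=1}=\vphi_1,
\]
where $f$ is a fixed cone volume density whose precise normalization is dictated by the geometry (cf.\ the discussion around Remark~\ref{modifyrhs}). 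Each such equation I would solve for a potential $\Psi_\epsilon\in C^{2,\a}_\b$ by the continuity method: openness comes from the invertibility of the linearized operator, a cone Laplacian, in the cone Schauder spaces obtained by generalizing Donaldson's space to the boundary case; closedness comes from the a priori estimates below.

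The core of the argument is a family of a priori estimates that are \emph{uniform in $\epsilon$}. The $C^0$ bound follows from the comparison principle, using barriers built from the boundary potentials together with the subsolution coming from the harmonic/convex interpolation, and exploiting $\mathcal H_C\subset\mathcal H^{3}_\b$. The $C^1$ bound follows by applying the maximum principle to $|\nabla\Psi_\epsilon|^2$ augmented by suitable correction terms. The decisive estimate is the uniform complex Hessian bound $\tfrac{\sqrt{-1}}{2}\p\bar\p\Psi_\epsilon\le C\,\Om_0$, obtained from a Chen--Yau/Bochner computation applied to the trace $\tr_{\Om_0}\bl\Om_0+\tfrac{\sqrt{-1}}{2}\p\bar\p\Psi_\epsilon\br$; it is here that the background curvature of the product model cone metric \eqref{model cone} enters, and where the hypothesis that the cone angles lie in $(0,\tfrac12)$, hence that the Riemannian curvature of $\om$ is bounded below, together with the defining conditions of $\mathcal H_C$, namely bounded $\sup\Ric(\om_\vphi)$ or bounded $\inf\Ric(\om_\vphi)$ on the boundary metrics, are invoked to absorb the negative curvature terms.

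Granting the uniform $C^{1,1}_\b$ bound, I would extract along a subsequence a limit $\Psi_\epsilon\to\Psi$ in $C^{1,\gamma}_\b$ for every $\gamma<1$, with $\tfrac{\sqrt{-1}}{2}\p\bar\p\Psi\in L^\infty$. The limit satisfies \eqref{geo smooth} in the pluripotential sense and lies in $C^{1,1}_\b$; this $\Psi$ is the sought cone geodesic, and by construction it is exactly the $C^{1,1}_\b$-limit of the $C^{2,\a}_\b$ approximate geodesics $\Psi_\epsilon$, which is the second assertion of the theorem. Uniqueness then follows from the comparison principle for the degenerate complex Monge--Amp\`ere equation with identical boundary data (equivalently, from the convexity of the energy along geodesics), forcing any two $C^{1,1}_\b$ solutions with the same endpoints to agree.

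I expect the main obstacle to be the uniform second-order estimate near the divisor and, more delicately, near the corner where the singular set meets the boundary $X\times\p R$. The classical interior Hessian estimate must be reproduced in the weighted cone spaces in which the model metric has \emph{unbounded} curvature, so one must both guarantee that the maximum of the auxiliary test quantity is not attained on or approaching the singular set, and show that all resulting curvature error terms are controlled precisely by the $\b<\tfrac12$ condition and the $\mathcal H_C$ Ricci bounds. Establishing the required cone Schauder theory up to the boundary, so that the $\epsilon$-problems are genuinely solvable in $C^{2,\a}_\b$, is the other technical point on which the whole scheme rests.
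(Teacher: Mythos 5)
Your proposal is correct and follows essentially the same route as the paper: regularize the degenerate HCMA by a family of non-degenerate cone Monge--Amp\`ere equations solved via the continuity method (openness from the cone Schauder theory for the linearized Laplacian up to the boundary, closedness from uniform $L^\infty$, gradient, interior Laplacian and boundary Hessian estimates that exploit the $\b<\tfrac12$ curvature bounds and the $\mathcal H_C$ Ricci conditions), then extract the $C^{1,1}_\b$ limit and conclude uniqueness by a comparison argument applied to the approximating solutions. The only notable deviation is your choice of Chen's normalization $\epsilon f\,\Om_0^{n+1}$ for the right-hand side rather than the paper's $\tau e^{\Psi-\Psi_1}\det\Om_1$ (the $a=1$ family), a variant the paper itself notes in Remark \ref{modifyrhs} to be handled by the same linear theory.
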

The notion of approximate geodesic is given in \lemref{approximate}.
As {an application, we prove the following result.}
\begin{thm}
$\mathcal H_C$ is a metric space.
\end{thm}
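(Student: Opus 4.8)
The plan is to equip $\mathcal H_\b$, and hence its subset $\mathcal H_C$, with the cone analogue of the Mabuchi--Semmes--Donaldson metric and to show that the associated length distance restricts to a genuine distance on $\mathcal H_C$. At a potential $\vphi$ the tangent space consists of functions on $M=X\setminus D$, and for $\psi_1,\psi_2$ I set
$$\langle \psi_1,\psi_2\rangle_\vphi := \int_M \psi_1\psi_2\,\om_\vphi^n .$$
The first point to record is that this is finite and well defined on $\mathcal H_C$: the volume form $\om_\vphi^n$ of a K\"ahler cone metric is integrable near $D$, and the $C^3_\b$ regularity of the potentials guarantees that $\p_t\vphi$ is bounded, so the integral converges. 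For a path $\vphi:[0,1]\to\mathcal H_\b^3$ I define its length $L(\vphi)=\int_0^1\bl\int_M(\p_t\vphi)^2\,\om_{\vphi_t}^n\br^{1/2}dt$ and, for $\vphi_0,\vphi_1\in\mathcal H_C$, the distance $d(\vphi_0,\vphi_1)=\inf_\vphi L(\vphi)$, the infimum being over piecewise $C^1_\b$ paths in $\mathcal H_\b^3$ joining them. Symmetry is immediate by reversing the parameter, the triangle inequality by concatenating paths, and $d(\vphi_0,\vphi_1)<\infty$ because \thmref{geo existence} supplies a connecting $C^{1,1}_\b$ geodesic of finite length (the $C^{1,1}_\b$ limit of the $C^{2,\a}_\b$ approximate geodesics of \lemref{approximate}, whose lengths are uniformly bounded). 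Thus the only substantive assertion is non-degeneracy: $d(\vphi_0,\vphi_1)=0$ forces $\vphi_0=\vphi_1$.

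For non-degeneracy I would use the connecting geodesic together with two facts. First, along the geodesic equation the energy $e(t):=\int_M(\p_t\vphi)^2\,\om_{\vphi_t}^n$ is constant; this is the conservation law obtained by contracting the geodesic equation, and it holds for the $C^{1,1}_\b$ geodesic by passing to the limit from the $C^{2,\a}_\b$ approximate geodesics, along which it holds up to a controlled error. Consequently the length of the geodesic equals $\sqrt{e}$, which is strictly positive unless $\p_t\vphi\equiv0$, i.e. unless $\vphi_0=\vphi_1$. Second, I must show that this geodesic actually realizes the distance, i.e. that it minimizes length among competing paths; granting this, $d(\vphi_0,\vphi_1)=\sqrt{e}>0$ whenever $\vphi_0\neq\vphi_1$, which is exactly non-degeneracy.

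The minimizing property is where the real work lies, and it is the step I expect to be the main obstacle. The mechanism is the non-positivity of the sectional curvature of $(\mathcal H_\b,\langle\cdot,\cdot\rangle)$: the energy functional is convex along the family interpolating between the geodesic and any competitor, and the geodesic is a critical point, so it minimizes energy and hence, after reparametrizing proportionally to arclength and using Cauchy--Schwarz, length. Making this rigorous in the present setting requires running the argument on the $C^{2,\a}_\b$ approximate geodesics, where all quantities are classical, obtaining a lower bound $L(\psi)\geq \sqrt{e_\epsilon}-C\epsilon$ for every competitor $\psi$, and then letting $\epsilon\to0$ using the $C^{1,1}_\b$-convergence of \thmref{geo existence}. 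The delicate part is controlling all the integrations by parts near the divisor $D$, where the metrics degenerate, so that no boundary contribution appears at the cone locus; the curvature conditions built into $\mathcal H_C$ (the bounds on $\Ric(\om_\vphi)$) and the restriction $\b\in(0,\tfrac12)$, which were already used to obtain the regularity of the geodesic, are precisely what make these integrations by parts legitimate. Once the minimizing property and the energy conservation are in hand, the three metric axioms follow and $(\mathcal H_C,d)$ is a metric space.
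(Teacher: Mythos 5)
Your overall framework (a length metric, with non-degeneracy reduced to two facts: positivity of the length of the connecting geodesic, and minimality of that geodesic among competing paths) matches the paper's, and your first fact is sound: the paper controls the energy drift $|\frac{d}{dt}E|\leq C\eps$ along the $C^{2,\a}_\b$ approximate geodesics (see \eqref{eqn: estimate of the energy of positive length of geodesic arc} and item (4) of \lemref{approximate}), so conservation of energy for the $C^{1,1}_\b$ limit, and hence positivity of its length when the endpoints differ, is legitimate. The paper itself gets positivity by a different device — the explicit lower bound of Proposition \ref{prop: geodesic length is positive}, which uses $\vphi''\geq0$ along (approximate) geodesics and the normalization $I(\vphi)=0$ — but your conservation argument serves the same purpose.

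The genuine gap is in your minimality step. You propose the Hadamard-space argument: interpolate between the geodesic $\gamma$ and a competitor $\sigma$ by a family $H(s,t)$ that is geodesic in $s$ for each fixed $t$, and use non-positive sectional curvature to get convexity of the energy in $s$. Two things block this. First, the interpolating geodesics must join $\gamma(t)$ to $\sigma(t)$; but for $0<t<1$ the point $\gamma(t)$ is only a $C^{1,1}_\b$ potential, and $\sigma(t)$ is an arbitrary point of your competitor class $\mathcal H^3_\b$, so neither endpoint need lie in $\mathcal H_C$ — they have neither the $C^3_\b$ regularity nor the Ricci bounds of Definition \ref{spaceHC} — and \thmref{geo existence} therefore provides no connecting geodesic; replacing $\gamma$ by its $C^{2,\a}_\b$ approximations does not help, since those time-slices are still not in $\mathcal H_C$. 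Second, the non-positivity of the curvature of $(\mathcal H_\b,\langle\cdot,\cdot\rangle_\vphi)$ is asserted but proved nowhere: in the cone setting even the formal Mabuchi computation requires the integration-by-parts control near $\mathfrak D$ (Lemma \ref{lemma: integration by parts}), and the paper never establishes it. The paper (following Chen \cite{MR1863016}) avoids both problems with a first-variation argument: the two-parameter family of \lemref{approximate} consists of $\eps$-approximate geodesics joining two curves that both lie in $\mathcal H_C$ (in the application, a fixed base point and a competitor curve), so the existence theory applies to every pair of endpoints; differentiating the energy in the curve parameter $s$ produces only endpoint terms plus $O(\eps)$, which yields the triangle-inequality theorem at the end of Section \ref{metricstr} and hence minimality of the geodesic — and so non-degeneracy — with no second variation and no curvature hypothesis. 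To repair your proof you would either have to anchor the interpolation on curves inside $\mathcal H_C$ (which is in effect what Chen's argument does) or prove both the non-positive curvature property and an existence theorem for geodesics with merely $C^{1,1}_\b$ endpoints; neither is available in the paper.
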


Concerning geodesics with weak regularity,
we should compare the construction in Berndtsson's remarkable paper \cite{MR2480611}
with our result.
It is easy to compute that the volume of the K\"ahler cone metric belongs to $L^p$ with $p(\b_i-1)+1>0$
for any $1\leq i\leq k$.
According to Kolodziej's theorem in \cite{MR1618325},
there exists a unique H\"older continuous $\om_0$-{plurisubharmonic} potential.
Berndtsson \cite{MR2480611} proved that given two bounded $\om_0$-{plurisubharmonic} potentials,
there is a bounded geodesic connecting them.
Then since the advantage of using the Ding functional (cf. Ding \cite{MR967024}) is that it requires less regularity of the potentials,
as observed by Berndtsson, the convexity of the Ding functional along the bounded geodesic is applied
to prove the uniqueness of K\"ahler-Einstein cone metrics (generalizing the Bando-Mabuchi uniqueness theorem \cite{MR946233}).
However the cone geodesic {we construct} here has more {regularity} across the divisor
in a subspace $\mathcal{H}_C$ which still contains the critical metrics.
The {regularity} of the cone geodesic across the divisor are not only important to prove
the metric structure as we show in this paper,
but also to our further application {on existence and uniqueness of cscK} cone metrics.

Now we state an application {of our main theorem} to the smooth K\"ahler metrics with slightly less geometric conditions than the $C^{1,1}$ geodesic in Chen's theorem \cite{MR1863016}.
\begin{cor}\label{smooth geo existence}
If the $C^{3}$ norm and Ricci curvature upper  (or lower) bound of two K\"ahler potentials
are uniformly bounded, then the geodesic connecting them has uniform  $C^{1,1}$ bound.
\end{cor}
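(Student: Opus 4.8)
The plan is to derive the corollary from \thmref{geo existence} by degenerating the cone structure to the smooth case and then extracting the \emph{uniformity} of the constants from the a priori estimates that underlie the main theorem. First I would observe that smooth K\"ahler metrics are recovered from the cone framework by setting all cone angles equal to $1$, i.e. $\b_i=1$ for every $i$; in this regime the weighted spaces collapse to the ordinary ones, so that $C^3_\b=C^3$ and $C^{1,1}_\b=C^{1,1}$, and the model background \eqref{model cone} is genuinely smooth. Under this identification the hypothesis that the two endpoints have uniformly bounded $C^3$ norm places them in $\mathcal H^3_\b$, while a uniform upper (resp. lower) bound on $\Ric(\om_\vphi)$ places them in the subspace $\mathfrak{I}_1$ (resp. $\mathfrak{I}_2$) of Definition \ref{spaceHC}; hence both boundary potentials lie in $\mathcal H_C$.

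With the endpoints in $\mathcal H_C$, \thmref{geo existence} already produces a unique $C^{1,1}_\b=C^{1,1}$ geodesic, realized as the $C^{1,1}$-limit of the $C^{2,\a}_\b$ approximate geodesics of \lemref{approximate}. The only additional assertion of the corollary is that the resulting $C^{1,1}$ bound is controlled \emph{only} by the prescribed $C^3$ norms and the one-sided Ricci bound, not by any finer data of the endpoints. To establish this I would revisit the a priori estimates for the $\ep$-perturbed equations. The $C^0$ estimate is a maximum-principle bound in terms of the boundary values alone, hence controlled by the $C^0$ norms of the potentials; the gradient and the boundary second-order estimates reduce to the fixed geometry of the product background $(X\times R,\Om_0)$ together with the $C^{2,\a}$ boundary data, which are in turn dominated by the $C^3$ norms of the two endpoints.

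The crux, and the step I expect to be the main obstacle, is the interior second-order (mixed-derivative) estimate for the approximate geodesics. In the maximum-principle computation for the Laplacian of the solution one encounters terms involving the Ricci curvature of the background and of the unknown metric $\om_\vphi$, and these are precisely the terms that membership in $\mathfrak{I}_1$ or $\mathfrak{I}_2$ is designed to tame. I would check that in this computation the one-sided Ricci bound enters linearly and with a definite sign, so that the resulting differential inequality closes with a constant depending only on the Ricci bound, the $C^3$ data, and the fixed ambient curvature of $(X\times R,\Om_0)$, and crucially uniformly in $\ep$. Letting $\ep\to 0$, the uniform $C^{1,1}$ bound on the approximate geodesics is inherited by the limit geodesic, which is the desired conclusion.

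The comparison with Chen \cite{MR1863016} is then transparent, and explains the phrase ``slightly less geometric conditions'': Chen's argument invokes a two-sided control (the bounded mixed Hessian coming from a bounded ambient geometry under smooth Dirichlet data), whereas here a single one-sided Ricci bound, together with finite $C^3$ regularity of the endpoints, already suffices to close the second-order estimate. The genuine work therefore lies in verifying that the sign structure of the curvature terms is compatible with only a one-sided bound; once that is in place, the passage from existence (given by \thmref{geo existence}) to a \emph{uniform} bound is a matter of bookkeeping the dependence of the constants.
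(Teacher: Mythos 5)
Your proposal is correct and follows essentially the same route the paper intends: the corollary is precisely the smooth specialization (all $\b_i=1$, so the divisor terms and angle restrictions become vacuous) of the machinery behind Theorem~\ref{geo existence}, with the uniform $C^{1,1}$ bound obtained by tracking the constants in the a priori estimates — $L^\infty$ by the maximum principle, the interior Laplacian bound of Proposition~\ref{sec}, the boundary Hessian and gradient estimates — all of which depend only on the $C^{3}$ norms of the endpoints (through the connection and curvature of the line-segment background $\Om_1$, cf. Lemma~\ref{lemma: curvature line} and Corollary~\ref{cor: curvature line}) and a one-sided Ricci bound, uniformly in the approximation parameter. The single nuance you compress is that the two signs are handled by two genuinely different interior computations rather than one sign check: Yau's second-order estimate closes under $\sup\Ric(\Om_1)$ (constant $C_1$ of Proposition~\ref{sec}), while the Chern--Lu formula closes under $\inf\Ric(\Om_1)$ (constant $C_3$).
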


Now we describe the structure of our paper.
In Section {\ref{space}}, we recall the notations and the function spaces
introduced by Donaldson. In particular, we {define} the boundary case.
{Then}, we generalize the Riemannian structure to the space of K\"ahler cone metrics.
The delicate part here is the growth rate near the divisor.
In {the Donaldson} space,
we derive that the geodesic equation is a HCMA with cone singularities by integration by part
and explain the construction of the initial metric for the continuity method.

In Section \ref{close}, we obtain the \emph{a priori} estimates of the approximate Monge-Amp\`ere equation.
It is divided into several steps.
The $L^{\infty}$ estimate is derived from cone version of the maximum principle and
the super-solution of the linear equation obtained in Section \ref{open}.
In order to find out the proper geometric global conditions, the interior {Laplacian} estimate
is obtained using the techniques of Yau's second order estimate \cite{MR480350} and the Chern-Lu formula (see \cite{MR0234397}\cite{MR0250243}\cite{MR0486659}).
In order to prove the boundary Hessian estimate estimate near the divisor,
we can not use the the distance function as the barrier function which is introduced in Guan-Spruck \cite{MR1247995},
since we need a uniform estimate independent of the distance to the divisor.
So we choose the auxiliary function by solving the linear equation provided by Section \ref{open}. We hope this method could have potentially further application to Monge-Amp\`ere equation on manifold with boundary arises in other geometric problems.
In order to obtain the interior gradient estimate near the divisor,
we carefully choose an appropriate test function near the divisor.
The appropriate growth rate is important to us.

In Section \ref{open}, we solve the linearized equation
and prove the $C^{2,\a}_\b$ regularity of the approximate geodesic equation. Both the interior and the boundary Schauder estimates are of the general form. Note that the right hand side of the approximation equation \eqref{eq_starting_@_interior_holder_estimate} contains $\log\Om^{n+1}$. When applying the Evans-Krylov estimate, we need to bound the first derivative of $\log\Om^{n+1}$. We will show that it is bounded when the cone angle is less than $\frac{2}{3}$.
Thus with these estimates, the existence and the uniqueness of the $C^{1,1}_\b$ cone geodesic are proved. Moreover, the approximate geodesic is in $C^{2,\a}_\b$.

We also include an application of the interior Schauder estimate to the regularity of the K\"ahler-Einstein cone metrics (see Proposition \ref{ke schauder}). There is also a term $f$ on the right hand side of the corresponding equation \eqref{keg}. When apply the Evans-Krylov estimate, it is necessary to bound the first derivative of $f$. We show that the gradient of this term is bounded when the cone angle is less than $\frac{2}{3}$. When the cone angle is less than $\frac{1}{2}$, Brendle \cite{Brendle:2011kx} derived Calabi's three order estimate to prove the existence of Ricci flat cone metrics. 

Section \ref{linear thy} contains the maximum principle and the H\"older continuity
of the linearized equation.
In particular, the weak Hanack inequality is used to prove the $C^{2,\a}_\b$ regularity
of the approximation geodesic equation.

In Section \ref{metricstr},
we apply our cone geodesic to prove the metric structure of $\mathcal H_C$. Once we establish the $C^{1,1}_\b$ regularity of cone geodesic, the proof of the metric structure is immediate.

\noindent {\bf Acknowledgments:}
Both authors would like to thank Xiuxiong Chen who brought this problem to their attention.
The second author also thanks Claudio Arezzo for helpful discussions and ICTP for their hospitality.
He is also grateful to G\'erard Besson for his warm encouragement during his visit in Institut Fourier.
\section{The space of K\"ahler cone metrics}\label{space}
 In this section, we first introduce some notations and knowledge
of Donaldson's program \cite{MR2975584}, which we will
stick to in the remainder of the paper.
Let $U_p$ a local cone chart as in Definition \ref{defn: cone metrics inside introduction}.
{  Let $W : U_p\setminus D  \rightarrow U_p\setminus D$ }
be the change of coordinates given by
\begin{equation}\label{conetransform}
W (z^1, \cdots, \, z^n)  :=
(w^1=|z^1|^{\b_1-1}z^1,\, \cdots, \, w^k=|z^k|^{\b_k-1}z^k, \, z^{k+1}, \, \cdots, \, z^n)\; .
\end{equation}
Now, for any $1\leq i \leq k$ let $0\leq \theta_i < 2\pi$,
$z^i:=\rho_i e^{\sqrt{-1}\theta_i}$ and $r_i :=|z^i|^{\b_i}=|w^i|$;
meanwhile, for any $k+1\leq j \leq n$ let $z^j:=x^j + \sqrt{-1}y^j$.
Then, let the polar coordinate transformation of $(w^1,\cdots,\, w^k,\, z^{k+1},\cdots,\, z^n)$ be
$$
P: (w^1,\cdots,w^k,z^{k+1},\cdots,z^n) \rightarrow(r_1, \, \theta_1, \, \cdots,r_k , \,
\theta_k,x^{k+1}, \, y^{k+1}, \, \cdots, \, x^n, \, y^n)\; .
$$
Thus we obtain that the expression of the push-forward { cone} flat metric is
\begin{align}\label{euc}
((P\circ W)^{-1})_\ast g=\sum_{1\leq i\leq k}[dr_i^2+\b_i^2r_i^2d\theta_i^2]
+\sum_{k+1\leq j\leq n}[(dx^j)^2+(dy^j)^2]\; .
\end{align}
This flat metric is uniformly equivalent to the standard Euclidean metric.
However, letting $\mu_i :=\b_i^{-1}-1$, we have $w^i=r_ie^{\sqrt{-1}\theta_i}=|w^{i}|^{-\mu_i}z^i$;
moreover, we define
\begin{align*}
\varepsilon_i&:=dr_i+\sqrt{-1}\b_i r_id\theta_i
=\b_i|w^i|^{1-\mu_i}{(w^i)^{-1}} dz^i\\
&=\b_i
\left[
\left(
1+\frac{\mu_i}{2}
\right)
|w^i|{(w^i)^{-1}} dw^i+\frac{\mu_i}{2}|w^i|^{-1}w^idw^{\bar i}
\right] \, ,
 \end{align*}
and we notice that it is not a holomorphic $1$-form, since $\p_{ w^{\bar i}}\varepsilon_i\neq0$.
Consequently, $\varepsilon_i$ and $dz^j$ merely form a local orthonormal basis of the $(1,0)$-forms.

Now we present the function spaces which are introduced by Donaldson in
\cite{MR2975584}.
The H\"older space $C^{\a}_{\b}$  consists in those functions $f$
which are H\"older continuous with respect to a K\"ahler cone metric.
Also, $C^{\a}_{\b,0}$ denotes  the subspace of those functions in $C^{\a}_\b$
for which their limit is zero along $V_i$ for any $1\leq i \leq m$.
The H\"older continuous $(1,0)$-forms, in local coordinates $U_p$,
can be expressed as
$$\xi = f_i\varepsilon_i+f_jdz^j \, , $$
where the Einstein notation is adopted, $f_i \in C^{\a}_{0}$ and $f_j\in C^{\a}$.
Meanwhile, a H\"older $(1,1)$-form $\eta$ in local coordinates $U_p$ is of the shape
$$
\eta = f_{i_1\bar{i_2}}\varepsilon_{i_1}\varepsilon_{\bar{i_2}}+f_{i\bar j}
\varepsilon_idz^{\bar j}
+f_{\bar i j}\varepsilon_{\bar i}dz^j+f_{j_1\bar {j_{2}}}dz^{j_1}dz^{\bar {j_{2}}}\;;
$$
here the coefficients satisfy
{  $f_{i\bar j}, f_{\bar i j} \in C^{\a}_{0}\; $
and $f_{i_1\bar{i_2}},f_{j_1\bar{j_{2}}} \in C^{\a} \; .$}
Note that according to this Definition,
for any K\"ahler cone metric $\om\in C^\a_\b$, around the point $p\in D$,
we have a local normal coordinate such that $g_{ij}(p)=\delta_{ij}$.
\begin{defn}\label{defn: twice diff mixed derivatives functions}
The H\"older space $C^{2,\a}_{\b}$ is defined by
\begin{align*}
C^{2,\a}_{\b}
= \{f\; \vert \;  f, \p f, \p\bar\p f\in C_\b^{\a}\}\; .
\end{align*}
\end{defn}
Note that the $C^{2,\a}_\b$ space, since it  concerns only the mixed derivatives,
is different from the usual $C^{2,\a}$ space.
The definition of higher order space $C_\b^{k,\a}$ depends on the background metrics. In this paper, we use the flat cone metric $\om_{cone}$ \eqref{flat cone} to define $C_\b^{k,\a}$.
It is not hard to see that, by the quasi-isometric mapping $W$, $\p\bar\p f\in C_\b^{\a}$ is equivalent to $\frac{\p^2}{\p w^i\p w^{\bar j}}\in C^\a$ for any $1\leq i,j\leq n$ under the coordinate $\{w^i\}$. So we say the third derivative of a function belongs to $C^{\a}_{\b}$ if $$\frac{\p^3}{\p w^k \p w^i\p w^{\bar j}} f\in C^\a$$ for any $1\leq i,j,k\leq n$. In particular,
\begin{defn}\label{defn: 3dri}
The H\"older space $C^{3}$ is defined by
\begin{align*}
C^{3}_{\b}
= \{f \vert  f\in C_\b^{2,\a} \text{ and the third derivative of $f$ w.r.t $\om_{cone}$ is bounded}\}\; .
\end{align*}
\end{defn}
Thus the higher order spaces are defined by induction on the index $k$.
Now we postpone the discussion of the function space for a while, we will continue after we introduce the product manifold where the geodesic equation is defined. 

We then approach some considerations on the Riemannian geometry of the space of K\"ahler cone metrics. Recall that $\mathcal H_\b^{2,\a}$ is the space of $C^{2,\a}_\b$ $\om_0$ psh-functions. It is a convex, open set in $C^{2,\a}_\b$.
The tangent space of $\mathcal H_\b^{2,\a}$ at a point $\vphi$ is $C^{2,\a}_\b$. We generalize the
Donaldson \cite{MR1736211}, Mabuchi \cite{MR909015}, Semmes \cite{MR1165352}
metric to $\mathcal H_\b^{2,\a}$ by associating to $\vphi\in\mathcal H_\b^{2,\a}$
and tangent vectors $\psi_1 , \psi_2 \in T_\vphi \mathcal H_\b^{2,\a}$, the
real number
\begin{equation}\label{definitionofmetric}
\int_M \psi_1 \cdot \psi_2 \om^n_\vphi \; .
\end{equation}
The definition makes sense for  {  K\"ahler cone  metrics},
since the volume of the K\"ahler cone metrics is finite.
Furthermore, we choose  { an arbitrary differentiable} path $\vphi\in C^1([0,1],\mathcal H_{\b}^{2,\a})$
and {  along it, differentiable vector field} $\psi\in C^1([0,1],C_\b^{2,\a})$.
We thus define { the following  derivation of the vector field on $M = X\backslash D$}
\begin{equation}\label{definitionofconnection}
D_t \psi := \frac{\partial \psi}{\partial t} -
(\p\psi , \p \frac{\p\vphi} {\p t})_{ g_\vphi} \; {.}
\end{equation}
 We claim that \eqref{definitionofconnection} is the Levi-Civita connection of \eqref{definitionofmetric}.
The fact that \eqref{definitionofconnection} is torsion free
comes from a point-wise computation on $M$.
  Thus, the claim will be accomplished after  verifying the metric compatibility,
which is done in Proposition \ref{prop:metric compatibility}.
We first prove an integration by parts formula.
\begin{lem}\label{lemma: integration by parts}
Assume that $\vphi_1$, $|\p \vphi_1|_\Om$, $|\p \vphi_2|_\Om$, $|\Delta \vphi_2|_{L^1(\Om)}$
are bounded. Then the following formula holds
\begin{align*}
 \int_M \vphi_1 \Delta \vphi_2 \om^n = -\int_M (\p \vphi_1 , \p \vphi_2 )_\Om \om^n \; .
\end{align*}
\end{lem}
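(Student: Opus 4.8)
The plan is to reduce the identity to Stokes' theorem on the closed manifold $X$, the only genuine difficulty being the non-compactness of $M=X\setminus D$. Throughout I take $\om=\Om$, so the Laplacian, the gradient norms and the inner product are all with respect to the single K\"ahler cone metric $\Om$ of the statement and $\om^n$ is its volume form. First I would pass to the language of differential forms. Set
\[
\alpha := \vphi_1\,\sqrt{-1}\,\bar\p\vphi_2\wedge\Om^{n-1},
\]
a form of bidegree $(n-1,n)$. Since $\Om$ is K\"ahler we have $d\Om=0$, and the $(n-1,n+1)$-part of $d\alpha$ vanishes for degree reasons, so
\[
d\alpha=\sqrt{-1}\,\p\vphi_1\wedge\bar\p\vphi_2\wedge\Om^{n-1}+\vphi_1\,\sqrt{-1}\,\p\bar\p\vphi_2\wedge\Om^{n-1}.
\]
Using the pointwise K\"ahler identities $\sqrt{-1}\,\p\vphi_1\wedge\bar\p\vphi_2\wedge\Om^{n-1}=\tfrac1n(\p\vphi_1,\p\vphi_2)_\Om\,\om^n$ and $\sqrt{-1}\,\p\bar\p\vphi_2\wedge\Om^{n-1}=\tfrac1n\,\Delta\vphi_2\,\om^n$, the asserted formula is seen to be equivalent to $\int_M d\alpha=0$.

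Next I would record that $d\alpha\in L^1(M,\om^n)$, which is exactly what the four hypotheses provide: the first term of $d\alpha$ equals $\tfrac1n(\p\vphi_1,\p\vphi_2)_\Om\,\om^n$, integrable because $|\p\vphi_1|_\Om$ and $|\p\vphi_2|_\Om$ are bounded and the cone volume is finite; the second equals $\tfrac1n\vphi_1\Delta\vphi_2\,\om^n$, integrable because $\vphi_1$ is bounded and $\Delta\vphi_2\in L^1(\om^n)$. I would then cut off near the divisor: choose smooth $\chi_\epsilon\colon X\to[0,1]$ that vanishes on an $\epsilon$-neighbourhood of $D$ and equals $1$ outside a $2\epsilon$-neighbourhood, the neighbourhoods measured by the cone radial variables $r_i:=|z^i|^{\b_i}$ of \eqref{euc} near each component $V_i$. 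Because $\chi_\epsilon\alpha$ vanishes near $D$ it extends smoothly to the \emph{closed} manifold $X$, so Stokes' theorem gives
\[
0=\int_X d(\chi_\epsilon\alpha)=\int_M\chi_\epsilon\,d\alpha+\int_M d\chi_\epsilon\wedge\alpha.
\]
By dominated convergence together with the integrability just established, $\int_M\chi_\epsilon\,d\alpha\to\int_M d\alpha$ as $\epsilon\to0$, so the whole lemma comes down to showing that the second term tends to $0$.

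This boundary-layer term is the crux. Only the $\p\chi_\epsilon$-part survives in $d\chi_\epsilon\wedge\alpha$ (the $\bar\p\chi_\epsilon$-part is an $(n-1,n+1)$-form, hence zero), and one computes
\[
d\chi_\epsilon\wedge\alpha=\tfrac1n\,\vphi_1\,(\p\chi_\epsilon,\p\vphi_2)_\Om\,\om^n,
\]
so that $|d\chi_\epsilon\wedge\alpha|\le C\,|\p\chi_\epsilon|_\Om\,\om^n$ using the boundedness of $\vphi_1$ and of $|\p\vphi_2|_\Om$. I would take $\chi_\epsilon$ to be a fixed profile of the radial variable $r_i$, so that $|\p\chi_\epsilon|_\Om=O(\epsilon^{-1})$ on the shell $\{\epsilon\le r_i\le 2\epsilon\}$ and is supported there. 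The decisive estimate is then a capacity bound: since $D$ has real codimension two and the transverse cone factor of $\Om$ in \eqref{euc} has volume element comparable to $r_i\,dr_i\,d\theta_i$, the $\om$-volume of this shell is $O(\epsilon^2)$, whence
\[
\int_M|d\chi_\epsilon\wedge\alpha|\le C\int_{\{\epsilon\le r_i\le 2\epsilon\}}|\p\chi_\epsilon|_\Om\,\om^n=O(\epsilon)\longrightarrow 0 \quad(\epsilon\to0).
\]
Combining the three displays yields $\int_M d\alpha=0$, which is the lemma. The one substantive step is this capacity estimate, where the finiteness of the cone volume and the codimension-two nature of $D$ are essential, together with the boundedness of $\vphi_1$ and $|\p\vphi_2|_\Om$; the remaining hypotheses ($|\p\vphi_1|_\Om$ bounded and $\Delta\vphi_2\in L^1$) serve precisely to make $d\alpha$ integrable so that the cutoff limit is legitimate.
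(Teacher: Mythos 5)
Your proof is correct and follows essentially the same route as the paper's: truncate with a cutoff $\chi_\eps$ supported away from $D$, integrate by parts (your Stokes step is exactly the paper's product-rule identity written in form language), pass to the limit by dominated convergence, and reduce everything to the capacity estimate $\int_M|\p\chi_\eps|_\Om\,\om^n\to0$. The only cosmetic difference is that you build the cutoff from the cone radial variable $r_i=|z^i|^{\b_i}$, giving an $O(\eps)$ bound on the boundary-layer term, while the paper uses $\chi_\eps=\chi\bigl(\eps^{-2}\prod_i|s_i|^2\bigr)$ and gets $O(\eps^{\b_i})$; these are the same estimate up to reparametrization of $\eps$.
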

\begin{proof}
 Choose a cut-off function $\chi_\eps$ which vanishes in a neighborhood of $D$.
Then,
\begin{align*}
 \int_M \chi_\eps \vphi_1 \Delta \vphi_2 \om^n
= - \int_M \chi_\eps (\p \vphi_1 , \p \vphi_2)_\Om \om^n
- \int_M \vphi_1 (\p \chi_\eps , \p \vphi_2)_\Om \om^n \; .
\end{align*}
The convergence of the first two terms { follows} from the  Lebesgue dominated convergence theorem.
So, it suffices to find a $\chi_\eps$ such that $\int_M |\p \chi_\eps|_\Om \om^n \to 0$
as $\eps \to 0$.\\
Choose $\chi_\eps := \chi (\frac{1}{\eps^2} \prod_{i} |s_i|^2)$, where
$s_i$ are the defining functions of $D_i$ and
$\chi$ is a smooth non-decreasing function  such that
\begin{align*}
 \left\{
\begin{array}{ll}
\chi = 0 & \mbox{ in } [0,1]\\
0\leq \chi \leq 1 & \mbox{ in } [1,2]\\
\chi = 1 & \mbox{ in } [2 , +\infty) \; .\\
\end{array}
\right.
\end{align*}
Now,
\begin{align*}
 |\p \chi_\eps|_\Om \leq \chi ' \cdot \frac{1}{\eps^2} |s_i| |s_i|^{1-\b_i}
= \frac{C}{\eps^2} |s_i|^{2-\b_i} \; .
\end{align*}
So, as $\eps \to 0 $ we get in the cone chart
\begin{align*}
 \int_M |\p \chi_\eps |_\Om \om^n
&\leq \int_{|s|=r}\int_0^{2\pi}\int_\eps^{2\eps} \frac{2\pi}{\eps^2} |r|^{2-\b_i} |r|^{2(\b_i-1)} r dr d\theta dz^2\wedge\cdots\wedge dz^n\\
&\leq \frac{2\pi}{\eps^2} \int_\eps^{2\eps} |r|^{1+\b_i} dr
\leq C \eps^{\b_i} \to 0 \; .
\end{align*}
This completes the proof of the lemma.
\end{proof}
As an application of the above formula, we have
\begin{prop}\label{prop:metric compatibility}
The connection \eqref{definitionofconnection} is compatible with
the metric \eqref{definitionofmetric}.
\end{prop}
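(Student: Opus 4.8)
The plan is to verify the defining identity of metric compatibility directly: for any path $\vphi(t)$ in $\mathcal H_\b^{2,\a}$ and any two vector fields $\psi_1,\psi_2\in C^1([0,1],C^{2,\a}_\b)$ along it, I must show
\[
\frac{d}{dt}\int_M \psi_1\psi_2\,\om_\vphi^n=\int_M (D_t\psi_1)\,\psi_2\,\om_\vphi^n+\int_M \psi_1\,(D_t\psi_2)\,\om_\vphi^n .
\]
First I would differentiate the left-hand side under the integral sign. Writing $\dot\vphi=\p\vphi/\p t$ and using the pointwise identity on $M$
\[
\frac{\p}{\p t}\om_\vphi^n=n\,\tfrac{\sqrt{-1}}{2}\p\bar\p\dot\vphi\wedge\om_\vphi^{n-1}=(\Delta_\vphi\dot\vphi)\,\om_\vphi^n,
\]
where $\Delta_\vphi$ denotes the complex Laplacian of $g_\vphi$ dual to the pairing $(\cdot,\cdot)_{g_\vphi}$, this produces three terms:
\[
\frac{d}{dt}\int_M\psi_1\psi_2\,\om_\vphi^n=\int_M\bigl(\dot\psi_1\psi_2+\psi_1\dot\psi_2+\psi_1\psi_2\,\Delta_\vphi\dot\vphi\bigr)\,\om_\vphi^n .
\]

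The first two terms already match the $\p\psi_i/\p t$ parts of $D_t\psi_i$ in \eqref{definitionofconnection}, so the whole point is to rewrite the third term. Here I would invoke the integration by parts formula \lemref{lemma: integration by parts} with $\vphi_1=\psi_1\psi_2$ and $\vphi_2=\dot\vphi$, obtaining
\[
\int_M\psi_1\psi_2\,\Delta_\vphi\dot\vphi\,\om_\vphi^n=-\int_M\bigl(\p(\psi_1\psi_2),\p\dot\vphi\bigr)_{g_\vphi}\,\om_\vphi^n .
\]
Applying the Leibniz rule $\p(\psi_1\psi_2)=\psi_2\,\p\psi_1+\psi_1\,\p\psi_2$ together with the linearity of the pointwise Hermitian pairing in its coefficient functions splits the right-hand side as
\[
-\int_M\psi_2\,(\p\psi_1,\p\dot\vphi)_{g_\vphi}\,\om_\vphi^n-\int_M\psi_1\,(\p\psi_2,\p\dot\vphi)_{g_\vphi}\,\om_\vphi^n .
\]
Substituting back and regrouping, the derivative becomes
\[
\int_M\bigl(\dot\psi_1-(\p\psi_1,\p\dot\vphi)_{g_\vphi}\bigr)\psi_2\,\om_\vphi^n+\int_M\psi_1\bigl(\dot\psi_2-(\p\psi_2,\p\dot\vphi)_{g_\vphi}\bigr)\,\om_\vphi^n ,
\]
which by the definition \eqref{definitionofconnection} of $D_t$ is precisely $\int_M (D_t\psi_1)\psi_2\,\om_\vphi^n+\int_M\psi_1(D_t\psi_2)\,\om_\vphi^n$. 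This is the compatibility asserted.

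The main obstacle is not the algebra but the justification of the two analytic steps near the divisor $D$, where $g_\vphi$ is singular. I must check that differentiation under the integral sign is legitimate and, more importantly, that the hypotheses of \lemref{lemma: integration by parts} are satisfied for the product $\vphi_1=\psi_1\psi_2$ and $\vphi_2=\dot\vphi$: namely that $\psi_1\psi_2$, $|\p(\psi_1\psi_2)|_\Om$, $|\p\dot\vphi|_\Om$ and $|\Delta\dot\vphi|_{L^1(\Om)}$ are all bounded. Since $\psi_1,\psi_2,\dot\vphi\in C^{2,\a}_\b$, the functions and their mixed second derivatives are bounded in the cone sense, the $(1,0)$-forms $\p\psi_i$ and $\p\dot\vphi$ have bounded $g_\vphi$-norm, and $\Delta_\vphi\dot\vphi$ is a bounded trace; combined with the finiteness of the volume of the K\"ahler cone metric, these bounds legitimize the cut-off argument in \lemref{lemma: integration by parts} and force the boundary contributions along $D$ to vanish. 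This is exactly the step where the choice of the function space $C^{2,\a}_\b$ and the finite-volume property of cone metrics are genuinely used.
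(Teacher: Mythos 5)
Your proof is correct and follows essentially the same route as the paper: differentiate under the integral using $\tfrac{\p}{\p t}\om_\vphi^n=(\Delta_\vphi\dot\vphi)\,\om_\vphi^n$, then absorb the volume-form term via Lemma~\ref{lemma: integration by parts} and the Leibniz rule, checking its hypotheses from $\psi_i,\dot\vphi\in C^{2,\a}_\b$ and the finite volume of the cone metric. The only cosmetic difference is that the paper verifies the identity for $\psi_1=\psi_2=\psi$ (which suffices by polarization), whereas you treat a general pair $\psi_1,\psi_2$ directly.
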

\begin{proof}
 We compute
\begin{align*}
 \frac{1}{2} \frac{d}{dt}\int_M \psi^2 \om_\vphi^n
=\frac{1}{2}\int_M  (2\psi \psi ' + \psi^2 \Delta_\vphi \vphi ') \om_\vphi^n \; .
\end{align*}
Since $\psi^2$, $|\p(\psi^2)|_{g_\vphi}$, $|\p \vphi '|_{g_\vphi}$, $\Delta_\vphi \vphi '$
are all bounded with respect to $g_\vphi$, we are allowed to apply
 Lemma \ref{lemma: integration by parts}
and we get
\begin{align*}
 \frac{1}{2} \frac{d}{dt}\int_M \psi^2 \om_\vphi^n
= \frac{1}{2} \int_M [2\psi \psi ' -2 \psi (\p \psi , \p \vphi ' )_{ g_\vphi}] \om_\vphi^n \; .
\end{align*}
This completes the proof of the proposition.
\end{proof}

Next, we derive the geodesic equation.
\begin{prop}
The geodesic equation satisfies the following equation on $M$ point-wise
\begin{align}\label{geo}
\vphi''-(\p\vphi',\p\vphi')_{g_{\vphi}}=0 \; .
\end{align}
\end{prop}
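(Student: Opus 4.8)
The plan is to derive the geodesic equation from the variational principle attached to the metric \eqref{definitionofmetric} and connection \eqref{definitionofconnection}, exactly as in the smooth Mabuchi--Semmes--Donaldson setting, but being careful that all integrations by parts are justified for K\"ahler cone metrics via \lemref{lemma: integration by parts}. A curve $\vphi(t)\in\mathcal H_\b^{2,\a}$ is a geodesic precisely when its velocity is parallel along itself, i.e. $D_t\vphi'=0$. Since by \eqref{definitionofconnection} the derivation is
\begin{equation*}
D_t\psi=\frac{\p\psi}{\p t}-(\p\psi,\p\vphi')_{g_\vphi},
\end{equation*}
specializing $\psi=\vphi'$ gives $D_t\vphi'=\vphi''-(\p\vphi',\p\vphi')_{g_\vphi}$, which is exactly \eqref{geo}. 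So the genuine content is to prove that $D_t\vphi'=0$ is the correct characterization of a geodesic, i.e. that it is the Euler--Lagrange equation of the energy functional $E(\vphi)=\tfrac12\int_0^1\!\int_M(\vphi')^2\,\om_\vphi^n\,dt$ with respect to the metric \eqref{definitionofmetric}.

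First I would take a variation $\vphi_s(t)$ of the path with fixed endpoints, write the variation field as $\psi=\partial_s\vphi_s|_{s=0}$, and compute $\frac{d}{ds}E(\vphi_s)|_{s=0}$. Differentiating under the integral sign produces two contributions: the term $\int\!\int \vphi'\,\partial_s\vphi'\,\om_\vphi^n$ coming from the square of the velocity, and the term $\tfrac12\int\!\int(\vphi')^2\,\partial_s(\om_\vphi^n)$ coming from the varying volume form. For the latter I would use $\partial_s(\om_\vphi^n)=n\,\partial_s\om_\vphi\wedge\om_\vphi^{n-1}=(\Delta_\vphi\psi)\,\om_\vphi^n$, since $\partial_s\om_\vphi=\tfrac{\sqrt{-1}}{2}\p\bar\p\psi$ and the trace of $\p\bar\p\psi$ against $\om_\vphi$ is the Laplacian. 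Next I would commute the $s$- and $t$-derivatives, and integrate by parts in $t$ (the endpoint terms vanish because the variation is fixed at $t=0,1$), so that $\partial_s\vphi'=\psi'$ integrated against $\vphi'$ becomes $-\vphi''\psi$ up to the $t$-derivative hitting $\om_\vphi^n$, which again contributes a $\Delta_\vphi\vphi'$ factor. Collecting the terms one expects the first variation to reduce to
\begin{equation*}
\frac{d}{ds}E(\vphi_s)\Big|_{s=0}=-\int_0^1\!\!\int_M\psi\,\big(\vphi''-(\p\vphi',\p\vphi')_{g_\vphi}\big)\,\om_\vphi^n\,dt,
\end{equation*}
whence \eqref{geo} follows from the arbitrariness of $\psi$.

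The main obstacle is not the algebra of the first variation, which is the standard computation, but justifying the integration by parts on the singular manifold $M=X\setminus D$, where boundary terms near the divisor must be shown to vanish. This is exactly what \lemref{lemma: integration by parts} supplies, provided the integrands satisfy the stated boundedness hypotheses ($\vphi_1$, $|\p\vphi_1|_\Om$, $|\p\vphi_2|_\Om$, $|\Delta\vphi_2|_{L^1(\Om)}$ bounded). I would therefore verify that for $\vphi\in\mathcal H_\b^{2,\a}$ and $\psi\in C^{2,\a}_\b$ these quantities are controlled with respect to the cone metric $g_\vphi$---which holds because membership in $C^{2,\a}_\b$ bounds precisely the mixed derivatives measured against the cone metric and the cone volume is finite. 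With those bounds in hand the passage from the localized identity to the global one goes through exactly as in Proposition \ref{prop:metric compatibility}, where the very same estimates were already invoked. A secondary but purely formal point to record is that the connection \eqref{definitionofconnection} is torsion-free and metric-compatible (the latter proved in Proposition \ref{prop:metric compatibility}), which is what legitimizes identifying $D_t\vphi'=0$ with the geodesic condition in the first place; granting this, the proof is a pointwise computation on $M$ together with one application of the integration-by-parts lemma.
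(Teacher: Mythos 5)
Your proposal is correct, and it reaches \eqref{geo} by a genuinely different (though closely related) route than the paper. The paper varies the \emph{length} functional $L(\vphi(s,t))=\int_0^1\bigl(\int_M(\vphi')^2\om_\vphi^n\bigr)^{1/2}dt$ and never expands the integrand: it swaps $D_s\vphi'=D_t\p_s\vphi$ using torsion-freeness and then integrates by parts in $t$ abstractly via the metric compatibility of Proposition \ref{prop:metric compatibility}, so all of the delicate divisor analysis is quarantined inside that proposition (which in turn rests on \lemref{lemma: integration by parts}). You instead vary the \emph{energy} functional and compute everything explicitly: $\p_s(\om_\vphi^n)=(\Delta_\vphi\psi)\,\om_\vphi^n$, integration by parts in $t$ (with $\p_t\om_\vphi^n=(\Delta_\vphi\vphi')\,\om_\vphi^n$ producing a $\psi\vphi'\Delta_\vphi\vphi'$ term), and two applications of \lemref{lemma: integration by parts} to realize the self-adjointness of $\Delta_\vphi$, after which the terms $\psi\vphi'\Delta_\vphi\vphi'$ cancel and the first variation collapses to $-\int_0^1\!\int_M\psi\,(\vphi''-(\p\vphi',\p\vphi')_{g_\vphi})\,\om_\vphi^n\,dt$; I checked this cancellation and it is exactly right, and your verification of the hypotheses of \lemref{lemma: integration by parts} for $C^{2,\a}_\b$ data is the correct way to legitimize it on $M=X\setminus D$. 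What your route buys is a self-contained Euler--Lagrange computation that directly yields the affinely parametrized geodesic equation and avoids the $1/\sqrt{E}$ normalization factor implicit in differentiating the square root of the length integrand (which the paper handles somewhat loosely); what the paper's route buys is brevity, since Proposition \ref{prop:metric compatibility} and the symmetry of the connection are already available and do the bookkeeping in one line. Your closing appeal to torsion-freeness and compatibility is actually dispensable in your approach, since you derive the equation directly from the variation rather than through the connection formalism.
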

\begin{proof}
Assume that $\vphi(t)|_0^1$ is a path from $\vphi_0$ to $\vphi_1$,
and that $\vphi( s, t)\in C^1([0,1]\times[0,1],C_\b^{2,\a})$ is
a $1$-parameter variation of $\vphi(t)|_0^1$ with fixed endpoints.
We minimize the length function
$$
L(\vphi(s , t))=
\int_0^1\sqrt{\int_M \left(\frac{\partial \vphi(s, t)}{\partial t}\right)^2\om^n_\vphi}\quad dt \;.
$$
We are going to compute the variation of $\frac{\partial }{\partial s}L(\vphi(s, t))$;
denote $\vphi ' =  \frac{\partial \vphi}{\partial t}$ and
$$
E = \int_M \vphi'^2\om^n_\vphi \; .
$$
Then, using \eqref{definitionofconnection} and the compatibility property
we get
\begin{align*}
\frac{\partial }{\partial s} L (\phi(s ,t)) &=
\int_0^1\frac{1}{E} \int_M D_s \vphi' \cdot \vphi ' \om^n_\vphi dt
=   \int_0^1 \frac{1}{E}  \int_M D_t \frac{\partial}{\partial s} \vphi \cdot \vphi '
 \om^n_\vphi dt
\\
&=  \int_0^1 \frac{1}{E} \left[  \frac{\partial}{\partial t} \int_M \frac{\partial}{\partial s}
\vphi\cdot \vphi' \om^n_\vphi  - \int_M D_t \frac{\partial}{\partial s} \vphi \cdot \vphi '
\om^n_\vphi \right] dt\\
&= -\int_0^1\frac{1}{E}  \int_M \frac{\partial}{\partial s} \vphi  \cdot D_t \vphi'
 \om^n_\vphi dt  \; .
\end{align*}
The first term in the second line vanishes since the endpoints are fixed.
So the geodesic condition reads
\begin{align*}
0 = \frac{\partial }{\partial s} L (\phi(s ,t)) =  -\int_0^1\frac{1}{E}
 \int_M \frac{\partial}{\partial s} \vphi  \cdot D_t \vphi' \om^n_\vphi dt
\end{align*}
which implies that the geodesic equation is
\begin{align*}
D_t \vphi ' \equiv 0 \mbox{ on } M \; .
\end{align*}
%
\end{proof}


Consider the cylinder $R=[0,1]\times S^1$ and introduce
the coordinate $z^{n+1}=x^{n+1}+\sqrt{-1}y^{n+1}$ on $R$.
Define  $$\vphi(z',z^{n+1})=\vphi(z^1,\cdots,z^n,x^{n+1})=\vphi(z^1,\cdots,z^n,t)$$
on the product manifold $X\times R$ and  let  $\pi$ be the natural projection form $X\times R$ to $X$.
We also denote
\begin{align*}
z&=(z',z^{n+1})=(z^1,\cdots,z^n,z^{n+1}) \; ,\\
\Om_0&=(\pi^{-1})^\ast \om_0+dz^{n+1} \wedge d\bar z^{n+1} \; ,\\
\Om&=(\pi^{-1})^\ast \om+dz^{n+1} \wedge d\bar z^{n+1} \; ,\\
\Psi&=\vphi-|z^{n+1}|^2 \; .
\end{align*}
It is a matter of algebra to show that \eqref{geo} could be reduced to a
degenerate Monge-Amp\`ere equation.  A path $\vphi(t)$
 with endpoints $\vphi_0 $, $\vphi_1$  satisfies the geodesic equation \eqref{geo} on $X$
if
and only if $\Psi$ satisfies  the following  Dirichlet problem
involving a degenerate complex Monge-Amp\`{e}re  equation
\begin{equation}\label{geo ma}
  \left\{
   \begin{array}{ll}
\det(\Om_{i\bar j}+\Psi_{i\bar j})=0&\text{ in }M\times R \; ,\\
\Psi(z)=\Psi_0&\text{ on } X \times \p R  \; ,\\
\sum_{1\leq i,j\leq n}(\Om_{i\bar j}+\Psi_{i\bar j})dz^{i}dz^{\bar j}>0
&\text{ in }X\times \{z^{n-1}\} \; .
   \end{array}
  \right.
\end{equation}
Here the  following  Dirichlet boundary  conditions $\Psi_0$ are satisfied
\begin{equation}\label{geo ma boundary}
  \left\{
   \begin{array}{ll}
\Psi_0(z',0)\doteq\Psi(z',\sqrt{-1}y^{n+1})={ \vphi_0(z')}-(y^{n+1})^2 &\text{ on }
X\times \{0\}\times S^1,\\
\Psi_0(z',1)\doteq\Psi(z',1+\sqrt{-1}y^{n+1})={ \vphi_1(z')}-1-(y^{n+1})^2 &\text{ on }
X\times \{1\}\times S^1.
   \end{array}
  \right.
\end{equation}

Now we are given a $n+1$-dimensional K\"ahler manifold 
$\mathfrak{X} = X \times R$ with boundary;  
the given data of the Dirichlet problem are put on two disjoint copies of $X$. 
We also have a divisor $\mathfrak{D}=D\times R$, with $D$ as in Definition \eqref{defn: cone metrics inside introduction}, which intersects transversely the boundary
 Let  $f_i$ be the local defining function  of each irreducible analytic component
$V_i$ of $\mathfrak{D}$.
Then the transition functions $\frac{f_i}{f_j}$ give a line bundle $[\mathfrak{D}]$ in $\mathfrak{X}$.
Let {$s$ be a} global meromorphic section of $[\mathfrak{D}]$.
Let $h_\L$ be the Hermitian metric on $[\mathfrak{D}]$.
{There is a small positive $\delta$ such that, on} $\mathfrak{X}$,
\begin{align}\label{Om}
\Om=\Om_0+\delta { \sum_{i=1}^m \frac{\sqrt{-1}}{2}\p\bar\p|s_i|^{2\b_i}_{h_\L}}
\end{align}
is a K\"ahler cone metric {(cf. \eqref{model cone}).}
Moreover, it is also independent of the choices $\Om_0$, $h_\L$, $\delta$ up to quasi-isometry.

We could define the H\"{o}lder space $C^{3}_\b$ in the interior of $(\mathfrak{X}, \mathfrak{D})$
as that one defined on $(X,D)$. On the boundary, near a point $p$ we choose a local holomorphic coordinate $\{U^+_p; z^i=x^i+iy^i\}$, $1\leq i\leq 2n+2$ centered at $p$. From the discussion above, we see that the boundary of $\mathfrak X$ is $x^{n+1}=0$. When $U_p^+$ does not intersect the divisor $\mathfrak D$, the H\"older space is defined in the usual way. So it is sufficient to defined a new H\"older space in the coordinates which contain the points of the divisor. We first note that the solution of geodesic equation is independent of the variable $y^{n+1}$, so the partial derivative on the variable $x^{n+1}$ is the same to the one on the variable $z^{n+1}$. Next, the quasi-isometric mapping $W$ is still well defined in $U_p^+$ as follows, 
\begin{equation*}
W (z^1, \cdots, \, z^{n+1})  :=
(w^1=|z^1|^{\b_1-1}z^1,\, \cdots, \, w^k=|z^k|^{\b_k-1}z^k, \, z^{k+1}, \, \cdots, \, z^{n+1})\; .
\end{equation*}
So we could define the H\"older space $C^{\a}_{\b}(U_p^+)$ to be the set of   functions 
which are H\"older continuous under $\{z^i\}_{i=1}^{n+1}$ with respect to a K\"ahler cone metric.
Also, $C^{\a}_{\b,0}(U_p^+)$ denotes  the subspace of those functions in $C^{\a}_\b(U_p^+)$
for which their limit is zero along $V_i$ for any $1\leq i \leq m$.
The H\"older continuous $(1,0)$-forms, in local coordinates $U_p^+$,
can be expressed as
$$\xi =\sum_{i} f_i\varepsilon_i+ \sum_j f_jdz^j \, , $$
where $f_i \in C^{\a}_{0}(U_p^+)$ and $f_j\in C^{\a}(U_p^+)$.
Meanwhile, a H\"older $(1,1)$-form $\eta$ in local coordinates $U_p^+$ is of the shape
$$
\eta = f_{i_1\bar{i_2}}\varepsilon_{i_1}\varepsilon_{\bar{i_2}}+f_{i\bar j}
\varepsilon_idz^{\bar j}
+f_{\bar i j}\varepsilon_{\bar i}dz^j+f_{j_1\bar {j_{2}}}dz^{j_1}dz^{\bar {j_{2}}}\;;
$$
here the coefficients satisfy
{  $f_{i\bar j}, f_{\bar i j} \in C^{\a}_{0}(U_p^+)\; $
and $f_{i_1\bar{i_2}},f_{j_1\bar{j_{2}}} \in C^{\a}(U_p^+) \; .$}
The H\"older space $C^{2,\a}_{\b}(U_p^+)$ is parallelly defined by
\begin{align*}
C^{2,\a}_{\b}(U_p^+)
= \{f\; \vert \;  f, \p f, \p\bar\p f\in C_\b^{\a}(U_p^+)\}\; .
\end{align*}
Then we use the flat cone metric $\om_{cone}$ \eqref{flat cone} to define the higher order space $C_\b^{k,\a}(U_p^+)$. 
The boundary $C^3$ space is defined in the same manner.
\begin{defn}\label{defn: 3dribdy}
The H\"older space $C^{3}(U_p^+)$ is defined by
\begin{align*}
C^{3}_{\b}(U_p^+)
= \{f \vert  f\in C_\b^{2,\a}(U_p^+) \text{ and the 3nd derivative of $f$ w.r.t $\om_{cone}$ is bounded}\}\; .
\end{align*}
\end{defn}
Thus the higher order spaces are also defined by induction on the index $k$ in the same way.

In order to apply the maximum principle, we require that the maximum point does not live on the divisor. The following lemma by {Jeffres} \cite{MR1800816} is used to overcome this trouble. With the discussion above, we prove this technical auxiliary lemma in our product manifold $\mathfrak{X}$ with boundary.

\begin{lem}\label{auxiliary function}
There is a positive constant $\kappa$ such that $S=||s||^{2\kappa}$ satisfies the
following properties
\begin{enumerate}
  \item $\frac{\sqrt{-1}}{2}\p\bar\p S\geq\kappa\frac{\sqrt{-1}}{2}\p\bar\p\log||s||^2\geq -C\Om$,
  \item  for  any $\a>0$, $S$ grows faster than the collapsing of $\Phi\in C^{\a}_\b$ near $\mathfrak{D}$ provided $2\kappa\leq a\b$.
  \end{enumerate}
\end{lem}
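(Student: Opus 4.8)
The plan is to take $S=||s||^{2\kappa}$ with $\kappa>0$ to be fixed at the very end, and to reduce both assertions to a direct $\p\bar\p$ computation together with the observation that $\frac{\sqrt{-1}}{2}\p\bar\p\log||s||^2$ is a \emph{bounded} smooth form off $\mathfrak D$. Writing $u=\log||s||^2$, so that $S=e^{\kappa u}$, I would compute
\[
\frac{\sqrt{-1}}{2}\p\bar\p S=\kappa\,||s||^{2\kappa}\Big(\kappa\,\frac{\sqrt{-1}}{2}\p u\wedge\bar\p u+\frac{\sqrt{-1}}{2}\p\bar\p u\Big).
\]
Since $u$ is real, the cross term $\frac{\sqrt{-1}}{2}\p u\wedge\bar\p u=\frac{\sqrt{-1}}{2}\p u\wedge\overline{\p u}$ is a nonnegative $(1,1)$-form and may simply be discarded, leaving
\[
\frac{\sqrt{-1}}{2}\p\bar\p S\ \geq\ \kappa\,||s||^{2\kappa}\,\frac{\sqrt{-1}}{2}\p\bar\p\log||s||^2 .
\]
After rescaling the Hermitian metric $h_\L$ so that $||s||\leq 1$ on $\mathfrak X$, the factor $||s||^{2\kappa}\leq1$, which gives the first inequality of (1); for the maximum-principle application the only quantity that matters is that this chain ultimately yields $\frac{\sqrt{-1}}{2}\p\bar\p S\geq -C\Om$.

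For the second inequality of (1) I would use that, in a local trivialization, $\log||s||^2=\log|s|^2+\log h_\L$ with $s$ holomorphic; off $\mathfrak D$ the term $\log|s|^2$ is pluriharmonic, so $\frac{\sqrt{-1}}{2}\p\bar\p\log||s||^2$ equals, up to sign, the Chern curvature form of $h_\L$, a smooth $(1,1)$-form on all of $\mathfrak X$ and hence bounded with respect to the smooth reference $\Om_0$. The point where the cone geometry enters is the comparison $\Om\geq c\,\Om_0$: the model cone metric \eqref{Om} is quasi-isometric to \eqref{flat cone}, which blows up like $|z|^{2(\b-1)}$ in the directions normal to $\mathfrak D$ (as $\b<1$) and is comparable to $\Om_0$ tangentially. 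Thus $\frac{\sqrt{-1}}{2}\p\bar\p\log||s||^2\geq -C_1\Om_0\geq -C\Om$, uniformly up to and across $\mathfrak D$, which completes (1); multiplying by $\kappa||s||^{2\kappa}\leq\kappa$ also yields $\frac{\sqrt{-1}}{2}\p\bar\p S\geq -C\Om$ directly.

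For (2) the estimate is carried out in the quasi-isometry coordinates of $W$ from \eqref{conetransform}. Near a point of $\mathfrak D$ one has $||s||^2\sim\prod_i|z^i|^2$, so $S\sim\prod_i|z^i|^{2\kappa}$, which in the cone radial variable $r_i=|z^i|^{\b_i}$ reads $S\sim\prod_i r_i^{2\kappa/\b_i}$; thus $S$ is comparable to the cone distance to $\mathfrak D$ raised to the power $2\kappa/\b$. On the other hand a function $\Phi\in C^\a_\b$ whose limit vanishes along the $V_i$ collapses no faster than the cone-H\"older modulus, i.e.\ $|\Phi|\leq C\,r^{\a}$ in the same variable. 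Requiring that $S$ dominate this collapse near $\mathfrak D$, that is $r^{2\kappa/\b}\geq r^{\a}$ for small $r$, forces precisely $2\kappa\leq\a\b$; choosing $\kappa\leq\tfrac12\a\b$ (and small enough to keep $||s||\leq1$) fixes the constant.

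The main obstacle I anticipate is not the algebra but making the two geometric comparisons uniform up to the singular and boundary loci: verifying that $\Om$ genuinely dominates the smooth curvature form $\frac{\sqrt{-1}}{2}\p\bar\p\log||s||^2$ all the way to $\mathfrak D$ (including across the boundary $\p R$, where $\mathfrak D=D\times R$ meets $X\times\p R$ transversely), and controlling the collapse rate of an arbitrary $C^\a_\b$ function against $S$ honestly in the $W$-coordinates rather than only heuristically. Once these quasi-isometry estimates are in place, both assertions follow from the displayed inequalities.
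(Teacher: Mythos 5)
Your proposal is correct and takes essentially the same route as the paper: for (1) the paper performs the identical computation (expand $\p\bar\p e^{\kappa\log||s||^2}$, discard the nonnegative term $\kappa^2 S\,\frac{\sqrt{-1}}{2}\p\log||s||^2\wedge\bar\p\log||s||^2$, identify $-\frac{\sqrt{-1}}{2}\p\bar\p\log||s||^2$ with the curvature form of the line bundle $[\mathfrak{D}]$ with metric $h_\L$, and absorb the factor $||s||^{2\kappa}\leq 1$), with the same implicit reliance on the cone metric $\Om$ dominating a smooth reference form. For (2) the paper runs the same exponent count in the $W$-coordinates but phrases it one derivative up---the main term of $|\nabla^\a S|^2_\Om$ is $|z^1|^{4\kappa-2\a\b}$, which blows up near $\mathfrak{D}$ exactly when your value comparison $r^{2\kappa/\b}\gg r^{\a}$ holds, while $|\nabla^\a\Phi|_\Om$ stays bounded for $\Phi\in C^\a_\b$---so your zeroth-order comparison of $S\sim r^{2\kappa/\b}$ against $|\Phi|\leq C r^\a$ is a repackaging of the same estimate and produces the same constraint $2\kappa\leq\a\b$.
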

\begin{proof}
Since
\begin{align*}
\frac{\sqrt{-1}}{2}\p\bar\p S
&=\frac{\sqrt{-1}}{2}S(\kappa\p\bar\p\log ||s||^2+\p\log S\wedge\bar\p\log S)\\
&\geq\kappa\frac{\sqrt{-1}}{2}S\p\bar\p\log ||s||^2,
\end{align*}
and since $-\frac{\sqrt{-1}}{2}\p\bar\p\log ||s||^2$ is the curvature form of the line bundle
under the Hermitian metric $h$,
there is a constant $C$ such that $\frac{\sqrt{-1}}{2}\p\bar\p\log S\geq -C\Om$.
So we have
\begin{align*}
\frac{\sqrt{-1}}{2}S\p\bar\p\log S\geq -C\kappa S\Om\geq -C\kappa\Om.
\end{align*}
In order to derive the second conclusion, we compute the first derivative of $S$ along
the singular direction.
Choosing the basis $e$, we have $||s||^{2\kappa}=|z|^{2\kappa}||e||^{2\kappa}$,
then the main term of $|\nabla^a S|^2_\Om$ is $|z^1|^{4\kappa-2a+2a(1-\b)}$.
So it is sufficient to choose $\kappa$ such that this main term becomes unbounded
as it approaches $\mathfrak{D}$.
Meanwhile, $\Psi\in C^{a}_\b$ implies that $|\nabla^a\Psi|_{\Om}$ is bounded,
so the second conclusion follows.
\end{proof}

In order to apply the continuity method we first construct the starting metric of the solution
path such that it satisfies the boundary conditions.
Since $\Psi_0$ may not be convex along the direction $\frac{\p }{\p z^{n+1}}$,
we have to extend $\Psi_0$ to whole $\mathfrak{X}$ as follows.
Let $\tilde{\Psi}_0$ be the line segment between the boundary K\"ahler cone potentials
  $\vphi_0$ and   $\vphi_1$;
namely,  (cf. \eqref{geo ma boundary})
\begin{align*}
\tilde{\Psi}_0=t{  \Psi_0 (z',1)} +(1-t){  \Psi_0(z',0) } +t+(y^{n+1})^2
=
t{  \vphi_1} +(1-t){  \vphi_0} \; .
\end{align*}
Then we choose a function   $\Phi$  which depends only on $z^{n+1}$ such that
\begin{equation*}
  \left\{
   \begin{array}{ll}
\Phi(z^{n+1})=0 &\text{ on } \p \mathfrak{X} \, ,\\
\Phi_{z^{n+1}\bar z^{n+1}}>0 &\text{ in } \mathfrak{X} \; .
   \end{array}
  \right.
\end{equation*}
We denote the new potential by $$\Psi_1 : =\tilde{\Psi}_0+m\Phi.$$
Next we verify that   $\Psi_1$ is a K\"ahler cone potential on $\mathfrak{X}$.
\begin{prop}\label{csck eqv}
 Suppose that $\vphi_0,\vphi_1\in \mathcal{H_\b}$. Then there  exists a large number $m$ such that
\begin{align}\label{Om1}
\Om_{1}:=\Om+\frac{\sqrt{-1}}{2}\sum_{1\leq i,j\leq n+1}\p_i\p_{\bar j}\Psi_1
\end{align} is a K\"ahler cone metric on $(\mathfrak{X}, \mathfrak{D})$.
\end{prop}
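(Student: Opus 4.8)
The plan is to check directly the three defining properties of a K\"ahler cone metric from Definition \ref{defn: cone metrics inside introduction} for the form $\Om_1$ in \eqref{Om1}: that it is a closed positive $(1,1)$-current, that it is smooth on the regular part $\mathfrak{X}\setminus\mathfrak{D}$, and that in each local cone chart it is quasi-isometric to the flat cone metric $\om_{cone}$ (extended by the Euclidean metric in the $z^{n+1}$ direction). Closedness is immediate, since $\Om$ is closed and $\tfrac{\sqrt{-1}}{2}\p\bar\p\Psi_1$ is $\p\bar\p$-exact; smoothness away from $\mathfrak{D}$ holds because $\vphi_0,\vphi_1$ are smooth on $M$ and $\Phi$ is smooth on the cylinder. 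So everything reduces to a pointwise study of the complex Hessian of $\Psi_1=t\vphi_1+(1-t)\vphi_0+m\Phi$. First I would compute $\tfrac{\sqrt{-1}}{2}\p\bar\p\Psi_1$ in the product coordinates, using that $t=\mathrm{Re}\,z^{n+1}=\tfrac12(z^{n+1}+\bar z^{n+1})$, that $\vphi_0,\vphi_1$ depend only on $z'=(z^1,\dots,z^n)$, and that $\Phi$ depends only on $z^{n+1}$. Indexing by the $X$-directions $1\le a,b\le n$ and the cylinder direction $n+1$, this gives the block structure
\begin{align*}
(\Om_1)_{a\bar b}&=\om_{a\bar b}+t(\vphi_1)_{a\bar b}+(1-t)(\vphi_0)_{a\bar b}=:A_{a\bar b},\\
(\Om_1)_{a\,\overline{n+1}}&=\tfrac12\big[(\vphi_1)_a-(\vphi_0)_a\big]=:B_a,\\
(\Om_1)_{(n+1)\,\overline{n+1}}&=1+m\,\Phi_{z^{n+1}\bar z^{n+1}}=:c,
\end{align*}
where the vanishing of the $\Phi$ off-diagonal terms and of the $(n+1,\overline{n+1})$ entry of $\tilde{\Psi}_0$ comes precisely from the separation of variables. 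Thus the whole question becomes the positivity and quasi-isometry type of the Hermitian matrix $\left(\begin{smallmatrix}A&B\\ B^\ast&c\end{smallmatrix}\right)$.

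Next I would show that the $X$-block $A$ is, for every $t\in[0,1]$, a K\"ahler cone metric on $X$ uniformly quasi-isometric to $\om_{cone}$. Regrouping and using \eqref{Om},
\begin{align*}
A=t\,\om_{\vphi_1}+(1-t)\,\om_{\vphi_0}+\delta\,\sum_{i=1}^m\tfrac{\sqrt{-1}}{2}\p\bar\p|s_i|^{2\b_i}_{h_\L},
\end{align*}
i.e. a convex combination of the two boundary cone metrics plus the model correction $\om-\om_0$. The convex combination is itself quasi-isometric to $\om_{cone}$, with constants independent of $t$, because each $\om_{\vphi_j}$ is a K\"ahler cone metric. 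Near $\mathfrak{D}$ the singular part dominates $\om_0$ in the cone directions, so $A$ stays comparable to $\om_{cone}$ there; on the compact region away from $\mathfrak{D}$ the correction is a bounded perturbation which, for the small $\delta$ fixed in \eqref{Om}, does not destroy positivity. Hence $c_0\,\om_{cone}\le A\le C_0\,\om_{cone}$ on $X$ for constants $0<c_0\le C_0$ independent of $t$.

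Finally I would absorb the mixed block by the Schur complement: the full matrix is positive definite exactly when $A>0$ and $c-B^\ast A^{-1}B>0$. Since $A\ge c_0\,\om_{cone}$, one has $B^\ast A^{-1}B\le \tfrac{1}{4}c_0^{-1}\,|\p(\vphi_1-\vphi_0)|_\om^2$, and this gradient is bounded on all of $\mathfrak{X}$ because the $C^{2,\a}_\b$-regularity of the potentials forces $\p\vphi_j\in C^\a_\b$, so that $|\p\vphi_j|_\om$ remains bounded up to the divisor. As $\Phi_{z^{n+1}\bar z^{n+1}}$ has a positive infimum on the compact cylinder, it then suffices to take $m$ so large that $1+m\inf\Phi_{z^{n+1}\bar z^{n+1}}>\sup_{\mathfrak{X}}B^\ast A^{-1}B$; the Schur complement bound then yields a uniform lower bound $\Om_1\ge\lambda\,\om_{cone}$ with $\lambda>0$, and once $m$ is fixed the matching upper bound $\Om_1\le C\,\om_{cone}$ follows from the boundedness of all entries relative to $\om_{cone}$, completing the verification. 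The step I expect to be the main obstacle is exactly this uniform control: both $A$ and $B$ degenerate along $\mathfrak{D}$, so one must check that the $\om$-gradient of $\vphi_1-\vphi_0$ stays bounded and that the cone-comparison constants $c_0,C_0$ for $A$ survive \emph{up to} the divisor, not merely on compact subsets of the regular part.
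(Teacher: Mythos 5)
Your proof is correct and takes essentially the same route as the paper: the paper computes the same block structure and observes that $\det(\Om_{\Psi_1})=\det(g_t)\bigl[1+m\Phi_{n+1,\overline{n+1}}-g_t^{i\bar j}\psi_i\psi_{\bar j}\bigr]$ with $\psi=\vphi_1-\vphi_0$, which is exactly your Schur-complement criterion, then chooses $m$ large and notes that the quasi-isometry to $\Om_{cone}$ holds because $\vphi_0,\vphi_1\in\mathcal{H}_\b$. Your writeup merely makes explicit two points the paper leaves implicit, namely the uniform-in-$t$ cone comparability of the $X$-block and the boundedness of $|\p(\vphi_1-\vphi_0)|_\Om$ up to the divisor.
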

\begin{proof}
The local expression of $\Om_{\Psi_1}$ is
\begin{align*}
&\Om+\frac{\sqrt{-1}}{2}\sum_{1\leq i,j\leq n+1}\p_i\p_{\bar j}(\tilde{\Psi}_0+m\Phi)\\
&=t\om_{{  \vphi_1 } }
+(1-t)\om_{{  \vphi_0}}+\frac{\sqrt{-1}}{2}(1+m\p_{n+1}\p_{\overline{n+1}}\Phi)dz^{n+1} \wedge d\bar z^{n+1}\\
&+\frac{1}{\sqrt{2}}(\p_i {  \vphi_1}-\p_i {  \vphi_0} )dz^idz^{\overline{n+1}}
+\frac{1}{\sqrt{2}}(\p_{\bar i}{  \vphi_1}-\p_{\bar i}{  \vphi_0})dz^{\bar i}dz^{n+1} \; .
\end{align*}
We call  $\om_t : =t\om_{  \vphi_1}+(1-t)\om_{  \vphi_0}$  the line segment
and $\psi :=  \vphi_1-  \vphi_0$ the difference  of the boundary K\"ahler cone potentials.

In order to show that $\Om_{\Psi_1}$
is a K\"ahler cone metric on $\mathfrak{X}$,
it suffices to verify two conditions; that it is positive on the regular part
$M$ and that $\Om_{\Psi_1}$ is locally quasi-isometric to
\begin{align*}
\Om_{cone}&=\frac{\sqrt{-1}}{2}\sum_{i=1}^k(\b_i^2|z^i|^{2(\b_i-1)}dz^i\wedge
dz^{\bar i})+\sum_{i=k+1}^{n+1}(dz^i\wedge dz^{\bar i}) \; .
\end{align*}
Since the determinant  of $\Om_{\Psi_1}$ is
$det(g_t)[1+m\Phi_{n+1,\overline{n+1}}-g_t^{i\bar j} \psi_i\psi_{\bar j}]$,
the former condition is true once we choose $m$ large enough.
The latter condition is verified   as $\vphi_0,\vphi_1\in \mathcal{H_\b}$.
\end{proof}



\section{A priori estimates}\label{close}
In this section, we derive uniform  \emph{a priori} estimates for the degenerate equation.
With the same background as (2.6), $\mathfrak{M} = M \times R$ and $\Psi_1$ a K\"ahler potential in $\mathfrak{M}$, that is $\Om_{1}:= \Om + \frac{\sqrt{-1}}{2}\p\bar\p \Psi_1>0$,
we consider the family of Dirichlet problems for $0\leq\tau\leq1$,
\begin{equation}\label{per equ}
  \left\{
   \begin{array}{ll}
\det(\Om_{i\bar j}+\Psi_{i\bar j})
=\tau  e^{\Psi-\Psi_1}\det(\Om_{i\bar j}+\Psi_{1i\bar j}) &\text{ in }\mathfrak{M}\;,\\
\Psi(z)=\Psi_0 &\text{ on }\p\mathfrak{X}\;,
   \end{array}
  \right.
\end{equation}
in the space $C^{2,\a}_\b$.
We will specify the conditions on $\Psi_0$ in each estimate.

Since the curvature conditions of the background metrics are required when we derive the a priori estimates, we explain  an  observation on how to choose appropriate background metrics.
If we take $\Om_{1}$ as the background metric, we obtain an equivalent equation
\begin{equation}\label{per equ sim}
  \left\{
   \begin{array}{ll}
\det({\Om_{1 i\bar j}}+\tilde\Psi_{i\bar j})
=\tau f e^{\tilde\Psi} \det(\Om_{i\bar j})
=\tau e^{\tilde\Psi} \det({\Om_{1i\bar j}}) &\text{ in }\mathfrak{M}\; ,\\
\tilde\Psi(z)=0 & \text{ on }\p\mathfrak{X}\; ,
   \end{array}
  \right.
\end{equation}
where $\tilde\Psi:= \Psi- \Psi_1$ and
$f:=\frac{\det(\Om_{i\bar j}+\Psi_{1 i\bar j})}{\det(\Om_{i\bar j})}$.
In general,  given a K\"ahler cone potential $\Phi$
we could take $\Om^A:=\Om+\frac{\sqrt{-1}}{2}\p\bar\p\Phi$,
$\Psi^A:=\Psi-\Phi$, $\Psi_1^A:=\Psi_1-\Phi$, $\Psi_0^A:=\Psi_0-\Phi$.
The new family of Dirichlet problems becomes
\begin{equation}\label{new equ}
  \left\{
   \begin{array}{ll}
\det(\Om^A_{i\bar j}+\Psi^A_{i\bar j})
=\tau  e^{\Psi^A-\Psi^A_1}\det(\Om^A_{i\bar j}+\Psi^A_{1i\bar j}) &\text{ in }\mathfrak{M}\;,\\
\Psi(z)=\Psi_0^A &\text{ on }\p\mathfrak{X}\;.
   \end{array}
  \right.
\end{equation}
The above observation will be particularly useful when we  will  derive the \emph{a priori} estimates later.
Note that the right hand side of the equation is positive as long as $\tau$ is positive.
When $\tau=1$, $\Psi_1$ solves the equation.
When $\tau$ is zero, \eqref{per equ} as well as \eqref{per equ sim}  provide
a solution of the degenerate equation \eqref{geo ma}.
\subsection{$L^{\infty}$ estimate}\label{l infty} We will see in the following that the $L^{\infty}$
estimate follows from the cone maximum principle (\lemref{maximum_principle}) and the global bounded weak solution (Proposition \ref{glob bound claim}) provided in Section \ref{linear thy}.
Applying the logarithm on both sides of \eqref{per equ sim} we have
\begin{align}\label{log equ}
\log\frac{\det(\Om_{1i\bar j}+\tilde\Psi_{i\bar j})}{\det(\Om_{1i\bar j})}=\log\tau +\tilde\Psi \;.
\end{align}
\begin{prop}\label{low}(Lower bound of $\Psi$)
For any point $x\in \mathfrak{X}$, the following estimate holds
$$
\Psi(x)\geq\Psi_1(x)\; .
$$
\end{prop}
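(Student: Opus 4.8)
The plan is to prove the equivalent statement $\tilde\Psi := \Psi - \Psi_1 \ge 0$ on all of $\mathfrak{X}$, starting from the logarithmic form \eqref{log equ} of the equation, $\log\frac{\det(\Om_{1i\bar j}+\tilde\Psi_{i\bar j})}{\det(\Om_{1i\bar j})} = \log\tau + \tilde\Psi$, and exploiting that $0 < \tau \le 1$ forces $\log\tau \le 0$. Since $\overline{\mathfrak{X}}$ is compact and $\tilde\Psi$ extends continuously up to the divisor, $\tilde\Psi$ attains a minimum; the core of the argument is to locate this minimum at a point where the classical maximum principle is available.

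First I would treat the model situation in which the minimum of $\tilde\Psi$ sits at an interior regular point $p \in \mathfrak{M}$ (off $\mathfrak{D}$) or on the boundary $\p\mathfrak{X}$. On $\p\mathfrak{X}$ one has $\tilde\Psi = 0$ by the Dirichlet condition, so there is nothing to prove there. At an interior regular minimum the complex Hessian $(\tilde\Psi_{i\bar j})(p)$ is positive semidefinite, whence $\det(\Om_{1i\bar j}+\tilde\Psi_{i\bar j})(p) \ge \det(\Om_{1i\bar j})(p)$ by monotonicity of the determinant on positive Hermitian matrices; feeding this into \eqref{log equ} gives $0 \le \log\tau + \tilde\Psi(p)$, and since $\log\tau \le 0$ we conclude $\tilde\Psi(p) \ge 0$.

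The main obstacle is that, because the background cone metric degenerates along $\mathfrak{D}$, the minimum of $\tilde\Psi$ could a priori be approached only along the divisor, where the Hessian comparison above is not available. To rule this out I would use the auxiliary function $S = \|s\|^{2\kappa}$ of Lemma \ref{auxiliary function} as a barrier: for $\eps>0$ set $F_\eps := \tilde\Psi - \eps S$. By property (2) of Lemma \ref{auxiliary function}, the derivatives of $S$ dominate those of $\tilde\Psi \in C^\a_\b$ near $\mathfrak{D}$, so the barrier term $-\eps S$ controls the behaviour of $F_\eps$ there and forces its minimum $p_\eps$ to lie off the divisor; this is exactly the content of the cone maximum principle \lemref{maximum_principle}. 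At such a regular minimum, property (1) of Lemma \ref{auxiliary function} (after using the quasi-isometry of $\Om$ and $\Om_1$) gives $\tilde\Psi_{i\bar j} \ge \eps S_{i\bar j} \ge -C\eps\,\Om_{1 i\bar j}$, so that $\det(\Om_{1i\bar j}+\tilde\Psi_{i\bar j})(p_\eps) \ge (1-C\eps)^{n+1}\det(\Om_{1i\bar j})(p_\eps)$; combined with \eqref{log equ} and $\tau \le 1$ this yields $\tilde\Psi(p_\eps) \ge (n+1)\log(1-C\eps)$.

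Finally I would pass to the limit. Since $S \ge 0$, for every $x$ we have $\tilde\Psi(x) \ge F_\eps(x) \ge F_\eps(p_\eps) = \tilde\Psi(p_\eps) - \eps S(p_\eps) \ge (n+1)\log(1-C\eps) - \eps\max_{\mathfrak X} S$ (the case $p_\eps \in \p\mathfrak X$ gives the even better bound $-\eps\max_{\mathfrak X} S$). Letting $\eps \to 0$ gives $\tilde\Psi(x) \ge 0$ for all $x$, that is $\Psi \ge \Psi_1$. The two delicate points to verify carefully are the quasi-isometry step that converts property (1)'s estimate $\frac{\sqrt{-1}}{2}\p\bar\p S \ge -C\Om$ into a bound against $\Om_1$, and the boundedness hypotheses required to invoke \lemref{maximum_principle} for $\tilde\Psi \in C^{2,\a}_\b$.
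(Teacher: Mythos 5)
Your proposal is correct and follows essentially the same route as the paper's own proof: the barrier $\tilde\Psi-\eps S$ with $S=\|s\|^{2\kappa}$ from Lemma \ref{auxiliary function}, the dichotomy between boundary and interior minimum, the simultaneous use of the semidefinite Hessian and of property (1) to get $\det(\Om_{1i\bar j}+\tilde\Psi_{i\bar j})\geq(1-C\eps)^{n+1}\det(\Om_{1i\bar j})$ at the minimum, hence $\tilde\Psi(p_\eps)\geq(n+1)\log(1-C\eps)$, and finally $S\geq0$ plus $\eps\to0$. The two points you flag as delicate (the quasi-isometry of $\Om$ and $\Om_1$ behind the estimate against $\Om_1$, and the hypotheses of the cone maximum principle) are indeed the only places where the paper is slightly terse, and your treatment of them is consistent with what the paper implicitly uses.
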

\begin{proof}
According to the second conclusion of \lemref{auxiliary function},
$U=\tilde\Psi-\eps S$ achieves its minimum point $p$ on $\mathfrak{M}$.
There are two cases, one  when  $p$ is on $M\times \p R$ and the other one when $p$
is in the interior of $\mathfrak{M}$. In the first case,
since $p$ is on the regular part of the boundary, then the minimal
value is just the boundary value. Thus the inequality holds
automatically. Now we explain the second case.
The equation \eqref{log equ} is rewritten as
\begin{align}\label{log new}
\log\frac{(\Om_1+\frac{\sqrt{-1}}{2}\p\bar\p(U+\eps S))^{n+1}}
{\Om_1^{n+1}}=\log\tau+\tilde\Psi \; .
\end{align}
At the point $p$ the Hessian of $U$ is non-negative $U_{i\bar i}\geq 0$;
so, after diagonalizing $\Om_1$ and $\Om_1+\frac{\sqrt{-1}}{2}\p\bar\p(U+\eps S)$
simultaneously, \eqref{log new} implies
\begin{align*}
\tau e^{\tilde\Psi(p)}\Pi_{i=1}^{n+1}\Om_{1i\bar i}
\geq\Pi_{i=1}^{n+1}(\Om_{1i\bar i}+\eps S_{i\bar i})
\geq (1-\eps C)^{n+1}\Pi_{i=1}^{n+1}\Om_{1i\bar i} \;,
\end{align*}
where, at the second inequality, we use the first conclusion of \lemref{auxiliary function}.
Then we have
$$
\tilde\Psi(p)\geq\log (1-\eps C)^{n+1} \; .
$$
Then for any $x\in \mathfrak{X}$, $(1)$ in \lemref{auxiliary function} implies
\begin{align*}
&\tilde\Psi(x)=U(x)+\eps S(x)
\geq U(p)\\
&= \tilde\Psi(p)-\eps S(p)
\geq \log (1-\eps C)^{n+1}-\eps \; ,
\end{align*}
which gives the lower bound of $\tilde\Psi$ as $\eps$ goes to zero.
\end{proof}

\begin{prop}\label{upp}(Upper bound of $\Psi$)
For any point $x\in \mathfrak{X}$, the following estimate holds
\begin{align*}
\Psi(x)\leq h(x) \; .
\end{align*}
\end{prop}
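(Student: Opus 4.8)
The plan is to prove the upper bound $\Psi(x)\leq h(x)$ by the same maximum-principle strategy used for the lower bound in Proposition \ref{low}, but running the inequalities in the opposite direction. Here $h$ should denote a suitable supersolution of the linearized equation, namely a function with $h=\Psi_0$ on $\p\mathfrak{X}$ and satisfying the relevant differential inequality guaranteed by the linear theory of Section \ref{open} (the global bounded weak solution of Proposition \ref{glob bound claim}). The idea is that $\tilde\Psi = \Psi-\Psi_1$ is controlled from above by a barrier constructed from $h$ together with the auxiliary function $S=\|s\|^{2\kappa}$ of \lemref{auxiliary function}, which confines the relevant extremum away from the divisor $\mathfrak{D}$.

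First I would form the perturbed quantity $U := \tilde\Psi + \eps S$ and observe, using conclusion (2) of \lemref{auxiliary function}, that $S$ grows faster than the collapsing of any $C^\a_\b$ function near $\mathfrak{D}$; hence $U$ attains its \emph{maximum} at some point $p\in\mathfrak{M}$ that is not on the divisor. As in Proposition \ref{low}, there are two cases. If $p$ lies on the regular part of the boundary $M\times\p R$, then $U$ takes its boundary value and the desired inequality holds automatically (here one uses $\Psi=\Psi_0\le h$ on $\p\mathfrak{X}$). If $p$ is interior, then at $p$ the complex Hessian of $U$ is non-positive, i.e. $U_{i\bar j}\le 0$ after diagonalizing $\Om_1$ and $\Om_1+\frac{\sqrt{-1}}{2}\p\bar\p(U-\eps S)$ simultaneously. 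Rewriting equation \eqref{log new} in the form adapted to the maximum and applying conclusion (1) of \lemref{auxiliary function} (which gives $\frac{\sqrt{-1}}{2}\p\bar\p S\geq -C\Om$, hence $S_{i\bar i}\geq -C$) yields, in the diagonalized frame,
\begin{align*}
\tau\, e^{\tilde\Psi(p)}\,\Pi_{i=1}^{n+1}\Om_{1i\bar i}
\leq \Pi_{i=1}^{n+1}(\Om_{1i\bar i}-\eps S_{i\bar i})
\leq (1+\eps C)^{n+1}\,\Pi_{i=1}^{n+1}\Om_{1i\bar i}\,,
\end{align*}
so that $\tilde\Psi(p)\leq \log(1+\eps C)^{n+1} + C'$ where $C'$ absorbs the comparison with the supersolution $h$.

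Finally I would propagate this pointwise bound at $p$ to an arbitrary $x\in\mathfrak{X}$ exactly as in Proposition \ref{low}: since $p$ maximizes $U$, we have $U(x)\le U(p)$, and unwinding $U=\tilde\Psi+\eps S$ together with conclusion (1) of \lemref{auxiliary function} gives an estimate for $\Psi(x)$ in terms of $h(x)$ with an error that vanishes as $\eps\to 0$. Letting $\eps\to 0$ then yields $\Psi(x)\leq h(x)$. The main obstacle I anticipate is identifying precisely which function $h$ serves as the comparison supersolution and verifying that it dominates the nonlinear right-hand side of \eqref{per equ sim}; this is where the global bounded weak solution from Section \ref{linear thy} must be invoked, and one must check that the sign of the curvature term in \lemref{auxiliary function} cooperates with the maximum (rather than minimum) argument so that the $S$-correction still pushes the extremum off the divisor while bounding $\tilde\Psi$ from above rather than below.
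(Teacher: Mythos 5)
Your plan to run the Proposition \ref{low} argument ``in the opposite direction'' breaks down exactly at the step where the Monge--Amp\`ere equation is invoked at the interior extremum, and the obstruction is the factor $\tau$. At an interior maximum $p$ of $U=\tilde\Psi+\eps S$ one does get, as you write,
\begin{align*}
\tau\, e^{\tilde\Psi(p)}\det\Om_1
\;=\;\det\bigl(\Om_1+\tfrac{\sqrt{-1}}{2}\p\bar\p\tilde\Psi\bigr)\Big|_p
\;\leq\;(1+\eps C)^{n+1}\det\Om_1\,,
\end{align*}
but this only yields $\tilde\Psi(p)\leq (n+1)\log(1+\eps C)-\log\tau$. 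In the lower-bound proof the analogous step gives $e^{\tilde\Psi(p)}\geq (1-\eps C)^{n+1}/\tau\geq(1-\eps C)^{n+1}$ precisely because $\tau\leq 1$; for the upper bound the same factor works against you, and the bound blows up as $\tau\to 0$, which is exactly the degenerate limit for which the a priori estimate is needed. So the pointwise maximum-principle argument cannot produce a $\tau$-uniform upper bound. Moreover, even leaving this aside, your argument never establishes the asserted comparison $\Psi\leq h$: what you bound at $p$ is a constant (an estimate of $\Psi-\Psi_1$), and the phrase ``$C'$ absorbs the comparison with the supersolution $h$'' has no mathematical content, since the Monge--Amp\`ere equation at the maximum point makes no reference to $h$ at all.

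The paper's proof avoids the nonlinear right-hand side entirely, which is why it is uniform in $\tau$. It uses only the positivity $\Om+\frac{\sqrt{-1}}{2}\p\bar\p\Psi\geq 0$, whose trace gives the pointwise inequality $\tri\Psi\geq -(n+1)$ on $\mathfrak{M}$; it then \emph{defines} $h$ as the solution of the linear Dirichlet problem $\tri h=-n-1$ in $\mathfrak{M}$ with $h=\Psi_0$ on $\p\mathfrak{X}$, which is solvable by the linear theory (Propositions \ref{ext weak} and \ref{glob bound claim}), and concludes by applying the weak maximum principle for cone metrics (\lemref{wmp}) to $\Psi-h$, which satisfies $\tri(\Psi-h)\geq 0$ and vanishes on $\p\mathfrak{X}$. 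The divisor is handled there by the weak (integral) form of the maximum principle, so no $\eps S$ perturbation is needed, and no dependence on $\tau$ ever enters.
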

\begin{proof}
From \eqref{per equ} the solution is non-negative $\Om+\frac{\sqrt{-1}}{2}\p\bar\p\Psi\geq0$,
after taking trace it implies
$$
-\tri \Psi\leq n+1 \; .
$$
In order to obtain the lower bound, we then consider the linear equation
\begin{equation*}
  \left\{
   \begin{array}{ll}
\tri h =-n-1 & \text{ in }\mathfrak{M}\; ,\\
h=\Psi_0 & \text{ on }\p\mathfrak{X} \;.
   \end{array}
  \right.
\end{equation*}
It is solvable by Proposition \ref{ext weak} and \ref{glob bound claim}.
Then the lemma follows form the weak maximum principle of cone metrics (\lemref{wmp}).
\end{proof}
\begin{rem}\label{modifyrhs}
We could consider the family of equations with parameter $a\in \mathbb{R}$ as
\begin{equation}\label{per equ a}
  \left\{
   \begin{array}{ll}
\det(\Om_{1i\bar j}+\tilde\Psi_{i\bar j})
=\tau e^{a\tilde\Psi} \det(\Om_{1i\bar j}) &\text{ in }\mathfrak{M}\;,\\
\tilde\Psi(z)=0 &\text{ on }\p\mathfrak{X}\;.
   \end{array}
  \right.
\end{equation}
The approximate equation \eqref{per equ sim} is the former with $a=1$.
That is slightly different from the family considered by Chen \cite{MR1863016}
with $a=0$.
We would like to indicate that using the estimate in Section \ref{linear thy},
the lower bound of the solution of Chen's approximate equation
can be proved by applying the maximum principle with respect to the K\"ahler cone metric \lemref{wmp} to
\begin{equation*}
  \left\{
   \begin{array}{ll}
\det(\Om_{1i\bar j}+\tilde\Psi_{i\bar j})\leq \det(\Om_1)&\text{ in }\mathfrak{M}\;,\\
\tilde\Psi=0 &\text{ on }\p \mathfrak{X}\;.
   \end{array}
  \right.
\end{equation*}
\end{rem}
The upper and the lower bound of $\Psi$ imply the boundary gradient estimate
\begin{align}\label{prop_boundary_gradient_estimate}
\sup_{M\times \p R}|\nabla\Psi|_{\Om}\leq \sup_{\mathfrak{X}}
|\nabla\Psi_1|_{\Om}+\sup_{\mathfrak{X}}|\nabla h|_{\Om}\; .
\end{align}

\subsection{Interior Laplacian estimate}
The content of the present subsection is
the statement and proof of three different interior Laplacian estimates
(Proposition \ref{sec}).

We remark that in \lemref{lem_1_interior_laplacian_estimate} below,
we could choose different background metrics.
As a result, constants would have different dependence { on geometric quantities}.
\begin{prop}\label{sec}
There are three constants $C_i$, for $i=1,2,3$ such that
\begin{align}\label{claim of subsection interior laplacian estimate}
\sup_{\mathfrak{X}}(n+1+\tri\Psi)\leq C_i\sup_{\mathfrak{\p X}}(n+1+\tri\Psi)\; .
\end{align}
The  constants respectively depend on
\begin{align*}
C_1&=C_1(\, \inf Riem(\Om),\,\sup \Ric(\Om_1), \, \sup \tr_{\Om}\Om_1,\,
\osc\Psi,\,\osc\Psi_1)\; ;\\
C_2&=C_2(\, |Riem(\Om_1)|_{L^\infty},\, \sup \tr_{\Om}\Om_1,\, \sup\tr_{\Om_1}\Om,\, \osc\Psi)\; ;\\
C_3&=C_3(\, \sup Riem(\Om),\, \inf \Ric(\Om_1),\, \sup \tr_{\Om}\Om_1, \, \osc\Psi, \, \osc\Psi_1)\;.
\end{align*}
\end{prop}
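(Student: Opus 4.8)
The plan is to run a maximum-principle argument on an auxiliary function built from $u := n+1+\tri\Psi = \tr_\Om\Om_\Psi$, where $\Om_\Psi := \Om+\frac{\sqrt{-1}}{2}\p\bar\p\Psi$ denotes the solution metric of \eqref{per equ}. First I would record the Yau--Aubin and Chern--Lu differential inequalities for the identity map $(\mathfrak{M},\Om_\Psi)\to(\mathfrak{M},\Om)$; the pointwise computation is carried out on the regular part $M\times R$, where the cone metrics are genuine smooth K\"ahler metrics, after differentiating the logarithmic equation \eqref{log equ} twice to convert $\Delta_{\Om_\Psi}\log\det\Om_\Psi$ into Ricci and $\tri\Psi$ terms. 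In the form used for $C_1$ this yields, with $B$ controlling the negative part of the bisectional curvature of $\Om$ (finite since the cone angles lie in $(0,\tfrac12)$) and $C$ absorbing $\sup\Ric(\Om_1)$ from the right-hand side,
\[
\Delta_{\Om_\Psi}\log u \;\geq\; -B\,\tr_{\Om_\Psi}\Om \;-\; C .
\]

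Second, I would introduce the barrier $H := \log u - A\Psi$ with $A$ a large constant. Using $\Delta_{\Om_\Psi}\Psi = (n+1)-\tr_{\Om_\Psi}\Om$ one obtains
\[
\Delta_{\Om_\Psi}H \;\geq\; (A-B)\,\tr_{\Om_\Psi}\Om \;-\; A(n+1)-C ,
\]
so for $A>B$ the coefficient of $\tr_{\Om_\Psi}\Om$ is positive. At an interior maximum $\Delta_{\Om_\Psi}H\le 0$ forces $\tr_{\Om_\Psi}\Om$ to be bounded there; feeding this into the elementary inequality $u\le (n+1)\,\frac{\det\Om_\Psi}{\det\Om}\,(\tr_{\Om_\Psi}\Om)^{n}$ together with the $L^\infty$ control of $\det\Om_\Psi/\det\Om$ coming from the equation bounds $u$ at the maximum, hence bounds $H$ itself. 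Comparing this with the case where the maximum of $H$ sits on $\p\mathfrak{X}$ and reabsorbing the barrier through $\osc\Psi$ produces exactly \eqref{claim of subsection interior laplacian estimate}. The constants $C_2$ and $C_3$ come from the same scheme run with different data: for $C_2$ I would take $\Om_1$ as background via the normalization \eqref{per equ sim}, so that the Ricci contribution cancels and only $|Riem(\Om_1)|_{L^\infty}$ survives, with the two traces $\tr_\Om\Om_1$ and $\tr_{\Om_1}\Om$ converting between backgrounds; for $C_3$ I would instead apply the Chern--Lu inequality to $\tr_{\Om_\Psi}\Om$, which requires an upper bound on the bisectional curvature of $\Om$ and a lower bound on $\Ric(\Om_\Psi)$, the latter supplied by the identity $\Ric(\Om_\Psi)=\Ric(\Om_1)+\Om_1-\Om_\Psi$ together with $\inf\Ric(\Om_1)$, and then convert the resulting bound back to $u$ by the same determinant--trace inequality.

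The one genuinely delicate point is that the maximum principle must be applied on the open regular part $M\times R$, where the supremum of $H$ could a priori escape to the divisor $\mathfrak{D}$. I would handle this exactly as in Proposition \ref{low}: replace $H$ by $H_\eps := H - \eps S$ with $S=\|s\|^{2\kappa}$ as in \lemref{auxiliary function}. Property $(2)$ of that lemma guarantees that $H_\eps$ attains its maximum away from $\mathfrak{D}$, while property $(1)$, $\frac{\sqrt{-1}}{2}\p\bar\p S\ge -C\Om$, shows that the extra term perturbs the differential inequality only by $O(\eps)\,\tr_{\Om_\Psi}\Om$, harmlessly absorbed into the $(A-B)$ coefficient; letting $\eps\to 0$ recovers the estimate for $H$. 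I expect this interplay --- verifying that all three curvature quantities are finite for cone angles in $(0,\tfrac12)$ and checking that the Jeffres cut-off does not spoil the sign of the leading term --- to be the main obstacle, the underlying K\"ahler computation (following Yau \cite{MR480350} and the Chern--Lu formula \cite{MR0234397}) being otherwise standard.
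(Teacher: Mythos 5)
Your proposal follows, in all essentials, the same route as the paper's proof: Yau's second-order computation applied to \eqref{per equ} for $C_1$; the identical computation run with $\Om_1$ as background via \eqref{per equ sim}, followed by the two-trace conversion $\sup\tr_{\Om}\Om_1\cdot\sup\tr_{\Om_1}\Om$, for $C_2$; the Chern--Lu formula together with the identity $\Ric(\Om_\Psi)=\Ric(\Om_1)+\Om_1-\Om_\Psi$ (read off from \eqref{log equ}) for $C_3$; the barrier $\log u - A\Psi$; the arithmetic-geometric-mean/determinant inequality to pass between $\tr_{\Om_\Psi}\Om$ and $u$; and the comparison with the boundary value at the end. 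This is exactly the structure of the paper's three proofs following Lemma \ref{lem_1_interior_laplacian_estimate} and Lemma \ref{la sch two}, where the test functions are $Z=\log(n+1+\tri\Psi)-K\Psi+\eps S$ and $Z_1=\log\tr_{\Om'}\Om-C'\Psi+\eps S$.

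There is, however, one concrete step that fails as written: the sign of the Jeffres perturbation. You set $H_\eps:=H-\eps S$ and claim its \emph{maximum} avoids $\mathfrak{D}$. Since $S=\|s\|^{2\kappa}\geq 0$ vanishes precisely on $\mathfrak{D}$ and, by property $(2)$ of Lemma \ref{auxiliary function}, grows off the divisor faster than the H\"older modulus of continuity of $H$, subtracting $\eps S$ makes $H_\eps$ \emph{smaller} at points near but off the divisor than at the nearby divisor points; so far from excluding a maximum on $\mathfrak{D}$, the subtraction can force one there. (The minus sign is the correct choice for a \emph{minimum}, which is what Proposition \ref{low} uses --- presumably where the sign was copied from.) The sign error also breaks the second half of your sentence: to control $\tri' H_\eps$ from below you would need a lower bound on $\tri'(-\eps S)$, i.e.\ an \emph{upper} bound on $\p\bar\p S$, which Lemma \ref{auxiliary function} does not supply; property $(1)$ only gives $\p\bar\p S\geq -C\Om$. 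Both problems disappear once you \emph{add} $\eps S$: property $(2)$ then pushes the maximum of $H+\eps S$ off $\mathfrak{D}$, and property $(1)$ gives $\tri'(\eps S)\geq-\eps C\,\tr_{\Om_\Psi}\Om$, which is absorbed into the coefficient $(A-B-\eps C)$ exactly as you intend. With that single sign corrected, your argument coincides with the paper's proof.
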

\begin{rem}
The estimates work for any given K\"ahler cone metric $\Om$.
\end{rem}
We first consider the equation \eqref{per equ}. We denote
\begin{align}\label{eq 0 interior laplacian estimate}
F{:=}\log\tau+\log f+\Psi-\Psi_1=\log\frac{\det(\Om_{i\bar j}+\Psi_{i\bar j})}{\det(\Om_{i\bar j})}\;.
\end{align}
We calculate $\Delta ' (n+1 + \Delta \Psi) $ of our equation and explain later how to change the background metric.
\begin{lem}\label{lem_1_interior_laplacian_estimate}
 The following formula holds
\begin{align*}
 \Delta ' (n+1 + \Delta \Psi) &= g^{i\bar j}{g'}^{k \bar l}{g'}^{p \bar q}
\partial_{\bar l} {g'}_{p \bar j} \partial_{k}{g'}_{i \bar q} -\tr_{\Omega} \Ric (\Omega_1)\\
&+\Delta \Psi - \Delta \Psi_1
+ {g'}^{k \bar l}R^{i\bar j}_{\phantom{i\bar j}k\bar l}{g'}_{i \bar j}\; .
\end{align*}
\end{lem}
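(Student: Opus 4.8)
The plan is to compute $\Delta'(\Delta\Psi)$ directly, where $\Delta' = {g'}^{i\bar j}\partial_i\partial_{\bar j}$ is the Laplacian of the solution metric $\Omega_\Psi = \Omega + \frac{\sqrt{-1}}{2}\partial\bar\partial\Psi$ (with $g'_{i\bar j} = \Omega_{i\bar j} + \Psi_{i\bar j}$) and $\Delta = g^{i\bar j}\partial_i\partial_{\bar j}$ is the background Laplacian with respect to $\Omega$. The starting point is the logarithmic form of the Monge-Amp\`ere equation, namely $F = \log\det(g'_{i\bar j}) - \log\det(g_{i\bar j})$ as in \eqref{eq 0 interior laplacian estimate}, together with $F = \log\tau + \log f + \Psi - \Psi_1$. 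Differentiating this identity twice is the engine of the whole computation: first derivatives of $\log\det g'$ produce ${g'}^{k\bar l}\partial_i {g'}_{k\bar l}$, and second derivatives produce both the Laplacian-type term and the quadratic ``Yau term'' ${g'}^{k\bar l}{g'}^{p\bar q}\partial_{\bar l}{g'}_{p\bar j}\partial_k{g'}_{i\bar q}$, which is the first term on the right-hand side of the claimed formula.

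First I would apply $g^{i\bar j}\partial_i\partial_{\bar j}$ to the identity $F = \log\det(g') - \log\det(g)$. Computing $\partial_i\partial_{\bar j}\log\det(g')$ yields ${g'}^{k\bar l}\partial_i\partial_{\bar j}{g'}_{k\bar l} - {g'}^{k\bar q}{g'}^{p\bar l}\partial_i{g'}_{p\bar q}\partial_{\bar j}{g'}_{k\bar l}$, and contracting with $g^{i\bar j}$ gives $\Delta F$ on the left after the $\log\det(g)$ term contributes the scalar curvature/Ricci data of $\Omega$. The subtle bookkeeping is to rearrange the terms $g^{i\bar j}{g'}^{k\bar l}\partial_i\partial_{\bar j}{g'}_{k\bar l}$ into $\Delta'(\Delta\Psi)$ plus curvature corrections: since ${g'}_{k\bar l} = g_{k\bar l} + \Psi_{k\bar l}$, one commutes covariant derivatives and the noncommutativity generates exactly the curvature contraction ${g'}^{k\bar l}R^{i\bar j}_{\phantom{i\bar j}k\bar l}{g'}_{i\bar j}$, which is the last term in the statement. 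Meanwhile $\Delta F = \Delta\log\tau + \Delta\log f + \Delta\Psi - \Delta\Psi_1$, and the identity $\log\tau + \log f = F - \Psi + \Psi_1$ is used so that the Ricci curvature of the reference metric $\Omega_1$ enters through $-\tr_\Omega\Ric(\Omega_1)$; here one recognizes $-\partial_i\partial_{\bar j}\log\det(g_1) = \Ric(\Omega_1)_{i\bar j}$ and contracts against $g^{i\bar j}$.

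The main obstacle, and the step requiring the most care, is the commutation of covariant derivatives that converts the raw expression $g^{i\bar j}{g'}^{k\bar l}\nabla_i\nabla_{\bar j}\Psi_{k\bar l}$ into $\Delta'(\Delta\Psi)$ while correctly tracking the curvature tensor $R^{i\bar j}_{\phantom{i\bar j}k\bar l}$ of the background metric $\Omega$. This is precisely Yau's second-order computation \cite{MR480350}, and the bookkeeping of indices under the Kähler identities (so that $\Psi_{k\bar l}$ and its derivatives are symmetric in the appropriate slots) is where sign and contraction errors typically arise. In the present setting everything is carried out pointwise on the regular part $M = X\setminus D$, so the computation is purely local and the cone structure does not interfere; the singular behavior is only confronted later, when this formula is fed into the maximum-principle argument. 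I would therefore treat this lemma as a clean pointwise identity on $\mathfrak{M} = M\times R$ and defer all analytic subtleties about the divisor to the subsequent estimate in Proposition \ref{sec}. Assembling $\Delta F$ on one side and the geometric terms on the other, and using that $\Delta'(\Delta\Psi) = \Delta F - \Delta'(n+1)$ trivially since $n+1$ is constant, produces exactly the stated four-term formula.
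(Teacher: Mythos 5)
Your proposal is correct and takes essentially the same route as the paper: both proofs differentiate the logarithmic Monge--Amp\`ere equation twice against the background metric, extract Yau's quadratic term $g^{i\bar j}{g'}^{k\bar l}{g'}^{p\bar q}\p_{\bar l}{g'}_{p\bar j}\p_k{g'}_{i\bar q}$, obtain the contraction ${g'}^{k\bar l}R^{i\bar j}_{\phantom{i\bar j}k\bar l}{g'}_{i\bar j}$ from the commutation of derivatives against the background curvature, and produce $-\tr_{\Omega}\Ric(\Omega_1)+\Delta\Psi-\Delta\Psi_1$ by computing $\Delta F$ with $\Delta\log f=-\tr_\Omega\Ric(\Omega_1)+S(\Omega)$; the paper merely packages this same computation through the curvature definitions and the relation ${R'}_{i\bar j}=R_{i\bar j}-F_{i\bar j}$. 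One cosmetic slip: your closing identity should read $\Delta'(n+1+\Delta\Psi)=\Delta'(\Delta\Psi)$, since $\Delta'(n+1)=0$, rather than $\Delta'(\Delta\Psi)=\Delta F-\Delta'(n+1)$.
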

\begin{proof}
 Since ${g'}_{i\bar j} = g_{i\bar j } + \Psi_{i\bar j}$,
when we take $-\partial_k \partial_{\bar l}$ on both sides we get
\begin{align}\label{eq 1 interior laplacian estimate}
 -\partial_k \partial_{\bar l} {g'}_{i\bar j} =
R_{i\bar j k \bar l}
- \Psi_{i\bar j k \bar l}\; .
\end{align}
Since the Riemannian curvature is defined by
\begin{align*}
 {R'}_{i\bar j k \bar l} =
-\partial_k \partial_{\bar l} {g'}_{i \bar j}
+ {g'}^{p\bar q}\partial_{\bar l}{g'}_{p\bar j} \partial_k {g'}_{i \bar q},
\end{align*}
inserting the latter in \eqref{eq 1 interior laplacian estimate} and taking the trace
with respect to ${g'}^{k\bar l}$ and ${g}^{i \bar j}$ we have
\begin{align}\label{eq 2 interior laplacian estimate}
 g^{i\bar j} {R'}_{i\bar j} =
g^{i \bar j}{g'}^{k \bar l}{g'}^{p\bar q}
\partial_{\bar l}{g'}_{p\bar j} \partial_k {g'}_{i \bar q}
+ {g'}^{k\bar l}R_{k\bar l} - g^{i\bar j} {g'}^{k \bar l}\Psi_{i\bar j k \bar l}\; .
\end{align}
 Since
\begin{align*}
 \Delta ' (n+ 1+\Delta \Psi)
= {g'}^{k \bar l}g^{i \bar l}\Psi_{i\bar j k \bar l}
+ {g'}^{k \bar l}R^{i \bar j}_{\phantom{i \bar j}k\bar l}\Psi_{i\bar j}\; ,
\end{align*}
inserting the latter in \eqref{eq 2 interior laplacian estimate} we get
\begin{align*}
 \Delta ' (n+ 1+\Delta \Psi)
=
g^{i \bar j}{g'}^{k \bar l}{g'}^{p\bar q}
\partial_{\bar l}{g'}_{p\bar j} \partial_k {g'}_{i \bar q}
-g^{i\bar j} {R'}_{i\bar j}
+{g'}^{k\bar l}R_{k\bar l}
+ {g'}^{k \bar l}R^{i \bar j}_{\phantom{i \bar j}k\bar l}\Psi_{i\bar j}\; .
\end{align*}
Since \eqref{eq 0 interior laplacian estimate} implies
${R'}_{i\bar j} = R_{i\bar j} - F_{i \bar j}$ we therefore have
\begin{align*}
 \Delta' (n+1 + \Delta \Psi) =
g^{i \bar j}{g'}^{k \bar l}{g'}^{p\bar q}
\partial_{\bar l}{g'}_{p\bar j} \partial_k {g'}_{i \bar q}
-S(\Omega) + \Delta F + {g'}^{k \bar l}R^{i \bar j}_{\phantom{i \bar j}k\bar l}{g'}_{i\bar j}\; .
\end{align*}
Then the lemma follows from the formula
\begin{align*}
 \Delta F = \Delta (\log f + \Psi - \Psi_1) = - \tr_{\Omega}\Ric(\Omega_1) +S(\Omega)+
\Delta \Psi - \Delta \Psi_1\; .
\end{align*}
This completes the proof of the lemma.
\end{proof}
\smallskip
The following formula follows from the Schwarz inequality.
See Yau \cite{MR480350}, and Siu \cite{MR942521} page 73.
\begin{align}\label{lem_2_interior_laplacian_estimate}
 g^{i \bar j}{g'}^{k \bar l}{g'}^{p\bar q}
\partial_{\bar l}{g'}_{p\bar j} \partial_k {g'}_{i \bar q}
\geq
\frac{|\partial (n+1 + \Delta \Psi)|^2}{n+1 + \Delta \Psi}\; .
\end{align}
\begin{lem}
There is a constant $C$ depending on $\sup \Ric(\Omega_1), \, \sup \tr_{\Omega}\Omega_1 ,
\, \inf_{i\neq k}R_{i\bar i k \bar k} $ such that
\begin{align*}
 \Delta' (\log (n+1+\Delta\Psi))\geq -C(1+ \tr_{\Omega '}\Omega).
\end{align*}
\end{lem}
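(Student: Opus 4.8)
The plan is to convert the bound on $\Delta'(n+1+\Delta\Psi)$ supplied by \lemref{lem_1_interior_laplacian_estimate} into a bound on its logarithm, using the pointwise identity
\begin{align*}
\Delta'(\log u) = \frac{\Delta' u}{u} - \frac{|\partial u|^2_{g'}}{u^2}, \qquad u := n+1+\Delta\Psi = \tr_\Omega\Omega' ,
\end{align*}
where $\Omega' := \Omega + \frac{\sqrt{-1}}{2}\partial\bar\partial\Psi > 0$ and $|\partial u|^2_{g'} = g'^{k\bar l}\partial_k u\,\partial_{\bar l}u$. Since $u=\tr_\Omega\Omega'>0$ on $M$, $\log u$ is well defined. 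I would fix an arbitrary point and work in a normal coordinate for $\Omega$ in which $\Omega'$ is diagonal with eigenvalues $\lambda_1,\dots,\lambda_{n+1}>0$, so that $u=\sum_i\lambda_i$ and $\tr_{\Omega'}\Omega=\sum_i\lambda_i^{-1}$.

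First I would substitute the formula of \lemref{lem_1_interior_laplacian_estimate} for $\Delta' u$ together with the Schwarz inequality \eqref{lem_2_interior_laplacian_estimate}, which bounds the positive term $g^{i\bar j}g'^{k\bar l}g'^{p\bar q}\partial_{\bar l}g'_{p\bar j}\partial_k g'_{i\bar q}$ from below by $|\partial u|^2_{g'}/u$. Dividing this lower bound by $u$ produces exactly $|\partial u|^2_{g'}/u^2$, which cancels against the last term of the identity above. This is the standard Yau mechanism, and it leaves the clean inequality
\begin{align*}
\Delta'(\log u) \geq \frac{1}{u}\Bigl(-\tr_\Omega\Ric(\Omega_1) + \Delta\Psi - \Delta\Psi_1 + g'^{k\bar l}R^{i\bar j}_{\phantom{i\bar j}k\bar l}g'_{i\bar j}\Bigr),
\end{align*}
in which no derivative of $\Psi$ of order higher than two survives.

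Next I would estimate the numerator. Using $\Delta\Psi = u-(n+1)$ and $\Delta\Psi_1 = \tr_\Omega\Omega_1-(n+1)$, the middle two terms contribute $u-\tr_\Omega\Omega_1$; the leading $u$ divided by $u$ is harmless, while $-\tr_\Omega\Omega_1$ and $-\tr_\Omega\Ric(\Omega_1)$ are each bounded below by constants depending only on $\sup\tr_\Omega\Omega_1$ and $\sup\Ric(\Omega_1)$. For the curvature term, in the chosen frame $g'^{k\bar l}R^{i\bar j}_{\phantom{i\bar j}k\bar l}g'_{i\bar j} = \sum_{i,k}\tfrac{\lambda_i}{\lambda_k}R_{i\bar ik\bar k}$; I would separate $i=k$, which yields the fixed background quantity $\sum_i R_{i\bar ii\bar i}$, from $i\neq k$, where $R_{i\bar ik\bar k}\geq B:=\inf_{i\neq k}R_{i\bar ik\bar k}$ and $\sum_{i\neq k}\tfrac{\lambda_i}{\lambda_k}\leq u\,\tr_{\Omega'}\Omega$, giving the lower bound $B\,u\,\tr_{\Omega'}\Omega+\sum_i R_{i\bar ii\bar i}$.

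Finally comes the one genuine subtlety: the remaining bounded quantities are divided by $u$, and $u$ may degenerate where $\Omega'$ collapses, so $1/u$ is a priori unbounded. The key is the elementary inequality $\tfrac{1}{u}\leq\tr_{\Omega'}\Omega$, which holds because $u\cdot\tr_{\Omega'}\Omega = (\sum_i\lambda_i)(\sum_k\lambda_k^{-1})\geq(n+1)^2\geq 1$ (equivalently $\lambda_{\max}\leq u$). This is precisely what forces the factor $\tr_{\Omega'}\Omega$ on the right-hand side of the target inequality: every $O(1/u)$ term is absorbed into $-C\,\tr_{\Omega'}\Omega$, while the off-diagonal curvature contributes $B\,\tr_{\Omega'}\Omega$ directly. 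Collecting the constants, all depending only on $\sup\Ric(\Omega_1)$, $\sup\tr_\Omega\Omega_1$, $\inf_{i\neq k}R_{i\bar ik\bar k}$ (and the fixed background curvature of $\Omega$), gives $\Delta'(\log(n+1+\Delta\Psi))\geq -C(1+\tr_{\Omega'}\Omega)$, as claimed.
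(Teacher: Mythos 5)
Your proposal is correct and takes essentially the same route as the paper's proof: the identity for $\Delta'(\log u)$, cancellation of the gradient term against the third-order term via Lemma \ref{lem_1_interior_laplacian_estimate} and the Schwarz inequality \eqref{lem_2_interior_laplacian_estimate}, and then absorption of every $O(1/u)$ term using $\frac{1}{n+1+\Delta\Psi}\leq \tr_{\Omega'}\Omega$. The only difference is that you spell out the curvature-term decomposition into $i=k$ and $i\neq k$ parts and the elementary trace inequality in more detail than the paper, which states these bounds tersely.
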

\begin{proof}
We compute
\begin{align*}
 \Delta ' (\log (n+1 + \Delta \Psi))=
\frac{\Delta ' (n+1 + \Delta \Psi)}{n+1+\Delta\Psi}
-\frac{|\partial (n+1 + \Delta \Psi)|^2}{n+1 + \Delta \Psi}\; .
\end{align*}
Thus, by combining Lemma \ref{lem_1_interior_laplacian_estimate} with
{\eqref{lem_2_interior_laplacian_estimate}}, we have
\begin{align*}
\Delta ' (\log (n+1 + \Delta \Psi))
&\geq
\frac{-\tr_{\Omega}\Ric(\Omega_1) + \Delta \Psi - \Delta \Psi_1
+ {g'}^{k \bar l}R^{i\bar j}_{\phantom{i\bar j}k\bar l}{g'}_{i \bar j}}{n+1+\Delta\Psi}\\
&\geq -C (1+ \frac{1}{n+1+\Delta \Psi} + \tr_{\Omega '} \Omega)\; .
\end{align*}
Thus the lemma follows from $\frac{1}{n+1+\Delta\Psi} \leq \frac{1}{1+\Psi_{i\bar i}}\leq \tr_{\Omega '}\Omega$.
\end{proof}
\begin{proof}(proof of constant $C_1$)
Denote
\begin{align*}
 Z:= \log(n+1 + \Delta \Psi) - K\Psi + \eps S \; ,
\end{align*}with $K$ to be chosen.
According to \lemref{auxiliary function}, with appropriate $\kappa$, the maximum point $p$ of $Z$
stays in the interior of $\mathfrak{M}$.
Since
$\Delta ' \Psi = n+1 -\tr_{\Omega '}\Omega$, and $\Delta ' S \geq -C\tr_{\Omega '}\Omega$ (\lemref{auxiliary function}),
then at $p$ there holds
\begin{align*}
 0\geq  \Delta ' Z
\geq -C (1+\tr_{\Omega ' }\Omega ) - K (n+1-\tr_{\Omega '}\Omega) -\eps C \tr_{\Omega ' }\Omega\;.
\end{align*}
Now we choose $K$ such that $-C + K - \eps C > 0$ to obtain the upper bound of $\tr_{\Omega '}\Omega (p)$.
From the arithmetic-geometric-mean inequality we have
\begin{align*}
 (n+1 + \Delta\Psi)^{\frac{1}{n}} \cdot e^{-\frac{F}{n}}&=
\left(
\sum_{i=1}^{n+1}\frac{1}{\prod_{k=1,k\neq i}^{n+1}(1+\Psi_{k\bar k})}
\right)^{\frac{1}{n}}\\
&\leq \sum_{k=1}^{n+1}\frac{1}{1+\Psi_{k\bar k}}=\tr_{\Omega '}\Omega\;.
\end{align*}
Since $F=\log \tau + \log f + \Psi - \Psi_1$,
so, $n+1+\Delta \Psi$ is bounded from above at $p$ depending on
$\sup \Ric(\Omega_1)$, $ \sup \tr_{\Omega}\Omega_1$, $ \inf_{i\neq k}R_{i\bar i k \bar k}$,
$\sup \Psi $, and $ \inf \Psi_1  $.
For any $x\in \mathfrak{X}$, there holds
$Z(x)\leq \sup_{\partial\mathfrak{X}} Z + Z(p)$.
Hence,
\begin{align}\label{got first constant for interior lapl est}
  n+1+\Delta\Psi &= e^{Z+K\Psi - \eps S}\nonumber\\
&\leq e^{\sup_{\partial \mathfrak{X}} Z + Z(p) + K \sup \Psi} \nonumber\\
&\leq \sup_{\partial \mathfrak{X}} (n+1+\Delta\Psi)
e^{-K\inf_{\mathfrak{X}}\Psi_0 + 1 + Z(p) + K\sup \Psi}\; .
\end{align}
This formula gives precisely the claimed inequality
\eqref{claim of subsection interior laplacian estimate} for the first constant $C_1$.
\end{proof}
\begin{proof}(proof of constant $C_2$)
Now the same argument as in Lemma \ref{lem_1_interior_laplacian_estimate},
applied to equation \eqref{per equ sim},
gives the following formula
\begin{align*}
 \Delta ' (n+1+\Delta_1 \tilde{\Psi})
= {g_1}^{i \bar j } {g'}^{k\bar l}{g'}^{p\bar q}
\partial_{\bar l}{g'}_{p\bar j} \partial_{k} {g'}_{i\bar q}
-S(\Omega_1) + \Delta_1 \tilde{\Psi} + {g'}^{k\bar l}{R_1}^{i\bar j}_{\phantom{i\bar j}k\bar l}{g'}_{i \bar j}\;.
\end{align*}

Then, still following an argument similar to that used in the first part of this subsection,
we get a constant $C$ which depends on $\sup S(\Omega_1)$,
$\inf_{i\neq k} R_{i\bar i k \bar k}(\Omega_1)$, $\osc \Psi$, such that
\begin{align*}
 n+1+\Delta_1 \tilde{\Psi} \leq C \sup_{\partial\mathfrak{X}} (n+1+\Delta_1 \tilde{\Psi})\;.
\end{align*}
 Since $\Omega$ and $\Omega_1$ are $L^{\infty}$ equivalent, we have
\begin{align}\label{got second constant for interior lapl est}
 n+1+\Delta\Psi \leq C(\sup \tr_{\Omega_1}\Omega)(\sup \tr_{\Omega}\Omega_1)\cdot
\sup_{\partial \mathfrak{X}}(n+1+\Delta\Psi)\; .
\end{align}
This formula gives precisely the second constant $C_2$ for claimed inequality
\eqref{claim of subsection interior laplacian estimate}.
Here the conditions $\inf Riem(\Om_1)$ and $\sup S(\Om_1)$
are bounded are equivalent to the $L^\infty$ bound of the Riemannian curvature of $\Om_1$.
\end{proof}
\begin{proof}(proof of constant $C_3$)
Now we use the Chern-Lu formula (see \cite{MR0234397}\cite{MR0250243}\cite{MR0486659}) to derive the second order estimate.
We get the formula of $$\tr_{\Omega '}\Omega = n+1- \Delta ' \Psi.$$
This following identity is interpreted as the energy identity of the harmonic map $id$ between
$(M,g')$ to $(M,g)$.
\begin{align}\label{lem_3_interior_laplacian_estimate}
 \Delta' (\tr_{\Omega '}\Omega)=
{R'}^{i\bar j}g_{i\bar j} - {g'}^{i\bar j}{g'}^{k\bar l}R_{i\bar j k \bar l}
-g^{i \bar j}{g'}^{k \bar l}{g'}^{p\bar q}
\partial_{\bar l}{g'}_{p\bar j} \partial_k {g'}_{i \bar q}\; .
\end{align}
The Schwarz inequality implies
\begin{align}\label{la sch}
g'^{k\bar l}\p_kg'^{i\bar j}g_{i\bar j}\p_{\bar l}g'^{p\bar q}
g_{p\bar q}
\leq
-(g'^{k\bar l}g'^{p\bar j}g_{i\bar j}\p_{\bar l}
g'_{p\bar q}\p_{k}g'^{i\bar q})(g'^{i\bar j}g_{i\bar j})\;.
\end{align}
Now we use the equation \eqref{log equ}.
\begin{lem}\label{la sch two}
The following formula holds
\begin{align*}
\tri'(\log tr_{\Om'}\Om)\geq
-(n+1) - C(\tr_{\Omega '}\Omega),
\end{align*}
with $C$ that depends on $\inf \Ric(\Omega_1)$,
$\sup_{i\neq k} R_{i\bar i k\bar k} (\Omega)$,
$\sup \tr_{\Omega}\Omega_1$.
\end{lem}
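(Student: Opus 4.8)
The plan is to derive the inequality pointwise on the regular part $M\subset\mathfrak{X}$ (where $\Omega$ and $\Omega'$ are genuine smooth K\"ahler metrics) by feeding the Chern--Lu identity \eqref{lem_3_interior_laplacian_estimate} into the elementary formula for the Laplacian of a logarithm. Set $u:=\tr_{\Omega'}\Omega$, which is positive on $M$ and equals $n+1-\Delta'\Psi$. First I would expand
\begin{align*}
\Delta'(\log u)=\frac{\Delta' u}{u}-\frac{|\p u|^2_{\Omega'}}{u^2},
\end{align*}
and substitute \eqref{lem_3_interior_laplacian_estimate} for $\Delta' u$. This splits $\Delta'(\log u)$ into a Ricci-type term $R'^{i\bar j}g_{i\bar j}/u$, a background-curvature term $-g'^{i\bar j}g'^{k\bar l}R_{i\bar jk\bar l}/u$, and the two non-positive third-order contributions $-T/u$ and $-|\p u|^2_{\Omega'}/u^2$, where $T$ is the quadratic gradient expression of \eqref{lem_3_interior_laplacian_estimate}.

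The first decisive step is to remove the two third-order terms. Here I would use the Schwarz inequality \eqref{la sch}, whose content is exactly that $|\p u|^2_{\Omega'}$ is bounded by $u$ times the quadratic expression coming from differentiating $u$; after dividing by $u^2$ this shows $|\p u|^2_{\Omega'}/u^2$ is absorbed by $T/u$, so the gradient term does not obstruct a lower bound. One is then reduced to
\begin{align*}
\Delta'(\log u)\geq\frac{R'^{i\bar j}g_{i\bar j}}{u}-\frac{g'^{i\bar j}g'^{k\bar l}R_{i\bar jk\bar l}}{u},
\end{align*}
that is, to estimating one term built from the Ricci curvature of the solution $\Omega'$ and one built from the bisectional curvature of the fixed background $\Omega$.

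For the two surviving terms I would diagonalize $\Omega$ and $\Omega'$ simultaneously, writing the eigenvalues of $\Omega'$ relative to $\Omega$ as $\lambda_i$, so that $u=\sum_i\lambda_i^{-1}$. The background-curvature term becomes $-\sum_{i,k}\lambda_i^{-1}\lambda_k^{-1}R_{i\bar ik\bar k}(\Omega)/u$, which is $\geq-C\,\tr_{\Omega'}\Omega$ once the off-diagonal bisectional curvatures $\sup_{i\neq k}R_{i\bar ik\bar k}(\Omega)$ are controlled and $\sum_i\lambda_i^{-2}\leq u^2$ is used. For the Ricci term I would invoke the Monge--Amp\`ere equation \eqref{log equ}, which yields $\Ric(\Omega')=\Ric(\Omega_1)-(\Omega'-\Omega_1)$; the correction $-(\Omega'-\Omega_1)$ produces the constant $-(n+1)$ through $\tr_{\Omega'}\Omega'=n+1$, up to a multiple of $\tr_{\Omega'}\Omega$ absorbed using the $L^\infty$-equivalence of $\Omega_1$ and $\Omega$, while the $\Ric(\Omega_1)$ part is bounded below via $\inf\Ric(\Omega_1)$, its negative part again contributing a multiple of $\tr_{\Omega'}\Omega$ controlled by $\sup\tr_\Omega\Omega_1$. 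Collecting the three estimates gives $\Delta'(\log\tr_{\Omega'}\Omega)\geq-(n+1)-C\,\tr_{\Omega'}\Omega$ with $C$ depending only on the listed quantities.

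I expect the main obstacle to be the third-order cancellation: one must verify that the precise index pattern of \eqref{la sch} really matches the quadratic term $T$ of \eqref{lem_3_interior_laplacian_estimate}, so that $|\p u|^2_{\Omega'}/u^2$ is genuinely dominated rather than merely comparable. A secondary subtlety, characteristic of the cone setting, is that the whole computation must stay on the regular locus $M$ where the metrics are smooth; the machinery of \lemref{auxiliary function} used in the surrounding estimates is what later guarantees that this pointwise inequality can be exploited away from the divisor.
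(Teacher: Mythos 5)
Your proposal is correct and takes essentially the same route as the paper: the paper's proof likewise expands $\tri'(\log \tr_{\Om'}\Om)$, invokes \eqref{lem_3_interior_laplacian_estimate} together with the Schwarz inequality \eqref{la sch} to discard the third-order terms and reduce to $\bigl(R'^{i\bar j}g_{i\bar j}-g'^{i\bar j}g'^{k\bar l}R_{i\bar jk\bar l}\bigr)/\tr_{\Om'}\Om$, and then uses the equation \eqref{log equ} to write $\Ric'=\Ric(\Om_1)+\Om_1-\Om'$, bounding the resulting pieces by exactly the three stated quantities. The minor discrepancies in your write-up (attributing the constant $-(n+1)$ to $\tr_{\Om'}\Om'$, whereas the $-\Om'$ term in fact contributes $-g'^{i\bar q}g'^{p\bar j}g'_{p\bar q}g_{i\bar j}/\tr_{\Om'}\Om=-1$; and the sign issue you flag for the quadratic term in \eqref{lem_3_interior_laplacian_estimate}, which must enter with a plus sign for the cancellation against $|\p(\tr_{\Om'}\Om)|^2_{\Om'}/(\tr_{\Om'}\Om)^2$ to go through) are harmless, landing within the claimed bound, and the latter imprecision appears in the paper's own displayed identity as well.
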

\begin{proof}
We apply \eqref{lem_3_interior_laplacian_estimate} and \eqref{la sch} to obtain
\begin{align*}
\tri'[\log tr_{\Om'}\Om]
&=\frac{\tri'(tr_{\Om'}\Om)}{tr_{\Om'}\Om}-
\frac{g'^{k\bar l}\p_kg'^{i\bar j}g_{i\bar j}\p_{\bar l}g'^{p\bar q}
g_{p\bar q}}{(tr_{\Om'}\Om)^2}\\
&\geq\frac{R'^{i\bar j}g_{i\bar j}-g'^{i\bar j}g'^{k\bar l}R_{i\bar jk\bar l}}{tr_{\Om'}\Om}\;.
\end{align*}
From \eqref{log equ} we have
\begin{align*}
\Ric'&=\Ric(\Om)-\frac{\sqrt{-1}}{2}\p\bar\p F\\
&=\Ric(\Om) + \Ric(\Om_1) -\Ric(\Om) -\Om'+\Om_1\;,
\end{align*}
then $\Ric' \geq (\inf \Ric(\Om_1) +1)\Om_1 -\Om '$ and
\begin{align*}
 {R'}^{i\bar j}g_{i\bar j}
&\geq
(\inf \Ric(\Omega_1) +1){g'}^{i\bar l}{g'}^{k\bar j}g_{1,k\bar l}g_{1, i\bar j}
- {g'}^{i\bar l}{g'}^{k\bar l}{g'}_{k\bar l}g_{i\bar j}\\
&\geq
-C (\tr_{\Omega '}\Omega)^2 \cdot (\tr_{\Omega}\Omega_1)^2
-(n+1) \tr_{\Omega}\Omega_1\; ,
\end{align*}
where $C$ is a positive constant depending on $\inf \Ric (\Omega_1)$.
Then we have
\begin{align*}
 \Delta ' (\log \tr_{\Omega ' }\Omega ) \geq -(n+1)-C(\tr_{\Om '}\Om)\; ,
\end{align*}
where $C$ depends on $\inf \Ric(\Om_1)$, $\sup_{i\neq k}R_{i\bar i k \bar k}$,
$\sup \tr_{\Om} \Om_1$.
This completes the proof of the lemma.
\end{proof}

Consider $Z_1 := \log \tr_{\Om '}\Om - C' \Psi + \eps S$, such it
has a maximum point $p$ which stays away from $\mathfrak{D}$, and with $C'$ to be chosen.
Then
\begin{align*}
 \Delta ' Z_1 \geq -(n+1)-C\tr_{\Om '}\Om - C' ((n+1) - \tr_{\Om '}\Om) -C\eps \tr_{\Om'}\Om\;.
\end{align*}
Now we choose $C'$ such that $C'-C-C\eps >0$ and we have at $p$, $\tr_{\Om '}\Om \leq C$.
In the same vein as the first part of the subsection we compute that for any $x\in
\mathfrak{X}$ there holds
\begin{align*}
 \log \tr_{\Omega '}\Om (x) &= Z_1 (x) + C' \Psi -\eps S +
\sup_{\partial \mathfrak{X}}\log \tr_{\Om '}\Om \\
&\leq Z_1 (p) + C' \sup \Psi + \sup_{\partial\mathfrak{X}}\log \tr_{\Om '}\Om
\end{align*}
Using the arithmetic-geometric-mean inequality we have
\begin{align}\label{got third}
 (\tr_{\Om  }\Om ')^{\frac{1}{n}} \leq \tr_{\Om '}\Om e^{\frac{F}{n}}
\leq C \sup_{\partial \mathfrak{X}} \tr_{\Om ' }\Om \;,
\end{align}
where $C$ depends on $\inf \Ric (\Om_1)$, $\sup_{i\neq k} R_{i\bar i k \bar k} (\Om)$,
$\sup \tr_{\Om}\Om_1$, $\osc \Psi$, $\inf \Psi_1$.
This formula gives precisely the third constant of formula
\eqref{claim of subsection interior laplacian estimate}.
\end{proof}
We could choose $\Om_1$ as the background metric and repeat the estimate, but it will not provide more information.
The three constants $C_i$
are determined by the formulas \eqref{got first constant for interior lapl est},
\eqref{got second constant for interior lapl est} and
\eqref{got third}, respectively.
This concludes the proof of Proposition \ref{sec}.
\subsection{Boundary Hessian estimate}
The boundary hessian estimate for real and complex Monge-Amp\`ere equation is developed
in \cite{MR780073}, \cite{MR1168119}, \cite{MR1247995},\cite{MR1664889} and \cite{MR1863016}.
The difficulty {that arises} in our problem is the estimate near the singular varieties $V_i$.
The distance function can not be used in our problem,
since we need the uniform estimate which is independent of the distance to the divisor $\mathfrak D$.
We overcome this difficulty by multiplying singular terms with proper weight and using the linear theory
developed in Section \ref{linear thy} to construct an appropriate barrier function which is independent
of the distance function.
\begin{prop}\label{prop: boundary hessian estimate}
The following boundary estimate holds
\begin{align*}
{\sup_{ X\times \p R}	|\sqrt{-1}\p\bar\p\Psi|_\Om \leq C(\sup_{\mathfrak X} |\p\Psi|_\Om+1)\; .}
\end{align*}
The constant $C$ depends on
$|\p\tilde g_{1\a\bar\b}|$,\,
$|\Psi|$,\, $|\p \Psi_1|_\Om$,\, $|\p \Psi_0|_\Om$,\, $|\p\bar\p \Psi_0|$,\,
\end{prop}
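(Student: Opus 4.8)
The plan is to argue locally near an arbitrary boundary point $p\in X\times\p R$ and to decompose, in the cone frame $\{\varepsilon_i,dz^j,dz^{n+1}\}$, the Hessian $\sqrt{-1}\p\bar\p\Psi$ at $p$ into three pieces: the pure tangential part, the mixed tangential--normal part, and the pure double--normal part. Since the solution does not depend on $y^{n+1}$, the only genuine normal direction to $\p\mathfrak{X}=\{x^{n+1}=0\}\cup\{x^{n+1}=1\}$ is $x^{n+1}$. Throughout I would use the linearized operator $\Delta':={g'}^{i\bar j}\p_i\p_{\bar j}$ with ${g'}_{i\bar j}=\Om_{i\bar j}+\Psi_{i\bar j}$. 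Differentiating the logarithm of \eqref{per equ} in a boundary--tangential direction $\p_\gamma$ gives ${g'}^{i\bar j}\p_\gamma {g'}_{i\bar j}=\p_\gamma(\log\tau+\Psi-\Psi_1+\log\det g_1)$; because we differentiate the equation, $\Delta'(\p_\gamma\Psi)$ is thereby expressed through first derivatives of $\Psi$, $\Psi_1$ and the background metric only, so it is controlled by $\tr_{\Om'}\Om$, $|\p\Psi|_\Om$, $|\p\Psi_1|_\Om$ and $|\p\tilde g_{1\a\bar\b}|$. This is the identity that drives the barrier argument.

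\emph{Tangential--tangential.} On $\p\mathfrak{X}$ we have $\Psi=\Psi_0$, so for two boundary--tangential directions the second derivatives of $\Psi-\Psi_0$ vanish along the boundary modulo first--order terms produced by the Christoffel symbols of $\Om$. Hence this part is bounded by $|\p\bar\p\Psi_0|$, $|\p\Psi_0|_\Om$ and $|\p\Psi|_\Om$, which is exactly the dependence recorded in the statement.

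\emph{Mixed tangential--normal; the main obstacle.} Classically one uses the distance $\rho$ to $\p\mathfrak{X}$, together with a term $-\rho^2$ or a subsolution, to build a barrier. Near $\mathfrak{D}$ this fails, because the distance to the divisor is not an admissible ingredient: we need a bound uniform as $p\to\mathfrak{D}$. Instead I would take as barrier the solution $w$ of the linear Dirichlet problem furnished by Section~\ref{open} and Section~\ref{linear thy}, arranged so that $w=0$ on $\p\mathfrak{X}$ and $\Delta' w\leq -C_0(1+\tr_{\Om'}\Om)$, and combine it with a term $\pm\eps S$ built from the singular weight $S=\|s\|^{2\kappa}$ of \lemref{auxiliary function}, whose gradient dominates that of any $C^\a_\b$ function near $\mathfrak{D}$ (its second conclusion). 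For a tangential operator $T$ (built from the $\p_\gamma$, $\gamma\leq n$, and the harmless $\p_{y^{n+1}}$) I would consider $v:=\pm T(\Psi-\Psi_0)+N w\pm\eps S$ with $N$ large. The control on $\Delta'(T\Psi)$, together with the fact that the fixed boundary contribution $\Delta'(T\Psi_0)$ is bounded by the data, yields $\Delta'(T(\Psi-\Psi_0))\geq -C(1+\tr_{\Om'}\Om)$, so $\Delta' v\leq 0$ in the interior once $N$ is large; meanwhile the weight $S$ forces the relevant extremum of $v$ away from $\mathfrak{D}$. Since $v$ has the right sign on $\p\mathfrak{X}$ and vanishes at $p$, a Hopf--type boundary comparison bounds $\p_{x^{n+1}}T(\Psi-\Psi_0)(p)$, hence the mixed second derivative, linearly in $\sup_{\mathfrak{X}}|\p\Psi|_\Om$ and the data. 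The delicate point, as in the interior estimates, is to tune $\kappa$ so that $S$ dominates the collapse of the $C^\a_\b$ quantities in $T(\Psi-\Psi_0)$ near $\mathfrak{D}$ while keeping $\tfrac{\sqrt{-1}}{2}\p\bar\p S\geq -C\Om$ (conclusion (1) of \lemref{auxiliary function}); this is where $\b\in(0,\tfrac12)$ and the precise growth rates enter, and it is the step I expect to be hardest.

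\emph{Double--normal, via the equation.} Writing ${g'}$ in block form with the $z^{n+1}$ row and column separated and expanding $\det{g'}$ along the $(n+1,\overline{n+1})$ entry, I would solve for ${g'}_{n+1,\overline{n+1}}$. The determinant is bounded above by the $L^\infty$ estimate of Subsection~\ref{l infty}; the tangential block equals the restriction of the boundary cone metric $\om_{\vphi_0}$ (resp.\ $\om_{\vphi_1}$), hence is positive definite with determinant bounded below; and the mixed entries were controlled in the previous step. The expansion then bounds ${g'}_{n+1,\overline{n+1}}$, and thus $\Psi_{n+1,\overline{n+1}}$, from above. Assembling the three pieces gives the bound on $|\sqrt{-1}\p\bar\p\Psi|_\Om$ over $X\times\p R$ with the stated dependence, the term $\sup_{\mathfrak{X}}|\p\Psi|_\Om$ being left on the right--hand side as in the statement.
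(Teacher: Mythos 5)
Your overall architecture coincides with the paper's: the same three-way decomposition of the boundary Hessian, the tangent--tangent part read off from the boundary data, the double--normal part recovered from the equation by expanding the determinant against the tangential block, and a Hopf-type barrier argument at $p$ driven by the tangentially differentiated equation for the mixed part. The genuine gap is the one ingredient your barrier cannot do without: a function $w$ with $w=0$ on $\p\mathfrak{X}$ and $\tri' w\leq -C_0\bigl(1+\tr_{\Om'}\Om\bigr)$, which you claim to obtain from the linear Dirichlet problem of Sections~\ref{open} and \ref{linear thy}. That linear theory solves equations with respect to the \emph{fixed} background cone metric $\Om$ (or $\Om_1$), not the unknown metric $\Om'=\Om_\Psi$; a solution of, say, $\tri_\Om w=-(n+1)$ with zero boundary data merely has bounded complex Hessian, hence only $|\tri' w|\leq C\,\tr_{\Om'}\Om$, with no control on the sign. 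In fact no function on $\mathfrak{X}$ can satisfy $\frac{\sqrt{-1}}{2}\p\bar\p w\leq -c\,\Om$ globally: restricting to a compact slice $X\times\{z^{n+1}\}$ and integrating the Laplacian against $\om^n$ (legitimate across $\mathfrak D$ by \lemref{lemma: integration by parts}) would force $0=\int_X\tri_\om w\,\om^n\leq -c\,n\Vol<0$. The required strict negativity can therefore only come from the Monge--Amp\`ere equation itself, and this is exactly the paper's mechanism: its negativity driver is $v=(\Psi-\Psi_1)+sx-Nx^2$ in the real normal variable $x$, where $\tri'(\Psi-\Psi_1)\leq (n+1)-\eps\,\tr_{\Om'}\Om$ because $\Om_{\Psi_1}\geq\eps\,\Om$, and the term $-2N{g'}^{(n+1)\overline{n+1}}$, combined with the arithmetic--geometric mean inequality and the lower bound $\prod_{\a}(1+\Psi_{\a\bar\a})^{-1}=e^{\Psi_1-\Psi}/(\tau f)\geq c>0$ supplied by the $L^\infty$ estimate, overcomes the $(n+1)$ and gives $\tri' v\leq -\frac{\eps}{4}\bigl(1+\tr_{\Om'}\Om\bigr)$. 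Without this, your barrier $\pm T(\Psi-\Psi_0)+Nw\pm\eps S$ has no supersolution property and the comparison argument never starts.

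Two secondary discrepancies are worth recording. First, the linear theory does enter the paper's proof, but in a weaker role than you assign it: the auxiliary function $u_\parallel$ solves $\tri_\Om u_\parallel=-(n+1)$ with nonnegative boundary data vanishing only at $p$, and (after adding a convex function $u_\perp$ of $z^{n+1}$) serves solely to make the full barrier $h=\l_1\tilde v+\l_2\tilde u+\l_3 D_i(\tilde\Psi-\tilde\Psi_1)$ nonnegative on the interior portion of $\p\Omega_\delta$; only the harmless one-sided bound $\tri' u\leq C\,\tr_{\Om'}\Om$ is ever used for it. Second, the paper does not invoke Jeffres' function $S$ in this estimate at all: rather than pushing extrema away from $\mathfrak D$ as in the interior estimates, it shrinks $\Omega_\delta$ so as to stay at distance comparable to the distance from $p$ to $\mathfrak D$, works in the cone coordinates $w^i$, and verifies that every constant ($N$, $s$, $\l_1,\l_2,\l_3$) is independent of $\delta$ and of that distance; the boundedness of the connection $|\p\tilde g_{1\a\bar\b}|$ in these coordinates (available for small cone angle and $C^3_\b$ boundary data, cf.\ Corollary~\ref{connection}) is what makes this uniformity possible. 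Your $\pm\eps S$ device is reasonable in spirit, but it is not needed once one localizes as the paper does, and by itself it does not repair the sign problem above.
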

\begin{proof}
Fix a point {$p\in M\times \p R $,
and consider $U_p\subset M \times R$} an open neighborhood of $p$.
Recall that we denote by $\Psi$ an \emph{a priori} solution of the equation
$$
\det(\Om_{i\bar j}+\Psi_{i\bar j})
=\tau f e^{\Psi-\Psi_1} \det(\Om_{i\bar j}) \; ,
$$
whose
boundary values are given by the datum $\Psi_0$.
The tangent-tangent term of the boundary Hessian estimate follows from the boundary value
directly. Since the boundary is flat, the normal-normal term follows from the construction
of the approximate geodesic equation
$$
[\vphi''-(\p\vphi',\p\vphi')_{g_{\vphi}}]
\det\om_\vphi=\Om_\Psi^{n+1}=\tau e^{\Psi-\Psi_1} \det(\Om_{1i\bar j})\, ,
$$
i.e.
\begin{align}\label{eqn_claim_bdry_hess_estimate}
 \left|
 \frac{\partial }{\partial z^{n+1}}
\frac{\partial }{\partial z^{{\overline{n+1}}} }
\Psi
 \right|_{\Om; X\times \p R}
 \leq
\sum_i  \left|
 \frac{\partial }{\partial z^{n+1} }
\frac{\partial }{\partial z^{\overline{i}} }
{\Psi}
\right|_{\Om; X\times \p R}
+
C
\; .
\end{align}
The constant $C$ depends on $|\Psi_1|$,\, $|\Psi_0|$,\, $|\p\bar\p \Psi_0|_\Om$ and $\det(\Om_{1i\bar j})$.
The quantity $\det(\Om_{1i\bar j})$ depends on the boundary value and the chosen function $\Phi$
in {Proposition \ref{csck eqv}}.
Then the aim of the present subsection is to derive the mixed tangent-normal estimate on the boundary.

We put
$$
\tri':= \sum_{\a , \b =1}^{n+1} {g'}^{\a \overline{\b}}
\frac{\partial^2}{\partial z^{\a} \partial z^{\overline{\b}}}\; .
$$
The elliptic operator $\tri'$ allows the use of the maximum principle in Section \ref{maximum_principle}.

Our idea is to construct {a barrier function} and apply the maximum principle locally in a small neighborhood
of {the point $p\in M\times \p R$.}
Since the second order derivatives of $\Psi$ blow up near the singular points where
$\mathfrak D$ intersects {$X\times \p R$}, we need to prove that the estimates do not depend on
the choice of the diameter of the small neighborhood {$U_p$.}

Let us suppose {that the open neighborhood $U_p \subset X\times R$ is a coordinate chart near $p$
(cf. Definition \ref{defn: cone metrics inside introduction}) for  the first $n$ variables;
moreover, the coordinate $z^{n+1} := x+ \sqrt{-1}y$ in $U_p$ locally parametrizes the Riemann surface $R$.}
Next, let us define the function {$v : U_p \rightarrow \mathbb{R}$} as
\begin{align}\label{eq_defn_function_v}
 v:= (\Psi -\Psi_1) + sx - Nx^2\, ,
\end{align}
where $N,s$ are constants which depend only on
$M\times R$, the background metric $g$ and the datum $\Psi_0$, and they
will be determined later in \eqref{choice_of_N} and \eqref{choice_of_s} respectively.
Also, let us fix the small neighborhood of the origin
$\Omega_{\delta} := (M\times R)\cap B_{\delta}(0){\subset U_p}$
with small {radius} $\delta<1$.
We require that $\Omega_{\delta}$ does not intersect  $\mathfrak D$.
We will show that the estimate does not depend on the choice of $\delta$.

We first prove the following lemma.
\begin{lem}
The following inequalities hold
\begin{align}\label{eq_subclaim_inequalities_on_v}
\left\{
\begin{array}{l}
\tri'v \leq - \frac{\epsilon}{4}
\left(
1 + \sum_{\a , \b=1}^{n+1} {g'}^{\a \bar \b } g_{ \a \bar \b}\right)   \mbox{ in } \Omega_\delta
 \\
  v\geq 0 \qquad \mbox{ on } \partial \Omega_\delta \;,
\end{array}
\right.
\end{align}
where $\epsilon>0$ is a constant depending on the lower bound of $\Om_{\Psi_0}$.
\end{lem}
\begin{proof}
By means of the  equation  \eqref{eq_defn_function_v} and the linearity of $\tri'$,
let us first consider the term $\tri'(\Psi -\Psi_1)$.
Here the remark to do is that,
as the metric $g_{\a \bar \b } + \Psi_{1\a \bar \b}$ is $L^\infty$ equivalent to $g_{\a \bar \b}$ in
$X\times R$, then we can find a uniform constant $\epsilon$ such that
$g_{\a \bar \b} + \Psi_{1\a \bar \b} > \epsilon g_{\a \bar \b}$ holds point-wise in
$\Omega_{\delta} $ (could be in the whole $\mathfrak X$).
Notice that the lower bound of $\Om_{\Psi_1}$ depends on the lower bound of $\Om_{\Psi_0}$.
We conclude, using the remark, that just by definition there holds
\begin{align*}
 &\tri'(\Psi - \Psi_1) =
\sum_{\a , \bar \b=1}^{n+1} {g'}^{\a \bar \b}
\left[
\left(
g_{\a \bar \b} +\Psi_{\a \bar \b }
\right)
-
\left(
g_{\a \bar \b } + \Psi_{1 \a \bar \b}
\right)
\right]\\
&= n+1 - \sum_{\a , \bar \b=1}^{n+1} {g'}^{\a \bar \b } g_{{\Psi_1} \a \bar \b}
\leq n+1 - \epsilon \sum_{\a , \bar \b=1}^{n+1} {g'}^{\a \bar \b } g_{ \a \bar \b}\; .
\end{align*}
It is clear that $\tri'x =0$ and $\tri' x^2=2{g'}^{(n+1)\overline{n+1}}$.
Thus, we have
\begin{align*}
\tri' v &= \tri'(\Psi - \Psi_1) + s\tri'x -2N {g'}^{(n+1)\overline{n+1}}\\
&\leq n+1 - \epsilon  \sum_{\a , \bar \b=1}^{n+1} {g'}^{\a \bar \b } g_{\a \bar \b}
-2N {g'}^{(n+1)\overline{n+1}} \\
&= n+1 + \left(
-\frac{\epsilon}{2} \right)
\sum_{\a , \bar \b=1}^{n+1} {g'}^{\a \bar \b } g_{\a \bar \b}
-2N {g'}^{(n+1)\overline{n+1}}
- \frac{\epsilon}{2}  \sum_{\a , \bar \b=1}^{n+1} {g'}^{\a \bar \b }g_{\a \bar \b}\;.
\end{align*}
Without loss of generality we can prove the inequality in the local normal coordinate such that,
at the origin, {there holds} $g_{\a \bar \b} = \delta_{\a \bar \b}$. We have, at the origin,
\begin{align*}
  & N {g'}^{(n+1)\overline{n+1}} + \frac{\epsilon}{4}
\sum_{\a , \bar \b=1}^{n+1} {g'}^{\a \bar \b }g_{\a \bar \b} \\
&=
\left(
N + \frac{\epsilon}{4}
\right)
\frac{1}{1+  \Psi_{(n+1)\overline{n+1}}}
+ \frac{\epsilon}{4}
\sum_{j=1}^n \frac{1}{1+\Psi_{j\bar j}}\\
&\geq (n+1)
\left[
\left(
N+\frac{\epsilon}{4}
\right)
\cdot
\left(
\frac{\epsilon}{4}
\right)^n
\prod_{\a=1}^{n+1}
\frac{1}{1 + \Psi_{\a \bar \a}}
\right]^{\frac{1}{n+1}}\;.
\end{align*}
Since, still at the origin, there holds
\[
 \prod_{\a=1}^{n+1} \frac{1}{1+\Psi_{\a\bar\a}} = \frac{\Omega^{n+1}}{\Omega_\Psi^{n+1}}
= \frac{1}{
\tau f}e^{\Psi_1-\Psi} \geq e^{\inf(\Psi_1-\Psi)}\frac{1}{\sup f}\; ,
\]
then we choose the constant $N$ large enough so that
\begin{align}\label{choice_of_N}
 -2(n+1)
\left[
\left(
N + \frac{\epsilon}{4}
\right)
\left(
\frac{\epsilon}{4}
\right)^n
e^{\inf(\Psi_1-\Psi)}\frac{1}{\sup f}
\right]^{\frac{1}{n+1}}
+n+1 < -\frac{\epsilon}{4}\;.
\end{align}
Here $N$ depends on $\inf(\Psi_1-\Psi)$, $\sup f=\sup \frac{\Om_1^{n+1}}{\Om^{n+1}}$ and $\eps$.

To fully achieve the claim \eqref{eq_subclaim_inequalities_on_v}, we have to verify
the condition on $\partial\Omega_{\delta}$.
On $\partial \Omega_\delta \cap \partial (M\times R)$, there holds $v=0$.
On $\partial \Omega_\delta \cap \Int(M\times R)$, there holds, since $ \Psi\geq  \Psi_1$,
$v\geq (s-Nx)x \geq (s-N\delta)x$. So we choose $s=2N$ such that
\begin{align}\label{choice_of_s}
 (s-N\delta)x \geq 0.
\end{align} This completes the proof of the lemma.
\end{proof}
Now, we come to construct the auxiliary function $u$.
We construct a nonnegative boundary value $\phi$ such that $\phi$ only vanishes on {the} point $p$.
For example, $\phi=\Psi_0-\Psi_0(p)+e^{|\Psi_0-\Psi_0(p)|}-1$.
Then we solve the equation $\tri_g u_\parallel=-n-1$ with the boundary value $\phi$.
According to the maximum principle for the cone metrics {(cf. Proposition \ref{wmp}},
we have $u_\parallel\geq 0$.
Meanwhile, we choose a smooth nonnegative function $u_\perp$ of $z^{n+1}$ monotonic along $\frac{\p}{\p z^{n+1}}$
such that it vanishes on the boundary and strictly larger than $u_\parallel+1$ in the interior of $\mathfrak X$,
since $u_\parallel$ is bounded.
Now, we define the function $u$ {by adding up $u_\parallel$ and $u_\perp$.}

We need to change the variables {via the map  $W$ defined at \eqref{conetransform},
extended as the identity on the variable $z^{n+1}$;
we mark functions and operators transformed under $W$ with $\tilde{}$ on the top.
Finally, under $W$ coordinate functions become, for $1\leq i \leq n$, $w^i = x^i + \sqrt{-1}y^i$;
then,} we define $D_i := \frac{\partial }{\partial x^i}$, for $1\leq i \leq 2n$.
With the above notations, we define the function {$h : U_p \rightarrow \mathbb{R}$} as
\begin{align*}
 h:= \l_1 \tilde v + \l_2 \tilde u+ \l_3  \cdot D_i (\tilde\Psi - \tilde\Psi_1)\;,
\end{align*}
for {one fixed} $1\leq i \leq n$ and three constants $\l_1$, $\l_2$ and $\l_3$ determined below.

{We emphasize that till the end of the subsection, the index $1\leq i \leq n$ is fixed;
we recall that the cone angle $\b_i$ is equal to one for the directions corresponding to $k+1 \leq i\leq n$. }

We notice that at the origin (or point $p$), the value of $h$ is zero.
We define $\rho_i$ as the distance from $p$ to the divisor only along the coordinate {$w^i$.}
{We shrink $\Om_\delta$} to be the set containing such points whose distance to $p$
{less than half the distance from $p$ to $D$.}
{So, } on $\partial \Omega_\delta \cap \partial (M\times R)$
there holds $\frac{\rho_i}{2}\leq|{w^i}|\leq 2\rho_i$ and ${\tilde u}\geq 1$;
{then, letting} $\l_3$ be the smallest eigenvalue of the inverse matrix of $W_\ast\Om$,
there holds { for  $q\in \partial \Omega_\delta \cap \Int(M\times R)$}
\begin{align*}
h(q)&\geq \l_2 \tilde u(q)-\l_3 |D_i (\tilde\Psi - \tilde\Psi_1)(q)|\\
&\geq \l_2-C|\p(\Psi - \Psi_1)(q)|_\Om\\
&\geq 0 \, ,
\end{align*}
where the last inequality is true provided $\l_2=1+C|\p (\Psi - \Psi_1)(q)|_\Om$
with $C$ that depends on background metric $\Om$.
Let us come to analyze $\tilde\tri' h$.
\begin{lem}
There exist $\l_1$ depending on $\l_2$,
$\l_4=|D_i\log\tilde\Om_1^{n+1}|+ |\p\Psi|_{\Om}
+ |\p\Psi_1|_{\Om}$,
$\l_5=|D_i \tilde g_{1\a\bar\b}|_{\Om}$, such that
\begin{align*}
 \tilde\tri'h\leq 0.
\end{align*}
\end{lem}
\begin{proof}
By our preliminary work, we read off
\eqref{eq_subclaim_inequalities_on_v} an estimate for $ \tri'v=\tilde\tri' \tilde v$.
About $ \tilde\tri'\tilde u$, we compute
\begin{align}\label{eqn: firs for h}
  {\tilde\tri'\tilde u =} \tri' u = \sum_{\a , \b =1}^{n+1} {g'}^{\a\bar\b} u_{\a\bar\b}
  \leq C \sum_{\a , \b =1}^{n+1} {g'}^{\a\bar\b} g_{\a\bar\b}\, ,
\end{align}
{ where  $C$ is a constant depending on $\Psi_0$ and $u_\perp$.
Finally, as $\Psi$ is a solution to  $\Omega_\Psi^{n+1} = e^F\Omega^{n+1}$
with $F=\log \tau + \log f + \Psi - \Psi_1$, we differentiate this equation
under coordinate ${w^i}$, and we get }
\begin{align*}
& \tilde\tri' D_i(\tilde\Psi- \tilde\Psi_1)\\ &= \sum_{\a , \b =1}^{n+1} D_i\log\tilde\Om_1^{n+1}
+ D_i \tilde\Psi-D_i\tilde\Psi_1- \sum_{\a , \b =1}^{n+1} {(\tilde{g})'}^{\a\bar\b}D_i \tilde{g}_{1\a\bar\b}\;.
\end{align*}
{We end up with the estimate for $\tilde\tri'  D_i (\tilde\Psi -\tilde\Psi_0)$,}
\begin{align}\label{eqn: second for h}
\tilde \tri'  D_i(\tilde\Psi - \tilde\Psi_1)
\leq
\l_4
+ \l_5\sum_{\a , \b =1}^{n+1} {g'}^{\a\bar\b} g_{\a\bar\b}
\;.
\end{align}
{There,} $\l_4:= |\p\log\tilde\Om_1^{n+1}|+ |\p\Psi|_{\Om}
+ |\p\Psi_1|_{\Om}$ ,
$\l_5:=|\p \tilde g_{1\a\bar\b}|_{\Om}$.
We conclude the following estimate for $\tilde\tri' h$
{by means of \eqref{eqn: firs for h} and \eqref{eqn: second for h};}
\begin{align*}
 &\tilde\tri'  h=\l_1\tilde\tri' \tilde v+\l_2\tilde\tri' \tilde u+\l_3\tilde\tri' D_i(\tilde\Psi- \tilde\Psi_1) \\ &\leq -\l_1\frac{\epsilon}{4}
\left(
1+ \sum_{\a , \b =1}^{n+1} {g}'^{\a\bar\b} g_{\a\bar\b}
\right)
+ \l_2 {\cdot C} \sum_{\a , \b =1}^{n+1} {g'}^{\a\bar\b} g_{\a\bar\b}
+
\left[
\l_4+ \l_5 \sum_{\a , \b =1}^{n+1} {g'}^{\a\bar\b} g_{\a\bar\b}
\right] \\
&\leq
\left[
-\frac{\epsilon}{4}\l_1 +{\l_2 \cdot C} + \l_4+\l_5
\right]
\cdot
\left(
1+ \sum_{\a , \b =1}^{n+1} {g'}^{\a\bar\b} g_{\a\bar\b}
\right)<0 \;,
\end{align*}
after choosing $\l_1$ properly.
\end{proof}
{(Completion of the proof of Proposition \ref{prop: boundary hessian estimate}.)}
To summarize, we get $h\geq0$ on $\partial \Omega_\delta $ and $\tilde\tri' h <0$ in $\Omega_\delta$ in the weak sense.
So, by the weak maximum principle, we get that $h\geq 0$ in $\Omega_\delta$. Since $h(0)=0$,
then we have {(recall $z^{n+1}= x+\sqrt{-1}\, y$)}
\[
 \frac{\partial h}{\partial x}(0) \geq 0 , \quad \frac{\partial h}{\partial y}(0) \geq 0\; .
\]
In particular, we compute
\[
 \frac{\partial h}{\partial x}= \l_1 \frac{\partial(\tilde\Psi - \tilde\Psi_1)}{\partial x}+s
 -2Nx + \l_2 \frac{\partial \tilde u}{\partial x}
+ \l_3 \frac{\partial}{\partial x} D_i (\tilde\Psi - \tilde\Psi_1) \;,
\]
which leads to
\begin{align*}
\l_3 \frac{\p }{\p x} D_i (\tilde\Psi - \tilde\Psi_1)(0) \geq -s
 -\l_1 \frac{\p(\tilde\Psi - \tilde\Psi_1)}{\p x}(0)-\l_2 \frac{\p \tilde u}{\partial x}(0) \;.
\end{align*}
Combining the above inequality with $\frac{\p}{\p y}D_i (\tilde\Psi- \tilde\Psi_1) =0$,
and adding the inequalities, we get that for any $1\leq i \leq n$
\begin{align*}
\frac{\p}{\p z^{n+1}} \frac{\p}{\p {w^{\bar i}}} (\tilde\Psi-\tilde\Psi_1)(0)
\geq
-C\;,
\end{align*}
where $C$ depends on $\l_1$, $\l_2$, $|\p\Psi|_{\Om}$, $|\p\Psi_1|_{\Om}$ and $|\p u|_{\Om}$.
We repeat the same argument for $D_i = - \frac{\p}{\p x^i}$ and for
$D_i = - \frac{\partial}{\partial y^i}$ and we conclude that the tangent-normal
derivative is bounded, for $1\leq i \leq n$, by
\begin{align}\label{end_section_hessian_bdry_estimate}
 { \left| \frac{\p}{\p z^{n+1}}  \frac{\p}{\p z^{\bar i}} \Psi \right|_\Om(0)
 = \left| \frac{\p}{\p z^{n+1}}  \frac{\p}{\p w^{ \bar i}} \Psi \right|(0)
\geq
\left| \frac{\partial}{\partial z^{n+1}} \frac{\partial}{\partial w^{ \bar i}} \Psi_0 \right|(0)
+ C\;,}
\end{align}
where again
$C$ depends on $\l_1$, $\l_2$, $|\p\Psi|_{\Om}$, $|\p\Psi_1|_{\Om}$ and $|\p u|_{\Om}$.
{Note from the construction of $\Psi_1$ that the derivatives of $\Psi_1$ are}
controlled by the corresponding derivatives of $\Psi_0$.
As \eqref{end_section_hessian_bdry_estimate}
clearly coincides with \eqref{eqn_claim_bdry_hess_estimate},
this completes the proof of the proposition.
\end{proof}


\subsection{Interior gradient estimate}
We directly calculate the norm of the gradient to obtain { the differential inequality
in Proposition \ref{prop: gradient estimate}.}
{Gradient estimates were} obtained by Cherrier and Hanani \cite{MR921559}\cite{MR1383012}
for Hermitian manifolds and later by Blocki \cite{MR2495772} 
for the K\"ahler case.
Since \eqref{per equ sim} has singularity along the divisor,
in order to apply the maximum principle,
we need to choose {an} appropriate test function near the divisor.

{We define the following functions,
where $\epsilon >0$ and $\gamma: \mathbb{R}\rightarrow \mathbb{R}$
are not yet specified}
\begin{align*}
&B:=|\p\Psi|^2=g^{i\bar j}\Psi_i\Psi_{\bar j}\, ,\quad
D:=|\p\Psi|_{g'}^2=g'^{k\bar l}\Psi_k\Psi_{\bar l},\\
&Z:=\log B-\gamma(\Psi),\quad K:=Z-\sup_{\mathfrak D} Z+\eps S\; .
\end{align*}
Consider $\kappa$ and $S=||s||^{2\kappa}$ as in Lemma \ref{auxiliary function}. Recall that $0<\a<\mu=\b^{-1}-1$.
\begin{lem}\label{lem:4_15}
Suppose that $\Psi\in C^{2,\a}_\b$ with $\a >0$ and $\b_i <{1\over 2}$, $\forall 1\leq i \leq k$. Then for any $\kappa$ satisfies $\b_i \leq 2\kappa < (1+ \a)\b_i$, $\forall 1\leq i \leq k$, the function $K = Z-\sup_{\mathfrak D} Z+\eps S$
achieves its maximum away from $\mathfrak D$
and $|\p S|_{\Om}\leq C$.
\end{lem}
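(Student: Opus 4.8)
The plan is to establish the two assertions separately: the bound $|\p S|_\Om\le C$ by a direct computation in a cone chart, and the location of the maximum of $K$ by combining the assumed regularity $\Psi\in C^{2,\a}_\b$ with the two properties of $S$ recorded in \lemref{auxiliary function}.

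For the gradient bound I would work in a local cone chart where $\|s\|^2=|z^1|^2\|e\|^2$, so that $S=|z^1|^{2\kappa}\|e\|^{2\kappa}$ is smooth off $\mathfrak D$ and only its behaviour as $z^1\to 0$ needs attention. Differentiating once gives $|\p_{z^1}S|\sim |z^1|^{2\kappa-1}$, and since the inverse cone metric satisfies $g^{1\bar 1}\sim |z^1|^{2(1-\b_1)}$ in the singular direction while the regular directions contribute only bounded terms, the leading term of $|\p S|^2_\Om$ is $|z^1|^{4\kappa-2\b_1}$. The hypothesis $\b_i\le 2\kappa$ forces every exponent $4\kappa-2\b_i$ to be nonnegative, so $|\p S|^2_\Om$ stays bounded (indeed tends to $0$ when the inequality is strict) as one approaches $\mathfrak D$; this settles the first half of the claim.

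For the maximum, the starting observation is that $\Psi\in C^{2,\a}_\b$ makes $\p\Psi$ a H\"older $(1,0)$-form, so that $B=g^{i\bar j}\Psi_i\Psi_{\bar j}=\sum_i|f_i|^2+\sum_j|f_j|^2$ is bounded on $\mathfrak M$ with the singular coefficients $f_i\in C^\a_0$ vanishing along $V_i$. Consequently $Z=\log B-\gamma(\Psi)$ is bounded above and extends upper--semicontinuously to $\overline{\mathfrak M}$ (tending to $-\infty$ along the locus of $\mathfrak D$ where $B$ collapses), so the maximum of $K=Z-\sup_{\mathfrak D}Z+\eps S$ over the compact $\overline{\mathfrak M}$ is attained. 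Since $S$ vanishes on $\mathfrak D$, there $K=Z-\sup_{\mathfrak D}Z\le 0$, and it therefore suffices to rule out that a maximizing point sits on $\mathfrak D$. I would do this by comparing, as one moves off $\mathfrak D$ in the singular direction, the switching--on of the barrier $\eps S$ against the modulus of continuity of $Z$ inherited from the first--order data of $\Psi$: this is precisely the content of the second conclusion of \lemref{auxiliary function}, namely that under $2\kappa<(1+\a)\b_i$ the function $S$ grows faster than the collapse of the relevant $C^\a_\b$ quantity built from $\p\Psi$, so that near $\mathfrak D$ the value of $K$ stays strictly below its supremum over a fixed interior region. Hence the maximum is attained away from $\mathfrak D$, and the admissible window $\b_i\le 2\kappa<(1+\a)\b_i$ is nonempty exactly because $\a>0$.

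The main obstacle is this last comparison, i.e. the simultaneous two--sided control of $\kappa$. The lower bound $2\kappa\ge\b_i$ is dictated by the gradient bound just proved, while displacing the maximum off $\mathfrak D$ requires $S$ to dominate the collapse of $|\p\Psi|^2$, which is what the upper bound $2\kappa<(1+\a)\b_i$ encodes; the delicate point is to verify that these are compatible and that, inside the window, $S$ genuinely outgrows that collapse. What makes this nontrivial is that $C^{2,\a}_\b$ controls only the mixed second derivatives of $\Psi$ — the holomorphic Hessian is unavailable — so $B$ cannot be estimated in $C^1$, and the exponent bookkeeping governing the collapse rate must be carried out by hand, exactly as in the proof of \lemref{auxiliary function}.
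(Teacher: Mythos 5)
Your treatment of the gradient bound is correct and is the same computation as the paper's: the leading term of $|\p S|^2_\Om$ in a cone chart is $|z^1|^{4\kappa-2\b_1}$, and the exponent is nonnegative exactly because $2\kappa\geq \b_i$.

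The second half, however, has a genuine gap, and it sits precisely at the point you flag in your closing paragraph without resolving it. To push the maximum of $K=Z-\sup_{\mathfrak D}Z+\eps S$ off $\mathfrak D$ you must compare $\eps S\sim|z^1|^{2\kappa}$ against the rate at which $Z$ can fall below its value on the divisor as one moves off it. If the only regularity you feed into this comparison is what comes from the first-order data of $\Psi$ --- i.e.\ $\p\Psi\in C^\a_\b$, hence $B\in C^\a_\b$ and $Z\in C^\a_\b$ --- then the collapse of $Z$ is only $O(|z^1|^{\a\b_i})$, and conclusion $(2)$ of \lemref{auxiliary function} (whose proof shows $|\nabla^a S|^2_\Om\sim|z^1|^{4\kappa-2a\b}$, unbounded iff $2\kappa<a\b$) would let $S$ dominate only under $2\kappa<\a\b_i$. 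Since $\a<1$, this is incompatible with the lower bound $2\kappa\geq\b_i$ you need for the gradient estimate: the window you describe would be empty. The window $\b_i\leq 2\kappa<(1+\a)\b_i$ is meaningful only if $Z\in C^{1,\a}_\b$, i.e.\ only if one may differentiate $B=g^{i\bar j}\Psi_i\Psi_{\bar j}$ once and still land in $C^\a_\b$. That differentiation produces the pure second covariant derivatives $\nabla_1\nabla_i\Psi$, which, as you yourself observe, are not controlled by the definition of $C^{2,\a}_\b$ (only mixed derivatives are). The paper closes exactly this hole by invoking Remark \ref{pure}: the Schauder estimates of Brendle and Jeffres--Mazzeo--Rubinstein for the pure $\p\p$ derivatives give $\nabla_1\nabla_i\Psi\in C^\a_\b$, hence $\p_1Z\in C^\a_\b$ and $Z\in C^{1,\a}_\b$; only then does $S$, whose $(1+\a)$-order derivative blows up like $|z^1|^{2\kappa-(1+\a)\b}$, outgrow the possible decrease of $Z$, ruling out a maximum on $\mathfrak D$. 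Your proposal instead defers to ``bookkeeping as in the proof of \lemref{auxiliary function}'', but that lemma only computes the growth of $S$; it supplies no regularity for $Z$, so the comparison cannot be completed from what you have assumed. (A smaller inaccuracy: $B$ does not collapse along $\mathfrak D$ in general --- only the components of $\p\Psi$ in the singular directions vanish there --- so $Z$ need not tend to $-\infty$ along the divisor; this does not affect the structure of the argument, since upper semicontinuity and $S|_{\mathfrak D}=0$ are all one needs there.)
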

\begin{proof}The second {claim} follows directly from the formula
{(cf. Lemma \ref{auxiliary function})}
$$g_\Om^{1\bar 1}\frac{\p}{\p z^1} S\frac{\p}{\p z^{\bar 1}} S =O( |z^1|^{2(1-\b)+4\kappa-2})$$
and the fact that the exponent is non-negative. 

Now we verify
the first statement. We only concern one direction $\frac{\p}{\p z^1}$ perpendicular to one component of the divisor defined by $z^1=0$, other directions are verified similarly. We have $$
\p_1 Z=B^{-1}g^{i\bar j}(\nabla_{1}\nabla_i\Psi\cdot\Psi_{\bar j}+\Psi_{i}\cdot\p_{1}\p_{\bar j}\Psi)-\gamma' \Psi_1\; .
$$
In order to prove $Z \in C^{1,\a}_\b$, it suffices to prove that $\nabla_{1}\nabla_i\Psi\in C^{\a}_\b$ which follows from Remark \ref{pure}. On the other hand, $|\p_1^{1+\a} S|=O(|z^1|^{2\kappa-(1+\a)\b})$ with negative power. Thus we see that $S$ grows extremely faster than $Z-\sup_{\mathfrak D} Z$ near the divisor. Since $Z-\sup_{\mathfrak D} Z$ is non-positive on ${\mathfrak D}$ while $S$ vanishes along ${\mathfrak D}$, we obtain that the maximum point of $Z-\sup_{\mathfrak D} Z+\eps S$ must be achieved on $\mathfrak M$.
\end{proof}

With the lemmas above, we could assume that $p$ in the interior of $\mathfrak{M}$ is the maximum point of $K$
and choose the normal coordinate around $p$.
We {get} at $p$,
\begin{align*}
g_{i\bar j}=\delta_{ij}\text{ and }\frac{\p g_{i\bar j}}{\p z^k}
=\frac{\p g_{i\bar j}}{\p z^{\bar j}}=0\; ;
\end{align*} so
\begin{align*}
\Psi_{i\bar j}=\Psi_{i\bar i}\delta_{ij}\text{ and }g'^{i\bar j}=\frac{\delta_{ij}}{1+\Psi_{i\bar i}}\;.
\end{align*}
We have
\begin{align*}
\tri' K
&=B^{-1}\sum_{k,i,j}\frac{1}{1+\Psi_{k\bar k}}[R_{i\bar j k\bar k}\Psi_j\Psi_{\bar i}
+\Psi_{k\bar k i}\Psi_{\bar i}
+\Psi_i\Psi_{k\bar k\bar i }
+\Psi_{ik}\Psi_{\bar i\bar k}+\Psi_{i\bar k}\Psi_{\bar i k}] 
\\
&-\gamma'\tri'\Psi
-\gamma''D
-B^{-2}g'^{k\bar l}B_kB_{\bar l}
+\eps \tri'  S\;.
\end{align*}
We deal with these terms by means of the next  lemmas.
\begin{lem}
The following inequality holds
\begin{align*}
B^{-1}\sum_{k,i,j}\frac{1}{1+\Psi_{k\bar k}}R_{i\bar j k\bar k}\Psi_i\Psi_{\bar j}
-\gamma'\tri'\Psi
&\geq \sum_{k,i,j}\frac{1}{1+\Psi_{k\bar k}}(\inf R_{i\bar j k\bar k}+\gamma')
-(n+1)\gamma' \;.
\end{align*}
\end{lem}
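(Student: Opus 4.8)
The plan is to evaluate the two terms on the left-hand side separately at the maximum point $p$ in the normal coordinates already fixed above (where $g_{i\bar j}=\delta_{ij}$, $g'^{i\bar j}=\delta_{ij}(1+\Psi_{i\bar i})^{-1}$, and $B=\sum_i|\Psi_i|^2$), and then to bound the curvature contribution from below by the smallest eigenvalue of the bisectional curvature.

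First I would dispose of the term $-\gamma'\tri'\Psi$ by a direct computation. Since $g'^{k\bar k}=(1+\Psi_{k\bar k})^{-1}$ at $p$,
\[
\tri'\Psi=\sum_k\frac{\Psi_{k\bar k}}{1+\Psi_{k\bar k}}=(n+1)-\sum_k\frac{1}{1+\Psi_{k\bar k}},
\]
so that $-\gamma'\tri'\Psi=\gamma'\sum_k(1+\Psi_{k\bar k})^{-1}-(n+1)\gamma'$. This already produces the $\gamma'$-weighted sum and the isolated $-(n+1)\gamma'$ on the right-hand side.

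Next I would treat the curvature term, which I rewrite as $\sum_k(1+\Psi_{k\bar k})^{-1}\bigl(B^{-1}\sum_{i,j}R_{i\bar jk\bar k}\Psi_i\Psi_{\bar j}\bigr)$. For each fixed $k$ the array $[R_{i\bar jk\bar k}]_{i,j}$ is Hermitian by the symmetries of the curvature tensor, so the Hermitian form it defines is bounded below by its smallest eigenvalue times the squared length of the vector $(\Psi_1,\dots,\Psi_{n+1})$. Because $B=\sum_i|\Psi_i|^2$ in the normal frame, this gives
\[
B^{-1}\sum_{i,j}R_{i\bar jk\bar k}\Psi_i\Psi_{\bar j}\geq \inf R_{i\bar jk\bar k},
\]
and multiplying by the positive weight $(1+\Psi_{k\bar k})^{-1}$ and summing over $k$ yields the curvature part of the claim.

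Finally I would add the two estimates: combining the displayed eigenvalue bound with the identity for $-\gamma'\tri'\Psi$ collects precisely $\sum_k(1+\Psi_{k\bar k})^{-1}(\inf R_{i\bar jk\bar k}+\gamma')-(n+1)\gamma'$, which is the asserted inequality. The only genuine inequality used is the eigenvalue bound for the Hermitian curvature form; everything else is an algebraic rearrangement valid in the normal coordinates. The main obstacle, and the sole place where geometry enters, is controlling that curvature form from below, which is exactly where the lower bound on the bisectional curvature of $\Om$ (available for cone angle less than $\frac{1}{2}$) is invoked; the remaining steps are routine bookkeeping.
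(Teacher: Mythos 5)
Your proposal is correct and follows essentially the same route as the paper: the paper's (very terse) proof consists precisely of the trace identity $\tri'\Psi=\sum_k\frac{\Psi_{k\bar k}}{1+\Psi_{k\bar k}}=(n+1)-\sum_k\frac{1}{1+\Psi_{k\bar k}}$, leaving the remaining step — bounding the Hermitian curvature form $\sum_{i,j}R_{i\bar jk\bar k}\Psi_i\Psi_{\bar j}$ below by $(\inf R_{i\bar jk\bar k})\,B$ for each fixed $k$ and summing against the positive weights $(1+\Psi_{k\bar k})^{-1}$ — implicit, which is exactly what you spell out.
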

\begin{proof}
From
$$
\tri'\Psi=\sum_k\frac{\Psi_{k\bar k}}{1+\Psi_{k\bar k}}=n+1-\sum_k\frac{1}{1+\Psi_{k\bar k}}\;,
$$
we have the lemma.
\end{proof}
\begin{lem}
The following formula holds
\begin{align*}
B^{-1}\sum_{k,i}\frac{1}{1+\Psi_{k\bar k}}\Psi_i\Psi_{ k\bar k\bar i}=
B^{-1}\sum_{k,i}\frac{1}{1+\Psi_{k\bar k}}\Psi_{\bar i}\Psi_{ k\bar k i}
\leq 1+|\p\log f|_\Om+|\p\Psi_1|_\Om\;.
\end{align*}
\end{lem}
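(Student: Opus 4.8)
The plan is to obtain the required identity by differentiating the complex Monge-Amp\`ere equation once in a holomorphic direction and then evaluating at the normal-coordinate maximum point $p$. I would start from the logarithmic form of the equation, i.e. the definition \eqref{eq 0 interior laplacian estimate} of $F=\log\tau+\log f+\Psi-\Psi_1$ together with \eqref{log equ}, and apply $\p_i$ to
\[
\log\det(g_{k\bar l}+\Psi_{k\bar l})-\log\det(g_{k\bar l})=F .
\]
This produces ${g'}^{k\bar l}\p_i(g_{k\bar l}+\Psi_{k\bar l})-g^{k\bar l}\p_i g_{k\bar l}=\p_i F$. At $p$ we have chosen normal coordinates, so $g_{k\bar l}=\delta_{kl}$ and $\p_i g_{k\bar l}=0$, and ${g'}^{k\bar l}$ is diagonal with entries $(1+\Psi_{k\bar k})^{-1}$. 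Hence the identity collapses to
\[
\sum_k\frac{1}{1+\Psi_{k\bar k}}\Psi_{k\bar k i}=\p_i F=\Psi_i-\p_i\Psi_1+\p_i\log f ,
\]
using that $\tau$ is constant so $\p_i\log\tau=0$.

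Next I would contract this identity with $\Psi_{\bar i}$, sum over $i$, and divide by $B=\sum_i|\Psi_i|^2$. The first term on the right gives $B^{-1}\sum_i\Psi_{\bar i}\Psi_i=B^{-1}B=1$, which is the source of the leading $1$ in the claimed bound. The two expressions in the asserted equality are complex conjugates of one another: since $\Psi$ is real, $\overline{\Psi_i\Psi_{k\bar k\bar i}}=\Psi_{\bar i}\Psi_{k\bar k i}$ and $1+\Psi_{k\bar k}$ is real, so they coincide and are real, which settles the first ``$=$''. For the two remaining terms I would apply the Cauchy--Schwarz inequality at $p$ (where $g^{i\bar j}=\delta_{ij}$), giving
\[
\Big|B^{-1}\sum_i\Psi_{\bar i}\bigl(\p_i\log f-\p_i\Psi_1\bigr)\Big|\le B^{-1/2}\bigl(|\p\log f|_\Om+|\p\Psi_1|_\Om\bigr).
\]
Finally one may assume $B\ge 1$ at the maximum point $p$ of $K$, since otherwise $|\p\Psi|$ is already under control there and the sought gradient bound is immediate; then $B^{-1/2}\le 1$ and the bound $1+|\p\log f|_\Om+|\p\Psi_1|_\Om$ follows.

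The computation is short, and the only genuine care lies in the passage to the differentiated equation: I must make sure that the third-order terms $\Psi_{k\bar k i}$ here are exactly the ordinary derivatives produced by differentiating the determinant, and that at the normal point the ordinary and first covariant derivatives of $g'$ agree (the Christoffel symbols vanishing at $p$), so that no spurious curvature terms enter. The other point worth flagging---although it is not needed for the algebraic inequality itself---is that the quantities $|\p\log f|_\Om$ and $|\p\Psi_1|_\Om$ on the right are precisely the ones that must later be controlled near $\mathfrak D$ (the bound on $|\p\log f|_\Om$ being delicate and requiring the cone-angle restriction); in this lemma they are simply carried along as the terms in which the estimate is expressed.
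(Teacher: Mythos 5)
Your proposal is correct and follows essentially the same route as the paper: differentiate the logarithm of the Monge--Amp\`ere equation, evaluate in normal coordinates at the maximum point so that $\sum_k\frac{\Psi_{k\bar k i}}{1+\Psi_{k\bar k}}=F_i=\Psi_i+\p_i(\log f-\Psi_1)$, contract with $\Psi_{\bar i}$, divide by $B$, and bound the $\tilde F=\log f-\Psi_1$ contribution by Cauchy--Schwarz. Your explicit remarks on the conjugate symmetry and on the reduction to $B\ge 1$ are exactly the points the paper uses implicitly (the assumption $B(p)\ge 1$ is made at the start of the proof of the gradient estimate), so nothing is missing.
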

\begin{proof}
Differentiating the equation {\eqref{eq 0 interior laplacian estimate}}, we have
\begin{align*}
g'^{i\bar j}(\p_kg_{ i\bar j}+\Psi_{i\bar j k})-g^{i\bar j}\p_kg_{i\bar j}=\p_k F \;,
\end{align*}
or
\begin{align}\label{gradient equ d}
\sum_i\frac{\Psi_{i\bar i k}}{1+\Psi_{i\bar i}}=F_k=\p_k\Psi+\p_k(\log f-\Psi_1)\;.
\end{align}
Then \eqref{gradient equ d} implies
\begin{align*}
B^{-1}\sum_{k,i}\frac{1}{1+\Psi_{k\bar k}}\Psi_i\Psi_{ k\bar k\bar i}
&=B^{-1}\sum_{k,i}\frac{1}{1+\Psi_{k\bar k}}\Psi_{\bar i}\Psi_{ k\bar k i}\\
&=B^{-1}\Psi_i F_{\bar i}\\
&=1+B^{-1}\Psi_i \tilde F_{\bar i}\\
&\leq 1+|\p\tilde F|_\Om\;.
\end{align*}
Here $\tilde F=\log f-\Psi_1$.
\end{proof}
\begin{lem}
The following formula holds
\begin{align*}
-B^{-2}g'^{k\bar l}B_kB_{\bar l}+B^{-1}\sum_{k,i,j}\frac{1}{1+\Psi_{k\bar k}}[
\Psi_{ik}\Psi_{\bar i\bar k}+\Psi_{k\bar k}\Psi_{\bar k k}]
\geq -(\gamma'+\eps) -\eps B^{-1}|\p S|_\Om\;.
\end{align*}
\end{lem}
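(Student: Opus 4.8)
The plan is to evaluate everything at an interior maximum point of $K$ and to use the critical-point equation to control the single dangerous term $-B^{-2}{g'}^{k\bar l}B_kB_{\bar l}$ by the two nonnegative Hessian terms.

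First I would use \lemref{lem:4_15} to locate the maximum point $p$ of $K$ in the interior $\mathfrak{M}$ and choose normal coordinates diagonalizing $g$ and $g'$ simultaneously at $p$, so that $g_{i\bar j}=\delta_{ij}$, ${g'}^{i\bar j}=\delta_{ij}/(1+\Psi_{i\bar i})$, and---decisively---the full Hessian is diagonal, $\Psi_{i\bar j}=\Psi_{i\bar i}\delta_{ij}$. Because $p$ is a critical point of $K=Z-\sup_{\mathfrak{D}}Z+\eps S$ with $Z=\log B-\gamma(\Psi)$, the relation $\p_kK=0$ records the identity $B_k=B(\gamma'\Psi_k-\eps S_k)$, which drives the whole estimate; I would also keep in hand $\sum_k|\Psi_k|^2=B$ and the bound $|\p S|_\Om\le C$ from \lemref{lem:4_15}.

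Next I would decompose $B_k=a_k+b_k$ with $a_k:=\sum_i\Psi_{ik}\Psi_{\bar i}$ and $b_k:=\sum_i\Psi_i\Psi_{k\bar i}=\Psi_k\Psi_{k\bar k}$, the last equality being exactly where the diagonalization pays off. The Cauchy--Schwarz inequality $\sum_i|\Psi_{ik}|^2\ge|a_k|^2/B$ shows the holomorphic term controls $B^{-2}\sum_k|a_k|^2/(1+\Psi_{k\bar k})$, so that $\text{bad}+(\text{holomorphic good})\ge B^{-2}\sum_k(|a_k|^2-|B_k|^2)/(1+\Psi_{k\bar k})$. Writing $|a_k|^2-|B_k|^2=-2\,\mathrm{Re}(B_k\bar b_k)+|b_k|^2$ and inserting the critical-point identity, then adding the mixed good term $B^{-1}\sum_k\Psi_{k\bar k}^2/(1+\Psi_{k\bar k})$, collapses $\text{bad}+\text{good}$ to
\[ \sum_k\frac{1}{1+\Psi_{k\bar k}}\Bigl[\frac{-2\gamma'\Psi_{k\bar k}|\Psi_k|^2}{B}+\frac{2\eps\Psi_{k\bar k}\,\mathrm{Re}(S_k\Psi_{\bar k})}{B}+\frac{|\Psi_k|^2\Psi_{k\bar k}^2}{B^2}+\frac{\Psi_{k\bar k}^2}{B}\Bigr]. \]
The last two summands are nonnegative; pairing the first with the fourth reduces to the elementary minimization of $t(t-q)/(1+t)$ over $t>-1$, which together with $\sum_k|\Psi_k|^2=B$ and $\Psi_{k\bar k}/(1+\Psi_{k\bar k})\le 1$ bounds the $\gamma'$-contribution below by a constant multiple of $-\gamma'$; for the $\eps$-contribution I would split $\Psi_{k\bar k}/(1+\Psi_{k\bar k})=1-1/(1+\Psi_{k\bar k})$ and estimate $|\sum_kS_k\Psi_{\bar k}|\le|\p S|_\Om\,|\p\Psi|_\Om$, so only the bounded factor $|\p S|_\Om$ survives and produces the advertised $-\eps-\eps B^{-1}|\p S|_\Om$.

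The main obstacle is the cross term $2\,\mathrm{Re}(a_k\bar b_k)$: a plain Cauchy--Schwarz on $|B_k|^2$ loses a factor of two and only yields $\text{bad}+\text{good}\ge-\text{good}$, which is worthless, so the cancellation must come from the critical-point equation, trading uncontrolled Hessian growth for the fixed constants $\gamma'=\gamma'(\Psi(p))$ and $|\p S|_\Om$. The secondary delicacy is that the weights $1/(1+\Psi_{k\bar k})$ blow up where $g'$ nearly degenerates; the $\eps S$ terms must therefore be arranged (via the $1-1/(1+\Psi_{k\bar k})$ splitting) so that no such weight ever multiplies an uncontrolled quantity, which is precisely why the final error is expressed through $|\p S|_\Om$ rather than a $g'$-norm of $\p S$, and it is also where matching the exact numerical constant in $-(\gamma'+\eps)$ requires the most care.
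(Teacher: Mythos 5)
Your strategy coincides with the paper's own proof: evaluate at the interior maximum point supplied by Lemma~\ref{lem:4_15}, insert the critical-point identity $B_k=B(\gamma'\Psi_k-\eps S_k)$, split $B_k=\sum_i\Psi_{ik}\Psi_{\bar i}+\Psi_k\Psi_{k\bar k}$, and dominate the holomorphic part by the good term $B^{-1}\sum_{k,i}|\Psi_{ik}|^2/(1+\Psi_{k\bar k})$ via Cauchy--Schwarz; your displayed intermediate expression is exactly what the paper's computation yields once its algebra is written correctly (the paper drops the factor $2$ and the real part in $|B_k|^2=2\,\mathrm{Re}\bigl(B_k\overline{\Psi_k\Psi_{k\bar k}}\bigr)-|\Psi_k|^2\Psi_{k\bar k}^2+\bigl|\sum_i\Psi_{ik}\Psi_{\bar i}\bigr|^2$). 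As a consequence your $\gamma'$-constant is the honest one: the minimization of $t(t-q)/(1+t)$ over $t>-1$ gives $-(\sqrt{1+q}-1)^2\geq -q$, hence $-2\gamma'$ after summing, not the $-\gamma'$ of the statement; this discrepancy originates in the paper's lost factor of $2$, and it is immaterial in Proposition~\ref{prop: gradient estimate}, where any controlled multiple of $\gamma'$ suffices.

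The genuine gap is in your $\eps$-term. After the splitting $\Psi_{k\bar k}/(1+\Psi_{k\bar k})=1-1/(1+\Psi_{k\bar k})$ you are left with the piece $-\frac{2\eps}{B}\sum_k\frac{\mathrm{Re}(S_k\Psi_{\bar k})}{1+\Psi_{k\bar k}}$, and this still carries the weight $1/(1+\Psi_{k\bar k})$: if some eigenvalue $\Psi_{k\bar k}$ approaches $-1$ while $S_k\Psi_{\bar k}\neq 0$ and $B$ stays bounded, this piece is arbitrarily large, so it cannot be bounded by $\eps+\eps B^{-1}|\p S|_\Om$; the assertion that ``only the bounded factor $|\p S|_\Om$ survives'' fails precisely here. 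The clean repair uses the nonnegative third summand you discarded: by Young's inequality,
\begin{align*}
\left|\frac{2\eps\Psi_{k\bar k}\,\mathrm{Re}(S_k\Psi_{\bar k})}{B(1+\Psi_{k\bar k})}\right|
\leq \frac{\Psi_{k\bar k}^2|\Psi_k|^2}{B^2(1+\Psi_{k\bar k})}
+\frac{\eps^2|S_k|^2}{1+\Psi_{k\bar k}}\;,
\end{align*}
so the $\eps$-term is absorbed by that summand at the cost of an error $\eps^2|\p S|^2_\Om\,\tr_{\Om'}\Om$, which is of the same nature as the $-C\eps\,\tr_{\Om'}\Om$ already present in the proof of Proposition~\ref{prop: gradient estimate} and is absorbed there by the choice of $\gamma$. (The paper's own proof has the same soft spot: its step $\frac{\Psi_{k\bar k}}{1+\Psi_{k\bar k}}X_k\leq X_k$ requires $X_k:=\gamma'|\Psi_k|^2-\eps\,\mathrm{Re}(S_k\Psi_{\bar k})\geq 0$ in every direction, which again holds only up to an absorbable error.) With this one repair your argument is sound, though what it proves is the inequality with right-hand side $-2\gamma'-C\eps^2\tr_{\Om'}\Om$ rather than the constants as stated.
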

\begin{proof}
At $p$ we have
$$
0=(Z_k+\eps S_k)(p)=B^{-1}B_k-\gamma'\Psi_k+\eps S_k \;'
$$
i.e.
$$
B^{-1}B_k=\gamma'\Psi_k-\eps S_k \;.
$$
Also,
$$
0=(Z_{\bar l}+\eps S_{\bar l})(p)=B^{-1}B_{\bar l}-\gamma'\Psi_{\bar l}+\eps S_{\bar l}\; ,
$$
i.e.
$$
B^{-1}B_{\bar l}=\gamma'\Psi_{\bar l}-\eps S_{\bar l}\;.
$$
Since at $p$ we have
$$
B_k=\Psi_{ik}\Psi_{\bar i}+\Psi_{i}\Psi_{\bar i k}\; ,
$$
and
$$
B_{\bar l}=\Psi_{i\bar l}\Psi_{\bar i}+\Psi_{i}\Psi_{\bar i \bar l}\; ,
$$
we obtain
\begin{align*}
\Psi_{ik}\Psi_{\bar i}=B(\gamma'\Psi_k-\eps S_k)-\Psi_{i}\Psi_{\bar i k}
\end{align*}
and
\begin{align*}
\Psi_{i}\Psi_{\bar i \bar l}
=B(\gamma'\Psi_{\bar l}-\eps S_{\bar l})-\Psi_{i\bar l}\Psi_{\bar i}\; .
\end{align*}
So
\begin{align*}
g'^{k\bar l}B_kB_{\bar l}
&=g'^{k\bar l}[\Psi_{ik}\Psi_{\bar i}\Psi_{j\bar l}\Psi_{\bar j}
+\Psi_{i}\Psi_{\bar i k}\Psi_{j}\Psi_{\bar j \bar l}
+\Psi_{ik}\Psi_{\bar i}\Psi_{j}\Psi_{\bar j \bar l}
+\Psi_{i\bar l}\Psi_{\bar i}\Psi_{j}\Psi_{\bar j k}]
\\
&=g'^{k\bar l}\{B(\gamma'\Psi_k-\eps S_k)\Psi_{j\bar l}\Psi_{\bar j}
-\Psi_{i\bar l}\Psi_{\bar i}\Psi_{j}\Psi_{\bar j k}
+\Psi_{ik}\Psi_{\bar i}\Psi_{j}\Psi_{\bar j \bar l}\}\; '
\end{align*}
using the normal coordinate at $p$ and assuming $\gamma'>0$ we have
\begin{align*}
g'^{k\bar l}B_kB_{\bar l}
&=\sum_{k,i,j}\frac{1}{1+\Psi_{k\bar k}}\{B(\gamma'\Psi_k-\eps S_k)\Psi_{k\bar k}\Psi_{\bar k}
-(\Psi_{k\bar k}\Psi_{\bar k})^2
+\Psi_{ik}\Psi_{\bar i}\Psi_{j}\Psi_{\bar j \bar k}\}\\
&\leq B\sum_{k,i,j}\frac{1}{1+\Psi_{k\bar k}}\{(\gamma'\Psi_k-\eps S_k)
(\Psi_{k\bar k}+1)\Psi_{\bar k}
-(\Psi_{k\bar k})^2
+\Psi_{ik}^2\}\\
&\leq (\gamma'+\eps) B^2+\eps B|\p S|_\Om+B\sum_{k,i}\frac{1}{1+\Psi_{k\bar k}}\{-(\Psi_{k\bar k})^2
+\Psi_{ik}^2\}\; .
\end{align*}
So
\begin{align*}
&-B^{-2}g'^{k\bar l}B_kB_{\bar l}+B^{-1}\sum_{k,i,j}\frac{1}{1+\Psi_{k\bar k}}[
\Psi_{ik}\Psi_{\bar i\bar k}+\Psi_{k\bar k}\Psi_{\bar k k}]\\
&\geq -(\gamma'+\eps) -\eps B^{-1}|\p S|_\Om+2B^{-1}\sum_{k}\frac{(\Psi_{k\bar k})^2}{1+\Psi_{k\bar k}}\;.
\end{align*}
\end{proof}

\begin{prop}\label{prop: gradient estimate}
We have the gradient estimate
\begin{align*}
\sup_{\mathfrak{X}}|\p\Psi|^2_{\Om}\leq C_{i} \text{ for } i=1,2 \;.
\end{align*}
The constants $C_i$ depend on, respectively,
\begin{align*}
C_1&=C_1(\inf_{i\neq k}R_{i\bar i k\bar k}(\Om),\sup|\p\log f|_\Om,\sup|\p \Psi_1|_\Om,
\osc_{\mathfrak X}\Psi,\osc_{\p\mathfrak X}\Psi_1)\, ,\\
C_2&=C_2(\inf_{i\neq k}R_{i\bar i k\bar k}(\Om_1),\sup|\p \Psi_1|_\Om,\sup\tr_{\Om}\Om_1,
\sup\tr_{\Om_1}\Om,\osc_{\mathfrak X}\Psi,\osc_{\mathfrak X}\Psi_1) \; .
\end{align*}
\end{prop}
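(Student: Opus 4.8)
The plan is to run the cone maximum principle for the test function $K=Z-\sup_{\mathfrak D}Z+\eps S$ and to convert the resulting pointwise inequality at the maximum into an a priori bound for $B=|\p\Psi|^2_\Om$. By Lemma \ref{lem:4_15}, the hypotheses $\b_i<\tfrac12$ and the choice of $\kappa$ in the window $\b_i\le 2\kappa<(1+\a)\b_i$ guarantee that $K$ attains its maximum at an interior point $p\in\mathfrak M$ lying off the divisor $\mathfrak D$, and that $|\p S|_\Om\le C$. Working in a normal coordinate at $p$ we have $\Delta'K(p)\le 0$, so I would insert into the displayed expression for $\Delta'K$ the three lemmas established just above: the curvature lemma for $B^{-1}\sum_k\frac{1}{1+\Psi_{k\bar k}}R_{i\bar jk\bar k}\Psi_i\Psi_{\bar j}-\gamma'\Delta'\Psi$, the third-order lemma for $B^{-1}\sum_{k,i}\frac{1}{1+\Psi_{k\bar k}}\Psi_i\Psi_{k\bar k\bar i}$, and the lemma bounding $-B^{-2}g'^{k\bar l}B_kB_{\bar l}$ together with the positive Hessian terms. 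The term $\eps\Delta'S$ is controlled by $\eps\Delta'S\ge -C\eps\tr_{\Om'}\Om$ from Lemma \ref{auxiliary function}, while $|\p S|_\Om\le C$ disposes of the remaining $S$-contributions.

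After this substitution the inequality takes the schematic form $0\ge \bigl(\gamma'+\inf_{i\neq k}R_{i\bar i k\bar k}(\Om)-C\eps\bigr)\tr_{\Om'}\Om-\gamma''D-C'$, where $C'$ collects only bounded quantities: the constant $(n+1)\gamma'$, the third-order contribution bounded by $2(1+\sup|\p\log f|_\Om+\sup|\p\Psi_1|_\Om)$ coming from $\tilde F=\log f-\Psi_1$, and the term $(\gamma'+\eps)$. Here I would also record the lower bound $\tr_{\Om'}\Om\ge (n+1)e^{-F/(n+1)}>0$, obtained from the arithmetic--geometric mean inequality applied to the equation; it depends only on $\osc\Psi$, $\osc\Psi_1$ and $\sup f$, and keeps the first term from degenerating. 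The decisive choice is that of $\gamma$: on the bounded interval $[\inf_{\mathfrak X}\Psi,\sup_{\mathfrak X}\Psi]$ I would take $\gamma$ concave with $\gamma'>0$ large enough that $\gamma'+\inf_{i\neq k}R_{i\bar i k\bar k}(\Om)-C\eps>0$ and with $-\gamma''$ bounded below by a positive constant, all of whose relevant norms are controlled by $\inf_{i\neq k}R_{i\bar i k\bar k}(\Om)$ and $\osc\Psi$. Discarding the now nonnegative $\tr_{\Om'}\Om$ term, the surviving good term $-\gamma''D=|\gamma''|D$ is estimated from below through $D\ge \frac{B}{n+1+\Delta\Psi}$ and the interior Laplacian bound of Proposition \ref{sec}, which yields $B(p)\le C$. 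Since $K(x)\le K(p)$ for all $x$ and both $\gamma(\Psi)$ and $S$ are bounded by the oscillation estimates, this propagates to $\sup_{\mathfrak X}B\le C_1$ with the advertised dependence. The second constant $C_2$ is produced by running the identical computation with $\Om_1$ as the background metric in \eqref{per equ sim} — which replaces the curvature of $\Om$ by that of $\Om_1$ — and then converting back through the $L^\infty$-equivalence factors $\sup\tr_\Om\Om_1$ and $\sup\tr_{\Om_1}\Om$.

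The interior part of this argument is of Blocki type; the genuine difficulty, and the place where I expect to spend the most care, is the behaviour near $\mathfrak D$. One must select the weight $S=\|s\|^{2\kappa}$ so that it grows strictly faster than the collapse of $Z-\sup_{\mathfrak D}Z$ while keeping $|\p S|_\Om$ and $\eps\Delta'S$ under control; this is precisely the window $\b_i\le 2\kappa<(1+\a)\b_i$ of Lemma \ref{lem:4_15}, which forces $\b_i<\tfrac12$ and relies on $\nabla_1\nabla_i\Psi\in C^\a_\b$. Driving the maximum point into the interior in this way is what makes the singular gradient estimate possible at all, and it is the step that has no counterpart in the smooth theory.
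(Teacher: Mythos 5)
Your setup coincides with the paper's argument: the test function $K=Z-\sup_{\mathfrak D}Z+\eps S$, the role of Lemma \ref{lem:4_15} in pushing the maximum off $\mathfrak D$, the three lemmas fed into $\tri' K$ at the normal-coordinate maximum point $p$, the treatment of $\eps\tri' S$ and $|\p S|_\Om$, the choice of a concave $\gamma$ with $\gamma'$ large, the propagation of the bound from $p$ to all of $\mathfrak X$, and the change of background metric to $\Om_1$ for $C_2$ are all as in the paper. The gap is in the single step where you deviate. After discarding the nonnegative term $\bigl(\gamma'+\inf_{i\neq k}R_{i\bar ik\bar k}-C\eps\bigr)\tr_{\Om'}\Om$, you are left only with $D(p)\leq C$, and to convert this into $B(p)\leq C$ through $B\leq(n+1+\tri\Psi)\,D$ you invoke ``the interior Laplacian bound of Proposition \ref{sec}.'' This is circular. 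Proposition \ref{sec} is not an absolute bound: it controls $\sup_{\mathfrak X}(n+1+\tri\Psi)$ only by $C_i\sup_{\p\mathfrak X}(n+1+\tri\Psi)$, and the boundary Hessian is in turn controlled (Proposition \ref{prop: boundary hessian estimate}) by $C(\sup_{\mathfrak X}|\p\Psi|_\Om+1)$, i.e.\ by the very gradient bound you are trying to prove. In the logical chain of the a priori estimates the gradient estimate is exactly the one that must be established independently of any Hessian information; this is also visible in the statement itself, since the constants $C_1$, $C_2$ contain no Laplacian or boundary-Hessian quantities, whereas your route would force such dependencies into them.

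The repair is to \emph{not} discard the trace term: with your choice of $\gamma$ its coefficient is bounded below by a positive constant, so the inequality $0\geq\tri'K(p)$ yields simultaneously $\tr_{\Om'}\Om(p)\leq C$ and $D(p)\leq C$. The equation itself then converts the first bound into a bound on $\tr_\Om\Om'$: by the arithmetic--geometric mean inequality, $(\tr_\Om\Om')^{1/n}=(n+1+\tri\Psi)^{1/n}\leq\tr_{\Om'}\Om\,e^{F/n}$, where $F=\log\tau+\log f+\Psi-\Psi_1$ is bounded by the $L^\infty$ estimates, $\sup f$ and $\osc\Psi_1$. Combined with $B\leq\tr_\Om\Om'\cdot D$ this gives $B(p)\leq C$ with exactly the claimed dependence, with no appeal to Proposition \ref{sec}. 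Note that you did write down an AM--GM inequality, but only in the form of a \emph{lower} bound for $\tr_{\Om'}\Om$, which is the direction that cannot help; the useful direction requires retaining the upper bound on $\tr_{\Om'}\Om(p)$ that your truncation threw away.
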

\begin{proof}
We assume $B(p)\geq 1$, otherwise we are done.
We compute
\begin{align*}
 \Delta ' (Z-\sup_{\mathfrak D} Z+\eps S)&\geq \sum_{i,j,k} \frac{1}{1+\Psi_{k \bar k}}
\left(
\inf_{i\neq k} R_{i\bar i k\bar k} + \gamma '
\right) -(n+1)\gamma ' - \gamma '' D -1 \\
& -|\partial \log f|_\Om - |\partial \Psi_1|_\Om - |\gamma ' +\eps|
-\eps B^{-1} |\partial S|_\Om - C\eps \tr_{\Omega '}\Omega \\
& = \left(
\inf_{i\neq k} R_{i\bar i k\bar k} + \gamma ' -C\eps
\right) \tr_{\Omega ' }\Omega
-\gamma '' D -1 -|\partial \log f|_\Om - |\partial \Psi_1|_\Om\\
&-(n+1)\gamma ' -(n+2)\gamma ' - \eps -\eps |\partial S|_\Om \; .
\end{align*}
We choose an appropriate $\gamma$, for example
\begin{align*}
 \gamma (t):= -C' e^{\sup \Psi - t},\,
\mbox{ where }  C':= \inf_{i\neq k} R_{i\bar i k \bar k} -C\eps +1 \; .
\end{align*}
We notice that $|\nabla S|$ is bounded by means of Lemma \ref{lem:4_15}.
Then,
\begin{align*}
 D + \tr_{\Omega '}\Omega (p)\leq C
= C(\inf_{i \neq  k} R_{i\bar i k \bar k},\, \sup|\p \log f|_\Om ,\, \sup |\p \Psi_1 |_\Om)\; .
\end{align*}

Since
\begin{align*}
 \left(
\tr_{\Omega}\Omega '
\right)^{\frac{1}{n}}
=
\left(
n+1 +\Delta \Psi
\right)^{\frac{1}{n}}
\leq
\tr_{\Omega '}\Omega \cdot e^{\frac{F}{n}}\; ,
\end{align*}
so $B\leq \tr_{\Omega}\Omega ' \cdot D\leq C$.
Moreover, for any $x\in \Int(\mathfrak{X})$, there holds
\begin{align*}
 \log B(x)
&= K(x)+ \gamma(\Psi)(x)-\eps S(x)+\sup_{\mathfrak D} Z \\
&\leq K(p) + \sup_{\partial \mathfrak{X}}K+ \gamma(\Psi)(x)-\eps S(x)+\sup_{\mathfrak D} Z \\
&= \log B(p) -\gamma(\Psi)(p)-\sup_{\mathfrak D} Z+\eps S(p)  + \sup_{\partial \mathfrak{X}}K+ \gamma(\Psi)(x)-\eps S(x)+\sup_{\mathfrak D} Z\\
&\leq \log B(p) - \gamma (\Psi) (p) + \gamma(\Psi)(x)
    + \sup_{\p \mathfrak{X}}(\log B -\sup_{\mathfrak D} \log B- \gamma(\Psi))+C\;.
\end{align*}
Here we use the assumption that $B\geq1$, so $\log B\geq 0$.
Similarly to former arguments, we change the background metric and we consider
\begin{align*}
 \log \frac{\det (\Omega_{1,i\bar j } + \tilde{\Psi}_{i\bar j})}{\det (\Omega_{1, i\bar j})}
=F_1 = \log \tau + \tilde{\Psi}\; .
\end{align*}
We arrive at
\begin{align*}
 \sup_{\mathfrak{X}} |\partial \tilde{\Psi}|_{\Omega_1}^2 \leq C \;.
\end{align*}
As a result, the proof of the proposition follows from
\begin{align*}
 \sup_{\mathfrak{X}} |\partial \Psi |_{\Omega}
\leq \sup_{\mathfrak{X}}
\tr_{\Omega}\Omega_1 \cdot ( \sup_{\mathfrak{X}} |\partial \tilde{\Psi}|_{\Omega_1} + \sup_{\mathfrak{X}} |\partial\Psi_1|_{\Omega_1})\;.
\end{align*}
\end{proof}


\section{Solving the geodesic equation}\label{open}
In this section, we assume that the components of $D$ are smooth and disjoint.


\subsection{Existence of the $C^{1,1}_\b$ cone geodesic}\label{Existence of the weak geodesic}
 In the present subsection we are dealing with the Dirichlet problem
for the family of approximate geodesic equation \eqref{per equ sim}.
In order to apply the a priori estimates in Section \ref{close},
we require that the pair $(\Om,\Om_1)$ satisfies are $|\p\log\tilde\Om_1^{n+1}|$, $|\p\log\frac{\Om_1^{n+1}}{\Om^{n+1}}|$ bounded and one of the following conditions
\begin{itemize}
  \item $\vert Riem(\Om_1)\vert$ is bounded;
  \item $\inf Riem(\Om_1)$ and $\sup Riem(\Om)$ are bounded;
  \item $\sup \Ric(\Om_1)$ and $\inf Riem(\Om)$ are bounded;
  \item $\inf \Ric(\Om_1)$ and $\vert Riem(\Om)\vert$ are bounded.
\end{itemize}
Then we reduce these conditions to
geometric conditions on the boundary potentials $\vphi_0$ and $\vphi_1$ as follows.

The boundedness
of the connection of the background cone metric $\om$ in \eqref{model cone}
is computed in the following lemma for $0<\b_1<\frac{2}{3}$.
It was also computed for $0<\b_1<\frac{1}{2}$ in Brendle \cite{Brendle:2011kx}.
\begin{lem}\label{lemma: connection bounded}
The connection of $\om$ is bounded for $0<\b_1<\frac{2}{3}$ under the coordinate chart $\{w^i\}$.
\end{lem}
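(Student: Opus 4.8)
The plan is to write the K\"ahler form of $\om$ in the cone chart $\{w^i\}$ and read off its Christoffel symbols, the point being that the singular cone part of $\om$ is engineered to have \emph{constant} coefficients in $w$, so that the connection is governed entirely by the subleading terms. Since $D$ has smooth, disjoint components, it suffices to work near a point $p\in D$ with a single cone direction $z^1=0$; the transverse directions $k+1\le j\le n$ (where $\b_j=1$, $\mu_j=0$) are smooth and harmless. First I would fix the holomorphic coordinate $z$ to be adapted to both $D$ and the smooth part $\om_0$, so that the mixed components $g^0_{1\bar j}$ of $\om_0$ vanish along $\{z^1=0\}$, i.e. $g^0_{1\bar j}=O(|z^1|)$; this adaptation is exactly what will buy the improvement from $\b_1<\tfrac12$ (Brendle \cite{Brendle:2011kx}) to $\b_1<\tfrac23$.

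Next I would push $\om$ forward by $W$. As $W$ is a diffeomorphism of $U_p\setminus D$ intertwining $J$ with $\tilde J:=W_\ast J$, for which the $w^i$ are holomorphic, $W_\ast\om$ is a genuine K\"ahler form, and I may legitimately write $W_\ast\om=\tfrac{\sqrt{-1}}{2}\tilde g_{i\bar j}\,dw^i\wedge dw^{\bar j}$ and compute the connection as $\tilde g^{k\bar l}\p_{w^i}\tilde g_{j\bar l}$. Using $z^1=|w^1|^{\mu_1}w^1$ one gets $\p_{w^1}z^1=(1+\tfrac{\mu_1}{2})|w^1|^{\mu_1}$ and $\p_{w^{\bar 1}}z^1=\tfrac{\mu_1}{2}|w^1|^{\mu_1}e^{2\sqrt{-1}\theta_1}$, hence the Jacobian factor $|\p_{w^1}z^1|^2-|\p_{w^{\bar 1}}z^1|^2=(1+\mu_1)|w^1|^{2\mu_1}$; combined with $|z^1|^{2(\b_1-1)}=|w^1|^{-2\mu_1}$ this shows the flat-cone part of $\tilde g_{1\bar 1}$ is a constant (equal to $\b_1^2(1+\mu_1)=\b_1$ for the normalized cone), whose contribution to the connection vanishes. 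I would then estimate the three remaining contributions to $\tilde g_{i\bar j}$: (i) the self-correction of $\delta\,\p\bar\p|s_1|^{2\b_1}$ due to the non-constant weight $h_\L$, which carries an extra $|z^1|^{2\b_1-1}=|w^1|^{1-\mu_1}$ and is of order $|w^1|^{1+\mu_1}$; (ii) the mixed components, which by the adapted choice of $z$ carry an extra $|z^1|=|w^1|^{1+\mu_1}$ and are of order $|w^1|^{1+2\mu_1}$; and (iii) the normal component of the smooth background, which transforms to $(1+\mu_1)g^0_{1\bar 1}|w^1|^{2\mu_1}$. Since $g^0_{1\bar 1}>0$ cannot be removed, (iii) is the dominant correction for $\mu_1<1$, and its $w^1$-derivative is of order $|w^1|^{2\mu_1-1}=|w^1|^{2/\b_1-3}$, whereas those of (i) and (ii) tend to zero. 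Because $\tilde g^{i\bar j}$ is bounded (the quasi-isometry of $\om$ to $\om_{cone}$), the Christoffel symbols are $O(|w^1|^{2/\b_1-3})$, which stays bounded exactly when $2/\b_1-3\ge 0$, i.e. $\b_1\le\tfrac23$.

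The main obstacle is conceptual rather than computational, and it is twofold. First, one must treat the non-holomorphicity of $W$ correctly: the connection must be computed for the pushed-forward structure $\tilde J$ (equivalently, from the coefficient of the $(1,1)$-form $W_\ast\om$), since the naive Riemannian pullback acquires a spurious $(2,0)$-part $\tilde g_{11}\sim e^{-2\sqrt{-1}\theta_1}$ whose $w^1$-derivative is of order $|w^1|^{-1}$ for \emph{every} $\b_1$ and would wrongly obstruct all bounds. Second, and this is where the gain over $\tfrac12$ lies, the mixed terms must be shown to decay like $|w^1|^{1+2\mu_1}$ rather than $|w^1|^{\mu_1}$: without the adaptation of $z$ they would already force $\mu_1\ge 1$, i.e. $\b_1\le\tfrac12$, so verifying that $g^0_{1\bar j}$ can be made to vanish to first order along $D$ by a holomorphic change of coordinates (and confirming that it is the diagonal term (iii) that remains binding) is the crux of the argument.
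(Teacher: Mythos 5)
Your proposal runs along essentially the same lines as the paper's own proof of Lemma \ref{lemma: connection bounded}: push the components of $\om$ to the chart $\{w^i\}$ of \eqref{conetransform}, note that the pure cone part has (up to the smooth factor $\rho$) constant coefficients, and identify the background normal--normal term $g_{01\bar 1}\circ W^{-1}\,|w^1|^{2/\b_1-2}$, whose $w^1$-derivative is $O(|w^1|^{2/\b_1-3})$, as the binding one --- exactly the term from which the paper extracts $\b_1<\tfrac{2}{3}$. Your decision to estimate only the Hermitian components $\tilde g_{i\bar j}$ also matches what the paper implicitly does (that is the data, e.g. $\p\log\tilde\Om_1^{n+1}$ and $D_i\tilde g_{1\a\bar\b}$, used later in the Schauder and Evans--Krylov arguments), and whether one takes your Jacobian factor $|\p_{w^1}z^1|^2-|\p_{w^{\bar 1}}z^1|^2$ or the paper's sum of squares is immaterial; but your justification of this step is off: $W$ is not holomorphic, so the $w^i$ are not $W_\ast J$-holomorphic, and $W_\ast\om$ fails to be of type $(1,1)$ for the standard structure precisely when mixed terms $g_{01\bar l}$ are present.

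The genuine gap is the step you yourself single out as the crux: the claim that a holomorphic change of coordinates can arrange $g_{01\bar j}=O(|z^1|)$ along $D$, on which your entire passage from $\b_1<\tfrac12$ to $\b_1<\tfrac23$ rests. This normalization is impossible in general. Arranging $g_{01\bar l}\vert_D\equiv 0$ is equivalent to producing a holomorphic section $v$ of $T_X\vert_D$, transverse to $D$, which is pointwise $\om_0$-orthogonal to $TD$ (one then flows $v$ to build the chart); since the $\om_0$-orthogonal complement of $TD$ is only a smooth, not holomorphic, line subbundle of $T_X\vert_D$, such $v$ need not exist. Concretely, for $\om_0=\frac{\sqrt{-1}}{2}\p\bar\p\bigl(|z^1|^2+|z^2|^2+\epsilon\,\mathrm{Re}\bigl(z^1(\bar z^2)^2\bigr)\bigr)$ one has $g_{01\bar 2}\vert_{z^1=0}=\epsilon\bar z^2$ and $g_{02\bar 2}\vert_{z^1=0}=1$, so orthogonality forces $v^2=-\epsilon v^1\bar z^2$ with $v^1,v^2$ holomorphic and $v^1\neq 0$; applying $\p_{\bar z^2}$ gives $0=-\epsilon v^1$, a contradiction. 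No choice of $h_\L$ (i.e. of $\rho$) can cancel this either, since the cone corrections to $\tilde g_{1\bar l}$ vanish at $D$ once $\b_1>\tfrac12$. Hence for a general background the mixed component stays of size $|w^1|^{\mu_1}$, its derivative is $O(|w^1|^{\mu_1-1})$, and by your own bookkeeping the admissible range collapses back to $\b_1\le\tfrac12$. You have, however, correctly located the weak point of the argument: the paper's proof disposes of this very term by asserting that ``$g_{01\bar l}\circ W^{-1}$ is also smooth and converges to zero as $w^1$ goes to zero,'' with no justification and no coordinate normalization offered; your proposal makes that hidden hypothesis explicit, but the repair you suggest does not close it, and closing it would require either restricting to $\b_1\le\tfrac12$ or adding the vanishing of the mixed components of $\om_0$ along $D$ as an assumption.
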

\begin{proof}
Since there exists a smooth function $\rho$ such that
$\delta |s|^{2\b_1}_{h_\L} = \rho |z^1|^{2\b_1}$, we can rewrite \eqref{model cone} as
\begin{align*}
\om&=\om_0
+\frac{\sqrt{-1}}{2}|z^1|^{2\b_1}\rho_{k\bar l} dz^k\wedge dz^{\bar l}\\
&+\frac{\sqrt{-1}}{2} \b_1|z^1|^{2(\b_1-1)} (z^1\rho_{k} dz^k\wedge dz^{\bar 1}
+z^{\bar 1} \rho_{\bar l} dz^1 \wedge dz^{\bar l})\\
&+\frac{\sqrt{-1}}{2} \b_1^2 \rho|z^1|^{2(\b_1-1)} dz^1\wedge dz^{\bar 1}
\end{align*}for $k,l$ from $2$ to $n$. By means of the
change of coordinates \eqref{conetransform}, as $w^i=|z^i|^{\b_1-1}z^i$,
we have, for $i\in \{1, \cdots , n\}$
\begin{align*}
\frac{\p w^i}{\p z^i}=\frac{\b_i+1}{2}|z^i|^{\b_i-1};
\frac{\p w^i}{\p z^{\bar i}}=\frac{\b_i-1}{2}|z^i|^{\b_i-3}z^iz^i.
\end{align*}
Meanwhile,
\begin{align*}
\frac{\p z^i}{\p w^i}=\frac{1+\b_i}{2\b_i}| w^i|^{\frac{1-\b_i}{\b_i}};
\frac{\p z^i}{\p w^{\bar i}}=\frac{1-\b_i}{2\b_i}| w^i|^{\frac{1-3\b_i}{\b_i}}w^iw^i.
\end{align*}
The components of the model cone metrics under the variables $w^i$ rbecome
\begin{align*}
\tilde g_{1\bar 1}
&=[(\frac{1+\b_1}{2\b_1})^2| w^1|^{\frac{2}{\b_1}-2}+(\frac{1-\b_1}{2\b_1})^2| w^1|^{\frac{2}{\b_1}-2}] g_{1\bar 1}\circ W^{-1}\\
&=\frac{1+\b_1^2}{2\b_1^2}| w^1|^{\frac{2}{\b_1}-2} [g_{01\bar 1}\circ W^{-1}+| w^1|^2\rho_{1\bar 1}\\
&+\b_1|w^1|^{\frac{2}{\b_1}-2}
(|w^1|^{\frac{1}{\b_1}-1}w^1\rho_1
+|w^1|^{\frac{1}{\b_1}-1}w^{\bar 1}\rho_{\bar 1})
+\b_1^2\rho|w^1|^{2-\frac{2}{\b_1}}]\\
&=\frac{1+\b_1^2}{2\b_1^2}[g_{01\bar 1}\circ W^{-1}| w^1|^{\frac{2}{\b_1}-2}
+|w^1|^{\frac{2}{\b_1}}\rho_{1\bar 1}
+\b_1(w^1\rho_1+w^{\bar 1}\rho_{\bar 1}+\b_1^2\rho)],\\
\tilde g_{1\bar l}
&=\frac{1+\b_1}{2\b_1}
[|w^1|^{\frac{1}{\b_1}-1}g_{01\bar l}\circ W^{-1}
+|w^1|^{\frac{1}{\b_1}+1}\rho_{1\bar l}\circ W^{-1}
+\b_1w^1\rho_{\bar l}\circ W^{-1}],\\
\tilde g_{k\bar l}&=g_{0k\bar l}\circ W^{-1}
+|w^1|^2\rho_{k\bar l}\circ W^{-1}.
\end{align*}
Now, the connection of $\om$ is the first derivative with respect to $w^i$.
We check one by one. Note that $\rho$ is smooth on $w^k$ for $1\leq k\leq n$.
\begin{align*}
&\frac{\p}{\p w^1}\tilde g_{1\bar 1}
=O(| w^1|^{\frac{2}{\b_1}-3}+| w^1|^{\frac{2}{\b_1}-1});\\
&\frac{\p}{\p w^i}\tilde g_{1\bar 1}
=O(1);\\
&\frac{\p}{\p w^i}\tilde g_{1\bar l}
=O(1);\\
&\frac{\p}{\p w^1}\tilde g_{k\bar l}=\frac{\p}{\p w^i}\tilde g_{k\bar l}
=O(1).
\end{align*}
Now let us check
$\frac{\p}{\p w^1}\tilde g_{1\bar l}.$
It contains three terms. The first term is
\begin{align*}
&\frac{\p}{\p w^1}(|w^1|^{\frac{1}{\b_1}-1}g_{01\bar l}\circ W^{-1})\\
&=\frac{\p}{\p w^1}[(|w^1|^{\frac{1}{\b_1}-1}|w^1|)(|w^1|^{-1}g_{01\bar l}\circ W^{-1})].
\end{align*}
Since $g_{01\bar l}\circ W^{-1}$ is also smooth and converges to zero as $w^1$ goes to zero,
so this first term is $O(|w^1|^{\frac{1}{\b_1}-1})$.
The second and third term are both $O(1)$.
Thus we conclude that when $0<\b_1<\frac{2}{3}$, the connection is bounded.
\end{proof}
As a corollary, we arrive at the boundedness of the connection of $\Om_1$.
\begin{cor}\label{connection}
When $0<\b_1<\frac{2}{3}$ and $\vphi_0,\vphi_1\in C_\b^{3}$,
the connection of $\Om_1$ is bounded.
\end{cor}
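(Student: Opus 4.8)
The plan is to bootstrap directly from Lemma \ref{lemma: connection bounded}, which already controls the model metric $\Om$, and to absorb the correction $\frac{\sqrt{-1}}{2}\p\bar\p\Psi_1$ using the hypothesis $\vphi_0,\vphi_1\in C^3_\b$. Writing $\tilde g_1$ and $\tilde g$ for the component matrices of $\Om_1=\Om+\frac{\sqrt{-1}}{2}\p\bar\p\Psi_1$ and of $\Om$ in the $w$-frame, the connection of $\Om_1$ is $\tilde g_1^{-1}\p_w\tilde g_1$, and I would estimate the two factors separately. For the inverse factor, recall from Proposition \ref{csck eqv} that $\Om_{\Psi_1}$ is a K\"ahler cone metric quasi-isometric to $\Om_{cone}$, hence to $\Om$; since in the $w$-frame $\tilde g$ is uniformly equivalent to the Euclidean metric (the remark following \eqref{euc}), the matrix $\tilde g_1$ is uniformly equivalent to the identity there, so the entries of $\tilde g_1^{-1}$ in this frame are uniformly bounded.

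For the derivative factor I would split $\p_w\tilde g_1=\p_w\tilde g+\p_w\big(\frac{\p^2\Psi_1}{\p w^i\p w^{\bar j}}\big)$. The first summand is $O(1)$ for $0<\b_1<\frac{2}{3}$: this is exactly the content of the component computations in the proof of Lemma \ref{lemma: connection bounded}, where the borderline term $\p_{w^1}\tilde g_{1\bar 1}=O(|w^1|^{2/\b_1-3}+|w^1|^{2/\b_1-1})$ is bounded precisely when $\b_1\leq\frac{2}{3}$ and all other first $w$-derivatives of $\tilde g$ are $O(1)$. The second summand equals $\frac{\p^3\Psi_1}{\p w^k\p w^i\p w^{\bar j}}$, and here the $C^3_\b$ hypothesis enters: by Definitions \ref{defn: 3dri} and \ref{defn: 3dribdy}, the assumption $\vphi_0,\vphi_1\in C^3_\b$ says exactly that all mixed third $w$-derivatives of $\vphi_0$ and $\vphi_1$ are bounded.

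It then remains to propagate this through the explicit form $\Psi_1=t\vphi_1+(1-t)\vphi_0+m\Phi$ recalled before Proposition \ref{csck eqv}. Since $t=x^{n+1}$ and $\Phi$ depend only on $z^{n+1}=w^{n+1}$ and are smooth there, differentiating $\Psi_1$ three times in the $w$-variables produces only: third $w$-derivatives of $\vphi_0,\vphi_1$ (bounded by $C^3_\b$); lower-order $w$-derivatives of $\vphi_0,\vphi_1$ multiplied by bounded $w^{n+1}$-derivatives of $t$ (bounded because $\vphi_0,\vphi_1\in C^{2,\a}_\b\subset C^3_\b$ already have bounded mixed Hessian in the $w$-frame); and smooth contributions from $m\Phi$. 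Hence $\frac{\p^3\Psi_1}{\p w^k\p w^i\p w^{\bar j}}$ is bounded, so $\p_w\tilde g_1=O(1)$, and the matrix product with the frame-bounded $\tilde g_1^{-1}$ yields the claimed bound on the connection of $\Om_1$.

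The main obstacle I anticipate is the bookkeeping of the singular weights: one must verify that multiplying the frame-bounded inverse $\tilde g_1^{-1}$ by $\p_w\tilde g_1$ genuinely remains bounded despite the singular behaviour of the individual components in the original $z$-coordinates. This is precisely why working throughout in the $w$-frame, where both $\tilde g$ and $\tilde g_1$ are uniformly equivalent to the identity, is essential, and why the threshold $\b_1=\frac{2}{3}$ appearing in Lemma \ref{lemma: connection bounded} is exactly the exponent that survives in the present computation.
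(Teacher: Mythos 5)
Your proof is correct and follows essentially the same route as the paper's: both reduce the statement to Lemma \ref{lemma: connection bounded} for the model part $\Om$ (where the threshold $\b_1<\frac{2}{3}$ enters) and to the $C^3_\b$ hypothesis for the third mixed $w$-derivatives of the potentials, after unwinding $\Psi_1=t\vphi_1+(1-t)\vphi_0+m\Phi$ just as the paper does componentwise; your explicit treatment of the inverse factor $\tilde g_1^{-1}$ via quasi-isometry is a point the paper leaves implicit. One trivial slip: the inclusion should read $C^3_\b\subset C^{2,\a}_\b$ (Definition \ref{defn: 3dri}), not the reverse, though what you actually use --- that $\vphi_0,\vphi_1\in C^3_\b$ have bounded mixed Hessians in the $w$-frame --- is of course correct.
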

\begin{proof}
From Lemma \ref{lemma: connection bounded} and the expression of $\Om$ in \eqref{Om},
we know the connection of $\Om$ is bounded for $0<\b_1<\frac{2}{3}$. Recall the formula \eqref{Om1} of $\Om_1$; we have
\begin{align*}
\Om_1
&=t\om_{\phi}+(1-t)\om_{\vphi}
+\frac{\sqrt{-1}}{2}(1+m\p_{n+1}\p_{\overline{n+1}}\Phi)dz^{n+1} \wedge d\bar z^{n+1}\\
&+\frac{1}{\sqrt{2}}\p_i(\phi-\vphi)dz^idz^{\overline{n+1}}
+\frac{1}{\sqrt{2}}\p_{\bar i}(\phi-\vphi)dz^{\bar i}dz^{n+1}.
\end{align*}
We have the the component of $\Om_1$ to be for $2\leq i,j\leq n$,
\begin{align*}
&( g_1)_{1\bar 1} = t (g_{\vphi_0})_{1\bar 1}+(1-t) (g_{\vphi_1})_{1\bar 1};\\
&(g_1)_{1\bar i} =t (g_{\vphi_0})_{1\bar i}+(1-t) ( g_{\vphi_1})_{1\bar i};\\
&( g_1)_{1\overline {n+1}}=\p_1(\vphi_0- \vphi_1);\\
&( g_1)_{i\bar {j}}=t (g_{\vphi_0})_{i\bar j}+(1-t) (g_{\vphi_1})_{i\bar j};\\
&( g_1)_{i\overline {n+1}}=\p_i(\vphi_0-\vphi_1);\\
&( g_1)_{n+1\overline {n+1}}=1+m\p_{n+1}\p_{\overline{n+1}}\Phi.
\end{align*}
Thus the corollary follows from $\vphi_0,\vphi_1\in C_\b^{3}$.
\end{proof}

\begin{lem}\label{lemma: curvature line}
Suppose that $\vphi_0, \vphi_1 \in C^{3}_\b$ have curvature lower (upper) bound. Then $\Om_1$ has also curvature lower (resp. upper) bound.
\end{lem}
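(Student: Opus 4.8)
The plan is to compute the full curvature tensor of $\Om_1$ directly in the coordinate chart $\{w^i\}$ (extended by the identity in the $z^{n+1}$ direction), where $\Om_1$ is quasi-isometric to the Euclidean metric and, by Corollary \ref{connection}, has bounded connection. Writing $g_1$ for the components of $\Om_1$ and using the curvature expression as in Lemma \ref{lem_1_interior_laplacian_estimate},
\[
R(\Om_1)_{\alpha\bar\beta\gamma\bar\delta} = -\partial_\gamma\partial_{\bar\delta}(g_1)_{\alpha\bar\beta} + (g_1)^{\mu\bar\nu}\partial_\gamma(g_1)_{\alpha\bar\nu}\partial_{\bar\delta}(g_1)_{\mu\bar\beta},
\]
I would separate the ``connection square'' term from the ``Hessian of the metric'' term. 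The first is harmless: its factors are first derivatives of the metric, i.e.\ third derivatives of the potentials, which are bounded since $\vphi_0,\vphi_1\in C^3_\b$ and since the inverse $(g_1)^{\mu\bar\nu}$ is bounded by quasi-isometry; hence this term is uniformly bounded, which is all that is needed for both the lower and the upper bound.

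The whole problem therefore reduces to controlling $-\partial_\gamma\partial_{\bar\delta}(g_1)_{\alpha\bar\beta}$, and here I would split into cases by index type, using the components of $g_1$ recorded in the proof of Corollary \ref{connection}. For any component carrying an index $n+1$, namely the mixed terms $(g_1)_{i\overline{n+1}} = \partial_i(\vphi_0-\vphi_1)$ and $(g_1)_{n+1\overline{n+1}} = 1 + m\p_{n+1}\p_{\overline{n+1}}\Phi$, differentiation produces either a third derivative of $\vphi_0-\vphi_1$ or a derivative of the smooth function $\Phi$ of $z^{n+1}$ alone; both are bounded, so these contribute only bounded terms. For the pure $X$-block component $(g_1)_{i\bar j} = t(g_{\vphi_1})_{i\bar j} + (1-t)(g_{\vphi_0})_{i\bar j}$, whenever at least one of $\gamma,\delta$ equals $n+1$ the derivative either vanishes (as $\vphi_s$ is independent of $z^{n+1}$ and $\partial_{n+1}\partial_{\overline{n+1}}t=0$) or, because $t=\mathrm{Re}\,z^{n+1}$, lands on the factor $t$ and leaves only a first derivative of $g_{\vphi_s}$, i.e.\ a bounded third derivative of $\vphi_s$.

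The only genuinely dangerous contribution is thus $-\partial_k\partial_{\bar l}(g_1)_{i\bar j}$ with all four indices in $\{1,\dots,n\}$, which contains the fourth derivatives of $\vphi_0,\vphi_1$. Here I would invoke the single-metric curvature identity for each $\om_{\vphi_s}$,
\[
-\partial_k\partial_{\bar l}(g_{\vphi_s})_{i\bar j} = R(\om_{\vphi_s})_{i\bar j k\bar l} - (g_{\vphi_s})^{p\bar q}\partial_k(g_{\vphi_s})_{i\bar q}\partial_{\bar l}(g_{\vphi_s})_{p\bar j},
\]
so that after substitution
\[
R(\Om_1)_{i\bar j k\bar l} = t\,R(\om_{\vphi_1})_{i\bar j k\bar l} + (1-t)\,R(\om_{\vphi_0})_{i\bar j k\bar l} + (\text{bounded terms}),
\]
the bounded terms again collecting connection squares with bounded inverse metrics. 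Since $t\in[0,1]$ and the metrics $g_{\vphi_0},g_{\vphi_1}$ and $g_t$ are all uniformly equivalent to $\om_{cone}$, a lower (resp.\ upper) bound for $R(\om_{\vphi_0})$ and $R(\om_{\vphi_1})$ passes to the convex combination and hence to $R(\Om_1)$. Assembling the $X$-block estimate with the bounded mixed and $(n+1)$-components yields the lower (resp.\ upper) bound for the full curvature tensor of $\Om_1$.

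I expect the main obstacle to be the bookkeeping that confirms the fourth-derivative terms occur \emph{only} in the all-$X$ case: one must check that the dependence of $t=\mathrm{Re}\,z^{n+1}$ never manufactures a new unbounded term (it produces at worst first derivatives of $g_{\vphi_s}$), and that the ``curvature bounded below/above'' inequality is formulated with respect to metrics ($g_{\vphi_s}$ versus $g_t$ versus $\om_{cone}$) that are uniformly comparable, so that the constants transfer cleanly under the convex combination.
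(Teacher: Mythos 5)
Your proposal is correct and follows essentially the same route as the paper: the paper likewise computes the curvature of $\Om_1$ componentwise in coordinates, rewrites the pure $X$-block as $tR(\om_{\vphi_1})+(1-t)R(\om_{\vphi_0})$ plus connection-squared corrections (exactly your substitution of the single-metric curvature identity), and bounds all components carrying an $n+1$ index by third derivatives of $\vphi_0,\vphi_1$ and derivatives of $\Phi$, all controlled by the $C^3_\b$ assumption and the connection bound of Corollary \ref{connection}. The only cosmetic difference is that the paper writes out all the mixed components $R_{i\bar j k\overline{n+1}}$, $R_{i\bar j(n+1)\overline{n+1}}$, etc.\ explicitly, while you argue for their boundedness by index bookkeeping; the conclusions and the key cancellation are identical.
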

\begin{proof}
Since the formula of the bisectional curvature is
\begin{align*}
R_{i\bar{j}k\bar{l}}=-\frac{\partial^2g_{i\bar{j}}}{\partial z^k\partial
z^{\bar{l}}}
+g^{p\bar{q}}\frac{\partial g_{p\bar{j}}}{\partial
z^{\bar{l}}}\frac{\partial g_{i\bar{q}}}{\partial z^{k}},
\end{align*}
we have for $1\leq i,j,k, l\leq n$ and $\phi=t\vphi_1+(1-t)\vphi_0$,
\begin{align*}
R_{i\bar{j}k\bar{l}}(g_1)
&=tR_{i\bar j k\bar l}(g(\vphi_1))+(1-t) R_{i\bar j k\bar l}(g(\vphi_0)) \\
&-tg(\vphi_1)^{p\bar{q}} \p_{\bar{l}} g(\vphi_1)_{p\bar{j}}\p_{k} g(\vphi_1)_{i\bar{q}}\\
&-(1-t)g(\vphi_0)^{p\bar{q}}  \p_{\bar{l}} g(\vphi_0)_{p\bar{j}}   \p_{k}g(\vphi_0)_{i\bar{q}}\\
&+\sum_{1\leq p, q\leq n} g_1^{p\bar q} \p_{\bar l} g(\phi)_{p\bar j} \p_{k} g(\phi)_{i\bar q}\\
&+\sum_{1\leq p \leq n} g_1^{p \overline{n+1}} \p_{\bar l} g(\phi)_{p\bar j}
\frac{1}{\sqrt 2}\p_k\p_i[\vphi_1-\vphi_0]\\
&+\sum_{1\leq q \leq n} g_1^{n+1\bar q}
\p_{k} g(\phi)_{i\bar q}
\frac{1}{\sqrt 2}\p_{\bar l}\p_{\bar j}[\vphi_1-\vphi_0]\\
&+ \frac{1}{2}g_1^{n+1\overline{n+1}}
\p_k\p_i[\vphi_1-\vphi_0]
\p_{\bar l}\p_{\bar j}[\vphi_1-\vphi_0].
\end{align*}
Also,
\begin{align*}
R_{i\bar{j} k \overline{n+1}}(g_1)
&=-\p_k (\vphi_1-\vphi_0)_{i\bar{j}} \\
&+\sum_{1\leq p,q\leq n}g_1^{p\bar q}(\vphi_1-\vphi_0)_{p\bar{j}} \p_k g( \phi)_{i\bar q}\\
&+g_1^{p\overline{n+1}}(\vphi_1-\vphi_0)_{p\bar{j}} \frac{1}{\sqrt 2}\p_k\p_i[\vphi_1-\vphi_0];\\
R_{i\bar{j} (n+1) \overline{n+1}} (g_1)
&=\sum_{1\leq p, q\leq n}
m g_1^{p\bar{q}}  (\vphi_1-\vphi_0)_{p\bar{j}}  (\vphi_1-\vphi_0)_{i\bar{q}};\\
R_{i \overline{n+1} (n+1) \overline{n+1}} (g_1)
&=\sum_{q=1}^n m g_1^{n+1 \bar q} \bar\p_{n+1} \p_{n+1} \bar\p_{n+1} \Psi (\vphi_1-\vphi_0)_{i\bar q};\\
R_{n+1\overline{n+1} (n+1) \overline{n+1}} (g_1)
&=-m\p_{n+1}\bar\p_{n+1}\p_{n+1}\bar\p_{n+1}\Psi\\
&+m^2 g_1^{n+1 \overline{n+1} } \bar\p_{n+1}\bar\p_{n+1}\p_{n+1} \Psi
\p_{n+1}\bar\p_{n+1}\p_{n+1} \Psi.
\end{align*}
The connection and the lower bound of the curvature of $\vphi_0$ and $\vphi_1$ are bounded. So the curvature of $\Om_1$ is also bounded below. The upper bound follows in the same way.
\end{proof}
\begin{cor}\label{cor: curvature line}
Suppose that $0<\b_1 < \frac{2}{3}$, $\vphi_0, \vphi_1 \in C^{3}_\b$ and their Ricci curvature have lower (upper) bound. Then the Ricci curvature of $\Om_1$ also has lower (resp. upper) bound.
\end{cor}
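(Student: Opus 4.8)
The plan is to obtain the one-sided bound on $\Ric(\Om_1)$ directly from the Ricci-potential identity $\Ric(\Om_1)=-\frac{\sqrt{-1}}{2}\p\bar\p\log\det g_1$, rather than by re-running the full bisectional computation of Lemma \ref{lemma: curvature line}. A lower (resp.\ upper) bound for $\Ric(\Om_1)$ as a $(1,1)$-form is equivalent to an upper (resp.\ lower) bound for $\p\bar\p\log\det g_1$ relative to $\Om_1$, so it suffices to control the complex Hessian of $\log\det g_1$.

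First I would insert the determinant formula from Proposition \ref{csck eqv}: writing $g_t=t\,g_{\vphi_1}+(1-t)g_{\vphi_0}$ and $\psi=\vphi_1-\vphi_0$, one has $\log\det g_1=\log\det g_t+\log\!\big[1+m\Phi_{n+1\overline{n+1}}-g_t^{i\bar j}\psi_i\psi_{\bar j}\big]$. The bracket is pinched between two positive constants once $m$ is large, and its complex Hessian, together with all the mixed ($i,\overline{n+1}$) and normal ($n+1,\overline{n+1}$) components of $\p\bar\p\log\det g_t$, involves the datum $\Phi$ (smooth in $z^{n+1}$) and at most first derivatives of $g_{\vphi_0},g_{\vphi_1}$; since $t=\mathrm{Re}\,z^{n+1}$, the $z^{n+1}$-derivatives of $g_t$ produce only polynomial expressions in $g_t^{-1}(g_{\vphi_1}-g_{\vphi_0})$. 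By Corollary \ref{connection} (valid because $0<\b_1<\frac23$ and $\vphi_0,\vphi_1\in C^3_\b$) the connection of $\Om_1$, hence the first derivatives of these metrics, are bounded, so all of these contributions are bounded. This reduces the corollary to the single sub-statement that the Ricci curvature of the convex combination $\om_t$ on $X$ is bounded below (resp.\ above), uniformly in $t$.

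For that sub-statement I would expand $\Ric(\om_t)_{i\bar j}=g_t^{k\bar q}g_t^{p\bar l}\p_i(g_t)_{p\bar q}\p_{\bar j}(g_t)_{k\bar l}-g_t^{k\bar l}\p_i\p_{\bar j}(g_t)_{k\bar l}$, note that the first term is a non-negative Hermitian form by Schwarz, and rewrite the second term through the curvature identity $\p_i\p_{\bar j}(g_{\vphi_a})_{k\bar l}=-R_{k\bar l i\bar j}(\vphi_a)+g_{\vphi_a}^{p\bar q}\p_i(g_{\vphi_a})_{p\bar l}\p_{\bar j}(g_{\vphi_a})_{k\bar q}$ for $a=0,1$. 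The connection-squared remainders are again bounded, leaving the combination $t\,g_t^{k\bar l}R_{k\bar l i\bar j}(\vphi_1)+(1-t)g_t^{k\bar l}R_{k\bar l i\bar j}(\vphi_0)$, which I would try to match to the hypothesised bounds $g_{\vphi_a}^{k\bar l}R_{k\bar l i\bar j}(\vphi_a)=\Ric(\vphi_a)_{i\bar j}$ using the uniform equivalence $g_t\simeq g_{\vphi_0}\simeq g_{\vphi_1}$ furnished by $\vphi_0,\vphi_1\in\mathcal H_\b$.

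The hard part is precisely this last matching: the bisectional curvatures of $\vphi_0,\vphi_1$ are contracted against the inverse $g_t^{-1}$ of the \emph{combined} metric rather than against $g_{\vphi_a}^{-1}$, whereas the hypothesis bounds only the trace taken with $g_{\vphi_a}^{-1}$. Controlling this discrepancy is where the comparability of the metrics and the one-sidedness of the estimate must be exploited carefully (the non-negative Schwarz term assisting the lower-bound case, and the symmetric argument with the roles of $\Om$ and $\Om_1$ interchanged giving the upper bound); this is the only step that genuinely goes beyond bounded connections and the $C^3_\b$ regularity, and it is the trace-level analogue of the full bisectional estimate already carried out in Lemma \ref{lemma: curvature line}.
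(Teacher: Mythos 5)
Your proposal is not a complete proof: it reduces the corollary to the final ``matching'' step and then leaves that step unproven, and that step is precisely the entire mathematical content of the statement. Worse, it cannot be carried out with the tools you allow yourself (uniform equivalence of the metrics plus ``one-sidedness''). Fix a direction $\xi$ and set $A_{k\bar l}:=R_{k\bar l i\bar j}(\vphi_a)\xi^i\xi^{\bar j}$. The hypothesis bounds only the single contraction $\tr_{g_{\vphi_a}}A$ from one side; under a Ricci-only hypothesis $A$ is an indefinite Hermitian form whose positive and negative parts $A^{\pm}$ are individually uncontrolled (bounded Ricci does not bound bisectional curvature). From $c\,g_{\vphi_a}\leq g_t\leq C\,g_{\vphi_a}$ one gets at best
$\tr_{g_t}A\geq C^{-1}\tr_{g_{\vphi_a}}A^{+}-c^{-1}\tr_{g_{\vphi_a}}A^{-}$,
which is unbounded below as soon as $A^{+}$ and $A^{-}$ both grow while their $g_{\vphi_a}$-traces cancel. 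A two-by-two model makes this concrete: $A=\mathrm{diag}(M,-M)$, $g_{\vphi_a}=\mathrm{Id}$, $g_t=\mathrm{diag}\bigl(1,\frac{1+t}{2}\bigr)$ gives $\tr_{g_{\vphi_a}}A=0$ yet $\tr_{g_t}A=-M\frac{1-t}{1+t}\to-\infty$ as $M\to\infty$. One-sided trace bounds simply do not transfer between uniformly equivalent metrics for indefinite tensors, and the non-negative Schwarz term you invoke is itself bounded (by Corollary \ref{connection}), so it cannot absorb an unbounded negative contribution. The same objection undermines your intermediate claim that the Hessian of the bracket $1+m\Phi_{n+1\overline{n+1}}-g_t^{i\bar j}\psi_i\psi_{\bar j}$ involves ``at most first derivatives'' of $g_{\vphi_0},g_{\vphi_1}$: differentiating $g_t^{i\bar j}$ twice produces second derivatives of the endpoint metrics, i.e.\ their full bisectional curvature contracted against $\p\psi$, which Ricci bounds again do not control.

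For comparison, the paper's own proof takes a different and much shorter route: it does not pass through $\Ric(\Om_1)=-\frac{\sqrt{-1}}{2}\p\bar\p\log\det g_1$, but instead traces the explicit formulas of Lemma \ref{lemma: curvature line}, in which $R_{i\bar jk\bar l}(g_1)$ appears as $tR_{i\bar jk\bar l}(g_{\vphi_1})+(1-t)R_{i\bar jk\bar l}(g_{\vphi_0})$ plus connection-quadratic terms bounded via Corollary \ref{connection}, and asserts that the conclusion ``follows directly.'' Note that the contraction mismatch you identified — the endpoint curvatures end up traced against $g_1^{k\bar l}$ rather than against $g_{\vphi_a}^{k\bar l}$, which is all the hypothesis controls — is present in that argument as well and is passed over in silence. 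So your proposal has correctly located the crux that the paper's one-line proof glosses over; but locating the difficulty is not resolving it, and since your listed ingredients ($C^3_\b$ regularity, bounded connections, metric comparability) provably cannot resolve it, the proposal does not constitute a proof.
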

\begin{proof}
We use the formulas of the Riemannian curvature in Lemma \ref{lemma: curvature line}, and we take the trace
to obtain the Ricci curvature. Then the lemma follows directly.
\end{proof}

Since $\inf Riem(\Om_1)$ is bounded for $0<\b_1<\frac{1}{2}$ and $\sup Riem(\Om_1)$
is bounded for $0<\b_1<1$ (c.f. \cite{Brendle:2011kx}\cite{Jeffres:2011vn}),
we introduce the following subspaces of K\"ahler cone metrics.
When $0<\b_1<\frac{1}{2}$, we define
\begin{align*}
\mathfrak{I_1}&:=\{\vphi\in\mathcal H^{3}_\b\vert  \sup \Ric(\om_\vphi)
\text{ is bounded}\};
\end{align*}
\begin{align*}
\mathfrak{I_2}&:=\{\vphi\in\mathcal H^{3}_\b\vert  \inf \Ric(\om_\vphi)
\text{ is bounded}\};
\end{align*}
\begin{thm}\label{sole geo}
Assume that two K\"ahler cone potentials $\vphi_0, \vphi_1$ are both in $\mathfrak{I_i}$ $i=1,2$.
Then they are connected by a $C^{1,1}_\b$ cone geodesic.
\end{thm}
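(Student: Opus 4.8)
The plan is to solve the degenerate Dirichlet problem \eqref{geo ma} by the continuity method applied to the nondegenerate family \eqref{per equ sim}, decreasing the parameter $\tau$ from $1$ to $0$. Set
$$T := \{\tau \in (0,1] : \eqref{per equ sim} \text{ is solvable in } C^{2,\a}_\b \text{ with zero boundary data}\}.$$
Nonemptiness is immediate: at $\tau=1$ the function $\tilde\Psi \equiv 0$ solves the equation, since $\Psi_1$ solves the original Monge--Amp\`ere problem. For openness I would linearize at a solution $\tilde\Psi_\tau$; after taking logarithms as in \eqref{log equ} the linearized operator is, up to lower order terms, $\tri' - \mathrm{id}$ acting on functions vanishing on $\p\mathfrak{X}$, and it has trivial kernel by the cone maximum principle. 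Its invertibility on the cone H\"older spaces is exactly what the linear theory of Section \ref{open} and Section \ref{linear thy} supplies (solvability of the linearized equation together with the interior and boundary $C^{2,\a}_\b$ Schauder estimates), so the implicit function theorem yields solutions for nearby parameters.

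The substance of the argument is closedness, i.e. uniform-in-$\tau$ a priori estimates in $C^{2,\a}_\b$. I would assemble these from Section \ref{close} in the standard order. Propositions \ref{low} and \ref{upp} give the two-sided bound $\Psi_1 \le \Psi \le h$, which already controls the boundary gradient \eqref{prop_boundary_gradient_estimate}. Proposition \ref{prop: gradient estimate} promotes this to a uniform interior gradient bound, and Proposition \ref{sec} combined with the boundary Hessian estimate of Proposition \ref{prop: boundary hessian estimate} produces a uniform bound on $n+1+\tri\Psi$. Together with the positivity $\Om_\Psi > 0$ and the equation itself, this confines the complex Hessian between two uniform positive multiples of $\Om$ for each fixed $\tau>0$, so \eqref{per equ sim} becomes uniformly elliptic; feeding this into the Evans--Krylov estimate (which requires $|\p\log\tilde\Om_1^{n+1}|$ bounded, valid for cone angle $<\tfrac23$) closes the loop and gives the uniform $C^{2,\a}_\b$ bound. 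Hence $T$ is open and closed in the connected set $(0,1]$, so $T=(0,1]$.

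Before this machinery applies I must verify the hypotheses of the Section \ref{close} estimates, namely one of the curvature configurations listed at the start of Section \ref{open}, together with $|\p\log\tilde\Om_1^{n+1}|$ and $|\p\log(\Om_1^{n+1}/\Om^{n+1})|$ bounded. This is precisely where the definition of $\mathfrak{I_i}$ and the range $\b_1<\tfrac12$ enter. By Corollary \ref{connection} the connection of $\Om_1$ is bounded (using $\b_1<\tfrac23$ and $\vphi_0,\vphi_1\in C^3_\b$), which controls the gradient terms; meanwhile Corollary \ref{cor: curvature line} transfers the one-sided Ricci bound encoded in $\mathfrak{I_1}$ (resp. $\mathfrak{I_2}$) into a bound on $\sup\Ric(\Om_1)$ (resp. $\inf\Ric(\Om_1)$). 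Paired with the Riemannian curvature bound of the model metric $\Om$ available for $\b_1<\tfrac12$, this realizes the hypotheses behind the constant $C_1$ (for $\mathfrak{I_1}$) or $C_3$ (for $\mathfrak{I_2}$) of Proposition \ref{sec}.

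Finally I would let $\tau\to 0$. All the estimates above are uniform in $\tau$ and in particular yield a uniform $C^{1,1}_\b$ bound on the mixed second derivatives, so a subsequence of the $\tilde\Psi_\tau$ converges in $C^{1,\a}_\b$ to a limit $\Psi$ with $\Om_\Psi\ge 0$ and bounded complex Hessian, solving the degenerate equation \eqref{geo ma} with the prescribed boundary data; this limit is the desired $C^{1,1}_\b$ cone geodesic joining $\vphi_0$ and $\vphi_1$. I expect the hardest point to be the uniform second-order control near the divisor, where the Hessian genuinely blows up: the boundary estimate of Proposition \ref{prop: boundary hessian estimate} must be made independent of the distance to $\mathfrak{D}$ (achieved there by a barrier built from the linear theory rather than from the distance function), and the gradient estimate requires a test function with exactly the right growth rate across $\mathfrak{D}$, with the final passage to $C^{2,\a}_\b$ via Evans--Krylov forcing the angle restriction $\b_1<\tfrac23$.
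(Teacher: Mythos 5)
Your proposal is correct and follows essentially the same route as the paper: a continuity method in $\tau$ on the family \eqref{per equ}/\eqref{per equ sim}, nonemptiness at $\tau=1$, openness via the invertibility of the linearized operator supplied by Proposition \ref{dp}, closedness via the Section \ref{close} estimates (under the curvature hypotheses transferred to $\Om_1$ by Corollaries \ref{connection} and \ref{cor: curvature line}) upgraded to $C^{2,\a}_\b$ by the interior and boundary Schauder estimates, and finally the $\tau\to 0$ limit in $C^{1,\a}_\b$ under the uniform $C^{1,1}_\b$ bound. Your accounting of how the $\mathfrak{I_i}$ hypotheses feed into the constants $C_1$ and $C_3$ of Proposition \ref{sec} is in fact slightly more explicit than the paper's own write-up, but the argument is the same.
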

\begin{proof}
Note that the right hand side of the equation is positive as long as $\tau$ is positive.
When $\tau$ is zero, \eqref{per equ} provides a solution of
the geodesic equation \eqref{geo ma}.


We denote the set of solvable times of \eqref{per equ} by
$$
I=\{\tau\in(0,1]|\eqref{per equ}_\tau \text{ is solvable in } C^{2,\a}_\b\}\;.
$$
Automatically, $\Psi=\Psi_1$ satisfies the equation at $\tau=1$ , so the set $I$ is not empty.

For any $0< \tau\leq 1$, assuming that $\om(\tau_0)$ solves the equation \eqref{per equ},
Proposition \ref{dp} provides a unique solution in $C^{2,\a}_\b$
to the following linearized equation
\begin{equation*}
  \left\{
   \begin{array}{ll}
\tri_{\tau_0} v-v=f &\text{ in }\mathfrak{M}\;,\\
v=u &\text{ on }\p\mathfrak{X}\;,
   \end{array}
  \right.
\end{equation*}
for any $f\in C^\a_\b$ and $u\in C^{2,\a}_\b$.
So the linearized operator at $\tau_0$ is invertible, and thus $I$ is open.
So the solvable time can be extended beyond $\tau_0$.

The a priori estimates in Section \ref{close},
with one of the geometry conditions in  $\mathfrak{ I_1}$ or $\mathfrak{ I_2}$ assures the uniform
$C_\b^{1,1}$ bound of $\vphi(t)$ which is independent of $\tau$.
Two estimates in the next subsections improve $C^{2,\a}_\b$ regularity of the
solution of \eqref{per equ} before $\tau=0$. Thus, we could solve th approximation equation till $\tau=0$. With the uniform $C_\b^{1,1}$ bound,
after taking a subsequence $t_i$ we have a weak limit $\vphi=\lim_{t_i\rightarrow0}\vphi(t_i)$
under a $C_\b^{1,\a}$ norm. In Section \ref{unigeo}, we prove the uniqueness of a weak solution.
Hence the theorem is proved completely.
\end{proof}

\subsection{Interior Schauder estimate: $\tau>0$}
We first prove the $C^{2,\a}_\b$ estimate for a general equation.
\begin{align}\label{eq_starting_@_interior_holder_estimate}
 \log \Omega_{\Psi}^{n+1} = \log \Omega^{n+1} + F. \;
\end{align}
\begin{prop}Assume that we have the second order estimate of $\Psi$.
Then the following estimate holds for the solution of \eqref{eq_starting_@_interior_holder_estimate} on any small ball $B\subset \mathfrak X$
\begin{align}\label{claimed_est_2_alpha_interior}
 |\sqrt{-1}\p\bar\p \Psi|_{C^\a_\b(B)} \leq C \;,
\end{align}
where $C$ depends on
$|\p\log\tilde\Om^{n+1}|_{L^q}$,
$|\log\Om^{n+1}|_{C^\a_\b}$,
$|\p\Psi|_{\infty}$,$|\tri\Psi|_{\infty} $,
$|\p \tilde F|_{L^q}$, $|F|_{C^\a_\b}$,
where $q > 2n+2 $.
\end{prop}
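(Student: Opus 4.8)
The plan is to turn \eqref{eq_starting_@_interior_holder_estimate} into an ordinary, uniformly elliptic, concave fully nonlinear equation in the quasi-isometric coordinates $w$ of \eqref{conetransform}, and then to run the Evans--Krylov argument, using the Krylov--Safonov weak Harnack inequality of Section \ref{linear thy} to produce the interior oscillation decay of the complex Hessian. The second order estimate enters only to guarantee uniform ellipticity; once that is in hand, the singular nature of the problem is confined to the regularity of the background data, which is where the cone angle condition $\b<\frac{2}{3}$ will be used.

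First I would record uniform ellipticity. Writing $g'=g+\p\bar\p\Psi$ and letting $\lambda_1,\dots,\lambda_{n+1}>0$ be its eigenvalues relative to $g$, the equation gives $\prod_i\lambda_i=e^{F}$, which is pinched between two positive constants since $F$ is bounded; the assumed bound on $\tri\Psi$ controls $\sum_i\lambda_i=\tr_g g'$ from above, and the determinant bound then keeps each $\lambda_i$ in a fixed compact subinterval of $(0,\infty)$. Thus $g'$ is uniformly equivalent to $g$, and after applying $W$ it is uniformly equivalent to $\om_{cone}$, so the linearized operator $\tri'={g'}^{i\bar j}\p_{w^i}\p_{w^{\bar j}}$ is uniformly elliptic in the ordinary $w$-coordinates. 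Since, by Donaldson's characterization recalled above, $\p\bar\p\Psi\in C^\a_\b$ is equivalent to $\tilde\Psi_{w^i w^{\bar j}}\in C^\a$, it suffices to prove an ordinary interior $C^\a$ bound for the complex Hessian of $\tilde\Psi(w)$ solving $\log\det(\tilde g_{i\bar j}+\tilde\Psi_{i\bar j})=\log\tilde\Om^{n+1}+\tilde F$ in the usual sense.

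Next I would invoke the concave theory. The map $M\mapsto\log\det(\tilde g(w)+M)$ is concave on positive Hermitian matrices, and on the ellipticity range fixed above it is uniformly elliptic; this is exactly the Evans--Krylov setting. The standard mechanism differentiates the equation twice along a fixed direction, exploits concavity to show that a suitable second derivative is a subsolution of a uniformly elliptic linear equation, and then applies the weak Harnack inequality from Section \ref{linear thy} to obtain a geometric decay of oscillation of $\tilde\Psi_{w^i w^{\bar j}}$ on dyadic balls $B\subset\mathfrak X$. In its perturbative (Caffarelli) form this reduces the desired $C^{2,\a}$ estimate to $C^\a$ control of the coefficient matrix $\tilde g(w)$ and of the inhomogeneity $f:=\log\tilde\Om^{n+1}+\tilde F$; the former is supplied by $|\log\Om^{n+1}|_{C^\a_\b}$ and $|F|_{C^\a_\b}$ for the smooth parts, while the a priori bounds $|\p\Psi|_\infty$ and $|\tri\Psi|_\infty$ secure the ellipticity constants entering the weak Harnack estimate.

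The hard part will be the regularity of the background data $\p_w\log\tilde\Om^{n+1}$ and the exponent threshold $q>2n+2$. Under $W$ the volume form $\tilde\Om^{n+1}$ acquires factors of $|w|$ to fractional powers, so $\log\tilde\Om^{n+1}=\log\det\tilde g$ is only borderline differentiable, and one cannot bound its $C^\a_\b$ norm directly; instead the point is that for $\b<\frac{2}{3}$ the connection of $\Om$ is bounded (Lemma \ref{lemma: connection bounded}), whence $\p_w\log\tilde\Om^{n+1}=\tilde g^{i\bar j}\p_w\tilde g_{i\bar j}$ stays bounded and in particular lies in every $L^q$; the same mechanism controls $\p_w\tilde F$. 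Choosing $q>2n+2=\dim_{\RR}\mathfrak X$ is precisely what lets the Sobolev embedding $W^{1,q}\hookrightarrow C^{\a}$, with $\a=1-(2n+2)/q$, convert these $L^q$ derivative bounds into the $C^\a$ control of $f$ and $\tilde g$ required to close the Evans--Krylov loop. Phrasing the final dependence through the $L^q$ norms $|\p\log\tilde\Om^{n+1}|_{L^q}$ and $|\p\tilde F|_{L^q}$, rather than through sup norms, is exactly what makes the estimate robust against the mild singularity of the coordinate change along $\mathfrak D$, and I expect verifying the integrability of $\p_w\log\tilde\Om^{n+1}$ up to the divisor to be the genuinely delicate step.
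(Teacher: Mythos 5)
There is a genuine gap, and it sits exactly at the step you dismiss as a reduction: the change of variables \eqref{conetransform} is \emph{not holomorphic} (the paper itself records $\p_{w^{\bar i}}\varepsilon_i\neq 0$), so the complex Monge--Amp\`ere operator does not survive the passage to the $w$-coordinates. The pullback of $\frac{\sqrt{-1}}{2}\p\bar\p\Psi$ under $W$ is a closed real $2$-form which acquires nonzero $(2,0)+(0,2)$ components in the $dw^i, dw^{\bar i}$ basis, and the chain rule relates $\Psi_{z^i\bar z^j}$ to the \emph{full} real Hessian of $\tilde\Psi$ in $w$ (including $\p^2_{w^iw^j}\tilde\Psi$ and $\p^2_{w^{\bar i}w^{\bar j}}\tilde\Psi$) plus first-order terms weighted by second derivatives of the coordinate change, which are singular along $\mathfrak D$. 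Consequently $\tilde\Psi$ does \emph{not} solve $\log\det(\tilde g_{i\bar j}+\tilde\Psi_{w^iw^{\bar j}})=\log\tilde\Om^{n+1}+\tilde F$ ``in the usual sense'': there is no ordinary, concave, uniformly elliptic equation in the $w$-chart to which the standard Evans--Krylov/Caffarelli machinery applies, and the rest of your argument (uniform ellipticity, concavity in the Hessian variable, oscillation decay for $\tilde\Psi_{w^iw^{\bar j}}$) is built on this nonexistent equation. Quasi-isometry of $W$ controls norms of functions and forms; it does not transport the PDE.

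The paper circumvents precisely this obstruction by never leaving the holomorphic coordinates: it differentiates \eqref{eq_starting_@_interior_holder_estimate} in $z$, accepts that $V_{k\bar l}=g_{k\bar l}+\Psi_{k\bar l}$ blows up along $\mathfrak D$, and tames it by multiplying with the inverse flat cone metric $\sigma^{k\bar l}$ of \eqref{flat cone}, studying the bounded scalars $u_\eta=\sum_{k,l}\eta^k\eta^{\bar l}\sigma^{k\bar l}V_{k\bar l}$. Flatness of $\om_{cone}$ in the $w$-chart kills the curvature terms in $\tri' u_\eta$, the cone weak Harnack inequality (Proposition \ref{harnack}) gives one side of the oscillation estimate with the $L^q$ norm of $\p\tilde h$ entering as the inhomogeneity (this is where Blocki's $W^{1,q}$ observation and the threshold $q>2n+2$ are used, not through a Sobolev embedding into $C^\a$), and the concavity step requires an extra ingredient you have no substitute for: comparing the \emph{weighted} Hessians at two points $Q_1,Q_2$ forces a sign, supplied by the monotonicity $g^{i\bar j}(Q_2)\geq g^{i\bar j}(Q_1)$ when $Q_2$ is farther from $\mathfrak D$, before Lemma $17.13$ of Gilbarg--Trudinger can be applied. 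The conclusion then passes from $u_\eta\in C^\a_\b$ to $\Delta V\in C^\a_\b$ and finally to $V\in C^{2,\a}_\b$ via the cone Schauder estimate (Proposition \ref{linear equ gl}). Your observations on uniform ellipticity from the second order estimate and on the role of Lemma \ref{lemma: connection bounded} for $\b<\frac{2}{3}$ are correct and do match the paper, but they are the easy ingredients; the weighting device and the monotonicity trick are the core of the proof, and they are absent from your proposal.
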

\begin{proof}Choose a small ball $B_d(p)$ around $p$ in the interior of $\mathfrak X$. When $B_d(p)$ does not intersect $\mathfrak D$, this proposition follows directly from the standard Evans-Krylov estimate. So it's sufficient to fix a point $p \in \mathfrak{D}$.
We consider \eqref{eq_starting_@_interior_holder_estimate} in $B_d (p)$.
We consider it in a local holomorphic coordinate chart and we differentiate it in $B_d(p)\setminus\mathfrak D$.
We fix the following piece of notation
$$h : =  \log \Omega^{n+1}+F.$$
So, we fix a $1\leq k \leq n+1$ and,  by taking $\frac{\partial}{\partial z^k}$
on both sides
of \eqref{eq_starting_@_interior_holder_estimate} we get
\begin{align*}
 {g'}^{i\bar j} (g_{i\bar j k} + \Psi_{i \bar j k} )
 = h_k \;.
\end{align*}
Taking $\frac{\partial}{\partial z^{ \bar l}}$ on both sides of the above equation,
we have
\begin{align*}
 &-{g'}^{p\bar j} {g'}^{i\bar q} (g_{p\bar q \bar l} + \Psi_{p\bar q \bar l})
(g_{i\bar j k} + \Psi_{i\bar j k}) +
{g'}^{i\bar j}(g_{i\bar j k \bar l} + \Psi_{i\bar j k \bar l} )
=
h_{k\bar l}
\;.
\end{align*}
We introduce the notation $V:=g+ \Psi $.
Here $ g$ is the local potential of $g_{i\bar j}$ in $B_d(p)$,
then
\begin{align*}
{g'}^{i\bar j} V_{k\bar l i \bar j}
&={g'}^{p\bar j}{g'}^{i\bar q}V_{p\bar q k}V_{i\bar j \bar l}
+h_{k\bar l}  \;  .\\
\;
\end{align*}
Since this equation is not well-defined along $\mathfrak D$,
we choose inverse of the flat metric $g^{k \bar l}$ \eqref{flat cone} as the weighted function $\sigma^{k \bar l}$, and
we consider $ {g'}^{i \bar j}(\sigma^{k \bar l} V_{k \bar l})_{i \bar j} $.
We have
\begin{align*}
\tri'(g^{k \bar l} V_{k \bar l}) =
{g'}^{i \bar j}(
R^{k \bar l}_{\phantom{k \bar l}i \bar j } V_{k \bar l }
+ g^{k \bar l} V_{k \bar l i \bar j})\; .
\end{align*}
Now given any direction $\eta\in \mathbb{C}^{n+1}$, with $|\eta|= 1$,
we denote
$\p_\eta :=\sum_k \eta^k\frac{\p}{\p z^k} $.
Also, we set
$V_{\eta\bar\eta} := \p^2_{\eta \bar\eta} V=\sum_{k,l}
\eta^k \eta^{\bar l}\frac{\p}{\p z^k}\frac{\p}{\p z^{\bar l}}V$.
We then define
$u_\eta := \sum_{k,l} \eta^k \eta^{\bar l} \sigma^{k\bar l } V_{k \bar l} $.
We have
\begin{align*}
\tri'(u_\eta)
&\geq \sum_{k,l} \eta^k \eta^{\bar l}
\sigma^{k \bar l}h_{k\bar l}\; ,
\end{align*}
where we use that the flat cone metric has flat curvature under the coordinate ${w^i}$.

We denote
$\tilde h_{\bar l}:= g^{k \bar l}\tilde h_{k}$ on the coordinate chat $\{w^i\}$.
Let us now introduce the following symbols.
We denote
$$
M_{s\eta} := \sup_{B_{sd}(p)} u_\eta,\quad
 m_{s\eta} := \inf_{B_{sd}(p)} u_\eta \; .$$
Applying
Proposition \ref{harnack} (weak Hanack inequality) to $M_{2\eta} - u_\eta$,
we have that there exists a $q>2n+2$ such that
\begin{align}\label{harnack_@_interior_holder_estimate}
& \left\{
d^{-2n-2} \int_{B_d (p)}  (M_{2\eta} - u_\eta)^p \Omega^{n+1}
\right\}^{\frac{1}{p}}
\leq
C
 \left\{
 M_{2\eta} - M_\eta + K
\right\}\;.
\end{align}
Here $$K:=d^{1-\frac{2n+2}{q}} \|\p{\tilde h}\|_{q}\; $$
and
\begin{align*}
\|\p{\tilde h}\|_{q}= \|{\p\log\tilde\Om^{n+1}+\p\tilde F}\|_{q}.
\end{align*}

In order to obtain the inverse inequality for $u_\eta - m_{2\eta}$
we use the concavity of the Monge-Amp\`ere operator.
Fix any two points $Q_2 \in B_{2d}(p)$ and $Q_1 \in B_{d}(p)$, without loss of generality,
we assume the distance from $Q_2$ to $\mathfrak D$ is longer than $Q_1$ to $\mathfrak D$.
From the formula of the flat metric \eqref{flat cone}, we see that $g^{i \bar j }(Q_2)>g^{i \bar j }(Q_1)$.
From the equation \eqref{eq_starting_@_interior_holder_estimate}
we have, writing
${g'}(t) := (1-t) {g'}(Q_2) + t {g'}(Q_1)$, and $a^{i \bar j} = \int_0^1 {g'}^{i\bar j}(t) dt$,
the expression
\begin{align}\label{eq_evolved_@_interior_holder_estimate}
 &h(Q_1) - h(Q_2) =
\log \det ({g'}_{i \bar j} (Q_1))
- \log \det ({g'}_{i \bar j} (Q_2)) \nonumber\\
&=\int_0^1 {g'(t)}^{i\bar j} dt (V(Q_1) - V(Q_2))_{i \bar j}
= a^{i\bar j } (V(Q_1) - V(Q_2))_{i \bar j}\;.
\end{align}
Now, for $1\leq i ,j \leq n+1$ we define
\begin{align*}
\tilde a^{i\bar j} := \frac{a^{i\bar j}}{g^{i \bar j }(Q_2)} \; .
\end{align*}
We have (since $g'(t)$ is $L^\infty$-equivalent to $g$, for $1\leq i,j \leq n+1$)
that the matrix $\tilde{a}^{i \bar j}$ is positive definite and its
eigenvalues range between the positive constants $\lambda$ and $\Lambda$.
Thus, we can apply Lemma $17.13$ in \cite{MR1814364}
(see also Section $(4.3)$ in \cite{MR904673}); we get that
there exists a finite set of unit vectors
$\gamma_1 , \cdots , \gamma_N \in \mathbb{C}^{n+1}$
and positive numbers $\lambda^* , \Lambda^*$ depending only on $n, \lambda , \Lambda$
such that the matrix $\tilde{a}^{i \bar j}$ can be written as
\begin{align*}
 \tilde{a}^{i \bar j}
= \sum_{\nu = 1}^N b_\nu \gamma_{\nu i} \gamma_{\nu \bar j}\;.
\end{align*}
Here $\lambda^* \leq b_{\nu} \leq \Lambda^*$ for any $1\leq \nu \leq N$.
As a result, we can express the matrix $a^{i\bar j}$ in terms of $b_\nu$ and the vectors
$\gamma_\nu$.
Thus, we continue from  \eqref{eq_evolved_@_interior_holder_estimate} and we write
\begin{align*}
&h(Q_1) - h(Q_2)
=g^{i \bar j }(Q_2) \tilde a^{i\bar j } (V(Q_1) - V(Q_2))_{i \bar j}\\
&=
\sum_{\nu = 1}^N b_\nu g^{i \bar j }(Q_2) \gamma_{\nu i} \gamma_{\nu \bar j}
(V(Q_1) - V(Q_2))_{i \bar j}
\geq
C \sum_{\nu = 1}^N b_\nu   (u_{ \gamma_{\nu}} (Q_1) - u_{ \gamma_{\nu}} (Q_2))\;,
\end{align*}
where we used that the matrix $V_{i \bar j}$ is positive-definite and $g^{i \bar j }(Q_2)\geq g^{i \bar j }(Q_1)$.
We conclude that for a fixed $1\leq l \leq N$
\begin{align} \label{needed_@_interior_holder_estimate}
C b_l (u_{ \gamma_{l}} (Q_1) - u_{ \gamma_{l}} (Q_2))
\leq
h (Q_1) - h (Q_2)
 + C \sum_{\nu \neq l} b_\nu (u_{ \gamma_{\nu}} (Q_2) - u_{ \gamma_{\nu}} (Q_1))\; .
\end{align}
We now  fix $1\leq \nu \leq N$, $s=1,2$ and we denote
$$w(sd) := \sum_{\nu = 1}^N \osc_{B_{sd}(p)} u_{ \gamma_{\nu}}.$$
From \eqref{needed_@_interior_holder_estimate},
since $Q_1 \in B_{d}(p)$ and $Q_2 \in B_{2d}(p)$
we get
\begin{align*}
 u_{ \gamma_{l}} (Q_1) - m_{2l} &\leq
C
\{
d^\a |h|_{C_\b^\a} + \sum_{\nu \neq l} (M_{2\gamma_{\nu}} - u_{ \gamma_{\nu}} (Q_1))\}.
\end{align*}
Applying the inequality \eqref{harnack_@_interior_holder_estimate}, we have
\begin{align}\label{harnack_two_@_interior_holder_estimate}
 &\left\{
d^{-2n-2} \int_{B_d (p)}  ( \sum_{\nu \neq l} M_{2 \gamma_{\nu}} - u_{ \gamma_{\nu}})^p \Omega^{n+1}
\right\}^{\frac{1}{p}} \nonumber \\
&\leq
 N^{\frac{1}{p}} \sum_{\nu \neq l}
\left\{
d^{-2n-2}
\int_{B_d (p)}  (  M_{2 \gamma_{\nu}} - u_{ \gamma_{\nu}})^p \Omega^{n+1}
\right\}^{\frac{1}{p}} \nonumber \\
&\leq C
 \left\{
\sum_{\nu \neq l} (M_{2 \gamma_{\nu}}  - M_{ \gamma_{\nu}})
+ K
\right\}\nonumber\\
&\leq C
 \left\{
w(2d) - w(d)
+ K
\right\}\;
\end{align}
which entails, by integrating of $Q_1$ on $B_d (p)$ with respect to $\Om$ and using \eqref{harnack_two_@_interior_holder_estimate}
\begin{align}\label{harnack_three_@_interior_holder_estimate}
&\left\{
d^{-2n-2}\int_{B_d (p)}  (u_{ \gamma_{l}} (Q_1) - m_{2l} )^p
\Omega^{n+1}
\right\}^{\frac{1}{p}} \nonumber \\
&\qquad\leq
C
\left\{
d^\a |h|_{C_\b^\a} +
w(2d) - w(d)
+K
\right\}\;.
\end{align}
Now, we combine \eqref{harnack_two_@_interior_holder_estimate} and \eqref{harnack_three_@_interior_holder_estimate} to obtain
\begin{align*}
 w(2d)
&\leq C
\left\{
d^\a |h|_{C_\b^\a} +
w(2d) - w(d)
+ K
\right\}\;,
\end{align*}
where at the last inequality we used \eqref{harnack_@_interior_holder_estimate} and
\eqref{harnack_three_@_interior_holder_estimate}.
Let us compute that $|h|_{C_\b^\a}=|F + \log\Om^{n+1}|_{C_\b^\a}$.

Then, using the Iteration Lemma $8.23$ in \cite{MR1814364},
we have $u_\eta \in C^\a_\b$, for all $\eta\in \mathbb{C}^{n+1}$.
So $\Delta V \in C^\a_\b$ and $V\in C^{2 , \a}_\b $
follows from Proposition \ref{linear equ gl}.
This gives \eqref{claimed_est_2_alpha_interior} and completes the proof of the
proposition.
\end{proof}

In conclusion, we obtain the conical Evans-Krylov estimate of the geodesic equation \eqref{per equ}.
\begin{prop}
Assume $0<\b_1<\frac{2}{3}$ and that $\vphi_0,\vphi_1$ are in $\mathfrak{I_i}$, $i=1,2,3$.
Then the $C^{1,1}_\b$ solution $\Psi$ of the approximate geodesic equation \eqref{per equ}
belongs to $C^{2,\a}_\b$ in the interior of $\mathfrak X$.
\end{prop}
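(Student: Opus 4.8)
The plan is to deduce the statement directly from the general interior Schauder estimate proved just above (the preceding proposition, applied to the equation \eqref{eq_starting_@_interior_holder_estimate}), by checking that every quantity on which its constant depends is finite for the geodesic equation \eqref{per equ}. First I would put the equation in the form \eqref{eq_starting_@_interior_holder_estimate} with background $\Om$ and $F=\log\tau+\log f+\Psi-\Psi_1$ (equivalently in the form \eqref{per equ sim} with background $\Om_1$). The hypothesis ``we have the second order estimate of $\Psi$'' is then exactly what Section \ref{close} supplies: under the geometric conditions defining $\mathfrak{I_1}$ or $\mathfrak{I_2}$, Proposition \ref{sec} bounds $|\tri\Psi|_\infty$ and Proposition \ref{prop: gradient estimate} bounds $|\p\Psi|_\infty$, both uniformly in $\tau$. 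Hence the $C^{1,1}_\b$ solution already controls the two pointwise quantities $|\p\Psi|_\infty$ and $|\tri\Psi|_\infty$ in the list of dependencies, and it remains to verify the two $L^q$-type terms and the two $C^\a_\b$-type terms.

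The crucial step, and the one place where the hypothesis $0<\b_1<\frac{2}{3}$ is used, is the bound on $|\p\log\tilde\Om^{n+1}|_{L^q}$. In the $\{w^i\}$-coordinates one has $\p\log\tilde\Om^{n+1}=\tilde g^{i\bar j}\p\tilde g_{i\bar j}$, so this term is a contraction of the connection of $\Om$; by Lemma \ref{lemma: connection bounded} that connection is bounded precisely for $0<\b_1<\frac{2}{3}$, so $\p\log\tilde\Om^{n+1}$ is bounded and a fortiori lies in $L^q$ for every $q>2n+2$. I would treat the remaining term $|\p\tilde F|_{L^q}$ in the same spirit: $\p\tilde F$ assembles $\p\log f$, which is the difference of the (bounded) traces of the connections of $\Om_1$ and $\Om$ by Corollary \ref{connection}, together with $\p\Psi$ and $\p\Psi_1$; the former is bounded by Proposition \ref{prop: gradient estimate} and the latter by $\vphi_0,\vphi_1\in C^{3}_\b$, which forces $\Psi_1\in C^{3}_\b$ through its explicit construction in Proposition \ref{csck eqv}. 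Thus $\p\tilde F$ is bounded, hence in $L^q$.

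Next I would control the H\"older norms $|\log\Om^{n+1}|_{C^\a_\b}$ and $|F|_{C^\a_\b}$. The first is finite because $\Om$ is the model cone metric \eqref{Om}, whose volume form is a smooth positive multiple of the flat cone volume in the $\{w^i\}$-chart. For the second, $F=\log\tau+\log f+\Psi-\Psi_1$ is a sum of $C^\a_\b$ functions: $\Psi\in C^{1,1}_\b\subset C^\a_\b$, $\Psi_1\in C^{3}_\b\subset C^\a_\b$, and $\log f$ is a difference of logarithms of two quasi-isometric cone volume forms. With all six dependencies finite, the general estimate yields $|\sqrt{-1}\p\bar\p\Psi|_{C^\a_\b(B)}\leq C$ on every interior ball $B\subset\mathfrak X$ (cf. \eqref{claimed_est_2_alpha_interior}); together with the $C^{1,1}_\b$ control already in hand this gives $\Psi\in C^{2,\a}_\b$ in the interior of $\mathfrak X$.

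The main obstacle will be precisely the boundedness of the connection, i.e. the $L^q$ control of $\p\log\tilde\Om^{n+1}$ with $q>2n+2$, since the Evans--Krylov and weak Harnack machinery underlying the general estimate genuinely requires the first derivative of the right-hand side in such an $L^q$, and this is where the threshold $\b_1<\frac{2}{3}$ is unavoidable: for larger cone angles the connection of the model metric degenerates along $\mathfrak D$. I would also stress that no bootstrapping is needed: although $F$ depends on the unknown $\Psi$, it does so only through the $C^\a_\b$-norm and the gradient of $\Psi$, both of which are already furnished by the $C^{1,1}_\b$ a priori bound, so the hypotheses of the general estimate are met at the regularity level one already has.
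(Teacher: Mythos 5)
Your proposal is correct and takes essentially the same route as the paper: both reduce the statement to the general interior Schauder estimate for \eqref{eq_starting_@_interior_holder_estimate}, then verify its dependencies by using Lemma \ref{lemma: connection bounded} and Corollary \ref{connection} together with $\vphi_0,\vphi_1\in C^{3}_\b$ to bound $\p\tilde F$ and $\p\log\tilde\Om^{n+1}$ when $0<\b_1<\frac{2}{3}$, and the $C^\a_\b$ regularity of $\Om$, $\Om_1$, $\Psi$, $\Psi_1$ to control $|F|_{C^\a_\b}$. Your write-up is merely more explicit than the paper's terse verification (e.g.\ in spelling out the $L^q$ terms and where the a priori estimates of Section \ref{close} supply the assumed second order bound), but the underlying argument is identical.
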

\begin{proof}
Considering the geodesic equation \eqref{per equ}, then $F=\log\tau+\log\frac{\Om_1^{n+1}}{\Om^{n+1}}+\Psi-\Psi_1$. Since $\Om\in C^{\a}_\b$, we have $\log\Om^{n+1}\in C^\a_\b$. Moreover, $\vphi_0,\vphi_1\in C^{2,\a}_\b$, so $\log\Om_1^{n+1}\in C^\a_\b$. Thus we have $F\in C^\a_\b$.
When $0<\b_1<\frac{2}{3}$, \lemref{lemma: connection bounded}, \lemref{connection} and $\vphi_0,\vphi_1\in C^{3}_\b$ imply that $\p\tilde F$ is bounded.
\end{proof}

Our argument presented above follows Evans-Krylov's estimate
\cite{MR649348}\cite{MR678347}\cite{MR688919}.
We also used Blocki's observation in \cite{MR1788046}
that $F$ belongs to $W^{1,q}$ is sufficient to the estimate.
In our problem, since $V_{k\bar l}$ is singular along
the direction which is perpendicular to $\mathfrak D$,
we multiply with the weight. In the next Section, we will develope the linear theory
including the weak Hanack inequality for the linear equation and with cone coefficient which is used in the proof above.
\subsection{An application to the K\"ahler-Einstein cone metrics}
Now we state an application of our estimate to the K\"ahler-Einstein metric on $(X,\om)$ with cone singularities. We first assume the divisor has only one component. I.e. $D=(1-\b_1)V.$ As usual, we assume that $z^1$ is the defining function of the hyper surface $V$.
The K\"ahler-Einstein cone metric satisfies for a real number $\l$, 
\begin{align*}
Ric (\omega_{\vphi})=\l\om_\vphi+2\pi [D]. 
\end{align*}
This equation implies the cohomology relation
\begin{align*}
c_1(M)=\l \Om+c_1(L_D). 
\end{align*} 
Here $L_D$ is the corresponding line bundle of $[D]$. Since $\om_0$ is a smooth K\"ahler metric in $\Om$, there exists a smooth function $f_0$ such that
\begin{align*}
Ric(\om_0)-\l\om_0+i\p\bar\p\log |s|^{2(1-\b_1)}=i\p\bar\p f_0.
\end{align*} 
Thus we have
\begin{align*}
Ric(\om)-\l\om
&=Ric(\om)-Ric(\om_0)-\l(\om-\om_0)+Ric(\om_0)-\l\om_0\\
&=i\p\bar\p f.
\end{align*} 
In which $$f=-\log(\frac{\om^n}{\om^n_0}|s|^{2(1-\b_1)})-\l\delta|s|^{2\b_1}+f_0.$$
Thus the K\"ahler-Einstein cone metric satisfies 
\begin{align}\label{keg}
 \log \omega_{\vphi}^{n} = \log \omega^{n} -\l \vphi+f=F. \;
\end{align}
When $\l$ is nonpositive, the continuity path is
\begin{align}\label{kegc}
 \log \omega_{\vphi}^{n} = \log \omega^{n} -\l \vphi + tf. \;
\end{align}
While when the $\l$ is positive, i.e. the Fano case, the Aubin path is 
\begin{align}\label{kegcc}
 \log \omega_{\vphi}^{n} = \log \omega^{n} -\l t \vphi + f. \;
\end{align}
When we solve this equation by the continuity method, we need to derive the a priori $C^{2,\a}_\b$ estimate as following. 
\begin{prop}\label{ke schauder}
Assume that the solutions of \eqref{kegc} and \eqref{kegcc} have up to the second order estimate and $0<\b_1<\frac{2}{3}$.
Then the following estimate holds on any small ball $B\subset \mathfrak X$
\begin{align}
\vphi\in C^{2,\a}_\b(B) \;.
\end{align}
\end{prop}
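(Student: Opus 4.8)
The plan is to read \eqref{kegc} and \eqref{kegcc} as instances of the general complex Monge-Amp\`ere equation \eqref{eq_starting_@_interior_holder_estimate} and to apply the interior Schauder estimate \eqref{claimed_est_2_alpha_interior} established just above, whose proof transfers verbatim to $(X,\om)$ (with $n$ in place of $n+1$). Concretely, I take $\Om=\om$ and $\Psi=\vphi$, so that the right hand side is $F=tf-\l\vphi$ in the case \eqref{kegc} and $F=f-\l t\vphi$ in the Fano case \eqref{kegcc}; the two are handled identically. Since by hypothesis $\vphi$ is already controlled up to second order, it remains only to verify that the quantities on which \eqref{claimed_est_2_alpha_interior} depends are finite: that $\log\om^n\in C^\a_\b$ and $F\in C^\a_\b$, and that the $w$-coordinate first derivatives $\p\log\tilde\om^n$ and $\p\tilde F$ belong to $L^q$ for some $q>2n$.

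The terms not involving $f$ are immediate. Because $\om$ is a K\"ahler cone metric, $\log\om^n\in C^\a_\b$ exactly as in the proof of \eqref{claimed_est_2_alpha_interior}; and for $0<\b_1<\frac{2}{3}$ \lemref{lemma: connection bounded} gives that the connection of $\om$ is bounded in the chart $\{w^i\}$, so $\p\log\tilde\om^n\in L^\infty\subset L^q$. The potential contribution $-\l\vphi$ (resp. $-\l t\vphi$) lies in $C^\a_\b$ since $\vphi\in C^{1,1}_\b\subset C^\a_\b$, and $\p\tilde\vphi$ is bounded by the first order part of the assumed second order estimate. Everything therefore reduces to estimating $f$ itself and its $w$-derivative, which is the heart of the matter and the reason the hypothesis $\b_1<\frac{2}{3}$ is needed.

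The \emph{main obstacle} is the boundedness of $\p\tilde f$, and this is exactly where $\b_1<\frac{2}{3}$ enters. I would decompose $f=-\log\bigl(\frac{\om^n}{\om^n_0}|s|^{2(1-\b_1)}\bigr)-\l\delta|s|^{2\b_1}+f_0$ and treat the three pieces in turn. The function $f_0$ is smooth, hence harmless after the derivative-decreasing coordinate change $W$; and $\delta|s|^{2\b_1}$ is, up to a smooth positive factor, the model cone potential, so in the variable $w^1$ it behaves like $|w^1|^2$ and has bounded $w$-derivative $O(|w^1|)$. For the logarithmic term the singular contributions cancel: the volume ratio $\frac{\om^n}{\om^n_0}$ carries a factor $|z^1|^{2(\b_1-1)}$ while $|s|^{2(1-\b_1)}\sim|z^1|^{2(1-\b_1)}$, so the product is a bounded positive function whose logarithm lies in $C^\a_\b$. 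Equivalently, in the chart $\{w^i\}$ the logarithmic singularity $(\frac{2}{\b_1}-2)\log|w^1|$ produced by the Jacobian of $\om^n_0$ is cancelled by the one coming from $|s|^{2(1-\b_1)}$, so that upon differentiating in $w$ the only surviving contribution is the first derivative of the volume density $\log\det\tilde g(\om)$, i.e. the connection of $\om$, which is bounded for $\b_1<\frac{2}{3}$ by \lemref{lemma: connection bounded}. Consequently $f\in C^\a_\b$ and $\p\tilde f\in L^\infty\subset L^q$, all the data entering \eqref{claimed_est_2_alpha_interior} are finite, and the interior Schauder estimate yields $|\sqrt{-1}\p\bar\p\vphi|_{C^\a_\b(B)}\le C$; therefore $\vphi\in C^{2,\a}_\b(B)$.
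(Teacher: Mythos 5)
Your proposal is correct and shares the paper's skeleton: read \eqref{kegc} and \eqref{kegcc} as instances of \eqref{eq_starting_@_interior_holder_estimate} in dimension $n$ with $\Om=\om$, $\Psi=\vphi$, then verify the data entering \eqref{claimed_est_2_alpha_interior}; in both your write-up and the paper the crux is the boundedness of $\p \tilde f$, which is where $\b_1<\frac{2}{3}$ enters. You differ in how that crux is settled. The paper proves it as a separate lemma by absorbing the weight $|s|^{2(1-\b_1)}$ into the matrix of $\om$ (so the weighted matrix has determinant $\om^n|s|^{2(1-\b_1)}$) and then redoing, entry by entry, the $w$-coordinate computation of \lemref{lemma: connection bounded} for this weighted matrix. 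You instead exploit the cancellation of the singular factors and reduce the surviving term directly to the connection bound already proved in \lemref{lemma: connection bounded}, so no second computation is needed. That is a genuine economy, but one step deserves more care: since $W$ is not holomorphic, $\frac{\om^n}{\om_0^n}|s|^{2(1-\b_1)}$ is not exactly a constant multiple of $\det\tilde g(\om)$ — the pure entry $|z^1|^{2(1-\b_1)}g_{1\bar 1}$ and the mixed entries $|z^1|^{1-\b_1}g_{1\bar l}$ acquire different Jacobian constants than the corresponding $\tilde g_{1\bar 1}$, $\tilde g_{1\bar l}$ — so "the surviving contribution is $\p_w\log\det\tilde g$" holds only up to these discrepancies. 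The statement your argument actually needs, and which does follow verbatim from \lemref{lemma: connection bounded}, is entrywise: $\om^n|s|^{2(1-\b_1)}$ is the determinant of a matrix whose entries are, up to constants and smooth positive factors, the $\tilde g_{i\bar j}$ of that lemma; since this matrix is uniformly positive definite and its entries have bounded $w$-derivatives for $\b_1<\frac{2}{3}$, the derivative of the determinant, hence of its logarithm, is bounded. With that rephrasing your route is complete and arguably cleaner than the paper's; note also that the paper only records the weaker $L^q$ bound for $\p\log\tilde\om^n$ (under $(\frac{2}{\b_1}-3)q+2\b_1>0$), whereas under the standing hypothesis $\b_1<\frac{2}{3}$ your $L^\infty$ bound is valid and simpler.
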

\begin{proof}
Applying the proposition above with dimension $n$, it suffices to check $|\p\log\tilde\om^{n}|_{L^q}$,
$|\log\om^{n}|_{C^\a_\b}$,
$|\p \tilde F|_{L^q}$, $|F|_{C^\a_\b}$. Since $\om\in C^{2,\a}_\b$, so we have $\log\om^{n}\in C^\a_\b$. The proof of
\lemref{lemma: connection bounded} implies that when $(\frac{2}{\b_1}-3)q+2\b_1>0$,
$|\p\log\om^{n}|_{L^q}$ is bounded. Thus the result follows from the next lemma.
\end{proof}
\begin{lem}
$|\p f|_\om$ is bounded when $0<\b_1<\frac{2}{3}$.
\end{lem}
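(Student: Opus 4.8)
The plan is to pass to the $w$-coordinates of \eqref{conetransform}, in which $\om$ is quasi-isometric to the flat cone metric and hence to the Euclidean metric. Since $|\partial f|_\om^2 = g^{i\bar j}f_i f_{\bar j}$ is a scalar, in these coordinates it is comparable to $\sum_a |\partial_{w^a} f|^2$, so it suffices to bound $\partial_{w^a} f$ (and its conjugate) near $\mathfrak D$ for each $a$; away from the divisor there is nothing to prove. The chain-rule inputs I would take from the proof of \lemref{lemma: connection bounded} are $\partial z^1/\partial w^1,\ \partial z^1/\partial w^{\bar 1} = O(|w^1|^{\mu_1})$ with $\mu_1 = \beta_1^{-1}-1$, so that any function smooth in $z$ has $w^1$-derivative of order $|w^1|^{\mu_1}\to 0$, together with $|z^1|^{2\beta_1} = |w^1|^2$ and $|z^1|^{2(1-\beta_1)} = |w^1|^{2\mu_1}$.

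First I would isolate the singular part of $f$. Using the expansion of $\om$ recorded in \lemref{lemma: connection bounded}, the density satisfies $\det(g_{i\bar j}) = |z^1|^{2(\beta_1-1)}\Phi$, where $\Phi := |z^1|^{2(1-\beta_1)}\det(g_{i\bar j})$ extends continuously across $z^1=0$ with a strictly positive limit (the off-diagonal entries contribute to the determinant only at relative order $O(|z^1|^{2\beta_1})$, so the leading term comes from the diagonal). Hence $-\log(\om^n/\om_0^n) = 2(1-\beta_1)\log|z^1| - \log\Phi + \log\det(g_{0,i\bar j})$, while $-\log|s|^{2(1-\beta_1)} = -2(1-\beta_1)\log|z^1| - (1-\beta_1)\log\|e\|^2$. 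The two $\log|z^1|$ contributions cancel exactly, and I am left with
\[
 f = -\log\Phi + \log\det(g_{0,i\bar j}) - (1-\beta_1)\log\|e\|^2 - \lambda\delta|s|^{2\beta_1} + f_0,
\]
every summand of which is continuous up to $\mathfrak D$.

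Next I would estimate the $w$-gradient of each surviving summand. The smooth-in-$z$ terms $\log\det(g_{0,i\bar j})$, $\log\|e\|^2$ and $f_0$ have $w$-gradient $O(|w^1|^{\mu_1})$ by the chain rule, and $|s|^{2\beta_1} = |w^1|^2\|e\|^{2\beta_1}$ has bounded, indeed vanishing, $w$-gradient. The only delicate term is $\log\Phi$. As $\Phi$ is bounded below, this reduces to bounding $\partial_{w^a}\Phi$, hence the $w$-derivatives of the regularized coefficient
\[
 A_{1\bar 1} = \beta_1^2\rho + \beta_1\bigl(z^1\rho_1 + \bar z^1\rho_{\bar 1}\bigr) + |z^1|^2\rho_{1\bar 1} + |z^1|^{2(1-\beta_1)}g_{0,1\bar 1},
\]
together with the analogous rescaled entries. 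All summands here have bounded $w$-gradient for every $\beta_1<1$, except the last, $|z^1|^{2(1-\beta_1)}g_{0,1\bar 1} = |w^1|^{2\mu_1}g_{0,1\bar 1}$, whose $w^1$-derivative is of order $|w^1|^{2\mu_1-1} = |w^1|^{2/\beta_1-3}$.

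The main (and only) obstacle is precisely this last power: $2/\beta_1 - 3 \geq 0$ if and only if $\beta_1 \leq \tfrac{2}{3}$, which is exactly the hypothesis of the statement and the same threshold that bounds the connection of $\om$ in \lemref{lemma: connection bounded}. Every other exponent appearing is manifestly nonnegative, so collecting the estimates yields $\partial_{w^a}f$ bounded for all $a$ when $0<\beta_1<\tfrac{2}{3}$, and therefore $|\partial f|_\om \leq C$. I expect the bookkeeping for the cancellation of the $\log|z^1|$ terms and the verification that the off-diagonal corrections in $\Phi$ are genuinely lower order to be the most error-prone part of a full write-up, while the threshold itself emerges mechanically from the single term $|w^1|^{2\mu_1}$.
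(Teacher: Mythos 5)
Your proposal is correct and takes essentially the same route as the paper: the paper likewise reduces to bounding the $w$-derivative of the weight-regularized density $\om^n|s|^{2(1-\b_1)}$ (organizing the bookkeeping by absorbing $|s|^{1-\b_1}$ into the first row and column of the metric matrix rather than expanding the determinant), and the threshold emerges from the same critical exponent $|w^1|^{\frac{2}{\b_1}-3}$. One small inaccuracy that does not affect validity: the off-diagonal corrections in $\Phi$ are not bounded-gradient for every $\b_1<1$, since the products $\bigl(|z^1|^{1-\b_1}g_{1\bar l}\bigr)\bigl(|z^1|^{1-\b_1}g_{k\bar 1}\bigr)$ have $w^1$-derivative of order $|w^1|^{2\mu_1-1}$, the same exponent as your diagonal term, so they are equally critical at $\b_1=\tfrac{2}{3}$ — harmless here because your hypothesis covers them.
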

\begin{proof}
In local coordinate, 
we have $f=-\log[\om^n|s|^{2(1-\b_1)}]+\log\om^n_0-\l\delta|s|^{2\b_1}+f_0.$
Note that $\om_0$ and $f_0$ are both smooth. Moreover, $|s|^{2\b_1}=\rho |z^1|^{2\b_1}=\rho \circ W^{-1} \cdot |w^1|^{2}$, so its first derivative is bounded. It remains to verify that the first derivative of $\log(\om^n|s|^{2(1-\b_1)})$ with respect to $\om$ is bounded. Since $\om^n|s|^{2(1-\b_1)}$ is positive and bounded, it suffices to prove that $|\p(\om^n|s|^{2(1-\b_1)})|_\om$ is bounded. Put the weight into the matrix $\om$, we have a new metric $\om_1$, \begin{align*}
\frac{2}{i}\om_1&=|s|^{2(1-\b_1)}(g_{0})_{ 1\bar 1}dz^1\wedge dz^{\bar 1}
+(g_{0})_{ k\bar l}dz^k\wedge dz^{\bar l}\\
&+|s|^{(1-\b_1)}(g_{0})_{ k\bar 1}dz^k\wedge dz^{\bar 1}
+|s|^{(1-\b_1)}(g_{0})_{ 1\bar l}dz^1\wedge dz^{\bar l}\\
&+|z^1|^{2\b_1}\rho_{k\bar l} dz^k\wedge dz^{\bar l}\\
&+ |s|^{(1-\b_1)}\b_1|z^1|^{2(\b_1-1)} (z^1\rho_{k} dz^k\wedge dz^{\bar 1}
+z^{\bar 1} \rho_{\bar l} dz^1 \wedge dz^{\bar l})\\
&+ |s|^{2(1-\b_1)}\b_1^2 \rho|z^1|^{2(\b_1-1)} dz^1\wedge dz^{\bar 1}
\end{align*}for $k,l$ from $2$ to $n$.
The components of $\om_1$ under the variables $w^i$ become
\begin{align*}
\tilde g_{1\bar 1}
&=\frac{1+\b_1^2}{2\b_1^2}\rho^{\frac{1-\b_1}{\b_1}}| w^1|^{\frac{2}{\b_1}-2}[g_{01\bar 1}\circ W^{-1} | w^1|^{\frac{2}{\b_1}-2}
+|w^1|^{\frac{2}{\b_1}}\rho_{1\bar 1}
+\b_1(w^1\rho_1+w^{\bar 1}\rho_{\bar 1}+\b_1^2\rho)],\\
\tilde g_{1\bar l}
&=\frac{1+\b_1}{2\b_1}\rho^{\frac{1-\b_1}{2\b_1}}| w^1|^{\frac{1}{\b_1}-1}
[|w^1|^{\frac{1}{\b_1}-1}g_{01\bar l}\circ W^{-1}
+|w^1|^{\frac{1}{\b_1}+1}\rho_{1\bar l}\circ W^{-1}
+\b_1w^1\rho_{\bar l}\circ W^{-1}],\\
\tilde g_{k\bar l}&=g_{0k\bar l}\circ W^{-1}
+|w^1|^2\rho_{k\bar l}\circ W^{-1}.
\end{align*}
Now, we check one by one the first derivative with respect to $w^i$. The first derivative of $\tilde g_{k\bar l}$ follows from \lemref{lemma: connection bounded}. Note that $\rho$ is smooth on $w^k$ for $1\leq k\leq n$.
\begin{align*}
&\frac{\p}{\p w^1}\tilde g_{1\bar 1}
=O(| w^1|^{\frac{4}{\b_1}-5}+| w^1|^{\frac{2}{\b_1}-3});\\
&\frac{\p}{\p w^i}\tilde g_{1\bar 1}=\frac{\p}{\p w^i}\tilde g_{1\bar l}
=O(1).
\end{align*}
Now let us check
$\frac{\p}{\p w^1}\tilde g_{1\bar l}.$
It contains three terms. The first term is
\begin{align*}
&\frac{\p}{\p w^1}(|w^1|^{\frac{2}{\b_1}-2}g_{01\bar l}\circ W^{-1})\\
&=\frac{\p}{\p w^1}[(|w^1|^{\frac{2}{\b_1}-1}|w^1|)(|w^1|^{-1}g_{01\bar l}\circ W^{-1})].
\end{align*}
Since $g_{01\bar l}\circ W^{-1}$ is also smooth and converges to zero as $w^1$ goes to zero,
so the growth rate of this term is $O(|w^1|^{\frac{1}{\b_1}-1})$.
The second and third term are both $O(1)$.
Thus this lemmas follows.
\end{proof}
\subsection{Boundary Schauder estimate: $\tau>0$}
We adapt Krylov's method \cite{MR688919} (also c.f. \cite{MR1814364})
for the boundary estimate to our cone case.
We notice that the linear equation is of divergence form,
so the Harnack inequality and maximum principle proved in the next section can be applied here. The boundary of $\mathfrak X$ is $X\times \p R$, which is a manifold with $2n+1$ real dimension. Under the local coordinate ${z^i=x^{n+1}+i y^{n+1}}$, the boundary is defined by $x^{n+1}=0$. Denote $x'=\{x^1, y^1, \cdots, x^n, y^n, y^{n+1}\}.$
\begin{prop}
Assume $0<\b_1<\frac{2}{3}$ and that $\vphi_0,\vphi_1$ are in $\mathfrak{I_i}$, $i=1,2,3$.
Then the $C^{1,1}_\b$ solution $\Psi$ of the approximate
geodesic equation \eqref{per equ} belongs to $C^{2,\a}_\b$ on the boundary of $\mathfrak X$.
\end{prop}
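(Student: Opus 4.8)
The plan is to adapt the weighted Evans--Krylov scheme of the interior estimate (cf. \eqref{claimed_est_2_alpha_interior}) to half-balls centred on $X\times\p R$, following Krylov's boundary technique \cite{MR688919}\cite{MR1814364}. As in the interior case, away from $\mathfrak D$ the estimate is the classical boundary Evans--Krylov estimate, and since the boundary $\{x^{n+1}=0\}$ is flat this is standard; so the only genuine work is at a boundary point $p$ lying on $\mathfrak D$. There I would first apply the quasi-isometry $W$ (extended as the identity in $z^{n+1}$) to pass to the $\{w^i\}$ chart and work in a half-ball $B_d^+(p)=B_d(p)\cap\{x^{n+1}>0\}$, retaining the flat cone weight $\sigma^{k\bar l}=g^{k\bar l}$ of \eqref{flat cone} to cure the degeneracy along the cone directions.

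First I would record the purely tangential information. Writing $x'=\{x^1,y^1,\dots,x^n,y^n,y^{n+1}\}$ for the boundary coordinates, the identity $\Psi=\Psi_0$ on $\{x^{n+1}=0\}$ together with $\vphi_0,\vphi_1\in C^3_\b$ shows that the tangential second derivatives of $\Psi$ restricted to the boundary are as regular as the datum; combined with the boundary Hessian estimate (Proposition \ref{prop: boundary hessian estimate}) this bounds every second derivative except the normal-normal one, and the Monge--Amp\`ere equation itself then fixes $\Psi_{(n+1)\overline{n+1}}$ on the boundary (cf. \eqref{eqn_claim_bdry_hess_estimate}). Differentiating \eqref{eq_starting_@_interior_holder_estimate} twice and multiplying by $\sigma^{k\bar l}$ exactly as in the interior argument, I would form $u_\eta=\sum_{k,l}\eta^k\eta^{\bar l}\sigma^{k\bar l}V_{k\bar l}$ and verify that $\tri'(u_\eta)\geq \sum\eta^k\eta^{\bar l}\sigma^{k\bar l}h_{k\bar l}$ remains a supersolution of the same divergence-form linear equation up to the boundary, so that the Harnack inequality and maximum principle of Section \ref{linear thy} apply on the half-balls.

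Next comes the oscillation decay. Applying the weak Hanack inequality (Proposition \ref{harnack}) to $M_{2\eta}-u_\eta$ on the half-balls $B^+_{sd}(p)$ gives the upper bound, while the concavity of $\log\det$ gives the matching lower bound for $u_\eta-m_{2\eta}$: expanding $h(Q_1)-h(Q_2)=a^{i\bar j}(V(Q_1)-V(Q_2))_{i\bar j}$ as in \eqref{eq_evolved_@_interior_holder_estimate} and decomposing the positive matrix $\tilde a^{i\bar j}$ over a fixed finite set of directions $\gamma_\nu$ (Lemma $17.13$ of \cite{MR1814364}). The new ingredient relative to the interior case is that the admissible directions include the normal direction $\eta=\p/\p z^{n+1}$, whose boundary value is already controlled by the preceding paragraph; feeding this boundary control into Krylov's scheme closes the oscillation estimate $w(2d)\leq C\{d^\a|h|_{C^\a_\b}+w(2d)-w(d)+K\}$, with $K=d^{1-(2n+2)/q}\|\p\tilde h\|_q$ now taken over half-balls. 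The Iteration Lemma $8.23$ of \cite{MR1814364} then yields $u_\eta\in C^\a_\b$ up to the boundary for all $\eta$, hence $\Delta V\in C^\a_\b$ and, by the linear elliptic regularity of Proposition \ref{linear equ gl}, $\Psi\in C^{2,\a}_\b$ on $\p\mathfrak X$. The hypotheses $0<\b_1<\frac{2}{3}$ and $\vphi_0,\vphi_1\in\mathfrak{I_i}$ enter precisely to guarantee, via Lemma \ref{lemma: connection bounded} and Corollary \ref{connection}, that $\|\p\tilde h\|_q=\|\p\log\tilde\Om^{n+1}+\p\tilde F\|_q$ is finite, so that $K\to0$ as $d\to0$.

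The main obstacle I expect is the interaction, at $p\in\mathfrak D\cap(X\times\p R)$, between Krylov's boundary barrier and the cone weight: one must ensure that the weighted second derivatives $u_\eta$ obey the oscillation decay uniformly and \emph{independently of the distance from $p$ to $\mathfrak D$}, exactly the difficulty that forced the weighted barrier in the boundary Hessian estimate. Keeping the flat cone metric as the weight and centring all half-balls on the boundary is what makes the weak Hanack inequality and the concavity decomposition survive simultaneously in both the cone (tangential) and the normal directions.
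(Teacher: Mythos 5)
Your overall reduction (tangent--tangent from the boundary data, normal--normal from the equation, all work concentrated at the mixed derivatives near $\mathfrak D\cap(X\times\p R)$) matches the paper, but the core of your argument has a circularity that I do not see how to close. You propose to run the weighted Evans--Krylov scheme directly on half-balls, applying the weak Harnack inequality to $M_{2\eta}-u_\eta$ for \emph{all} directions $\eta$, including those with a component along $\p/\p z^{n+1}$. On a half-ball touching the boundary, the weak Harnack inequality (and hence the oscillation decay) does not come for free: as in Proposition \ref{eqholder}, the resulting estimate necessarily carries a term of the form $\osc_{B\cap\p\mathfrak X}\,u_\eta$, i.e.\ you must already control the oscillation of $u_\eta$ \emph{restricted to the boundary}. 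For purely tangential $\eta$ this is supplied by the boundary data $\Psi_0$, but for $\eta$ with a normal component $u_\eta$ contains the tangent--normal second derivatives of $\Psi$ on $\p\mathfrak X$, whose H\"older continuity along the boundary is precisely what you are trying to prove. The boundary Hessian estimate (Proposition \ref{prop: boundary hessian estimate}) and the relation \eqref{eqn_claim_bdry_hess_estimate} give only $L^\infty$ bounds on these quantities, not an oscillation decay, so "feeding this boundary control into Krylov's scheme" does not close the estimate: the iteration inequality $w(2d)\leq C\{d^\a|h|_{C^\a_\b}+w(2d)-w(d)+K\}$ acquires an extra boundary-oscillation term that is not absorbed.

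The paper avoids this by working one derivative lower, which is the actual content of Krylov's boundary technique. It differentiates the equation once in a tangential direction $k$, forms the weighted first derivative $u=\sqrt{g^{k\bar k}}\,\tilde\Psi_k$, and observes that $u$ \emph{vanishes identically on $\p\mathfrak X$} (since $\tilde\Psi=0$ there) and satisfies $\tri'u=f$ with $f$ bounded -- this is exactly where $0<\b_1<\tfrac23$, $\vphi_0,\vphi_1\in C^3_\b$ and Corollary \ref{connection} enter. Then the quotient $v=u/x^{n+1}$ is shown to be H\"older up to the boundary by an explicit barrier $w=[(4-|x'|^2_{\b_1}/d^2)\inf_{S_2}v+(1+d\sup|f|)\frac{x^{n+1}/(2d)-\delta}{\sqrt\delta}]x^{n+1}$, the maximum principle (Lemma \ref{max prin}), the \emph{interior} Harnack inequality (Proposition \ref{linearharnack}) applied on the slab $B_1^3$ away from the boundary, and the iteration Lemma $8.23$ of \cite{MR1814364}; H\"older continuity of $v$ yields $\p_{z^{n+1}}u\in C^\a_\b$, i.e.\ the tangent--normal estimate, with no concavity decomposition and no boundary weak Harnack needed. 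If you want to salvage your route, you would first have to establish the tangent--normal H\"older regularity along the boundary by some such first-derivative argument -- at which point your half-ball Evans--Krylov step becomes redundant.
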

\begin{proof}
Recall the approximate geodesic equation is
\begin{equation}\label{per equ sim bdy}
  \left\{
   \begin{array}{ll}
\log\det(\Om_{\Psi_1 i\bar j}+\tilde\Psi_{i\bar j})=h
=\log\tau+ \tilde\Psi + \log\det(\Om_{\Psi_1i\bar j}) &\text{ in }\mathfrak{M}\; ,\\
\tilde\Psi(z)=0 & \text{ on }\p\mathfrak{X}\; .
   \end{array}
  \right.
\end{equation}
We first see that the tangent-tangent direction of the boundary estimate equals to the same estimate of the boundary values.
Then the normal-normal estimate follows from the approximate geodesic equation
$$
[\vphi''-(\p\vphi',\p\vphi')_{g_{\vphi}}]
\det\om_\vphi=\Om_\Psi^{n+1}=\tau e^{\Psi-\Psi_1} \det(\Om_{1i\bar j})
$$ with the estimates of the tangent-normal direction and the tangent-tangent direction.
We differentiate \eqref{eq_starting_@_interior_holder_estimate} with respect to $\p_k$ for a fixed $k\in 1,\cdots, n$,
and we get
\begin{align*}
\tri' \tilde\Psi_k
 = h_k - g_\Psi^{i\bar j}g(\Psi_1)_{ i\bar j k}\;.
\end{align*}
We use the flat metric as the weighted metric to derive the differential equation of $u=\sqrt{g^{k\bar k}} \tilde\Psi_k$.
Then we obtain that $u$ satisfies
\begin{align*}
\tri' u
 =\sqrt{g^{k\bar k}}( h_k - g_\Psi^{i\bar j}g(\Psi_1)_{ i\bar j k})\;.
\end{align*}
We denote the right hand side as $f$. According to \lemref{connection}, $f$ is bounded when $0<\b_1<\frac{2}{3}$
and $\vphi_0,\vphi_1\in C^3_\b$. Note that $u$ vanishes on the boundary $\p \mathfrak  X$.
We fix a point $p$ on the boundary, and we take coordinates ${z^i}$ centered at $p$.
We introduce the following domains for a small radius $d$.
\begin{align*}
|x'|^2_{\b_1}&=|z^1|^{2\b_1}+\sum_{i=2}^n |z^i|^2+|y^{n+1}|^2,\;
B_d(p)=\{z\in M\vert |z-p|_{\b_1}\leq d\},\\
B_{1}&=B_d(p)\times \{x^{n+1}\vert 0\leq|x^{n+1}|\leq\delta d, x^{n+1}\geq 0\},\\
B_{1}^3&=B_d(p)\times \{x^{n+1}\vert \delta d\leq|x^{n+1}|\leq3\delta d, x^{n+1}\geq 0\},\\
S_2&=B_d(p)\times\{|x^{n+1}|=2\delta d, x^{n+1}\geq 0\},\\
B_{2}&=B_{2d}(p)\times\{x^{n+1}\vert 0\leq|x^{n+1}|\leq2\delta d, x^{n+1}\geq 0\},\\
B_{4}&=B_{4d}(p)\times \{x^{n+1}\vert 0\leq|x^{n+1}|\leq4\delta d, x^{n+1}\geq 0\}.
\end{align*}
Here, $\delta\ll 1$ is a small positive constant such that $v:=\frac{u}{x^{n+1}}$ is strictly positive on $S_2$.
We  assume that $v$ is nonnegative on $B_4$; then $u\geq 0$.

We use the barrier function
$$w=[(4-\frac{|x'|^2_{\b_1}}{d^2})\inf_{S_2}v
+(1+d\sup|f|)\frac{\frac{x^{n+1}}{2d}-\delta}{\sqrt{\delta}}
]x^{n+1}.$$
We first prove that on the boundary of $B_2$, $w\leq u$.
On $|x^{n+1}|=2\delta d$, we have
$w\leq 4 x^{n+1}\inf_{S_2}v \leq
 u$;
on $|x^{n+1}|=0$, we have $w=0\leq u$;
on $|x'|^2_{\b_1}=2d$, $w\leq 0\leq u$.
Then, in $B_2$ we compute $\tri' w=-\frac{\inf_{S_2}v}{d^2}x^{n+1}
+(1+d\sup|f|)\frac{1}{2d\sqrt{\delta}}\geq f $. According to the maximum principle \lemref{max prin},
we have $w\leq u$ on $B_2$. As a result, we obtain in $B_1$,
\begin{align}\label{section set}
v&\geq (4-\frac{|x'|^2_{\b_1}}{d^2})\inf_{S_2}v
+(1+d\sup|f|)\frac{\frac{x^{n+1}}{2d}-\delta}{\sqrt{\delta}}\nonumber \\
&\geq 2 \inf_{S_2}v -d\sup|f|.
\end{align}
Note that $\delta$ only needs to be an arbitrarily small constant.

Now, notice that $\tri' u$ is of the divergence form, we could apply the interior Harnack inequality (Proposition \ref{linearharnack}) to $\tri' u= f$ on $B_1^3$;
since now $\frac{u}{3\delta d}\leq v\leq \frac{u}{\delta d}$ we obtain
\begin{align*}
\sup_{B_1^3} v
&\leq C(\inf_{B_1^3} v+\sup | f|).
\end{align*}
Here $C$ depends on $\om$.
Since $\inf_{B_1^3} v\leq \inf_{S_2}v$, using \eqref{section set}, we have
\begin{align}\label{resulting inequ}
\sup_{B_1^3} v\leq  C(\inf_{B_1}v+d\sup |f|).
\end{align}
Replacing in the former arguments, $v$ by $v-\inf_{B_4} v$ and then by $\sup_{B_4} v - v$, noticing that they are
both positive, and finally adding the resulting inequalities \eqref{resulting inequ}, we arrive at the following inequality,
\begin{align*}
\osc_{B_1} v\leq  \frac{C-1}{C}\osc_{B_4}v+ 2d\sup |f|.
\end{align*}
Then by the iteration Lemma 8.23 in \cite{MR1814364}, we have the H\"older estimate of $v$ for any $d\leq d_0$,
\begin{align*}
\osc_{B_d} v\leq  C\frac{d^\a}{d_0^\a}(\osc_{B_{ d_0}} v+d_0\sup |f|).
\end{align*}
For any $q$ in $\mathfrak X$, choose $d=|p-q|_{\b_1}$ and $d_0=diam(\mathfrak X)$, we obtain the H\"older continuity of $v$ as
\begin{align*}
\frac{\vert v(p)-v(q) \vert}{|p-q|^\a_{\b_1}}\leq  C(d_0^{-\a}\sup_{B_{ d_0}} |v|+d_0\sup |f|).
\end{align*}
Since $u$ vanishes on the boundary and depends trivially on the variable $y^{n+1}$, we have $\p_{z^{n+1}} u$ is $C^\a_\b$.
Thus the proposition is proved.
\end{proof}

\subsection{Uniqueness of the $C^{1,1}_\b$ cone geodesic}\label{unigeo}
In Theorem \ref{sole geo}, we have obtained the existence
of a $C^{1,1}_\b$ cone geodesic.
Our present goal is to prove its uniqueness.
Suppose that $\Phi_i$ for $i=1,2$ are two cone geodesic segments,
which correspond to the solutions $\Psi_{\tau_i}\in C^{2,\a}_\b$ of
\begin{equation*}
  \left\{
   \begin{array}{ll}
\frac{\det(\Om_{\Psi_{\tau_i}})}{\det(\Om_1)}= \tau_i e^{a(\Psi_{\tau_i} - \Psi_1)}&
\text{ in }\mathfrak{M}\; ,\\
\Psi_{\tau_i}=\Psi_{0i} & \text{ on }\p\mathfrak{X}\; ,
   \end{array}
 \right.
\end{equation*}
for $i=1,2$ and $\tau_i \in [0,1]$.
Since $\Psi_{\tau_i} \to \Psi_i$ in $C^{1,\a}_\b$ as $\tau_i \to 0$, then for any $\eps >0$
we can find two values $\tau_1 ,\,\tau_2$ such that
\begin{align*}
\sup_{\mathfrak{X}}|\Psi_i-\Psi_{\tau_i}| \leq \eps \;.
\end{align*}
So, we compute
\begin{align*}
\log\det(\Om_{\Psi_{\tau_1}}) - \log\det(\Om_{\Psi_{\tau_2}}) =
\int_0^1 g_t^{i\bar j} dt (\Psi_{\tau_1} - \Psi_{\tau_2})_{i\bar j}
>a(\Psi_{\tau_1} - \Psi_{\tau_2})\;,
\end{align*}
where $g_t = t g_{\Psi_{\tau_1}} + (1-t) g_{\Psi_{\tau_2}}$ and $a\geq 0$.
Now, applying \lemref{wmp} we have,
\begin{align*}
\sup_{\mathfrak{X}}(\Psi_{\tau_1}-\Psi_{\tau_2})
\leq \sup_{\p \mathfrak{X}}(\Psi_{01}-\Psi_{02})\;.
\end{align*}
So we have
\begin{align*}
\sup_{\mathfrak{X}}(\Psi_{1}-\Psi_{2})
&\leq \sup_{ \mathfrak{X}}(\Psi_{\tau_1}-\Psi_{1})+
 \sup_{ \mathfrak{X}}(-\Psi_{\tau_2}+\Psi_{\tau_1})+
 \sup_{ \mathfrak{X}}(\Psi_{\tau_2}-\Psi_{2})\\
&\leq 2\eps +  \sup_{\p \mathfrak{X}}(\Psi_{01}-\Psi_{02})\;.
\end{align*}
Then, switching $\Psi_1$ and $\Psi_2$ and letting $\eps\to 0$, we end up with
\begin{align*}
 \sup_{\mathfrak{X}} |\Psi_{1}-\Psi_{2}|
\leq \sup_{\p \mathfrak{X}}|\Psi_{01}-\Psi_{02}| \;.
\end{align*}
The above inequality proves the uniqueness
of a cone geodesic segment with prescribed boundary values.


\section{Linearized equation}\label{linear thy}
In this section we consider the general linear elliptic equation
\begin{equation}\label{linear equ ge}
  \left\{
   \begin{array}{rl}
Lv&=g^{i\bar j}v_{i\bar j}+b^iv_i+cv=f+\p_ih^i\\
v&=v_0
   \end{array}
  \right.
\end{equation}
in the space $(\mathfrak{X},\mathfrak{D})$ defined in Section \ref{space}.
Here $g^{i\bar j}$ is the inverse matrix of a K\"ahler cone metric $\Om$ in $H^{2,\a}_\b$.
Moreover, we are given the following datas.
\begin{align}\label{linear equ ge co}
b^i, h^i\in C^{1,\a}_\b; c, f\in C^\a_\b \text{ and } v_0\in C^{2,\a}_\b \;.
\end{align}
This type of equation has been studied via the general edge calculus theory (c.f. Mazzeo \cite{MR1133743} and references therein). However, We consider in this paper the K\"ahler manifold with boundary. The edge space is not defined near the boundary.
Recently, Donaldson introduced a function space on a closed K\"ahler manifold which fits well with our geometric problem. In Section \ref{space}, Definition \ref{defn: 3dribdy}, we generalized DonaldsonÕs space to the boundary
case and thus introduced a H\"older space. Now we studied \eqref{linear equ ge} this H\"older space. We collect here the analytic results on the linear equation \eqref{linear equ ge} which are not only used in previous arguments above but also for our further applications.

\subsection{The maximum principle and the weak solution}\label{maximum_principle}
We say $v$ is the solution of \eqref{linear equ ge} if it satisfies this equation on $\mathfrak X\setminus \mathfrak D$ and belongs to $C^{2,\a}_\b$. From the theory of the elliptic equation, we know that $V$ is smooth outside $\mathfrak D$. The delicate part here is always the estimate near the divisor.
We first prove a maximum principle for the Kahler cone metric.
\begin{lem}\label{max prin}
Assume that $v$ satisfies $Lv\geq 0$ (resp. $Lv\leq 0$) with $c<0$, then the maximum (minimum) is achieved on the boundary
i.e.
$$
\sup_\mathfrak{X} v=\sup_{\p\mathfrak{X}\setminus\p\mathfrak{D}}v
\qquad \left(\inf_\mathfrak{X} v=\inf_{\p\mathfrak{X}\setminus\p\mathfrak{D}}v\right)\;.
$$
\end{lem}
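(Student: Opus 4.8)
The plan is to run the classical Hopf-type maximum principle in the interior $\mathfrak{M}=M\times R$, where $v$ is a genuine smooth solution, and to use the auxiliary barrier $S$ of \lemref{auxiliary function} to guarantee that no extremum can escape to the singular set $\mathfrak{D}$. I treat the supremum case under $Lv\ge 0$; since $L$ is linear, the infimum case under $Lv\le 0$ follows by applying the result to $-v$, which satisfies $L(-v)\ge 0$ with the same coefficient $c<0$.

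First I record the classical interior step. Away from $\mathfrak{D}$ the coefficient matrix $g^{i\bar j}$ is positive definite, so $L$ is elliptic there and $v$ is smooth. At any point $p\in\mathfrak{M}$ that is a local maximum of $v$ one has $v_i(p)=0$ and $(v_{i\bar j}(p))\le 0$, whence $g^{i\bar j}v_{i\bar j}(p)\le 0$ and $b^iv_i(p)=0$; therefore $Lv(p)\le c\,v(p)$. Since $Lv\ge 0$ and $c<0$, this forces $v(p)\le 0$. Thus an interior maximum, should one exist, cannot exceed the value $0$, and the only genuine obstruction to the stated equality is the possibility that the supremum is attained or approached along $\mathfrak{D}$, where the classical argument breaks down because $L$ degenerates.

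To remove this obstruction I perturb by the barrier. Set $v_\eps:=v+\eps S$ with $S=\|s\|^{2\kappa}$ as in \lemref{auxiliary function}. Since $v\in C^{2,\a}_\b$ is bounded up to $\mathfrak{D}$ while, by conclusion $(2)$, $S$ grows strictly faster than the collapse of any $C^\a_\b$ function near $\mathfrak{D}$, the supremum of $v_\eps$ is attained at a point $p_\eps\in\mathfrak{X}\setminus\mathfrak{D}$ (either on the regular boundary $\p\mathfrak{X}\setminus\p\mathfrak{D}$ or in the interior $\mathfrak{M}$). Conclusion $(1)$, namely $\tfrac{\sqrt{-1}}{2}\p\bar\p S\ge -C\Om$, gives $g^{i\bar j}S_{i\bar j}\ge -C(n+1)$; combined with $|\p S|_\Om\le C$ (cf. \lemref{lem:4_15}) and the boundedness of $b^i$, $c$ and $S$, so that $b^iS_i$ and $cS$ are controlled, this yields $LS\ge -C_0$ for a uniform constant $C_0$, hence $Lv_\eps=Lv+\eps LS\ge -\eps C_0$ throughout $\mathfrak{M}$.

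Now I apply the interior step to $v_\eps$. If $p_\eps\in\mathfrak{M}$, then as above $Lv_\eps(p_\eps)\le c\,v_\eps(p_\eps)$, and combining with $Lv_\eps(p_\eps)\ge -\eps C_0$ and $c<0$ gives $v_\eps(p_\eps)\le \eps C_0/|c|$. Consequently
\begin{align*}
\sup_{\mathfrak{X}}v\ \le\ \sup_{\mathfrak{X}}v_\eps\ \le\ \max\Big\{\sup_{\p\mathfrak{X}\setminus\p\mathfrak{D}}v_\eps,\ \tfrac{\eps C_0}{|c|}\Big\}.
\end{align*}
Letting $\eps\to 0$ and using that $0\le\eps S\le\eps\sup S\to 0$ uniformly, the barrier disappears and I recover $\sup_{\mathfrak{X}}v\le\sup_{\p\mathfrak{X}\setminus\p\mathfrak{D}}v$, which gives the claimed identity. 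The reverse inequality is trivial, and the infimum statement is obtained by the substitution $v\mapsto -v$. The main obstacle is precisely the third step, the exclusion of the divisor: it is exactly here that the non-standard geometry enters, and it is handled entirely by the growth estimate $(2)$ together with the Hessian lower bound $(1)$ of \lemref{auxiliary function}; the remainder is the textbook maximum principle.
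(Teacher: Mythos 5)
Your proposal is correct and takes essentially the same route as the paper: the paper also perturbs to $u=v+\eps S$ with the barrier of \lemref{auxiliary function} (so that by Jeffres's trick the maximum of $u$ avoids $\mathfrak{D}$), runs the classical interior argument using $c<0$, $\tri_g S\geq -C$ and $|\p S|_g\leq C$ to get $u(p)\leq \eps C$, and lets $\eps\to 0$, treating the minimum case by the symmetric substitution. The only remark worth making is that both your argument and the paper's really deliver $\sup_{\mathfrak{X}}v\leq\max\bigl\{\sup_{\p\mathfrak{X}\setminus\p\mathfrak{D}}v,\,0\bigr\}$ (the classical positive-part caveat when an interior maximum is non-positive), so this slight imprecision is shared with, not worse than, the paper's own proof.
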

\begin{proof}
Set $u=v+\eps S$ and $S=\| s \|^{2\kappa}$ with $(1+\a)\b>2\kappa\geq\b$. Then $|\p S|_g$ is bounded. Suppose that  $p$ is the maximum point of $u$. According to \lemref{auxiliary function}, $p$ cannot be on $\mathfrak{D}$.
So either $p$ stays on the boundary $\p\mathfrak{X}\setminus\p\mathfrak{D}$ or in the interior
of $\mathfrak{X}\setminus\mathfrak{D}$.
Then in the latter case, at the maximum point $p$ we have
$$
0\leq Lv=Lu-\eps LS\leq  cu-\eps(\tri_g S+b^iS_i+cS)\leq cu + \eps C\;.
$$
Here we use $b^iS_i\geq -|b^i|^2_g-|\p S|^2_g$ and the first conclusion in \lemref{auxiliary function},
$\tri_g S\geq -C\;.$
Combining these inequalities we obtain
$$
u(p)\leq \eps C \;.
$$

Then at any point $x\in \mathfrak{X}$, we have the following relation
\begin{align*}
v(x)= u(x)-\eps S\leq u(p) \leq \sup_{\p\mathfrak{X}\setminus\p\mathfrak{D}}v
+\eps C \;,
\end{align*}
since $S$ is nonnegative.
Similarly, Similarly, we shall be using $u=v-\eps F$ instead for $Lv \leq 0$.
As a result, the proposition follows as $\eps\rightarrow0$.
\end{proof}

Now we use the maximum principle to deduce the uniqueness of solutions of
the elliptic equation \eqref{linear equ ge}.
\begin{cor}
If $v_1$, $v_2$ are two solutions of the linearized equation
\eqref{linear equ ge} with $c<0$, then $v_1=v_2$.
\end{cor}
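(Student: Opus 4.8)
The plan is to reduce the uniqueness statement to the maximum principle just established in \lemref{max prin}, exactly as in the classical linear theory. First I would set $w:=v_1-v_2$. Since both $v_1$ and $v_2$ solve \eqref{linear equ ge} with the same data $f$, $h^i$ and the same boundary value $v_0$, the linearity of the operator $L$ gives
\begin{align*}
Lw = Lv_1 - Lv_2 = (f+\p_i h^i)-(f+\p_i h^i)=0 \quad\text{in } \mathfrak{X}\setminus\mathfrak{D},
\end{align*}
together with $w=0$ on $\p\mathfrak{X}\setminus\p\mathfrak{D}$. Moreover $w\in C^{2,\a}_\b$, because $v_1,v_2\in C^{2,\a}_\b$ by the very definition of a solution; hence the auxiliary weight $S$ controls the behaviour of $w$ near the divisor and the hypotheses of \lemref{max prin} are met for $w$.

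Next I would apply \lemref{max prin} twice. Reading the identity $Lw=0$ as $Lw\geq 0$, the maximum--principle part of \lemref{max prin} (valid since $c<0$) yields
\begin{align*}
\sup_{\mathfrak{X}} w = \sup_{\p\mathfrak{X}\setminus\p\mathfrak{D}} w = 0,
\end{align*}
the last equality because $w$ vanishes on $\p\mathfrak{X}\setminus\p\mathfrak{D}$. Symmetrically, reading $Lw=0$ as $Lw\leq 0$ and invoking the minimum--principle part gives $\inf_{\mathfrak{X}} w = \inf_{\p\mathfrak{X}\setminus\p\mathfrak{D}} w = 0$. Combining these two relations forces $w\equiv 0$ on $\mathfrak{X}$, that is $v_1=v_2$.

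There is no genuine analytic obstacle here; the only point that requires care is ensuring that the extrema of $w$ cannot be absorbed into the divisor $\mathfrak{D}$, where $w$ is not a priori controlled. This is precisely the role of the perturbation $u=w+\eps S$ with $S=\|s\|^{2\kappa}$ inside \lemref{max prin}: by the second conclusion of \lemref{auxiliary function} the perturbed function attains its extremum away from $\mathfrak{D}$, while the first conclusion bounds $\tri_g S$ from below, and letting $\eps\to 0$ recovers the clean statement. Since that delicate step is already packaged into \lemref{max prin}, the corollary follows immediately, and the same reduction would give uniqueness for the Dirichlet problem \eqref{linear equ ge} with arbitrary prescribed $C^{2,\a}_\b$ boundary data.
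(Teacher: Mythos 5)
Your proof is correct and is exactly the argument the paper intends: the corollary is stated immediately after \lemref{max prin} with the remark that uniqueness follows from the maximum principle, i.e.\ by applying it to the difference $w=v_1-v_2$, which satisfies $Lw=0$ and vanishes on $\p\mathfrak{X}\setminus\p\mathfrak{D}$, so that both the sup and the inf of $w$ are zero. Your additional observation that the weight $S=\|s\|^{2\kappa}$ inside \lemref{max prin} is what prevents the extrema from hiding on the divisor is also precisely how the paper handles that issue.
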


The singular volume form
$\om^n$ with respect to the cone metric gives a measure on the manifold $\mathfrak{X}$.
As a consequence, the $L^p(\mathfrak{X},g)$ space is defined in the usual way.
The $W^{1,p}(\mathfrak{X},g)$ space furthermore requires that
the derivatives satisfy $\int_\mathfrak{X}|\nabla f|^p_\Om\om^n<\infty$.

\begin{defn}
The weak solution in $W^{1,2}$ of \eqref{linear equ ge} is defined, for any $\eta\in W^{1,2}_0$, in the sense of distributions;
\begin{align}\label{weak sol}
\mathcal{L}(v,\eta)=\int_{\mathfrak{X}}[g^{i\bar j}v_i\eta_{\bar j}-b^iv_i\eta-cv\eta]\om^n
=\int_{\mathfrak{X}}-\eta f- h^i\eta_i\om^n.
\end{align}
Note that our weak solution is defined globally.
\end{defn} The following lemmas follow directly from the local lifting $P\circ W$ (cf. \eqref{euc}).
\begin{lem}(Sobolev imbedding)\label{sob}
Assume that $f\in W^{1,2}_0$. Then there is a constant $C$ depending on $n,\b$ such that
\begin{align*}
||f||_{\frac{2n}{n-1}}\leq C||f||_{W^{1,2}}\;.
\end{align*}
\end{lem}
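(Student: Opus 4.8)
The plan is to reduce the Sobolev inequality on the singular K\"ahler cone manifold $(\mathfrak{X},\mathfrak{D})$ to the classical Sobolev inequality on a Euclidean domain by means of the local quasi-isometry $P\circ W$ introduced in \eqref{euc}. The key observation, already recorded in the excerpt, is that the push-forward $(P\circ W)^{-1}_\ast g$ of the cone flat metric is \emph{uniformly equivalent to the standard Euclidean metric}; consequently the singular volume form $\om^n$, when pulled back through the cone coordinates, is comparable to ordinary Lebesgue measure, and gradients measured in $|\cdot|_\Om$ are comparable to Euclidean gradients. This means that, at least locally near a point of $\mathfrak{D}$, the statement $\|f\|_{2n/(n-1)}\leq C\|f\|_{W^{1,2}}$ is nothing but the usual Sobolev embedding $W^{1,2}\hookrightarrow L^{2n/(n-1)}$ on a Euclidean ball, transported back by the bi-Lipschitz change of variables.

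First I would fix a finite open cover of $\mathfrak{X}$ by the cone charts $U_p$ (together with ordinary charts away from $\mathfrak{D}$), subordinate to a smooth partition of unity $\{\chi_\ell\}$ whose members are supported in single charts. In each chart containing a piece of the divisor, I would apply the map $P\circ W$ to flatten the metric: under this map the cone metric $g$ becomes comparable to the Euclidean metric, so there are constants $c_1,c_2>0$ with $c_1\,dV_{\mathrm{euc}}\leq \om^n\leq c_2\,dV_{\mathrm{euc}}$ and $c_1|\nabla_{\mathrm{euc}}f|\leq |\nabla f|_\Om\leq c_2|\nabla_{\mathrm{euc}}f|$ pointwise on the regular part. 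The classical Sobolev inequality then gives, for each localized piece $\chi_\ell f\in W^{1,2}_0$,
\begin{align*}
\|\chi_\ell f\|_{\frac{2n}{n-1}}\leq C_\ell\|\chi_\ell f\|_{W^{1,2}},
\end{align*}
where the constant $C_\ell$ depends only on the quasi-isometry constants (hence on $n$ and $\b$) and on the fixed cutoff. Summing over $\ell$ using the triangle inequality in $L^{2n/(n-1)}$, controlling the derivative terms $|\nabla\chi_\ell|$ by the finiteness of the cover, and absorbing the lower-order contributions, yields the global estimate with a constant depending only on $n$ and $\b$.

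The only genuine subtlety, and the step I expect to be the main obstacle, is the passage from the regular part $M=\mathfrak{X}\setminus\mathfrak{D}$ to the whole manifold: one must check that functions in $W^{1,2}_0$ do not acquire any singular contribution (a distributional mass concentrated on $\mathfrak{D}$) when the inequality is integrated across the divisor. Since $\mathfrak{D}$ has real codimension two and the cone volume form assigns it measure zero, and since the Euclidean Sobolev inequality already holds across the image of $\mathfrak{D}$ (which under $P\circ W$ is simply a lower-dimensional coordinate subspace), the worry is only that the quasi-isometry constants might degenerate as one approaches $\mathfrak{D}$. But the uniform equivalence stated after \eqref{euc} is precisely what rules this out: the comparison constants are bounded independently of the distance to the divisor. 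The same cutoff argument used in \lemref{lemma: integration by parts} — approximating by functions vanishing near $\mathfrak{D}$ and using that $\int_M|\p\chi_\eps|_\Om\om^n\to 0$ — then justifies that no boundary term along $\mathfrak{D}$ appears, completing the reduction to the classical inequality.
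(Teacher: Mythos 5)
Your proposal is correct and takes essentially the same approach as the paper: the paper's entire justification is that the lemma ``follows directly from the local lifting $P\circ W$'' of \eqref{euc}, i.e.\ exactly the reduction to the classical Euclidean Sobolev inequality via the uniform quasi-isometry between the cone metric and the flat metric that you carry out. Your partition-of-unity, gradient-comparison, and divisor-removability details simply make explicit what the paper leaves implicit.
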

\begin{lem}\label{kon}(Kondrakov compact imbedding)
The imbedding $W^{1,2}_0\rightarrow L^p$ for $1\leq p<\frac{2n}{n-1}$ is compact.
\end{lem}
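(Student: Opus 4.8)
The plan is to reduce the assertion to the classical Rellich--Kondrachov theorem by transporting everything to the Euclidean picture through the local lifting $P\circ W$ of \eqref{euc}. First I would cover $\mathfrak{X}$ by finitely many coordinate charts: away from $\mathfrak{D}$ the cone metric is a genuine smooth metric and nothing special happens, so only the cone charts $U_p$ meeting the divisor require attention. On such a chart the map $W$ of \eqref{conetransform} (extended as the identity in $z^{n+1}$) is a homeomorphism onto its image $\tilde U_p$, smooth off $\mathfrak{D}$, and by \eqref{euc} it carries the K\"ahler cone metric $\Om$ to a metric uniformly equivalent to the flat Euclidean metric. Consequently the weighted norms computed with respect to $\Om$ are comparable, with constants independent of the point, to the ordinary Euclidean norms of the transported function $\tilde f := f\circ W^{-1}$: there is a constant $C$ with
\[
 C^{-1}\,\|\tilde f\|_{W^{1,2}(\tilde U_p,\,\mathrm{eucl})}
 \;\leq\; \|f\|_{W^{1,2}(U_p,\,\Om)}
 \;\leq\; C\,\|\tilde f\|_{W^{1,2}(\tilde U_p,\,\mathrm{eucl})}\,,
\]
and the same comparison for the $L^p$ norms, since the singular volume form $\om^n$ is comparable to the Euclidean volume form under $P\circ W$.

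With these equivalences in hand I would argue as follows. Fix a partition of unity $\{\chi_a\}$ subordinate to the cover, and let $\{f_m\}$ be a bounded sequence in $W^{1,2}_0$. Each localized piece $\chi_a f_m$ is bounded in $W^{1,2}$, so its transport $\widetilde{\chi_a f_m}$ is bounded in $W^{1,2}(\tilde U_a,\mathrm{eucl})$, where $\tilde U_a$ is a bounded domain in Euclidean space and the vanishing of $f_m$ on $\p\mathfrak{X}$ is preserved because $W$ fixes the $z^{n+1}$ variable. The classical Rellich--Kondrachov theorem then yields, for every exponent strictly below the Sobolev exponent $\frac{2n}{n-1}$ of \lemref{sob}, a subsequence of $\widetilde{\chi_a f_m}$ converging in $L^p(\tilde U_a)$. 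A diagonal argument over the finitely many charts produces a single subsequence for which every localized piece converges; pulling back through $W$ and summing against $\{\chi_a\}$ gives a subsequence of $\{f_m\}$ that converges in $L^p(\mathfrak{X},\om^n)$. This is exactly the asserted compactness.

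The only genuinely delicate point, and hence the main obstacle, is the passage across the divisor $\mathfrak{D}$: the map $W$ is merely a homeomorphism and fails to be smooth there, so one must make sure that the identification of the cone-weighted Sobolev space with the flat Sobolev space on $\tilde U_p$ really loses no derivatives at $\mathfrak{D}$ and that the image domain is admissible for the flat Rellich theorem. Both rest entirely on the uniform equivalence \eqref{euc}, which converts the cone gradient norm into the honest Euclidean gradient norm and the cone volume into the Euclidean volume; once this is recorded, the divisor plays no special role and the reduction is complete. As the statement and its proof are purely local in nature and use only \eqref{euc} together with the standard embedding theorem, this is what is meant by the remark that the lemma \emph{follows directly from the local lifting} $P\circ W$.
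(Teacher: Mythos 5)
Your proposal is correct and is exactly the argument the paper intends: the paper's entire "proof" is the one-line remark that \lemref{sob} and \lemref{kon} "follow directly from the local lifting $P\circ W$ (cf.\ \eqref{euc})", i.e.\ reduction to the classical Rellich--Kondrachov theorem via the uniform equivalence of the pushed-forward cone metric with the Euclidean metric. You have simply supplied the localization, norm-comparison, and diagonal-subsequence details that the authors leave implicit.
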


\begin{lem}\label{wmp}(Weak maximum principle)
Let $v\in W^{1,2}$ satisfy $Lv\geq 0(\leq0)$ in $\mathfrak{X}$ with $c\leq 0$. Then
$$
\sup_\mathfrak{X}v\leq\sup_{\p\mathfrak{X}}v^+
\qquad \left(\inf_\mathfrak{X}v\geq\sup_{\p\mathfrak{X}}v^-\right)\;.
$$
\end{lem}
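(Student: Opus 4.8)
The plan is to run the Gilbarg--Trudinger weak maximum principle argument (cf. Theorem $8.1$ of \cite{MR1814364}) adapted to the cone measure, using the cone Sobolev inequality of \lemref{sob} in place of the Euclidean one. First I would pin down the meaning of the hypothesis: for $v\in W^{1,2}$ the inequality $Lv\geq 0$ is understood in the weak sense, i.e. $\mathcal{L}(v,\eta)\leq 0$ for every nonnegative $\eta\in W^{1,2}_{0}$, where $\mathcal{L}$ is the bilinear form of \eqref{weak sol}; this is exactly what one gets by pairing $Lv\geq 0$ with $\eta\geq 0$ and integrating by parts against the K\"ahler cone volume $\om^{n}$. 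Set $\ell:=\sup_{\p\mathfrak{X}}v^{+}$ and suppose, toward a contradiction, that the essential supremum $M:=\esssup_{\mathfrak{X}}v$ satisfies $M>\ell$.

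The main step is the truncation. For $k\in[\ell,M)$ put $\eta:=(v-k)^{+}$. Since $v\leq v^{+}\leq\ell\leq k$ on $\p\mathfrak{X}$, the function $\eta$ has vanishing boundary trace, so $\eta\in W^{1,2}_{0}$ and $\eta\geq 0$; moreover $\nabla\eta=\nabla v$ on the super-level set $\Gamma_{k}:=\{v>k\}$ and $\nabla\eta=0$ elsewhere. Inserting $\eta$ into $\mathcal{L}(v,\eta)\leq 0$ and observing that on $\Gamma_{k}$ one has $v\geq k\geq 0$ and $\eta\geq 0$, the zeroth-order term contributes $\int c\,v\,\eta\,\om^{n}\leq 0$ on the right-hand side (because $c\leq 0$) and may be dropped. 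Using that $b^{i}\in C^{1,\a}_{\b}$ is bounded (see \eqref{linear equ ge co}) together with Cauchy--Schwarz to absorb the first-order term, I would arrive at a Caccioppoli-type estimate $\int|\nabla\eta|_{\Om}^{2}\,\om^{n}\leq C\int_{\supp\nabla\eta}|\eta|^{2}\,\om^{n}$, with $C$ depending only on the metric and $\sup|b|_{\Om}$.

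The conclusion then follows from Sobolev and a measure argument. By \lemref{sob}, together with the Poincar\'e inequality on $W^{1,2}_{0}$ (which follows from the Kondrakov compactness of \lemref{kon} by the standard contradiction argument), one has $\|\eta\|_{2n/(n-1)}\leq C\|\nabla\eta\|_{2}$; combining this with the Caccioppoli estimate and H\"older's inequality over $\supp\nabla\eta$ yields $\|\eta\|_{2n/(n-1)}^{2}\leq C\,|\supp\nabla\eta|^{1/n}\,\|\eta\|_{2n/(n-1)}^{2}$. As $\eta\not\equiv 0$ for $k<M$, dividing gives $|\supp\nabla\eta|\geq\delta_{0}>0$ with $\delta_{0}$ independent of $k$. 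But $\supp\nabla\eta\subset\{k<v<M\}$ up to a null set (the gradient vanishes a.e. on the plateau $\{v=M\}$), and since $\mathfrak{X}$ has finite cone volume these sets shrink to the empty set as $k\uparrow M$, so $|\supp\nabla\eta|\to 0$, a contradiction. Hence $M\leq\ell$, which is the first inequality; the statement for the infimum follows by applying this to $-v$.

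I expect the genuinely delicate point to be the interface with the divisor: I must check that the integration by parts defining \eqref{weak sol}, the truncation $\eta=(v-k)^{+}$, and the Sobolev--Poincar\'e chain are all legitimate across $\mathfrak{D}$. Because $\mathfrak{D}$ is a null set for $\om^{n}$ and \lemref{sob} is already the cone-adapted Sobolev inequality, the iteration is insensitive to the singular locus; should any pointwise step be required to keep a maximizing region away from $\mathfrak{D}$, the auxiliary weight $S$ of \lemref{auxiliary function} (as used in \lemref{max prin}) supplies the needed barrier. The only metric-dependent constants are those in the Caccioppoli bound and in the Sobolev/Poincar\'e inequalities, all finite under \eqref{linear equ ge co} and the finiteness of the cone volume.
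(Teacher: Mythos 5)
Your proposal is correct and takes essentially the same route as the paper: after handling $b^i=0$ by a direct truncation, the paper disposes of the general case by saying the proof ``is the same as that of Theorem 8.1 in Gilbarg--Trudinger'' using the cone Sobolev inequality (Lemma~\ref{sob}), which is precisely the level-set truncation $\eta=(v-k)^+$, Caccioppoli estimate, Sobolev/H\"older bound, and measure-theoretic contradiction that you carry out in detail. Your additional care with the Poincar\'e step (needed since Lemma~\ref{sob} carries the full $W^{1,2}$ norm on the right) and with the null set $\mathfrak{D}$ simply fills in details the paper leaves to the cited reference.
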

\begin{proof}
From the definition of weak solution we have that $Lv\geq 0$ implies $\mathcal{L}(v,\eta)\leq 0$.
Then for $\eta\geq0$, we have
\begin{align*}
\int_{\mathfrak{X^+}}[g^{i\bar j}v_i\eta_{\bar j}-b^iv_i\eta]\om^n\leq 0\;,
\end{align*}
where  $\mathfrak{X}^+=\{x\in\mathfrak{X}\vert v(x)\geq0\}$. Let $v^+=\max\{0,v\}$.
If $b^i=0$, letting $\eta=\sup\{0,v-\sup_{\p\mathfrak{X}}v^+\}$, we have
$$
\int_{\mathfrak{X^+}}|\nabla\eta|^2\om^n\leq0 \;.
$$
So $|\nabla\eta|^2=0$ on $\mathfrak{X^+}\setminus\mathfrak{D}$.
Since $\eta=0$ at the maximum point on the boundary of $\mathfrak{X^+}$,
we obtain $\eta=0$ on $\mathfrak{X^+}\setminus\mathfrak{D}$.
Since the measure of $\mathfrak{D}$ is zero, we could modify the value of $\eta$ such that $\eta=0$
on the whole $\mathfrak{X}$. Then the lemma follows for $b^i=0$.
When $b^i\neq0$, using the Sobolev inequality \eqref{sob},
the proof is the same as that of Theorem 8.1 in \cite{MR1814364}.
\end{proof}
Then this lemma and a standard argument by means of the Fredholm alternative theorem implies the uniqueness and the existence of the weak solution.
\begin{prop}\label{ext weak}
The linear equation \eqref{linear equ ge} with $c\leq 0$ has a unique weak solution in $W^{1,2}$.
\end{prop}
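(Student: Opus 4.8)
The plan is to run the classical Fredholm-alternative argument for linear elliptic equations in divergence form, as in Chapter 8 of \cite{MR1814364}, adapting it to the cone setting by feeding in the three analytic inputs already established: the Sobolev imbedding \lemref{sob}, the Kondrakov compactness \lemref{kon}, and the weak maximum principle \lemref{wmp}. First I would reduce to homogeneous boundary data: writing $w := v - v_0 \in W^{1,2}_0$, the problem \eqref{linear equ ge} becomes the search for $w \in W^{1,2}_0$ with $\mathcal{L}(w,\eta) = \langle F, \eta\rangle$ for all $\eta \in W^{1,2}_0$, where the functional $F$ collects $f$, $h^i$ and the contribution of $v_0$ through $\mathcal{L}(v_0,\cdot)$; by the regularity \eqref{linear equ ge co} each of these defines a bounded linear functional on $W^{1,2}_0$. \emph{Uniqueness} is then immediate: if $w_1,w_2$ are two weak solutions, their difference solves the homogeneous problem with zero boundary values, and since $c\leq 0$ the weak maximum principle \lemref{wmp} forces $w_1=w_2$.

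For \emph{existence} I would first check that $\mathcal{L}$ is a bounded bilinear form on $W^{1,2}_0$ and satisfies a G\aa rding inequality. Boundedness follows from the $L^\infty$ bounds on $g^{i\bar j}$ (uniform ellipticity of the cone metric) together with $b^i\in C^{1,\a}_\b$, $c\in C^\a_\b$ and the finiteness of the cone volume. The G\aa rding inequality $\mathcal{L}(w,w)\geq \lambda \|\nabla w\|_{L^2}^2 - \Lambda\|w\|_{L^2}^2$ comes from absorbing the first-order term $b^i w_i\, \bar w$ by Cauchy--Schwarz with a small weight and bounding the zeroth-order term by $\|c\|_\infty \|w\|_{L^2}^2$. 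Consequently, for $\sigma>0$ large the shifted form $\mathcal{L}_\sigma(w,\eta):=\mathcal{L}(w,\eta)+\sigma\int_{\mathfrak{X}} w\eta\,\om^n$ is coercive on $W^{1,2}_0$, so Lax--Milgram yields an isomorphism whose inverse, precomposed with the inclusion into the dual, defines a solution operator $T_\sigma : L^2 \to W^{1,2}_0$.

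The key structural point is that $T_\sigma$, regarded as an operator on $L^2$, is compact, and this is exactly where the Kondrakov imbedding \lemref{kon} enters, since $T_\sigma$ factors through the compact inclusion $W^{1,2}_0\hookrightarrow L^2$. Rewriting the original problem as $(I-\sigma T_\sigma)w = T_\sigma F$ casts it as a Fredholm equation of the second kind, so the Fredholm alternative reduces solvability for every right-hand side to triviality of the kernel of $I-\sigma T_\sigma$, i.e. to uniqueness for the homogeneous problem $Lw=0$ with $w|_{\p\mathfrak{X}}=0$. That uniqueness is again guaranteed by \lemref{wmp} under $c\leq 0$. Hence a unique weak solution in $W^{1,2}$ exists.

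I expect the main obstacle to lie not in the functional-analytic skeleton, which is classical, but in confirming that each ingredient survives the cone degeneracy: that $\mathcal{L}$ is genuinely bounded and coercive with respect to the cone volume $\om^n$ and the cone gradient $|\nabla w|_\Om$, and that the weak maximum principle continues to hold across the divisor. These are precisely the points secured by \lemref{sob}, \lemref{kon} and \lemref{wmp}, the last exploiting that $\mathfrak{D}$ has measure zero so that extremal test functions may be modified there without cost. Once these are in hand, the remaining argument is the textbook one.
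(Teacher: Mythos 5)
Your proof is correct and follows essentially the same route as the paper: the paper's own proof consists of the single remark that Lemma \ref{wmp} "and a standard argument by means of the Fredholm alternative theorem" give existence and uniqueness, and your write-up is exactly that standard argument (Lax--Milgram for the shifted form, compactness of the solution operator via Lemma \ref{kon}, Fredholm alternative, kernel triviality from the weak maximum principle), spelled out with the correct cone-specific inputs.
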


\subsection{H\"older estimates}
We remark that in this subsection, all results hold for normal-crossing divisors $D$ with more than one component. However, we just check for one component. The general multiple case follows from the normal crossing condition. The H\"older estimates derived in this subsection are used iin the proof of  both the interior and boundary Schauder estimates of the approximate geodesic equation.
Before stating the proposition on the global and local boundedness,
we require some technical lemmas which will be useful later.
Denote $\omega_0 = dz^1 \wedge d\overline{z}^1 + \cdots +  dz^{n+1} \wedge d\overline{z}^{n+1} $.
Then locally in a neighborhood $U_p$ near $p\in \mathfrak D$, $\om_0^{n+1} = n! \cdot dz^1 \wedge d\overline{z}^1
\wedge \cdots \wedge dz^{n+1}
\wedge d\overline{z}^{n+1} , $ and then we have that there is a bounded function $h$ such that
$$
\om^{n+1} = \beta^2 |z^1|^{2(\beta - 1)} \om_0^{n+1} e^h \; .
$$
Finally, let $m=2n+2$.
\begin{lem}\label{lemma three}
There is a constant $C$ depending on $|h|_\infty$ such that, for any
$s > \frac{1}{\beta}$, the following inequality holds
\begin{align*}
\left( \int_{U_p} f^p \om^{n+1} \right)^{\frac{1}{p}}
\leq C \left( \int_{U_p} f^{sp} \om_0^{n+1} \right)^{\frac{1}{sp}}\;.
\end{align*}
\end{lem}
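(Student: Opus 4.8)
The plan is to exploit the explicit relation $\om^{n+1}=\beta^2|z^1|^{2(\beta-1)}e^h\,\om_0^{n+1}$ recorded just above the lemma, and to peel off the singular weight $|z^1|^{2(\beta-1)}$ by H\"older's inequality. First I would bound the density pointwise: since $|h|\le|h|_\infty$ on $U_p$, we have $e^h\le e^{|h|_\infty}$, whence
\[
\int_{U_p}f^p\om^{n+1}\le \beta^2 e^{|h|_\infty}\int_{U_p}f^p\,|z^1|^{2(\beta-1)}\om_0^{n+1}.
\]

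Next I would apply H\"older's inequality with the conjugate pair $s$ and $s'=s/(s-1)$ to decouple $f^p$ from the weight, obtaining
\[
\int_{U_p}f^p\,|z^1|^{2(\beta-1)}\om_0^{n+1}\le
\Big(\int_{U_p}f^{ps}\om_0^{n+1}\Big)^{1/s}
\Big(\int_{U_p}|z^1|^{2(\beta-1)s'}\om_0^{n+1}\Big)^{1/s'}.
\]
The crucial point, and the only place where the hypothesis $s>\tfrac1\beta$ is really used, is to show that the weight factor $I_0:=\int_{U_p}|z^1|^{2(\beta-1)s'}\om_0^{n+1}$ is finite. Since $\om_0^{n+1}$ is a constant multiple of Lebesgue measure, Fubini reduces this to the planar integral in $z^1$ (the remaining $2n$ real integrations over the bounded chart contribute a finite factor); passing to polar coordinates $z^1=\rho e^{\sqrt{-1}\theta}$ gives
\[
\int_{|z^1|\le R}|z^1|^{2(\beta-1)s'}\tfrac{\sqrt{-1}}{2}dz^1\wedge dz^{\bar 1}
=2\pi\int_0^R\rho^{\,2(\beta-1)s'+1}\,d\rho,
\]
which converges precisely when $2(\beta-1)s'+1>-1$, i.e. $s'<\tfrac1{1-\beta}$. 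A one-line algebraic check (using $s>1$, which holds because $s>\tfrac1\beta\ge1$) shows $s'<\tfrac1{1-\beta}\iff s>\tfrac1\beta$, so under the stated hypothesis $I_0<\infty$, with $I_0$ depending only on $\beta$, $s$ and the size of $U_p$.

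Finally I would combine the two displays and take $p$-th roots, which yields
\[
\Big(\int_{U_p}f^p\om^{n+1}\Big)^{1/p}
\le\big(\beta^2 e^{|h|_\infty}I_0^{1/s'}\big)^{1/p}
\Big(\int_{U_p}f^{ps}\om_0^{n+1}\Big)^{1/(sp)},
\]
so the claim holds with $C=(\beta^2 e^{|h|_\infty}I_0^{1/s'})^{1/p}$, whose essential dependence is on $|h|_\infty$ (besides the fixed data $\beta,s,p$ and $U_p$; note $C\to1$ as $p\to\infty$, so the bound is uniform over any range of $p$ bounded away from $0$). The only genuine obstacle here is the finiteness of the weight integral, and that is settled exactly by the sharp threshold $s>\tfrac1\beta$; everything else is a routine application of H\"older's inequality and the comparison of the two volume forms.
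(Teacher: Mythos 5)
Your proof is correct and is essentially the paper's own argument: the paper likewise writes $\om^{n+1}=\beta^2\rho^{2(\beta-1)}e^h\om_0^{n+1}$ in polar coordinates, applies H\"older's inequality with conjugate exponents $s$ and $t=s/(s-1)$ to split off the weight $\rho^{2(\beta-1)}$, and observes that the weight integral converges exactly when $t<\frac{1}{1-\beta}$, i.e. $s>\frac{1}{\beta}$. The only cosmetic difference is that you bound $e^h\leq e^{|h|_\infty}$ before applying H\"older while the paper keeps $e^h$ inside both factors; your additional remark on the $p$-dependence of the constant is a harmless refinement.
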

\begin{proof}
Let $z^1 = \rho e^{i \theta}$ and compute
\begin{align*}
&\left( \int_{U_p} f^p \om^{n+1} \right)^{\frac{1}{p}}
= \left( \int_{U_p(z')} \int_0^{r_2} \int_0^{2\pi} f^p \beta^2 \rho^{2(\beta -1)}
e^h \om_0^{n+1} \right)^{\frac{1}{p}}
\\ &\leq  \left( \int_{U_p(z')} \int_0^{r_2} \int_0^{2\pi} f^{sp}
e^h \om_0^{n+1} \right)^{\frac{1}{sp}}
\cdot
 \left( \int_{U_p(z')} \int_0^{r_2} \int_0^{2\pi} (\rho^{2\beta - 2})^t
e^h \om_0^{n+1} \right)^{\frac{1}{tp}}\;.
\end{align*}
Here $\frac{1}{s}
+ \frac{1}{t} =1$. Since $t < \frac{1}{1-\beta}$, the second term is bounded, we have
$s> \frac{1}{\beta}$, which concludes the proof.
\end{proof}

\begin{lem}\label{lemma four}
There is a constant $C$ depending  on  $ \beta $ and $|h|_\infty$ such that, for any $s > 1$,
the following formula holds
\begin{align*}
\left( \int_{U_l} f^p \om_0^{n+1} \right)^{\frac{1}{p}} \leq C \left( \int_{U_l} f^{sp}
\om^{n+1} \right)^{\frac{1}{sp}} \;.
\end{align*}
\end{lem}
\begin{proof}
Again we compute in polar coordinates
\begin{align*}
&\left( \int_{U_p} f^p \om_0^{n+1} \right)^{\frac{1}{p}}
= \left( \int_{U_p(z')} \int_0^{r_2} \int_0^{2\pi} f^p  \rho^{\frac{2(\beta -1)}{s}}
 \rho^{-\frac{2(\beta -1)}{s}} \om_0^{n+1} \right)^{\frac{1}{p}}
\\ &\leq  \left( \int_{U_p} f^{sp} \beta^{-2} e^{-h}\om^{n+1} \right)^{\frac{1}{sp}}
 \left( \int_{U_p(z')} \int_0^{r_2} \int_0^{2\pi}
\rho^{-\frac{2(\beta - 1)t}{s}}  \om_0^{n+1}  \right)^{\frac{1}{tp}}\;.
\end{align*} Here $s,t$ are two positive constants such that $\frac{1}{s}
+ \frac{1}{t} =1$.
The second term is bounded as $\frac{t}{s} > \frac{1}{\beta - 1}$ which is trivially satisfied.
\end{proof}

The proof of the following propositions
are in the same vein as the proofs in Chapter 8 in \cite{MR1814364}.
However, by the lemmas stated above, we need
a careful analysis in the charts which intersect the divisor.
\begin{prop}\label{glob bound claim}(Global boundedness)
If $v$ is a $W^{1,2} $ sub-solution (respectively super-solution) of
\eqref{linear equ ge} in $\mathfrak X$ satisfying $v\leq 0$ (resp.$ v \geq 0$) on $\partial \mathfrak X$;
moreover, if $f\in L^{\frac{q}{2}}$ and $h^i\in L^{q}$, $i=1,\cdots, n+1$ with $q> m$
then there is a constant $C$ depending on $|b^i|_g$, $|c|_\infty$, $q$, $\b$ such that
\begin{align*}
&\sup_{\mathfrak X} v(-v) \leq C(\| v^+(v^-) \|_2 + \| f \|_{\frac{q}{2}}+\|h^i\|_q).
\end{align*}
\end{prop}
\begin{proof}
Assume that $v$ is a $W^{1,2}$ sub-solution of \eqref{linear equ ge}.
We are going to use the De Giorgi-Nash-Moser iteration as in Theorem $8.15$ in \cite{MR1814364}.
Denote $k=\| f \|_{\frac{q}{2}}+\|h^i\|_q$. Choose $w=v^++ k$ and $\eta=\int_{k}^wa^2s^{2(aa-1)}ds$ for $a\geq 1$ in $\mathcal{L}(v,\eta)$. With the Sobolev inequality Lemma \ref{sob}, we have
$$
\| w \|_{\frac{2na}{n-1}; \om} \leq  (C(a+1))^{\frac{1}{a}} \| w \|_{2a; \om}\; .
$$
We use Lemma \ref{lemma three} and Lemma \ref{lemma four} on the coordinates which intersect
the divisor $\mathfrak D$ and the H\"older inequality in the remainder coordinates.
After patching them together via a partition of the unity we have,
for $ s > \frac{1}{\beta} \geq 1$,
$$
\| w \|_{\frac{2nas}{n-1}; \om_0} \leq  (C(a+1))^{\frac{1}{a}} \| w \|_{2as ; \om_0}\;.
$$
Now we follow a standard iteration argument;
using the interpolation inequality we have with $\chi=\frac{n}{n-1}$
$$
\| w \|_{\frac{\chi^{N 2s}}{n-1}; \om_0} \leq C \| w\|_{\frac{2}{s}; \om_0}\;.
$$
Finally, letting $N\rightarrow \infty$ and using Lemma \ref{lemma four} again, we get the proposition.
\end{proof}

Denote as $d$ the distance measured via the K\"ahler cone metric $\om$
\begin{prop}\label{loc est}(Local boundedness)
Suppose that  $v$ is a $W^{1,2}$ sub-solution of \eqref{linear equ ge} and suppose that
$f \in L^{q}, $ and $h^i\in L^{q}$, $i=1,\cdots, n+1$ with $ q > m$.
Then for any ball $B_{2d}(y) \subset  \mathfrak X$ and any $p>1$
there is a constant $C$ depending on $(|b^i|_g+|c|_\infty)d$, $q$, $\b$, $p$ such that
\begin{equation*}
\sup_{B_d(y)} v(-v) \leq C ( d^{-\frac{m}{p}} \| v^+(v^-) \|_{L^p(B_{2d}(y))}
+ d^{2(1- \frac{m}{2q})} \| f\|_{\frac{q}{2}}+d^{1-\frac{m}{q}}\|h^i\|_q )\;.
\end{equation*}
\end{prop}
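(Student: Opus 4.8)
The plan is to carry out a localized De~Giorgi--Nash--Moser iteration, in the same spirit as the global bound in Proposition~\ref{glob bound claim} but now with interior cutoffs, following Theorem~$8.17$ of \cite{MR1814364}. Writing $m=2n+2$, set $k:=d^{2(1-m/(2q))}\|f\|_{q/2}+d^{1-m/q}\|h^i\|_q$ and $\bar v:=v^++k$, so that, by H\"older's inequality applied to $f\in L^{q/2}$ and $h^i\in L^{q}$ with $q>m$, the weights in $k$ match the scaling of the principal terms. Fix a cutoff $\psi$ supported in $B_{2d}(y)$ with $\psi\equiv 1$ on $B_d(y)$ and $|\nabla\psi|_\Om\le C/d$, and test the weak formulation \eqref{weak sol} against $\eta=\psi^2\bar v^{2\gamma-1}$, $\gamma\ge 1$. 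Absorbing the first order term $b^iv_i$ and the zeroth order term $cv$ into the leading term, and estimating the right hand side $-\eta f-h^i\eta_i$ of \eqref{weak sol} by the weighted Young inequality, one obtains the Caccioppoli bound controlling $\int_{\mathfrak X}|\nabla(\psi\bar v^{\gamma})|_\Om^2\,\om^n$ by $C\gamma^2d^{-2}\int_{B_{2d}}\bar v^{2\gamma}\om^n$; the normalization $\bar v=v^++k$ is exactly what lets the $f$ and $h^i$ contributions be dominated.

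Next I would apply the Sobolev inequality Lemma~\ref{sob} to $\psi\bar v^\gamma$, turning the Caccioppoli bound into the reverse--H\"older step
\[
\|\bar v\|_{\chi\cdot 2\gamma;\,B_d,\,\om}\le (C\gamma)^{1/\gamma}\,\|\bar v\|_{2\gamma;\,B_{2d},\,\om},\qquad \chi=\tfrac{n}{n-1}>1.
\]
The one genuinely new point is that Lemma~\ref{sob}, hence this gain of integrability, is measured in the cone metric $\om$, whose volume form degenerates like $|z^1|^{2(\b-1)}$ along $\mathfrak D$. To iterate with scale--covariant constants I would pass to the flat metric $\om_0$ exactly as in Proposition~\ref{glob bound claim}: on the finitely many charts meeting $\mathfrak D$ convert $\om$--norms to $\om_0$--norms by Lemma~\ref{lemma three} and Lemma~\ref{lemma four} (at the cost of shrinking the exponent by a factor $s>1/\b$, resp.\ $s>1$), use the ordinary H\"older inequality on the charts away from $\mathfrak D$, and patch through a partition of unity. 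Iterating the resulting $\om_0$--inequality over $\gamma=\chi^{j}$ and summing the convergent series $\sum_j\chi^{-j}\log(C\chi^{j})$ then yields $\sup_{B_d}\bar v\le C\,d^{-m/2}\|\bar v\|_{L^2(B_{2d})}$; the passage to arbitrary $p>1$ is the standard interpolation device in Theorem~$8.17$ of \cite{MR1814364}.

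The main obstacle is the bookkeeping of the integrability losses incurred at each $\om\leftrightarrow\om_0$ conversion, so that they do not overwhelm the Sobolev gain $\chi$: this is where the hypothesis $\b<\tfrac12$ enters, guaranteeing that the effective exponent still increases geometrically and the iteration converges. The explicit powers of $d$ then come from rescaling $B_{2d}(y)$ to the unit ball, which renders $(|b^i|_\Om+|c|_\infty)d$ dimensionless and reproduces the weights $d^{2(1-m/(2q))}$ and $d^{1-m/q}$ through H\"older on $f$ and $h^i$; after this the normalization $\bar v=v^++k$ converts the bound on $\bar v$ into the asserted bound on $\sup_{B_d}v$. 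The super--solution statement for $-v$ follows by replacing $v$ with $-v$ throughout.
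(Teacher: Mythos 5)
Your overall scheme (normalization by $k$, Moser iteration with test functions $\psi^2\bar v^{2\gamma-1}$, the Sobolev inequality of Lemma \ref{sob}, the conversions of Lemma \ref{lemma three} and Lemma \ref{lemma four} patched by a partition of unity, and the interpolation device for general $p>1$) is the same as the paper's. But there is a genuine gap at the heart of your iteration: you convert the reverse-H\"older inequality into $\om_0$-norms and then iterate \emph{the converted inequality}, so that at every step you pay the conversion price --- a factor $s>1/\b$ in the exponent from Lemma \ref{lemma three} and a further factor $s>1$ from Lemma \ref{lemma four} --- against a Sobolev gain of only $\chi=\tfrac{n}{n-1}$. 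The net per-step change of exponent is $\chi/s^2<\tfrac{n}{n-1}\,\b^2$, which is strictly less than $1$ whenever $\b\le\sqrt{(n-1)/n}$; in particular, for all $\b<\tfrac12$ (and $n\geq 2$) the exponents \emph{decrease} geometrically and the iteration never reaches $L^\infty$. Your proposed rescue --- ``this is where the hypothesis $\b<\tfrac12$ enters'' --- is therefore doubly wrong: first, $\b<\tfrac12$ is not a hypothesis of Proposition \ref{loc est} (the linear theory of Section \ref{linear thy} is stated for arbitrary cone angle, the constant merely depending on $\b$; the restrictions $\b<\tfrac12$ and $\b<\tfrac23$ belong to the geometric sections on curvature and connection bounds); second, since the loss is $s>1/\b$, a \emph{small} cone angle makes the conversion loss larger, not smaller --- the condition that would save a per-step conversion scheme is $\b$ close to $1$, the opposite of what you invoke.

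The way out, which is what the paper's argument relies on, is to run the entire iteration in cone-metric norms and to use Lemmas \ref{lemma three}--\ref{lemma four} only a bounded number of times. The reverse-H\"older step you wrote in $\om$-norms is already iterable as it stands: the only term that seems to force a conversion is the cutoff term $\|w\,\p\eta\|_{2;\om}$, and there one has the pointwise bound $|\p\eta|^2_\om=\b^{-2}|z^1|^{2(1-\b)}|\eta_1|^2+\sum_{i\ge2}|\eta_i|^2\le C\,|\p\eta|^2_{\om_0}$ --- equivalently, in the paper's computation, the singular factor $g^{1\bar 1}\sim|z^1|^{2(1-\b)}$ cancels exactly against the density $|z^1|^{2(\b-1)}$ of $\om^{n+1}$ --- so this term costs no exponent at all. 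Iterating over $\gamma=\chi^j$ in $\om$-norms then converges for every $\b\in(0,1)$, and the $\om\leftrightarrow\om_0$ conversions enter only at the two ends of the argument; this is exactly why the paper's final display reads $\|v\|_{\infty;B_1,\om_0}\le C\|v\|_{ps;B_2,\om_0}\le C\|v\|_{ps^2;B_2,\om}$: the factors $s$ and $s^2$ are a one-time exponent shift, removable by the standard interpolation trick you correctly cite, rather than a per-step geometric loss.
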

\begin{proof}
We will prove the local boundedness of the homogeneous equation. The general case follows by means of using $v+d^{2(1- \frac{m}{2q})} \| f\|_{\frac{q}{2}}+d^{1-\frac{m}{q}}\|h^i\|_q$ instead of $v$. Then $v$ would be a weak sub-solution of \eqref{linear equ ge} with $f=0$ and $h^i=0$; namely
$\mathcal{L}(v,\eta)\leq 0 .$ Assume $d=1$
and take the test function to be $\eta^2 v^\a$ for $\eta \in C_0^1 (B_4)$ and $\a >0.$
Then we have for $w:= v^{\frac{\a+1}{2}}$
\begin{align*}
\| \eta w \|_{\frac{2n}{n-1};\om} \leq C \cdot (\|w\p \eta \|_{2;\om} + \| w\eta \|_{2;\om}  )\; .
\end{align*}
Using Lemma \ref{lemma three} and Lemma \ref{lemma four} we obtain,
on any open set $U_p$ which intersects the divisor $D$ for $s > \frac{1}{\beta}$
\begin{align*}
\| \eta w \|_{\frac{2n}{s(n-1)};\om_0}
\leq C [  \|w \p\eta \|_{2;\om}
+\| w\eta \|_{2s;\om_0}]\; .
\end{align*}
We claim for the first addendum on the right hand side it holds $ \|w\p\eta \|_{2;\om} \leq  C \|w\p\eta \|_{2s;\om_0}$ with $s> \frac{1}{\beta}$.
Again, by means of Lemma \ref{lemma three} and Lemma \ref{lemma four} we compute
\begin{align*}
& \|w\p\eta \|_{2;\om}
= \left[ \int_{U_p} w^2 (\partial_{z_1} \eta \partial_{z_{\overline{1}}} \eta |z^1 |^{2(1-\beta)}
\frac{1}{\beta^2}
+
\sum_{i=2}^{n+1} \partial_{z_i} \eta \partial_{z_{\overline{i}}} \eta) \om^{n+1} \right]^\frac{1}{2}
\\ &\leq \left[ \int_{U_p} w^2 (\partial_{z_1} \eta \partial_{z_{\overline{1}}} \eta ) e^h \om_0^{n+1}
+ C \left( \int_{U_p} \sum_{i=2}^{n+1} w^{2s} (\partial_{z_i} \eta \partial_{z_{\overline{i}}} \eta)^s
\om^{n+1} \right)^\frac{1}{s} \right]^\frac{1}{2}
\\ &\leq C \left(  \int_{U_p} w^{2s} |\p\eta|_{\om_0}^{2s}  \om_0^{n+1}  \right)^{\frac{1}{2s}}\; ,
\end{align*}
where to get  the last step we used the H\"older inequality on the first term.
So standard argument with Lemma \ref{lemma four} implies
\begin{align*}
 \| v\|_{\infty ; B_1 , \om_0 }
\leq C \| v \|_{ ps ; B_2 , \om_0 } \leq   C \| v \|_{ ps^2 ; B_2 , \om } \; .
\end{align*}
The local boundedness follows from the next observation;
$B_1 (0 , \om) \subset B_1 (0 , \om_0) $ which follows from
the distance inequality,
$$
\sqrt{ |z^1 |^2 + \sum_{i=2}^n |z^i|^2  }
 \leq \sqrt{ |z^1 |^{2\beta} + \sum_{i=2}^n |z^i|^2  }  \leq 1 .
$$
\end{proof}


\begin{prop}(Weak Harnack inequality)\label{harnack}
Suppose that $v$ is a  $W^{1,2}$ super-solution of \eqref{linear equ ge},
non-negative in a ball $B_{4d}(y) \subset \mathfrak X$ and suppose that  $f\in L^{\frac{q}{2}}$ and $h^i\in L^{q}$, $i=1,\cdots, n+1$ with $q >m$.
Then, for any $\frac{n+1}{n}>p>1$ there is a constant $C$ depending on $(|b^i|_g+|c|_\infty)d$, $q$, $\b$, $p$ such that
\begin{align}\label{eqharnack}
d^{-\frac{m}{p}} \| v \|_{L^p(B_{2d}(y))}
\leq
C
\left\{
\inf_{B_{d(y)}}v+d^{2(1-\frac{m}{q})}||f||_{\frac{q}{2}} +d^{1-\frac{m}{q}}\|h^i\|_q
\right\}\;.
\end{align}
\end{prop}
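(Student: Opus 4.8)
The plan is to adapt the Moser iteration proof of the weak Harnack inequality (Theorem 8.18 in \cite{MR1814364}) to the conical setting, using the volume comparison Lemmas \ref{lemma three} and \ref{lemma four} to pass between the cone measure $\om^{n+1}$ and the background measure $\om_0^{n+1}$ on the charts that meet $\mathfrak D$, and patching with a partition of unity on the charts that do not (where the classical argument applies verbatim). By the scaling $z\mapsto dz$ I first reduce to $d=1$, and by replacing $v$ with $\bar v:=v+k$, where $k:=\|f\|_{q/2}+\|h^i\|_q$, I may assume that $\bar v$ is a strictly positive $W^{1,2}$ super-solution of the \emph{homogeneous} equation, i.e. $\mathcal L(\bar v,\eta)\geq 0$ for every nonnegative $\eta\in W^{1,2}_0$; the inhomogeneous data are absorbed into $k$ exactly as in the proof of Proposition \ref{loc est}. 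The whole estimate then splits into three ingredients: an iteration on positive powers, an iteration on negative powers, and a logarithmic estimate joined to the John--Nirenberg lemma.

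First I would run the two Moser iterations. Inserting the test functions $\eta^2\bar v^{\gamma}$ (with $\gamma<0$, and with small $\gamma>0$) into the weak formulation \eqref{weak sol} and applying the Sobolev inequality Lemma \ref{sob} yields reverse-H\"older inequalities for $w:=\bar v^{(\gamma+1)/2}$, with cut-offs $\eta$ supported in the nested balls $B_1\subset B_2\subset B_4$ centered at $y$. Whenever Lemma \ref{sob} is used, the resulting $\om$-integrals are converted to $\om_0$-integrals by Lemmas \ref{lemma three} and \ref{lemma four}, the iteration is carried out with respect to $\om_0$ with the fixed exponent loss $s>\tfrac{1}{\b}$ absorbed only at the final step (precisely as in Proposition \ref{loc est}), and the outcome is transferred back to $\om$; the gain factor in each step is the conical Sobolev ratio $\chi=\tfrac{n}{n-1}$. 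This produces, for some small $q_0>0$ and all $1\leq p<\tfrac{n+1}{n}$,
\begin{align*}
\Bl\int_{B_2}\bar v^{\,p}\,\om^{n+1}\Br^{1/p} &\leq C\Bl\int_{B_2}\bar v^{\,q_0}\,\om^{n+1}\Br^{1/q_0},\\
\inf_{B_1}\bar v &\geq C^{-1}\Bl\int_{B_2}\bar v^{-q_0}\,\om^{n+1}\Br^{-1/q_0}.
\end{align*}

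The bridge between the positive and negative exponents is the logarithmic estimate. Testing \eqref{weak sol} with $\eta^2\bar v^{-1}$ controls the Dirichlet energy of $\log\bar v$ on interior balls, and the (cone) Poincar\'e inequality derived from Lemma \ref{sob} shows that $\log\bar v$ has bounded mean oscillation, with a constant independent of the distance to $\mathfrak D$. The John--Nirenberg lemma then supplies a $q_0>0$ with
\begin{align*}
\Bl\int_{B_2}\bar v^{\,q_0}\,\om^{n+1}\Br^{1/q_0}\cdot\Bl\int_{B_2}\bar v^{-q_0}\,\om^{n+1}\Br^{1/q_0}\leq C,
\end{align*}
and chaining the three displayed inequalities gives $\|\bar v\|_{L^p(B_2)}\leq C\inf_{B_1}\bar v$, which is \eqref{eqharnack} after undoing the reduction $\bar v=v+k$ and restoring the scale $d$.

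The hard part will be the John--Nirenberg step in the singular measure $\om^{n+1}\sim \b^2|z^1|^{2(\b-1)}\om_0^{n+1}e^{h}$. The point is that the weight $|z^1|^{2(\b-1)}$ is a Muckenhoupt $A_2$ weight on the transverse disc for $0<\b<2$, so the weighted $BMO$/John--Nirenberg machinery applies; equivalently, lifting through $P\circ W$ (cf. \eqref{euc}) to a genuinely flat chart reduces everything to the classical statement. What requires real care is \emph{uniformity up to} $\mathfrak D$: the Sobolev and Poincar\'e constants, the exponent $q_0$, and the $A_2$ bound must all be independent of the distance to the divisor, and this is exactly what Lemmas \ref{lemma three}--\ref{lemma four} guarantee through the single fixed loss $s>\tfrac1\b$. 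Tracking this loss consistently through the logarithmic estimate, so that it does not accumulate over the infinitely many iteration steps, is the most delicate bookkeeping in the argument.
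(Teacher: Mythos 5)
Your proposal is correct and shares the overall skeleton of the paper's proof: both delegate the positive- and negative-power Moser iterations to the local-boundedness machinery (Proposition \ref{loc est}), and both reduce the proposition to the crossover inequality $\int_{B}v^{p}\,\om^{n+1}\cdot\int_{B}v^{-p}\,\om^{n+1}\leq C$, started in each case by testing with $\eta^2 v^{-1}$ to bound the cone Dirichlet energy of $\log v$. Where you genuinely diverge is the John--Nirenberg step near the divisor. The paper does \emph{not} use weighted BMO theory: it converts the bound on $\int|\p \log v|^2_{\om}\,\om^{n+1}$ into a Euclidean $L^1$ gradient bound $\int|\p \log v|_0\,\om_0^{n+1}<\infty$ by a pointwise case analysis ($|\p_{z^1}\log v|_0\leq 1$ versus $>1$) in which the measure's weight $|z^1|^{2(\b-1)}$ exactly cancels the inverse-metric factor $g^{1\bar 1}\sim|z^1|^{2(1-\b)}$; it then applies the classical Moser--Trudinger inequality (Theorem 7.21 in \cite{MR1814364}) with respect to the flat measure $\om_0$, and finally transfers back to $\om$ via Lemma \ref{lemma three}, paying a single fixed power loss $s_0>\b^{-1}$, so the crossover holds with exponent $p=p_0/s_0$. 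You instead run John--Nirenberg directly in the singular measure, either through Muckenhoupt $A_2$ theory for the weight $|z^1|^{2(\b-1)}$ (valid, since the codimension of the divisor is two and $-2<2(\b-1)<2$) or through the quasi-isometric lift $P\circ W$; the lift version is essentially the same device the paper itself uses to obtain Lemmas \ref{sob} and \ref{kon}, so it is fully consistent with the paper's framework, while the $A_2$ version imports weighted Poincar\'e/BMO machinery (Fabes--Kenig--Serapioni type) that the paper never develops. What your route buys is the absence of the power loss $s_0$ and a more conceptual treatment of uniformity up to $\mathfrak D$; what the paper's route buys is that it never leaves the elementary toolkit already assembled (Lemmas \ref{lemma three}--\ref{lemma four} plus classical Gilbarg--Trudinger). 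Two small corrections to your write-up: the uniformity in your weighted John--Nirenberg step does not come from Lemmas \ref{lemma three}--\ref{lemma four} (those transfer $L^p$ norms between the two measures and play no role once you work intrinsically in the weighted or lifted setting), and the mean-value Poincar\'e inequality you need for BMO does not follow from Lemma \ref{sob} as stated (that lemma is for $W^{1,2}_0$ functions); it is obtained from the lift $P\circ W$, which you should invoke explicitly there as well.
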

\begin{proof}
We assume $d=a$ and argue as in the proof of the local boundedness with different test function.
Thus it suffices to prove, for the weak super-solution of \eqref{linear equ ge} with vanishing right hand side, there is a $p > 0$ and constant $C$ such that
\begin{align}\label{reduced claim harnack}
  \int_{B_2} v^{-p} \om^{n+1} \int_{B_2} v^{p} \om^{n+1} \leq C \;.
\end{align}
Choose a test function of the form $\eta^2 v^{\a}$ and let $w:= \log v$ and $\a =-1$. Here $\eta$ is the cut-off function defined in \lemref{lemma: integration by parts}.
We have by the CauchyÐSchwarz's inequality for small $\eps_1$ and $\eps_2$,
$$
\int_{B_r} | \p w |^2 \om^{n+1} \leq   \frac{2}{\epsilon_1} \int_{\mathfrak X} |\p \eta|^2 \om^{n+1}
+2\left( \frac{|b^i|_0}{4\epsilon_2} + |c|_0 \right) \int_{\mathfrak X} \eta^2 \om^{n+1} \; .
$$
Since $(\mathfrak X ,\om)$ has finite volume, the second term is bounded.
Concerning the first addendum, we compute,
\begin{align*}
\int_{U_p} |\p\eta|^2 \om^{n+1}
\leq C \int_0^{2\pi}\int_0^r t^{4-2\b+2(\b-1)} dt d\theta
\leq Cr^{3}.
\end{align*}
We conclude that $\int_{B_r} | \p w |\om^{n+1}$ is bounded.

Next we claim that
$\int_{B_r} | \p w |_0\om_0^{n+1}$ is also bounded.
To prove the claim, let's compute
\begin{align*}
\int_{B_r} | \p w |_0 \om_0^{n+1}  =  \int_{B_r}(|\partial_{z^1 } w |_0^2 +\sum_{i=2}^{n+1}|\partial_{z^i } w |_0^2 )^{\frac{1}{2}} \beta^{-2} | z^1 |^{2(1-\beta)} e^{-h}\om^{n+1}.
\end{align*}
The second is bounded, since $h$ and $| z^1 |$ are bounded.
For the first addendum,
we first consider the case when  $ |   \partial_{z^1 } w  |_0  \leq 1$. The its boundedness follows from the finiteness of the volume.
The second case is when   $ |   \partial_{z^1 } w  |_0 > 1$. In this second case $
 |   \partial_{z^1 } w  |_0 <  |
\partial_{z^1 } w  |_0^2
$
and so its integral is bounded by
$\int_{B_r} | \p w |^2 \om^n .$
The claim thus holds.
Now we apply the Moser-Trudinger inequality (see Theorem $7.21$ in \cite{MR1814364}) with respect to $\om_0$.
Thus there exists a constant $p_0$
 such that
$$
\int_{B_3} e^{p_0 | w - w_0   |  } \om_0^n
$$
is bounded and so is
$$
\int_{B_3} v^{p_0} \om_0^n \int_{B_3}v^{-p_0   } \om_0^n \;.
$$
From Lemma \ref{lemma three} we have, for some $s_0> \beta^{-1}$,
\begin{align*}
\int_{B_3} v^\frac{p_0}{s_0} \om^n \int_{B_3}v^\frac{-p_0}{s_0   } \om^n
\leq C \left( \int_{B_3} v^{p_0} \om_0^n \int_{B_3}v^{-p_0   } \om_0^n \right)^\frac{1}{s_0}
\leq C \;.
\end{align*}
The above inequality gives the wanted inequality \eqref{reduced claim harnack} with
$p = \frac{p_0}{s_0}$.
The proof of the proposition is therefore achieved.
\end{proof}
As a result we have the following estimates.
\begin{prop}(The Harnack inequality)\label{linearharnack}
For any $B_{4d}(y) \subset \mathfrak X$,
suppose that $v$ is a non-negative $W^{1,2}$ solution of \eqref{linear equ ge} with homogeneous right hand side in a ball $B_{4d}(y) \subset \mathfrak X$.
Then, there is a constant $C$ depending on $(|b^i|_g+|c|_\infty)d$, $\b$  such that
\begin{align}\label{eqharnack}
\sup_{B_d} v\leq C\inf_{B_d} v.
\end{align}
\end{prop}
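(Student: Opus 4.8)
The plan is to obtain the two-sided Harnack inequality by chaining together the two one-sided estimates already established in this subsection, following the classical De Giorgi--Nash--Moser scheme (cf.\ Theorem~8.20 in \cite{MR1814364}). The starting observation is that a $W^{1,2}$ solution $v$ of \eqref{linear equ ge} with homogeneous right hand side is at once a sub-solution and a super-solution, so both Proposition~\ref{loc est} and Proposition~\ref{harnack} may be applied to the very same function with $f=0$ and $h^i=0$; all the inhomogeneous contributions $\|f\|_{\frac{q}{2}}$ and $\|h^i\|_q$ then drop out.

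First I would fix an exponent $p$ lying in the common range of validity of the two estimates, that is some $1<p<\frac{n+1}{n}$; this is possible since $\frac{n+1}{n}>1$, and $p$ can be taken to depend only on $n$. Applying Proposition~\ref{loc est} (local boundedness) to $v$ regarded as a sub-solution on $B_{2d}(y)$ produces
\begin{align*}
\sup_{B_d(y)} v \leq C\, d^{-\frac{m}{p}}\, \| v\|_{L^p(B_{2d}(y))}\;,
\end{align*}
while applying Proposition~\ref{harnack} (the weak Harnack inequality) to $v\geq 0$ regarded as a super-solution on $B_{4d}(y)$ produces
\begin{align*}
d^{-\frac{m}{p}}\, \| v\|_{L^p(B_{2d}(y))} \leq C\, \inf_{B_d(y)} v\;.
\end{align*}
In both displays the constant $C$ depends on $(|b^i|_g+|c|_\infty)d$, $q$, $\b$ and $p$. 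Combining them at once gives $\sup_{B_d} v \leq C\inf_{B_d} v$, and once the universal choice $p=p(n)$ is made the dependence reduces to $(|b^i|_g+|c|_\infty)d$ and $\b$, exactly as claimed.

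Since the genuinely analytic work---the Moser iteration behind the local boundedness and the logarithmic estimate combined with the Moser--Trudinger inequality behind the weak Harnack bound, both carried out with the cone-adapted Lemmas~\ref{lemma three} and \ref{lemma four}---is already contained in Propositions~\ref{loc est} and \ref{harnack}, I do not expect any new analytic difficulty here. The only point demanding a little care is the bookkeeping of exponents: one must check that the admissible range $p>1$ for the local boundedness and the range $1<p<\frac{n+1}{n}$ for the weak Harnack inequality genuinely overlap (which they do), and that the constants, all allowed to depend on $(|b^i|_g+|c|_\infty)d$, $q$, $\b$ and $p$, collapse to the stated dependence after $p$ is fixed in terms of $n$.
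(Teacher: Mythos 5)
Your proposal is correct and is essentially the paper's own argument: the paper states this proposition with the words ``As a result we have the following estimates,'' i.e.\ it is obtained exactly by noting that a homogeneous solution is simultaneously a sub-solution and a non-negative super-solution, and then chaining Proposition~\ref{loc est} (with $v^+=v$, $f=h^i=0$) with Proposition~\ref{harnack} for a fixed exponent $p$ in the overlap $1<p<\frac{n+1}{n}$. Your bookkeeping of the exponent ranges and of how the $q$- and $p$-dependence of the constants collapses to the stated dependence on $(|b^i|_g+|c|_\infty)d$ and $\b$ is exactly the detail the paper leaves implicit.
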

\begin{prop}(Interior H\"older estimate)\label{itholder}
Suppose that $v$ is a $W^{1,2}$ solution of \eqref{linear equ ge} in $\mathfrak X$ and suppose that
$f\in L^{\frac{q}{2}}$ and $h^i\in L^{q}$ with $q >m$.
Then, for any $B_{d_0}(y)\subset \Int\mathfrak X$ and $d\leq d_0$, there is a constant $C(|b^i|_g,|c|_\infty,d_0,q)$ and $\a((|b^i|_g+|c|_\infty)d_0,q)$ such that
\begin{align*}
osc_{B_d(y)} v\leq C d^{\a}(d_0^{-\a}\sup_{B_{d_0}(y)}|v|+d^{2(1-\frac{m}{q})}||f||_{\frac{q}{2}} +d^{1-\frac{m}{q}}\|h^i\|_q).
\end{align*}
\end{prop}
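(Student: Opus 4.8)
The plan is to derive an oscillation--decay estimate from the weak Harnack inequality (Proposition \ref{harnack}) and then feed it into an iteration lemma, following the classical scheme of Theorem~8.22 in \cite{MR1814364} but carried out with respect to the cone measure $\om^{n+1}$. First I would fix $B_{4d}(y)\subset\Int\mathfrak X$ and set $M_{4}=\sup_{B_{4d}(y)}v$, $m_{4}=\inf_{B_{4d}(y)}v$, together with the analogous $M_{1},m_{1}$ on $B_{d}(y)$. Since $v$ solves \eqref{linear equ ge}, each of $M_{4}-v$ and $v-m_{4}$ is a non-negative $W^{1,2}$ super-solution on $B_{4d}(y)$ once the zeroth–order contribution $cv$ is transferred to the right-hand side; as $c\le 0$ is not assumed, this absorption creates an extra source term controlled by $|c|_\infty\sup_{B_{4d}(y)}|v|$, which I would fold into the inhomogeneous data alongside $f$ and $h^i$. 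Writing $k(d):=d^{2(1-m/q)}\|f\|_{q/2}+d^{1-m/q}\|h^i\|_q$, Proposition \ref{harnack} applied to the two super-solutions then yields
\begin{align*}
d^{-m/p}\|M_4-v\|_{L^p(B_{2d}(y))}&\le C\bigl(M_4-M_1+k(d)\bigr),\\
d^{-m/p}\|v-m_4\|_{L^p(B_{2d}(y))}&\le C\bigl(m_1-m_4+k(d)\bigr).
\end{align*}

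Next I would add these two inequalities. On the left, the triangle inequality in $L^p(B_{2d}(y))$ bounds the sum from below by $d^{-m/p}\|M_4-m_4\|_{L^p(B_{2d}(y))}$, and the crucial point is that the cone ball $B_{2d}(y)$ has $\om^{n+1}$–volume comparable to $d^{m}$, uniformly in the position of $y$ relative to $\mathfrak D$. This is precisely what the local lifting $P\circ W$ of \eqref{euc} supplies, since the flat cone metric is uniformly equivalent to the Euclidean one in the $w$–coordinates; hence the left side dominates $c_0\,(M_4-m_4)=c_0\,\osc_{B_{4d}(y)}v$ for a uniform $c_0>0$. On the right, the identity $(M_4-M_1)+(m_1-m_4)=\osc_{B_{4d}(y)}v-\osc_{B_{d}(y)}v$ holds, so after rearranging I obtain the decay
\begin{align*}
\osc_{B_{d}(y)}v\le \gamma\,\osc_{B_{4d}(y)}v+C\,k(d),\qquad \gamma=\tfrac{C-c_0}{C}<1,
\end{align*}
which is exactly the shape produced in the boundary estimate of the previous subsection.

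With this decay in hand, I would invoke the Iteration Lemma~8.23 in \cite{MR1814364}, applied to the non-decreasing function $d\mapsto\osc_{B_d(y)}v$ and to the power weight $k(d)$ (note $1-m/q>0$ because $q>m$); this produces the claimed inequality with a Hölder exponent $\a=\a((|b^i|_g+|c|_\infty)d_0,q)>0$ determined by $\gamma$, and a constant $C$ of the stated dependence. I expect the main obstacle to be not the algebra of the oscillation argument, which is classical, but the two places where the cone geometry intervenes: verifying the uniform volume comparison $\mathrm{vol}_{\om}(B_{2d}(y))\asymp d^{m}$ near $\mathfrak D$, and checking that the constant $c_0$ in the lower bound for the left-hand side — hence $\gamma$ — can be chosen independently of the distance from $y$ to the divisor. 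Both reduce, through $P\circ W$, to the corresponding Euclidean statements, but the constants must be tracked carefully to remain uniform, in the same spirit as in Propositions \ref{loc est} and \ref{harnack}.
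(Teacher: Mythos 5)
Your proposal is correct and coincides with the paper's (implicit) proof: the paper states Proposition~\ref{itholder} as an immediate consequence of the weak Harnack inequality (Proposition~\ref{harnack}), i.e.\ exactly the classical oscillation-decay argument of Theorem~8.22 combined with the Iteration Lemma~8.23 of \cite{MR1814364}, carried out with respect to the cone measure. The two cone-specific points you flag --- the uniform volume comparison $\mathrm{vol}_{\om}(B_{2d}(y))\asymp d^{m}$ via the quasi-isometry $P\circ W$, and folding the zeroth-order term $cM_4$ into the inhomogeneous data (which is why $C$ depends on $|c|_\infty$ and $d_0$) --- are handled exactly as the paper's framework requires.
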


\subsubsection{Local estimates at the boundary}
 Consider a point $y \in \partial \mathfrak{X}$ and using the local holomorphic coordinate in the half space $R_+^{2n+2}=\{x\vert x^{n+1}\geq 0\}$, here $x^{n+1}$ is the real part of the variable $z^{n+1}$. Then the coordinate chart near $y$ becomes a domain $T$ in $R_+^{2n+2}$. Recall that we assumed $v_0$ in
$C_\beta^\alpha (\partial \mathfrak X )$ in \eqref{linear equ ge co}.
We let
$$
M : = \sup_{ \partial\mathfrak{X}} \cap B_{2d} v
\,, \qquad m : = \inf_{ \partial\mathfrak{X}} \cap B_{2d} v \; .
$$
Moreover we extend $v$ from the half space to the whole space $R^{2n+2}$.
\begin{equation*}
v_M^+ :=
\left\{
\begin{array}{cc}
\sup\{ v(x) , M \}, & x \in T\\\
M & x\not\in T\; .
\end{array}
\right.
\end{equation*}
\begin{equation*}
v_m^- :=
\left\{
\begin{array}{cc}
\inf\{ v(x) , m \}, & x \in T\\\
m & x\not\in T\;.
\end{array}
\right.
\end{equation*}
Just by following the proof of interior estimates, we obtain
\begin{prop}(Local boundedness at the boundary)
Suppose that  $v$ is a $W^{1,2}$ sub-solution of \eqref{linear equ ge} and suppose that
$f \in L^{q}, $ and $h^i\in L^{q}$, $i=1,\cdots, n+1$ with $ q > m$.
Then for any ball $B_{2d}(y)$ and any $p>1$
there is a constant $C$ depending on $(|b^i|_g+|c|_\infty)d$, $q$, $\b$, $p$ such that
$$\sup_{B_{d} (y)} v_M^+ \leq C( d^{-\frac{m}{p}} \| v_M^+ \|_{L^p (B_{2d} (y))}+
d^{2(1-\frac{m}{q})}||f||_{\frac{q}{2}} +d^{1-\frac{m}{q}}\|h^i\|_q).
$$
\end{prop}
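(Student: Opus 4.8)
The plan is to reduce the boundary estimate to the interior local boundedness already established in Proposition \ref{loc est}, by means of the standard reflection argument of Gilbarg--Trudinger (the boundary version, Theorem $8.25$ in \cite{MR1814364}), while carrying along the cone-weight conversions furnished by \lemref{lemma three} and \lemref{lemma four}. First I would reduce to the homogeneous inequality exactly as in the proof of Proposition \ref{loc est}: replacing $v$ by $v+d^{2(1-\frac{m}{2q})}\|f\|_{\frac{q}{2}}+d^{1-\frac{m}{q}}\|h^i\|_q$, it suffices to treat a $W^{1,2}$ sub-solution of \eqref{linear equ ge} with vanishing right-hand side, so that $\mathcal{L}(v,\eta)\leq 0$ for every nonnegative $\eta\in W^{1,2}_0$.

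Working in the half-space coordinates on $T\subset R_+^{2n+2}$ with flat boundary $\{x^{n+1}=0\}$, I would set $w:=v_M^+-M$. Then $w\geq 0$, it vanishes on $\partial\mathfrak X\cap B_{2d}$, and it is extended by the constant $0$ off $T$. The key point is that, because $w$ vanishes on the flat boundary, the test functions $\eta^2 w^{\alpha}$ with $\eta\in C_0^1(B_{2d}(y))$ remain admissible in \eqref{weak sol}, and the weak sub-solution inequality continues to hold over the whole Euclidean ball, since both $w$ and the test functions are supported in $T$. Thus $w$ may be treated as a sub-solution on the full ball and the interior machinery applies verbatim.

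Next I would run the De Giorgi--Nash--Moser iteration with the cone weights, as in Proposition \ref{loc est}. Inserting $\eta^2 w^{\alpha}$ ($\alpha\geq 0$) into the weak inequality and writing $\bar w:=w^{(\alpha+1)/2}$ yields a Caccioppoli-type bound
\[
\|\eta\bar w\|_{\frac{2n}{n-1};\om}\leq C\,(\|\bar w\,\p\eta\|_{2;\om}+\|\bar w\eta\|_{2;\om})\;.
\]
On each chart $U_p$ meeting $\mathfrak D$, \lemref{lemma three} and \lemref{lemma four} convert the $\om$-integrals into $\om_0$-integrals, giving for $s>\frac{1}{\b}$
\[
\|\eta\bar w\|_{\frac{2n}{s(n-1)};\om_0}\leq C\,(\|\bar w\,\p\eta\|_{2s;\om_0}+\|\bar w\eta\|_{2s;\om_0})\;,
\]
where, exactly as before, the gradient term is controlled through $\|\bar w\,\p\eta\|_{2;\om}\leq C\|\bar w\,\p\eta\|_{2s;\om_0}$. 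On charts away from $\mathfrak D$ the ordinary H\"older inequality suffices, and a partition of unity patches the charts together. Iterating with $\chi=\frac{n}{n-1}$, letting the exponent tend to infinity, and converting back to $\om$ by a final use of \lemref{lemma four}, I obtain $\sup_{B_d(y)}w\leq C d^{-\frac{m}{p}}\|w\|_{L^p(B_{2d}(y))}$; undoing the reduction of the first step then gives the claimed estimate for $v_M^+$.

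I expect the main obstacle to be the interplay, near a point $y\in\partial\mathfrak X\cap\mathfrak D$, between the flat boundary and the cone divisor, which meet transversally there. One must verify that the zero-extension of $w$ across $\{x^{n+1}=0\}$ is genuinely a $W^{1,2}$ sub-solution in the cone-weighted sense, and that the partition of unity can be arranged so that \lemref{lemma three} and \lemref{lemma four} apply with constants independent of the distance to $\mathfrak D$. Guaranteeing this uniformity in the distance to $\mathfrak D$, the recurring difficulty throughout Section \ref{close}, is the technical heart of the argument.
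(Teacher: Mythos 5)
Your proposal is correct and follows essentially the same route as the paper: the paper defines $M$ and the extension $v_M^+$, and then simply states that the result follows ``by following the proof of interior estimates,'' i.e.\ the Gilbarg--Trudinger-style extension across the flat boundary combined with the cone-weighted Moser iteration of Proposition \ref{loc est}. Your write-up merely makes explicit the two points the paper leaves implicit — the admissibility of the test functions $\eta^2 w^{\alpha}$ (since $w=v_M^+-M$ has zero trace on $\partial\mathfrak X$) and the reuse of Lemmas \ref{lemma three} and \ref{lemma four} in the iteration — which is exactly what the paper intends.
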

\begin{prop}(Weak Harnack inequality at the boundary)
Suppose that $v$ is a  $W^{1,2}$ super-solution of \eqref{linear equ ge},
non-negative in a ball $B_{4d}(y) \cap T$ and suppose that  $f\in L^{\frac{q}{2}}$ and $h^i\in L^{q}$, $i=1,\cdots, n+1$ with $q >m$.
Then, for any $\frac{n+1}{n}>p>1$ there is a constant $C$ depending on $(|b^i|_g+|c|_\infty)d$, $q$, $\b$, $p$ such that
\begin{align*}
d^{-\frac{m}{p}} \| v_m^- \|_{L^p (B_{2d} (y))}
\leq
C(\inf_{B_{d(y)}} v_m^- +
d^{2(1-\frac{m}{q})}||f||_{\frac{q}{2}} +d^{1-\frac{m}{q}}\|h^i\|_q).
\end{align*}
\end{prop}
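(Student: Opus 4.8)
The plan is to deduce this boundary estimate from the interior weak Harnack inequality (Proposition \ref{harnack}) by extending $v$ across the flat boundary $\{x^{n+1}=0\}$ so that the extension $v_m^-$ becomes a non-negative super-solution on the entire ball $B_{4d}(y)$. As in the interior case I would first reduce to the homogeneous equation: setting $k := d^{2(1-\frac{m}{q})}\|f\|_{\frac{q}{2}} + d^{1-\frac{m}{q}}\|h^i\|_q$ and replacing $v$ by $v+k$ absorbs the right-hand side of \eqref{linear equ ge}, so it suffices to treat a non-negative $W^{1,2}$ super-solution of the \emph{homogeneous} equation, i.e. $\mathcal{L}(v,\eta)\geq 0$ for every $\eta\geq 0$ in $W^{1,2}_0$.

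The decisive point is that $v_m^-$ is a global $W^{1,2}$ super-solution on $B_{4d}(y)$. Since $c\leq 0$ and $m=\inf_{\partial\mathfrak{X}\cap B_{2d}}v\geq 0$, the constant $m$ is a super-solution of the homogeneous equation; hence the truncation $\inf\{v,m\}$, being the minimum of two super-solutions, is again a super-solution on $B_{4d}(y)\cap T$. Because $v\geq m$ on the boundary portion $\partial\mathfrak{X}\cap B_{2d}$, this truncation equals the constant $m$ on the boundary trace, so extending it by $m$ to the reflected half-space yields a function $v_m^-\in W^{1,2}(B_{4d}(y))$ whose distributional gradient carries no singular part on $\{x^{n+1}=0\}$. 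Testing \eqref{weak sol} against any $\eta\geq 0$ supported in $B_{4d}(y)$ and splitting the integral across the boundary, one checks that the weak super-solution inequality survives: contributions from the reflected side involve only the constant $m$ and are of the correct sign, while those from $T$ inherit the super-solution inequality for $\inf\{v,m\}$.

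With $v_m^-$ recognized as a non-negative super-solution on $B_{4d}(y)$, I would run the proof of Proposition \ref{harnack} verbatim. Taking the test function $\eta^2 (v_m^-)^{\a}$ with $\a=-1$ and $w:=\log v_m^-$ produces the gradient bound for $w$ in the cone metric; Lemma \ref{lemma three} and Lemma \ref{lemma four} then pass this to a bound for $\int|\p w|_0\,\om_0^{n+1}$, the Moser--Trudinger inequality with respect to $\om_0$ supplies a constant $p_0$ with $\int (v_m^-)^{p_0}\,\om_0^{n+1}\int (v_m^-)^{-p_0}\,\om_0^{n+1}$ bounded, and a final application of Lemma \ref{lemma three} converts this back to the cone measure to give \eqref{reduced claim harnack} for $v_m^-$ with $p=p_0/s_0$. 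Combined with the boundary local boundedness applied to $(v_m^-)^{-1}$, this is exactly the claimed inequality, the infimum on the right being $\inf_{B_d(y)}v_m^-$.

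The step I expect to be the main obstacle is the verification, in the second paragraph, that $v_m^-$ is a genuine weak super-solution across $\{x^{n+1}=0\}$ precisely where the boundary meets the divisor $\mathfrak{D}$ and the cone weight $|z^1|^{2(\b_1-1)}$ degenerates. The key is that the boundary is flat and transverse to $\mathfrak{D}$, so the reflection is carried out in the $x^{n+1}$-direction, which is orthogonal to the singular directions; the cone weight depends only on the divisor-normal coordinates and is left untouched by the extension. Consequently the local comparisons of Lemma \ref{lemma three} and Lemma \ref{lemma four} hold on the extended charts exactly as in the interior, and no new singular contribution is generated along the boundary.
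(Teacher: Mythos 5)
Your proposal is correct and takes essentially the same route as the paper: the paper's own ``proof'' consists exactly of defining the extensions $v_M^+$, $v_m^-$ across the flat boundary and asserting that the boundary estimates follow ``just by following the proof of interior estimates,'' which is precisely the truncation--extension--interior-Harnack argument you carry out. Your added verifications --- that $v_m^-$ stays a weak super-solution because the constant $m\ge 0$ is a super-solution when $c\le 0$ (the hypothesis implicit in all of the paper's applications), and that the reflection in the $x^{n+1}$-direction is transverse to $\mathfrak{D}$ so the cone weight is untouched --- are details the paper omits rather than a departure from its method.
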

\begin{prop}(H\"oder estimate at the boundary)\label{eqholder}
Suppose that $v$ is a $W^{1,2}$ solution of \eqref{linear equ ge} in $\mathfrak X$ and
$f\in L^{\frac{q}{2}}$ and $h^i\in L^{q}$, $i=1,\cdots, n+1$ with $q >m$. Suppose that $y$ is on the boundary of $\mathfrak X$.
Then, for any $B_{d_0}(y)$ and $d\leq d_0$, there is a constant $C(|b^i|_g,|c|_\infty,d_0,q)$ and $\a((|b^i|_g+|c|_\infty)d_0,q)$ such that
\begin{align*}
\osc_{B_d(y)\cap\mathfrak X } v\leq C \{d^{\a}(d_0^{-\a}\sup_{B_{d_0}(y)\cap T}|v|+d^{2(1-\frac{m}{q})}||f||_{\frac{q}{2}} +d^{1-\frac{m}{q}}\|h^i\|_q)
+\osc_{B_{\sqrt{d_0d}}(y)\cap\p\mathfrak X } v\}.
\end{align*}
\end{prop}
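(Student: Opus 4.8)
The plan is to transplant the classical De Giorgi--Nash--Moser derivation of boundary H\"older continuity (Theorem~8.27 and Corollary~8.28 in \cite{MR1814364}) into the conical boundary setting, letting the two immediately preceding propositions---local boundedness at the boundary and the weak Harnack inequality at the boundary---play exactly the roles that their interior counterparts play in the proof of Proposition~\ref{itholder}. First I would fix $y\in\p\mathfrak X$, pass to the half-space chart $T\subset R^{2n+2}_+$, and introduce the domain oscillation quantities
\begin{align*}
\mathcal{M}(r):=\sup_{B_r(y)\cap\mathfrak X}v,\qquad
\mathfrak{m}(r):=\inf_{B_r(y)\cap\mathfrak X}v,\qquad
\omega(r):=\mathcal{M}(r)-\mathfrak{m}(r)=\osc_{B_r(y)\cap\mathfrak X}v\;,
\end{align*}
together with the boundary sup/inf $M,m$ over $B_{2d}(y)\cap\p\mathfrak X$ and the whole-space extensions $v_M^+,v_m^-$ recorded just above the statement. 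The purpose of these extensions is that, since $v\le M$ and $v\ge m$ on $\p\mathfrak X\cap B_{2d}$ by definition, the functions $v_M^+$ and $v_m^-$ glue continuously to their constant extensions and hence define a genuine $W^{1,2}$ sub-solution, respectively super-solution, on the \emph{full} Euclidean ball $B_{2d}(y)\subset R^{2n+2}$, which is what allows the boundary propositions to be invoked on balls that cross $\p\mathfrak X$.

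Next I would apply the weak Harnack inequality at the boundary to the two non-negative super-solutions $\mathcal M(4d)-v_M^+$ and $v_m^--\mathfrak m(4d)$. Using $v_M^+\le\mathcal M(2d)$ and $v_m^-\ge\mathfrak m(2d)$ on $B_{2d}$ to bound the $L^p$-norms from below, and $\sup_{B_d}v_M^+\ge\mathcal M(d)$, $\inf_{B_d}v_m^-\le\mathfrak m(d)$ to bound the infima from above, these two applications yield
\begin{align*}
\mathcal M(4d)-\mathcal M(2d)\le C\big(\mathcal M(4d)-\mathcal M(d)+k(d)\big),\qquad
\mathfrak m(2d)-\mathfrak m(4d)\le C\big(\mathfrak m(d)-\mathfrak m(4d)+k(d)\big)\;,
\end{align*}
where $k(d):=d^{2(1-\frac{m}{q})}\|f\|_{\frac q2}+d^{1-\frac mq}\|h^i\|_q$. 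Adding these and absorbing the boundary sup/inf---which is where the factor $M-m=\osc_{B_{2d}\cap\p\mathfrak X}v$ is produced---gives an oscillation-decay inequality of the shape $\omega(d)\le\gamma\,\omega(4d)+C\,k(d)+\osc_{B_{2d}(y)\cap\p\mathfrak X}v$ with contraction factor $\gamma=\frac{C-1}{C}<1$. Feeding this into the iteration Lemma~8.23 of \cite{MR1814364} produces the H\"older exponent $\a$ depending only on $(|b^i|_g+|c|_\infty)d_0$ and $q$; the boundary oscillation term resurfaces at the geometric-mean scale $\sqrt{d_0 d}$ exactly as in Corollary~8.28 of \cite{MR1814364}, since that is the intermediate radius at which the interior decay balances the boundary modulus of continuity.

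The step I expect to require the most care is not the abstract iteration but the uniformity of the constants $C$ and $\a$ across the singular set---in particular near the corner $\mathfrak D\cap\p\mathfrak X$ where the conical divisor meets the boundary. As in the proofs of the two preceding boundary propositions, the weak Harnack and local-boundedness constants were obtained by transferring integrals between the cone metric $\om$ and the flat reference $\om_0$ via Lemmas~\ref{lemma three} and \ref{lemma four} and the finiteness of the cone volume, with test functions regularized by the cut-off $\chi_\eps$ of \lemref{lemma: integration by parts}. The hard part will be checking that these $\om$--$\om_0$ transfers remain valid for the half-space extensions $v_M^+,v_m^-$ in a neighborhood of $\mathfrak D\cap\p\mathfrak X$, so that the extended functions stay in $W^{1,2}(\om)$ and the resulting estimates are genuinely independent of the distance to $\mathfrak D$; reconciling the Euclidean half-space reflection with the conical degeneration at this corner is the one place where the argument departs from the classical Euclidean proof.
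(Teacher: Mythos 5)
Your overall skeleton---take the two preceding boundary propositions as given, run the oscillation-decay argument of Theorem 8.27/Corollary 8.28 in \cite{MR1814364} with the extensions $v_M^+,v_m^-$, and close with the iteration Lemma 8.23---is exactly what the paper intends (the paper states this proposition without further proof, the boundary local boundedness and weak Harnack inequalities having been obtained ``by following the proof of interior estimates''). However, the central step of your proposal, the derivation of the oscillation-decay inequality, fails as written. Your two displayed inequalities, e.g.\ $\mathcal M(4d)-\mathcal M(2d)\le C\bigl(\mathcal M(4d)-\mathcal M(d)+k(d)\bigr)$, are \emph{trivially true} (since $\mathcal M(2d)\ge\mathcal M(d)$, $C\ge 1$, $k(d)\ge 0$) and carry no information; adding them gives only $\omega(4d)-\omega(2d)\le C\bigl(\omega(4d)-\omega(d)+2k(d)\bigr)$, from which no inequality of the form $\omega(d)\le\gamma\,\omega(4d)+\cdots$ with $\gamma<1$ follows. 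Decisively, the boundary oscillation $M-m$ never appears in these inequalities, so it cannot be ``produced by absorbing the boundary sup/inf'' afterwards, as you claim.

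The error is located in the $L^p$ lower bound. The lower bound for $\|\mathcal M(4d)-v_M^+\|_{L^p(B_{2d})}$ must \emph{not} come from the bound $v_M^+\le\mathcal M(2d)$ on $B_{2d}\cap T$, but from the fact that $v_M^+\equiv M$ on $B_{2d}\setminus T$, a set of Euclidean measure comparable to $d^{2n+2}$ (at least a half-ball): this gives $d^{-\frac{2n+2}{p}}\|\mathcal M(4d)-v_M^+\|_{L^p(B_{2d})}\ge c\,\bigl(\mathcal M(4d)-M\bigr)$. Combining with the boundary weak Harnack inequality and with $\inf_{B_d}\bigl(\mathcal M(4d)-v_M^+\bigr)\le\mathcal M(4d)-\mathcal M(d)$ (this part of your proposal is correct) yields $\mathcal M(4d)-M\le C\bigl(\mathcal M(4d)-\mathcal M(d)+k(d)\bigr)$, and similarly $m-\mathfrak m(4d)\le C\bigl(\mathfrak m(d)-\mathfrak m(4d)+k(d)\bigr)$. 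These are not vacuous precisely because the boundary sup/inf sit on the left; adding them gives $\omega(4d)-\osc_{B_{2d}\cap\p\mathfrak X}v\le C\bigl(\omega(4d)-\omega(d)+2k(d)\bigr)$, hence $\omega(d)\le\frac{C-1}{C}\,\omega(4d)+2k(d)+\frac1C\osc_{B_{2d}\cap\p\mathfrak X}v$, and Lemma 8.23 of \cite{MR1814364} finishes the proof with the $\sqrt{d_0d}$ term exactly as you describe. Finally, your closing worry is misplaced: once the boundary weak Harnack inequality is taken as given (as both you and the paper do), the cone geometry and the corner $\mathfrak D\cap\p\mathfrak X$ do not re-enter this proof at all---the remaining argument is purely the real-variable iteration above, and all dependence on $\b$ is already packaged inside the constants of the preceding propositions.
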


\subsection{The Dirichlet problem of the linearized problem}
\label{The Dirichlet problem of the linearized problem}
Fix $0< \b <1$, and write $\mu:=\b^{-1}-1$.
Denote by $G$ the Green function of the standard cone metric $dr^2+\b^2r^2d\theta^2$ and by $T$ one of the second order operators
$$
\frac{\p^2}{\p s^i\p s^j},r^{-1}\frac{\p^2}{\p\theta\p s^i},\frac{\p^2}{\p r\p s^i}\;.
$$
It is shown in Donaldson \cite{MR2975584} Proposition 4 that the polyhomogeneous expansion of the Green function
around the singular set $D$
\begin{align}\label{green function}
G=\sum_{j,k}a_{j,k}(s)r^{\nu+2j}\cos k(\theta-\theta')\;.
\end{align}
Donaldson proved the following Schauder estimate.
\begin{prop}(Donaldson \cite{MR2975584})\label{dol2a}
Suppose that $\a \in (0,\mu)$,
then there exists a constant $C$ which depends only on $\a , \, m , \, \b $
such that for all functions $\rho \in C_c^\infty (\mathbb{R}^m)$, we have
\begin{align*}
 [i\partial \bar \partial (G\rho)]_\a \leq C[\rho]_\a \;.
\end{align*}
\end{prop}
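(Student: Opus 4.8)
The plan is to establish this as a Schauder estimate for the model Laplacian, by realizing each singular component of $i\p\bar\p(G\rho)$ as a Calder\'on--Zygmund singular integral acting on $\rho$ and then invoking the classical H\"older theory for such operators. Writing $u:=G\rho$, so that $u$ solves the Poisson equation for the cone Laplacian, the components of $i\p\bar\p u$ that can be singular along $D$ are exactly those of the form $Tu$, where $T$ ranges over the three operators $\frac{\p^2}{\p s^i\p s^j}$, $r^{-1}\frac{\p^2}{\p\th\p s^i}$, $\frac{\p^2}{\p r\p s^i}$ listed above; hence it suffices to bound $[Tu]_\a\leq C[\rho]_\a$ for each such $T$. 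Writing each as
\begin{align*}
Tu(x)=\int (T_x G)(x,x')\,\rho(x')\,dV(x')\;,
\end{align*}
reduces the problem to kernel estimates for $K:=T_xG$.

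First I would use the explicit polyhomogeneous expansion \eqref{green function} to read off the structure of $K$. Since the model metric is invariant under the simultaneous scaling $r\mapsto\lambda r$, $s\mapsto\lambda s$ (with $\th$ fixed), the Green function is homogeneous of degree $2-m$, so after the second-order operator $T$ the kernel $K$ is homogeneous of the critical degree $-m$, i.e.\ precisely of Calder\'on--Zygmund type. The angular factors $\cos k(\th-\th')$ in \eqref{green function} furnish the cancellation (vanishing mean value over cone-spheres), and differentiating the expansion term by term shows that, away from the diagonal and away from the singular set $\{r=0\}$, $K$ is smooth and obeys the expected size and gradient bounds.

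With these bounds one runs the standard argument: to estimate $Tu(x)-Tu(\bar x)$ one writes $\rho(x')=\rho(\bar x)+(\rho(x')-\rho(\bar x))$, uses the cancellation of $K$ to annihilate the constant piece, and controls the remainder with $|\rho(x')-\rho(\bar x)|\leq[\rho]_\a\,d(x',\bar x)^\a$ together with the homogeneity of $K$. Splitting the integration into the ball of radius $2d(x,\bar x)$ about $x$ and its complement and summing the two contributions yields $[Tu]_\a\leq C[\rho]_\a$ with $C=C(\a,m,\b)$, which is the asserted inequality.

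The delicate point, and the main obstacle, is the behaviour near the singular set $\{r=0\}$: the Calder\'on--Zygmund kernel bounds must be shown to hold \emph{uniformly up to} the apex, and it is exactly here that the restriction $\a\in(0,\mu)$ is forced. The admissible exponents are governed by the indicial roots of the cone Laplacian $\p_r^2+\tfrac{1}{r}\p_r+\tfrac{1}{\b^2r^2}\p_\th^2$, whose homogeneous harmonic modes behave like $r^{\pm k/\b}$; the first nonconstant root $r^{1/\b}$ limits the regularity of the subleading terms of \eqref{green function}, and hence of $i\p\bar\p(G\rho)$, to H\"older exponents strictly below $\mu=\b^{-1}-1$. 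Verifying that these error terms are genuinely controlled in $C^\a_\b$ throughout this sharp range, as done in Donaldson \cite{MR2975584}, is where the real work lies.
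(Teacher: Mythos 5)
First, a point of comparison that matters: the paper never proves Proposition \ref{dol2a} at all --- it is quoted from Donaldson \cite{MR2975584}, and the only supporting material the paper supplies is the recalled expansion \eqref{green function} of the Green function. So your attempt has to be measured against Donaldson's own argument, and in outline you have reconstructed it faithfully: Donaldson likewise realizes each $T(G\rho)$ as a singular integral with kernel $T_xG$, extracts size, cancellation and regularity properties of this kernel from the structure of the Green function of the model cone, and then runs the classical potential-theoretic H\"older argument; and your identification of the indicial exponent $r^{1/\beta}$ as the source of the threshold $\alpha<\mu$ is the correct explanation of why the range $(0,\mu)$ is what it is.

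The genuine gap is that, as a proof, the proposal stops exactly where the proposition begins. The Calder\'on--Zygmund-type H\"older argument requires three \emph{uniform} kernel estimates: the size bound $|K(x,x')|\leq C\,d(x,x')^{-m}$, boundedness of the integral of $K$ over annuli (cancellation), and --- crucially --- a H\"older-type modulus of continuity of $K$ in the first variable with some exponent $\alpha_0>\alpha$, all holding uniformly as the base point approaches $\{r=0\}$. Joint homogeneity of degree $-m$ together with smoothness away from the singular set, which is all your second paragraph establishes, does not give the third estimate: homogeneity only propagates information along scaling orbits, and the whole difficulty is the behaviour of $K$ on a fixed sphere as the base point slides into the apex, where the fractional powers $r^{k/\beta}$ in \eqref{green function} live. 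You name this difficulty yourself and then defer it to Donaldson (``where the real work lies''); but since the statement being proved \emph{is} Donaldson's estimate, that deferral makes the argument circular rather than merely incomplete. Closing it requires differentiating \eqref{green function} term by term, controlling the resulting series, and extracting the uniform kernel bounds with exponent $\mu$ --- which is the actual content of Donaldson's proof. Two smaller inaccuracies: (i) the singular components of $i\partial\bar\partial(G\rho)$ are not ``exactly'' those of the form $T(G\rho)$ for the three listed $T$'s --- the cone-cone component (the coefficient of $\varepsilon\wedge\bar\varepsilon$) is also singular and is not on the list; it is handled not by a kernel estimate but by the equation itself, since $\Delta(G\rho)=\rho$ lets you write that component as $\rho$ minus the trace of the components already controlled, a step you must state for the reduction ``it suffices to bound $[Tu]_\alpha$'' to be valid; (ii) the cancellation is not furnished by the angular factors alone --- the $k=0$ mode has no angular oscillation, and for $T=\partial^2/\partial s^i\partial s^j$ the vanishing of annular integrals comes from the divergence structure of second derivatives of a potential, not from $\cos k(\theta-\theta')$.
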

\begin{rem}\label{pure}
The Schauder estimates of the remainder pure second order derivatives (i.e. $\p\p$ direction) of $G\rho$ are proved in Brendle \cite{Brendle:2011kx} (Proposition A.1) and Jeffres, Mazzeo and Rubinstein \cite{Jeffres:2011vn} (Proposition 3.3).
\end{rem}
In our problem, the interior Schauder estimate follows
from Proposition \ref{dol2a}.
When we consider the Schauder estimate near the boundary,
we notice that our manifold is a product manifold, then the new Green function is constructed by replacing $s$ by $(s,x^{n+1},y^{n+1})$ in \eqref{green function}.
So when $i$ or $j$ is not equal to $n+1$,
then the $\p_i\bar \p_j$ estimate follows exactly the same line of Donaldson's proof (and further regularity by Jeffres, Mazzeo and Rubinstein \cite{Jeffres:2011vn}).
The $\p_{n+1}\bar \p_{n+1}$ estimate follows from the equation (cf. Section 4.4 in \cite{MR1814364}).
Now we patch the local estimates to the whole manifold by the partition of unity in the standard way.
\begin{prop}\label{linear equ gl}
Fix $\a$ with $0<\a<\mu=\b^{-1}-1$.
Then there is a constant $C$ depending on $\b$, $m$, $\a$
such that for all the functions $f\in C^\a_\b$ we have the Schauder estimate of the weak solution
of the equation \eqref{linear equ ge}
$$
|v|_{C^{2,\a}_\b}\leq C(|v|_{L^{\infty}}+|f|_{C^\a_\b}+\sum_i|h^i|_{C^{1,\a}_\b})\;.
$$
\end{prop}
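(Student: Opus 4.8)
The plan is to prove the estimate locally via a Green-function representation and then patch by a partition of unity, as indicated in the discussion preceding the statement. First I would cover $\mathfrak{X}$ by finitely many charts of three types: interior charts disjoint from $\mathfrak{D}$, interior cone charts meeting $\mathfrak{D}$, and boundary charts meeting $\mathfrak{D}$. On charts of the first type the leading operator is uniformly elliptic with $C^\alpha$ coefficients, so the classical interior (resp. boundary) Schauder estimate applies verbatim, the divergence datum being handled by the observation that $\partial_i h^i \in C^\alpha_\beta$ whenever $h^i \in C^{1,\alpha}_\beta$; see \cite{MR1814364}.

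On a cone chart centered at a point $p$ of $\mathfrak{D}$ I would freeze the metric at $p$ and rewrite the equation as $g^{i\bar j}(p)v_{i\bar j} = \big(g^{i\bar j}(p)-g^{i\bar j}(x)\big)v_{i\bar j} + f + \partial_i h^i - b^i v_i - cv$. After a linear change of coordinates the frozen operator becomes the standard cone Laplacian, whose Green function $G$ admits the polyhomogeneous expansion \eqref{green function}. Representing $v$ against $G$ and invoking Proposition \ref{dol2a} for the mixed derivatives $\partial_i\bar\partial_j(G\rho)$ together with Remark \ref{pure} for the pure second derivatives $\partial_i\partial_j(G\rho)$, I obtain the weighted Hölder bound on $\partial\bar\partial v$ and, after one integration, on $\partial v$, provided $\alpha \in (0,\mu)$. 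Near the boundary I would instead use the product Green function obtained by adjoining the variables $x^{n+1},y^{n+1}$ to $s$ in \eqref{green function}: for the components $\partial_i\bar\partial_j$ with $i,j\neq n+1$ the estimate follows Donaldson's argument (and its refinement in \cite{Jeffres:2011vn}) line by line, while the single remaining component $\partial_{n+1}\bar\partial_{n+1}v$ is recovered algebraically from the equation itself, as in Section 4.4 of \cite{MR1814364}; the tangential data are controlled by $v_0\in C^{2,\alpha}_\beta(\partial\mathfrak{X})$.

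The decisive step is the absorption of the perturbation and lower-order terms. The frozen-coefficient error $\big(g^{i\bar j}(p)-g^{i\bar j}(x)\big)v_{i\bar j}$ carries a factor of order $[g]_{C^\alpha_\beta}\,d^\alpha$ on a ball of radius $d$, hence is reabsorbed into the left-hand side for $d$ small; the terms $cv$ and $b^i v_i$ are handled by the interpolation inequalities for the weighted Hölder norms, which bound $\|\partial v\|_{C^\alpha_\beta}$ between $\|v\|_{L^\infty}$ and $\|\partial\bar\partial v\|_{C^\alpha_\beta}$, so that after shrinking $d$ the gradient contribution is again absorbed. The bootstrap is started from the global $C^\alpha_\beta$ bound already furnished by the De Giorgi--Nash--Moser estimates of Propositions \ref{itholder} and \ref{eqholder}, which also supply the $|v|_{L^\infty}$ appearing on the right-hand side.

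I expect the main obstacle to be verifying that this interpolation-and-absorption scheme works with constants \emph{uniform in the distance to $\mathfrak{D}$}: the cone metric \eqref{flat cone} degenerates along the divisor, so one must confirm that the weights do not spoil either the small-perturbation factor $d^\alpha$ or the interpolation constants as $p$ approaches $\mathfrak{D}$. This uniformity is precisely what the restriction $\alpha<\mu=\beta^{-1}-1$ in Proposition \ref{dol2a} guarantees, and it is also the point at which the passage from $\omega$-balls to $\omega_{cone}$-balls, exploited throughout Section \ref{linear thy}, must be invoked to keep the partition-of-unity gluing compatible with the conical Hölder structure.
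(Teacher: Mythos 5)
Your proposal is correct and follows essentially the same route as the paper: Donaldson's model estimate (Proposition \ref{dol2a}) together with Remark \ref{pure} for the pure second derivatives in the interior, the product Green function obtained by adjoining $(x^{n+1},y^{n+1})$ to $s$ at the boundary with the $\p_{n+1}\bar\p_{n+1}$ component recovered from the equation as in Section 4.4 of \cite{MR1814364}, and a partition-of-unity patching. The only difference is one of exposition: you spell out the coefficient-freezing, absorption, and interpolation steps that the paper leaves implicit when it says the estimate ``follows from'' Proposition \ref{dol2a}.
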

Combining the existence and uniqueness of the weak solution Proposition \ref{ext weak}, we obtain
\begin{prop}\label{dp}
There exists a unique solution of \eqref{linear equ ge} with data as \eqref{linear equ ge co} in $C^{2,\a}_\b$.
\end{prop}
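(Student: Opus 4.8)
The plan is to obtain Proposition \ref{dp} by combining the abstract existence and uniqueness of the weak solution (Proposition \ref{ext weak}) with the global Schauder estimate (Proposition \ref{linear equ gl}); the only genuine content is a regularity bootstrap that promotes the $W^{1,2}$ weak solution to the weighted class $C^{2,\a}_\b$. First I would apply Proposition \ref{ext weak} to the data \eqref{linear equ ge co}: in the situation of interest $c\leq 0$ (indeed $c=-1$ in the application to \thmref{sole geo}), so this yields a unique weak solution $v\in W^{1,2}(\mathfrak{X},g)$ of \eqref{linear equ ge} with boundary value $v_0$. Uniqueness in $C^{2,\a}_\b$ is then automatic, since $C^{2,\a}_\b\subset W^{1,2}$ forces any two $C^{2,\a}_\b$ solutions to coincide as weak solutions; alternatively it follows directly from the maximum principle \lemref{wmp}.

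The substance lies in the interior and boundary regularity of this weak solution. Away from $\mathfrak{D}$ the operator $L$ is uniformly elliptic with Hölder coefficients, so ordinary elliptic theory gives $v\in C^{2,\a}$ on $\mathfrak{X}\setminus\mathfrak{D}$; the entire difficulty is uniform control as one approaches the divisor and its transverse intersection with $\p\mathfrak{X}$. Here I would invoke the De Giorgi--Nash--Moser machinery already established in Section \ref{linear thy}: local boundedness (Proposition \ref{loc est}) and the weak Harnack inequality (Proposition \ref{harnack}), together with their boundary analogues, produce the interior and boundary Hölder estimates (Propositions \ref{itholder} and \ref{eqholder}), so that $v\in C^\a_\b$ up to $\p\mathfrak{X}$. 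The device that keeps this iteration uniform across $\mathfrak{D}$ is the pair of weighted comparison inequalities \lemref{lemma three} and \lemref{lemma four}, which allow one to pass between the cone volume $\om^{n+1}$ and the flat volume $\om_0^{n+1}$ on the charts meeting the divisor.

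With $v\in C^\a_\b$ secured, the lower-order terms $b^iv_i+cv$ and the datum $f+\p_ih^i$ are all controlled in the relevant Hölder and divergence norms, so I would freeze the second-order part and represent $v$ locally by convolving the model-cone Green function $G$ of \eqref{green function} against the right-hand side. Donaldson's Schauder estimate (Proposition \ref{dol2a}), supplemented by the pure second-derivative estimates recalled in Remark \ref{pure}, then bounds $[i\p\bar\p v]_\a$; combined with the gradient control this gives $v\in C^{2,\a}_\b$ and exactly the estimate of Proposition \ref{linear equ gl} after patching the local pieces with a partition of unity. The boundary charts are handled by the product structure noted just before Proposition \ref{dol2a}: replacing $s$ by $(s,x^{n+1},y^{n+1})$ reduces the mixed $\p_i\bar\p_j$ estimates with $i,j\neq n+1$ to Donaldson's interior argument, while the $\p_{n+1}\bar\p_{n+1}$ direction is recovered from the equation itself.

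The main obstacle is precisely the behaviour near $\mathfrak{D}$: classical elliptic estimates degenerate there, and one must instead rely on the cone-adapted function spaces throughout. Concretely, the crux is verifying that the weak solution produced abstractly by Proposition \ref{ext weak} actually lands in the correct weighted Hölder class, which hinges on (i) the volume-comparison Lemmas \ref{lemma three}--\ref{lemma four} keeping the Moser iteration uniform across the divisor, and (ii) the polyhomogeneous expansion \eqref{green function} of $G$, whose leading exponent $\nu$ is what forces the restriction $\a\in(0,\mu)$ with $\mu=\b^{-1}-1$ in Proposition \ref{dol2a}. Once these two ingredients are in place the remaining steps are routine, and the statement follows by assembling Proposition \ref{ext weak} and Proposition \ref{linear equ gl}.
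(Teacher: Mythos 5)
Your proposal is correct and follows essentially the same route as the paper: the paper's proof of Proposition \ref{dp} is precisely the combination of the weak existence/uniqueness result (Proposition \ref{ext weak}) with the global Schauder estimate (Proposition \ref{linear equ gl}), the latter being established exactly as you describe — Donaldson's estimate (Proposition \ref{dol2a}) plus Remark \ref{pure} in the interior, the product-structure Green function and the equation itself for the $\p_{n+1}\bar\p_{n+1}$ direction at the boundary, and a partition of unity. Your additional detail (the De Giorgi--Nash--Moser Hölder step via Lemmas \ref{lemma three}--\ref{lemma four} and the observation that $c\leq 0$ is what licenses the appeal to Proposition \ref{ext weak}) only makes explicit what the paper leaves implicit from Section \ref{linear thy}.
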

The linear theory in this section immediately implies the $\p\bar\p$-lemma with cone singularities.
\section{The metric space structure}\label{metricstr}
In this section we apply our geodesic
to study the geometry of the space of K\"ahler cone metrics.
We equip the space of K\"ahler cone metrics with the following normalization condition; we ask any K\"ahler cone potential $\vphi$ with respect to the background modes metric $\om$to satisfies $I(\vphi)=0$ vanishes, where
\begin{align*}
I_\om(\vphi)
&=\frac{1}{V}\int_M\vphi\om^n-\frac{1}{V}\sum_{i=0}^{n-1}\frac{i+1}{n+1}
\int_{M}\p\vphi\wedge\bar\p\vphi\wedge\om^{i}\wedge\om_\vphi^{n-i-1}.
\end{align*}
In particular, the functional $I(\vphi)$ is well defined along the $C^{1,1}_\b$ geodesic.
We show that the space of cone metrics
has a structure of metric space following the approach in \cite{MR1863016}. We said that $\vphi (t)$ is an $\eps$-approximate geodesic if it solves
\begin{align} \label{eqn: approximate geodesic }
(\vphi '' - |\p \vphi '|^2_{g_\vphi}) \det g_\vphi = \eps f \det g,
\end{align}
where $f = \frac{\det \Om_1}{\det \Om}
= | m \Phi_{n+1 , \overline{n+1}}
-\p(\vphi(1) - \vphi(0))|_\Om$.
Recall the energy $E:= \int_0^1 \int_M \vphi ' (t) \om_{\vphi(t)}^n dt $.
Along the $C^{1,1}_\b$ geodesic, there holds
\begin{align}\label{eqn: estimate of the energy of positive length of geodesic arc}
 \frac{1}{2} \left| \frac{d}{dt} E \right|
= \left| \int_M \vphi ' (\vphi '' - |\p \vphi|_{g_\vphi}^2 ) \om_\vphi^n \right|
\leq \eps \sup_{\mathfrak{X}}|\phi '| \cdot \sup_{\mathfrak{X}} |f| \cdot \Vol \; .
\end{align}

We show positivity of the length of any non-trivial geodesic segment and the geodesic approximation lemma. We omit the proof here, since along the $C^{1,1}_\b$ geodesic, all the inequalities are well defined.
\begin{prop}\label{prop: geodesic length is positive}
Let $\vphi (t) $ be a  $ C^{1, 1}_\b$ geodesic from $0$ to $\vphi$, and $I(\vphi)=0$.
Then the following inequality holds
\begin{align*}
 \int_0^1 \sqrt{\int_M (\vphi ')^2 \frac{\om_\vphi^n}{n!}}dt
\geq {\Vol}^{-\frac{1}{2}}\left(
\sup \left(
\int_{\vphi>0} \vphi \frac{\om_\vphi^n}{n!}\, , \, \int_{\vphi<0} \vphi \frac{\om_0^n}{n!}
\right)
\right)\; .
\end{align*}
In particular, the length of any non-constant $C^{1,1}_\b$ geodesic is positive.
\end{prop}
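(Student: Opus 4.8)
The plan is to run Chen's argument from \cite{MR1863016} on the cone space, the only genuinely new points being that energy conservation, every integration by parts, and the limiting procedure all survive the presence of the divisor $\mathfrak D$. First I would work on the $\epsilon$-approximate geodesics \eqref{eqn: approximate geodesic }, which are genuinely $C^{2,\a}_\b$, establish the inequality there up to an error $O(\epsilon)$, and then let $\epsilon\to 0$, invoking the $C^{1,1}_\b$ convergence from \thmref{sole geo} and the finiteness of the cone volume to pass all the integrals to the limit. The estimate \eqref{eqn: estimate of the energy of positive length of geodesic arc} shows that the kinetic energy $E(t):=\int_M(\vphi')^2\om_{\vphi(t)}^n/n!$ is constant in $t$ up to $O(\epsilon)$, so that in the limit the geodesic has constant speed and its length is simply $L=\sqrt{E}$ with $E$ the common value; that $E$ is exactly conserved on the true geodesic is the metric-compatibility statement of Proposition \ref{prop:metric compatibility} combined with the geodesic equation \eqref{geo}.

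Next I would exploit the normalization $I(\vphi)=0$. The functional $I_\om$ is precisely the Monge--Amp\`ere (Aubin--Mabuchi) energy; its second derivative along any path equals $\int_M(\vphi''-|\p\vphi'|^2_{g_\vphi})\,\om_\vphi^n/n!$, which vanishes on a geodesic by \eqref{geo}, the required integration by parts being \lemref{lemma: integration by parts}. Hence $I_\om$ is affine in $t$; since $I_\om(0)=0$ and $I_\om(\vphi)=0$ by the normalization, it vanishes identically, and differentiating yields the mean-value identity $\int_M\vphi'(t)\,\om_{\vphi(t)}^n=0$ for every $t$.

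The heart of the matter is then a one-variable estimate. Following Chen, I would introduce the partial energy $g(t):=\int_{\{\vphi(t)>0\}}\vphi(t)\,\om_{\vphi(t)}^n/n!$, which satisfies $g(0)=0$ and $g(1)=\int_{\{\vphi>0\}}\vphi\,\om_\vphi^n/n!$, differentiate it (the moving-boundary contribution drops out because $\vphi\equiv 0$ on $\p\{\vphi>0\}$), and organise the resulting terms, exactly as in the smooth computation, so that the mean-value identity removes the parts of indefinite sign and Cauchy--Schwarz against the conserved energy gives $g'(t)\le \Vol^{1/2}\sqrt{E}$. Integrating over $[0,1]$ yields $\int_{\{\vphi>0\}}\vphi\,\om_\vphi^n/n!\le \Vol^{1/2}L$. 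Applying the same argument to the reversed geodesic $t\mapsto\vphi(1-t)$, which is a geodesic from $\vphi$ to $0$, interchanges the roles of $\om_\vphi$ and $\om_0$ and of the two sign regions, and produces the companion bound on $\{\vphi<0\}$ with the measure $\om_0^n$; taking the larger of the two gives the stated inequality. Finally, if the geodesic is non-constant then $\vphi$ is a non-constant normalized potential, and $I_\om(\vphi)=0$ forces $\vphi$ to change sign (were $\vphi\ge 0$ and not identically zero, the $i=n$ term of $I_\om$ would already be strictly positive), so at least one of the two quantities is strictly positive and $L>0$.

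The main obstacle is the one the paper defers: legitimising the integrations by parts and the termwise limits near $\mathfrak D$, where the potential is only $C^{1,1}_\b$ and the background metric is singular. This is exactly what \lemref{lemma: integration by parts} is built for --- its cut-off construction shows that the boundary flux across a shrinking neighbourhood of $\mathfrak D$ tends to zero precisely because the cone volume is finite --- so each step above becomes rigorous once the integrands are checked to be dominated uniformly in $t$ and in $\epsilon$, which follows from the uniform $C^{1,1}_\b$ bounds of Section \ref{close}.
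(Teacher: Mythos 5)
Your outer scaffolding---working on the $C^{2,\a}_\b$ $\eps$-approximate geodesics, using \eqref{eqn: estimate of the energy of positive length of geodesic arc} to get conservation of the energy $E(t):=\int_M(\vphi')^2\om^n_{\vphi(t)}/n!$ up to $O(\eps)$, justifying the integrations by parts near $\mathfrak D$ by the cut-off argument of \lemref{lemma: integration by parts}, and using $I(\vphi)=0$ only to force $\vphi$ to change sign---is sound, and it is exactly how the paper (which omits the proof and defers to Chen \cite{MR1863016}) intends the cone-singularity issues to be handled. The gap is at your central step. If $g(t):=\int_{\{\vphi(t)>0\}}\vphi(t)\,\om^n_{\vphi(t)}/n!$, then the transport theorem (whose boundary term does legitimately vanish, since $\vphi(t)=0$ on $\p\{\vphi(t)>0\}$) together with one integration by parts gives
\begin{align*}
g'(t)=\int_{\{\vphi(t)>0\}}\vphi'\,\frac{\om^n_{\vphi(t)}}{n!}
-\int_{\{\vphi(t)>0\}}(\p\vphi,\p\vphi')_{g_{\vphi}}\,\frac{\om^n_{\vphi(t)}}{n!}\;.
\end{align*}
The first term is indeed at most $\Vol^{1/2}\sqrt{E}$ by Cauchy--Schwarz, but the second, gradient cross-term is not ``removed by the mean-value identity'': the identity $\int_M\vphi'\,\om^n_{\vphi}=0$ is a global statement over $M$ containing no spatial gradients, and it says nothing about an integral of $(\p\vphi,\p\vphi')_{g_\vphi}$ over the region $\{\vphi>0\}$. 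Nor is that term bounded by $\Vol^{1/2}\sqrt{E}$: it scales with the spatial Dirichlet energy of $\vphi(t)$, which is unrelated to the kinetic energy $E$. There is also no such ``smooth computation'' in Chen's paper to imitate; Chen never differentiates this domain integral.

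What your write-up never invokes---and what is the actual engine of Chen's proof and of the paper's intended one---is pointwise convexity in $t$: along the geodesic, \eqref{geo} gives $\vphi''=(\p\vphi',\p\vphi')_{g_{\vphi}}\geq0$ pointwise on $M$ (and the $\eps$-approximate geodesics also satisfy $\vphi''\geq0$, since the right-hand side of the approximate equation is nonnegative), hence $\vphi'(0)\leq\vphi(1)-\vphi(0)=\vphi\leq\vphi'(1)$ pointwise. Evaluating at $t=1$: on $\{\vphi>0\}$ one has $0<\vphi\leq\vphi'(1)$, so
\begin{align*}
\int_{\{\vphi>0\}}\vphi\,\frac{\om^n_{\vphi}}{n!}
\leq\int_M|\vphi'(1)|\,\frac{\om^n_{\vphi}}{n!}
\leq\Vol^{1/2}\sqrt{E(1)}=\Vol^{1/2}L\;;
\end{align*}
evaluating at $t=0$: on $\{\vphi<0\}$ one has $\vphi'(0)\leq\vphi<0$, which gives the companion bound against $\om_0^n$ (no reversed geodesic is needed). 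Substituting this convexity step for your differentiation of $g(t)$, and keeping the rest of your argument---energy conservation, the normalization argument for the sign change, and the $\eps\to0$ limit under the uniform $C^{1,1}_\b$ bounds---yields a complete proof.
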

\begin{lem}\label{approximate}
 Let $\mathcal{H}_C \subset \mathcal{H}_\b$ be as in Definition \ref{spaceHC}.
Also, let $C_i:= \vphi_i (s) : [0,1] \to \mathcal{H}_C$, for $i=1,2$, be two smooth curves.
Then, for a small enough $\eps_0$, there is a
two-parameter family of curves
\begin{align*}
 C( s , \eps) : \vphi(t , s, \eps) : [0,1]\times[0,1]\times (0, \eps_0] \to \mathcal{H}
\end{align*}
such that the following properties hold:
\begin{enumerate}
 \item Fixed $s,\eps$, then $C(s,\eps)\in C_\b^{2,\a}$ is an $\eps$-approximate geodesic from
$\vphi_1 (s)$ to $\vphi_2(s)$.
\item There exists a uniform constant $C$ such that
\begin{align*}
 |\vphi|+ \left| \frac{\p \vphi}{\p t}\right| + \left| \frac{\p \vphi}{\p s}\right|< C; \quad
0\leq \frac{\p^2 \vphi}{\p t^2} < C ; \quad \frac{\p^2 \vphi }{\p s^2} < C \; .
\end{align*}
\item Fixed any $s$, the limit in $C_\b^{1,1}$ of $C(s,\eps)$ as $\eps\to 0$ is the unique
geodesic arc from $\vphi_1(s)$ to $\vphi_2(s)$.
\item There exists an uniform constant $C$ such that, about the energy $E(t,s,\eps)$
along the curve $C(s,\eps)$, there holds
\begin{align*}
 \sup_{t,s}\left| \frac{\p E }{\p t}\right| \leq \eps \cdot C \cdot \Vol \; .
\end{align*}
\end{enumerate}
\end{lem}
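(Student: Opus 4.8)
The plan is to solve, for each fixed pair $(s,\eps)$, the Dirichlet problem for the $\eps$-approximate geodesic equation \eqref{eqn: approximate geodesic } with endpoints $\vphi_1(s)$ and $\vphi_2(s)$, and then to show that all the relevant estimates are uniform in $(s,\eps)$. Reducing \eqref{eqn: approximate geodesic } to the product manifold $\mathfrak{X}=X\times R$ exactly as in \secref{space}, it becomes the non-degenerate Monge-Amp\`ere equation $\det(\Om_{i\bar j}+\Psi_{i\bar j})=\eps f\det(\Om_{i\bar j})$ with $f=\det\Om_1/\det\Om$, which is the family \eqref{per equ} at $\tau=\eps$ in the form of Remark \ref{modifyrhs} (with $a=0$). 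For each fixed $\eps>0$ the problem is uniformly elliptic, and solvability in $C^{2,\a}_\b$ with boundary data built from $\vphi_1(s),\vphi_2(s)$ follows from the continuity method of \thmref{sole geo}: openness from the invertibility of the linearized operator (Proposition \ref{dp}) and closedness from the \emph{a priori} estimates of \secref{close}, upgraded to $C^{2,\a}_\b$ by the interior and boundary Schauder estimates of \secref{open}. This produces the family $C(s,\eps)$ and gives property (1).

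For property (2), the $t$-direction bounds are precisely the \emph{a priori} estimates of \secref{close} applied with $\tau=\eps\leq\eps_0$. The point to emphasize is that the constants there depend only on the oscillation and the $C^3_\b$-data of the endpoints and on the geometric hypotheses defining $\mathcal H_C$; since $s\mapsto\vphi_i(s)$ is a smooth curve into $\mathcal H_C$ over the compact interval $[0,1]$, these data are bounded uniformly in $s$. Thus $|\vphi|$ and $|\partial_t\vphi|=|\vphi'|$ are bounded by the $L^\infty$ estimate (Propositions \ref{low}, \ref{upp}) and the gradient estimate (Proposition \ref{prop: gradient estimate}), while $\partial_t^2\vphi$ is bounded above via the interior Laplacian estimate (Proposition \ref{sec}) together with the boundary Hessian estimate (Proposition \ref{prop: boundary hessian estimate}). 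The lower bound $\partial_t^2\vphi\geq 0$ is the convexity in $t$, equivalently the positivity $\Om+\frac{\sqrt{-1}}{2}\p\bar\p\Psi\geq 0$ forced by the equation once $\eps\geq 0$.

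The hard part will be the control of the $s$-derivatives, i.e. the bounds on $\partial_s\vphi$ and $\partial_s^2\vphi$ in property (2). Here I would differentiate the Monge-Amp\`ere equation once and twice in the parameter $s$. The first derivative $w:=\partial_s\vphi$ satisfies a linear equation $\tri' w=(\text{bounded right hand side})$ with coefficients $g'^{i\bar j}$ and Dirichlet datum $\partial_s\vphi_i(s)$ on the two boundary copies; a uniform bound on $w$ then follows from the maximum principle for cone metrics (\lemref{wmp}) together with the global and local boundedness of \secref{linear thy}, using that the $s$-derivatives of the boundary data are bounded. The second derivative $\partial_s^2\vphi$ satisfies a linear equation with an extra nonnegative quadratic term coming from the concavity of $\log\det$, which yields the one-sided bound $\partial_s^2\vphi<C$ again by the cone maximum principle. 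The delicate issue is that all these linear estimates must be uniform as the solution degenerates near the divisor $\mathfrak D$; this is exactly why the weighted auxiliary function $S$ of \lemref{auxiliary function} and the divergence-form linear theory of \secref{linear thy} are required, and it is where the argument is genuinely technical.

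Finally, properties (3) and (4) are comparatively soft. For (3), the uniform $C^{1,1}_\b$ bounds of (2) give, for each fixed $s$, subsequential limits in $C^{1,\a}_\b$ as $\eps\to 0$; each such limit is a $C^{1,1}_\b$ solution of the degenerate geodesic equation \eqref{geo ma} with the prescribed endpoints, and by the uniqueness established in \secref{unigeo} the limit is independent of the subsequence, so the whole family converges to the unique geodesic arc. For (4), I would simply invoke the identity preceding the lemma: differentiating the energy $E$ and using \eqref{eqn: approximate geodesic } gives $\frac{1}{2}|\partial_t E|=|\int_M \vphi'(\vphi''-|\p\vphi'|^2_{g_\vphi})\om_\vphi^n|\leq \eps\,\sup_{\mathfrak X}|\vphi'|\cdot\sup_{\mathfrak X}|f|\cdot\Vol$, exactly as in \eqref{eqn: estimate of the energy of positive length of geodesic arc}; the uniform bound on $\sup_{\mathfrak X}|\vphi'|$ from (2) and on $\sup_{\mathfrak X}|f|$ from the construction of $\Om_1$ (Proposition \ref{csck eqv}) then yield $\sup_{t,s}|\partial_t E|\leq \eps\cdot C\cdot\Vol$, which is (4). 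Assembling these four points completes the proof.
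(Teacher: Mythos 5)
Your proposal is correct and coincides with the approach the paper intends: the paper in fact omits the proof of \lemref{approximate} (it explicitly defers to Chen's argument in \cite{MR1863016}, to be run with the machinery of Sections \ref{close}--\ref{linear thy}), and your assembly --- existence and $C^{2,\a}_\b$ regularity of each $\eps$-approximate geodesic from \thmref{sole geo} together with Remark \ref{modifyrhs} for the $a=0$ family, uniform $t$-direction estimates from Section \ref{close}, $s$-derivative bounds obtained by differentiating the Monge--Amp\`ere equation in the parameter and invoking the cone maximum principle of Section \ref{linear thy}, uniqueness from Section \ref{unigeo} for property (3), and the energy identity \eqref{eqn: estimate of the energy of positive length of geodesic arc} for property (4) --- is precisely that argument. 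Your handling of the parameter direction (the nonnegative quadratic term from concavity of $\log\det$ giving the one-sided bound on $\p^2\vphi/\p s^2$, with the linear cone theory supplying the maximum principle near $\mathfrak D$) fills in the only step not already written elsewhere in the paper, and it does so correctly.
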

With the geodesic approximation lemma above, the triangular inequality and the differentiability property of the distance function follow immediately.
\begin{thm}
 Suppose that $\phi= \vphi (s) : [0,1]\to \mathcal{H}_\b$ is a smooth curve, and
let $p$ be a base point of $\mathcal{H}$. Then, the length of the geodesic arc
between $p$ and $\vphi$ is less than the sum of the length of the geodesic arc
between from $p$ to $\phi(0)$ and the length of the curve from $\phi(0)$ to $\phi(s)$.
\end{thm}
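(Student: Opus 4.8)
The plan is to establish the triangle inequality by the method of Chen \cite{MR1863016}, transplanted to the cone setting through the $\eps$-approximate geodesics of \lemref{approximate} and the passage to the $C^{1,1}_\b$ limit. Fix the base point $p$ and write $L(s)$ for the length of the unique $C^{1,1}_\b$ geodesic arc from $p$ to $\vphi(s)$, and $\ell(s):=\int_0^s\sqrt{\int_M(\p_u\vphi)^2\om_{\vphi(u)}^n}\,du$ for the length of the curve $\vphi|_{[0,s]}$. The assertion is precisely the inequality $L(s)\le L(0)+\ell(s)$, and I would prove it by bounding $\p_s L$ by the length density of $\vphi$.

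First I would invoke \lemref{approximate} with $\vphi_1(s)\equiv p$ and $\vphi_2(s)=\vphi(s)$, producing a two-parameter family $\phi(t,s,\eps)$ of $\eps$-approximate geodesics from $p$ to $\vphi(s)$, equipped with the uniform $C^{2,\a}_\b$ bounds of item $(2)$ and the energy estimate $|\p_t E|\le\eps\,C\,\Vol$ of item $(4)$, where $E(t,s,\eps)=\int_M(\p_t\phi)^2\om_\phi^n$. Set $L(s,\eps)=\int_0^1\sqrt E\,dt$. The energy bound says $E$ is constant in $t$ up to an $O(\eps)$ error, so in particular $E(1,s,\eps)=E(t,s,\eps)+O(\eps)$ uniformly.

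The central computation is the $s$-derivative of $L(s,\eps)$. Using the metric compatibility of \eqref{definitionofconnection} (Proposition~\ref{prop:metric compatibility}), the torsion-freeness $D_s\p_t\phi=D_t\p_s\phi$, and integration by parts in $t$, I would obtain
\begin{align*}
\p_s L(s,\eps)
&=\int_0^1\frac{1}{\sqrt E}\int_M D_t\p_s\phi\cdot\p_t\phi\,\om_\phi^n\,dt\\
&=\int_0^1\frac{1}{\sqrt E}\,\p_t\!\Big(\int_M\p_s\phi\cdot\p_t\phi\,\om_\phi^n\Big)\,dt
-\int_0^1\frac{1}{\sqrt E}\int_M\p_s\phi\cdot D_t\p_t\phi\,\om_\phi^n\,dt.
\end{align*}
Since $E$ is $t$-constant to order $\eps$, a further integration by parts in $t$ in the first term leaves the endpoint contribution $\big[E^{-1/2}\int_M\p_s\phi\cdot\p_t\phi\,\om^n\big]_0^1$ plus an $O(\eps)$ remainder; the $t=0$ endpoint vanishes because $\p_s\phi(0,s)=0$, while $D_t\p_t\phi=O(\eps)$ by \eqref{eqn: approximate geodesic }. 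At $t=1$ we have $\p_s\phi(1,s)=\p_s\vphi(s)$, so Cauchy--Schwarz together with $E(1,s,\eps)=\int_M(\p_t\phi|_{t=1})^2\om^n$ gives
\begin{align*}
|\p_s L(s,\eps)|
&\le\frac{1}{\sqrt{E(1)}}\,\Big|\int_M\p_s\vphi\cdot\p_t\phi|_{t=1}\,\om^n\Big|+O(\eps)\\
&\le\sqrt{\int_M(\p_s\vphi)^2\om_{\vphi(s)}^n}+O(\eps).
\end{align*}
Integrating in $s$ yields $L(s,\eps)\le L(0,\eps)+\ell(s)+O(\eps)$, and letting $\eps\to0$ — using the $C^{1,1}_\b$ convergence $C(s,\eps)\to$ geodesic of item $(3)$, which forces $L(s,\eps)\to L(s)$ — gives $L(s)\le L(0)+\ell(s)$.

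The main obstacle is purely analytic and lives near $\mathfrak D$ and $\p\mathfrak X$: every variational identity above — the first variation of energy, the exchange $D_s\p_t\phi=D_t\p_s\phi$, and the two integrations by parts in $t$ together with the spatial integration by parts implicit in metric compatibility — must be licensed \emph{uniformly up to the singular locus}. This is exactly the role of \lemref{lemma: integration by parts}, whose boundedness hypotheses I would verify from the uniform $C^{2,\a}_\b$ bounds of item $(2)$; and the $O(\eps)$ errors must be controlled uniformly in $s$, which follows from $\sup_{\mathfrak X}|\p_t\phi|$ and $\sup_{\mathfrak X}|f|$ as in \eqref{eqn: estimate of the energy of positive length of geodesic arc}. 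Once these integrability inputs are secured, the computation is formally identical to the smooth case, and combined with the positivity of Proposition~\ref{prop: geodesic length is positive} it upgrades the length functional to a genuine distance, completing the metric space structure of $\mathcal H_C$.
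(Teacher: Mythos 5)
Your proposal is correct and takes essentially the same approach as the paper: the paper gives no detailed argument, stating that the triangle inequality ``follows immediately'' from the geodesic approximation lemma (Lemma~\ref{approximate}) by transplanting Chen's method \cite{MR1863016} to the cone setting, and your first-variation computation of $L(s,\eps)$ with endpoint terms, the $O(\eps)$ error from the approximate geodesic equation, Cauchy--Schwarz at $t=1$, and the passage to the $C^{1,1}_\b$ limit is precisely that argument written out. Your attention to justifying the integrations by parts near $\mathfrak{D}$ via Lemma~\ref{lemma: integration by parts} matches the paper's remark that along the $C^{1,1}_\b$ geodesic all the relevant quantities and inequalities are well defined.
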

\begin{thm}
The distance function given by the length of the geodesic arc is a differentiable function.
\end{thm}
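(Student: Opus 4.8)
The plan is to reduce the theorem to the differentiability, in the parameter $s$, of the function $L(s):=d(p,\vphi(s))$, where $p$ is a fixed base point and $\vphi(s):[0,1]\to\mathcal H_C$ is an arbitrary smooth curve of K\"ahler cone potentials. By Theorem \ref{sole geo} this distance is realized as the length of the unique $C^{1,1}_\b$ cone geodesic $\vphi(t,s)$ with $\vphi(0,s)=p$ and $\vphi(1,s)=\vphi(s)$; since along a geodesic the integrand of the length is constant in $t$ and the length is positive by Proposition \ref{prop: geodesic length is positive}, one has $L(s)=\sqrt{E(s)}$ with $E(s):=\int_0^1\int_M(\p_t\vphi)^2\om_\vphi^n\,dt>0$. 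Hence it suffices to differentiate $E(s)$ and to show that the outcome depends continuously on $s$. I would carry out the computation not on the merely $C^{1,1}_\b$ geodesic but on the regularized curves furnished by \lemref{approximate}: for each small $\eps$ this provides a two--parameter family $\vphi(t,s,\eps)$ of $C^{2,\a}_\b$ $\eps$--approximate geodesics joining $p$ to $\vphi(s)$, with the uniform $C^{1,1}_\b$ bounds of property $(2)$, the $C^{1,1}_\b$ convergence of property $(3)$, and the energy control of property $(4)$.

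First I would compute the first variation of the energy $E_\eps(s):=\int_0^1\int_M(\p_t\vphi)^2\om_\vphi^n\,dt$ of the approximate geodesic $\vphi(\cdot,s,\eps)$. Differentiating under the integral (legitimate thanks to the uniform bounds), then using the metric compatibility of the connection (Proposition \ref{prop:metric compatibility}), the torsion--freeness of \eqref{definitionofconnection} in the form $D_s\p_t\vphi=D_t\p_s\vphi$, and an integration by parts in $t$, the interior term collapses onto the two endpoints:
\begin{align*}
\frac{1}{2}\frac{\p}{\p s}E_\eps(s)
&=\Big[\int_M \p_s\vphi\cdot\p_t\vphi\,\om_\vphi^n\Big]_{t=0}^{t=1}\\
&\quad-\int_0^1\int_M \p_s\vphi\cdot\big(\vphi''-|\p\vphi'|_{g_\vphi}^2\big)\,\om_\vphi^n\,dt.
\end{align*}
At $t=0$ the factor $\p_s\vphi$ vanishes because the endpoint $p$ is fixed, while at $t=1$ it equals the velocity $\p_s\vphi(1,s)$ of the curve $\vphi(s)$. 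The remaining integral is $O(\eps)$: by the $\eps$--approximate geodesic equation the acceleration $\vphi''-|\p\vphi'|^2_{g_\vphi}$ equals $\eps f\,\det g/\det g_\vphi$, which is controlled by the uniform bounds exactly as in \eqref{eqn: estimate of the energy of positive length of geodesic arc}.

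Passing to the limit $\eps\to0$, property $(3)$ gives $\vphi(\cdot,s,\eps)\to\vphi(\cdot,s)$ in $C^{1,1}_\b$ and $E_\eps(s)\to E(s)$, the $O(\eps)$ term drops out, and one obtains the explicit formula
\begin{align*}
\frac{d}{ds}L(s)=\frac{1}{2\sqrt{E(s)}}\frac{d}{ds}E(s)
=\frac{1}{\sqrt{E(s)}}\int_M \p_s\vphi(1,s)\cdot\p_t\vphi(1,s)\,\om_{\vphi(s)}^n.
\end{align*}
The right--hand side is continuous in $s$: the endpoint velocity $\p_s\vphi(1,s)$ is smooth, $E(s)>0$ is bounded away from zero, and the boundary data $\p_t\vphi(1,s)$ and $\om_{\vphi(s)}$ vary continuously because the geodesic depends continuously on its endpoints---a consequence of the uniqueness in Theorem \ref{sole geo} together with the uniform a priori bounds. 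A standard uniform--convergence argument over the family $\{E_\eps\}$ upgrades this pointwise computation to genuine differentiability of $L$. Running the same argument in the first argument and combining with the triangle inequality of the previous theorem yields differentiability of $d$ on $\mathcal H_C$.

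The hard part will be justifying the integration by parts near the divisor $\mathfrak D$, where the cone metrics degenerate and the naive Stokes formula fails. This is precisely the situation controlled by the cut--off construction of \lemref{lemma: integration by parts}: all the integrands appearing above---$\p_s\vphi$, $|\p\vphi'|^2_{g_\vphi}$, and $\Delta_\vphi\vphi'$---are bounded with respect to the cone metric by virtue of the uniform $C^{1,1}_\b$ bounds of \lemref{approximate}, so the error terms produced by the cut--off tend to zero and $\mathfrak D$ contributes nothing. Performing the variation on the $C^{2,\a}_\b$ approximate geodesics, rather than directly on the $C^{1,1}_\b$ limit, is what makes the pointwise acceleration meaningful and the $O(\eps)$ estimate available; the $C^{1,1}_\b$ regularity of the limiting geodesic is invoked only at the final passage to the limit.
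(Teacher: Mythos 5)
Your proposal is correct and takes essentially the same route the paper intends: the paper derives differentiability ``immediately'' from the geodesic approximation lemma (\lemref{approximate}), following Chen's argument in \cite{MR1863016}, which is precisely the first-variation-of-energy computation on the $C^{2,\a}_\b$ $\eps$-approximate geodesics (endpoint terms plus an $O(\eps)$ acceleration term, controlled as in \eqref{eqn: estimate of the energy of positive length of geodesic arc}) that you carry out. You have simply written out the details, including the cut-off justification of the integration by parts near $\mathfrak D$, that the paper leaves implicit.
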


\
\bibliography{bib}
\bibliographystyle{plain}
\end{document}